\newlength{\lw}
\numberwithin{equation}{section}
\newtheoremstyle{my}{1.5em}{0.5em}{\em}{}{\sc}{.}{0.5em}{}
\theoremstyle{my}
\theoremstyle{my}
\newtheorem{thm}{Theorem}[section]
\newtheorem{Theorem}[thm]{Theorem}
\newtheorem*{Theorem*}{Theorem}
\newtheorem{Corollary}[thm]{Corollary}
\newtheorem{cor}[thm]{Corollary}
\newtheorem*{corollary*}{Corollary}
\newtheorem{Lemma}[thm]{Lemma}
\newtheorem{lem}[thm]{Lemma}
\newtheorem{Addendum}[thm]{Addendum}
\newtheorem{prop}[thm]{Proposition}
\newtheorem{Proposition}[thm]{Proposition}
\newtheorem{Conjecture}[thm]{Conjecture}
\newtheorem*{conjecture*}{Conjecture}
\newtheorem*{question*}{Question}
\newtheorem{defn}[thm]{Definition}
\newtheorem{Definition}[thm]{Definition}
\newtheorem{Hypothesis}[thm]{Hypothesis}
\newtheorem*{definitions*}{Definitions}
\newtheorem{rem}[thm]{Remark}
\newtheorem*{rem*}{Remark}
\newtheorem{Remark}[thm]{Remark}
\newtheorem*{remark*}{Remark}
\newtheorem*{remarks*}{Remarks}
\newtheorem*{example*}{Example}
\newtheorem{Example}[thm]{Example}
\newtheorem*{examples*}{Examples}
\newtheorem*{convention*}{Convention}
\newtheorem*{conventions*}{Conventions}
\newtheorem*{Note*}{Note}
\newtheorem*{exercise*}{Exercise}
\newtheorem*{bibliographical-note*}{Bibliographical note}
\newcommand{\Acknowledgements}{{\em Acknowledgements.} }
\def\co{\colon\thinspace}
\newcommand{\nc}{{\bf nc}}
\newcommand{\bR}{\mathbb{R}}
\newcommand{\bZ}{\mathbb{Z}}
\newcommand{\bQ}{\mathbb{Q}}
\newcommand{\bC}{\mathbb{C}}
\newcommand{\bP}{\mathbb{P}}
\newcommand{\bk}{\mathbf{k}}
\newcommand{\iso}{\cong}           
\newcommand{\cdbar}{\mathrm{\overline{\partial}}}
\newcommand{\Sym}{\mathrm{Sym}}
\newcommand{\Symp}{\mathrm{Symp}}
\newcommand{\id}{\mathrm{id}}
\newcommand{\im}{\mathrm{im}}
\renewcommand{\ker}{\mathrm{ker}}
\newcommand{\Slice}{\mathcal{S}}
\newcommand{\Hom}{\mathrm{Hom}}
\newcommand{\End}{\mathrm{End}}
\newcommand{\Kh}{\mathrm{Kh}}
\newcommand{\Br}{\mathrm{Br}}
\newcommand{\Ext}{\mathrm{Ext}}
\newcommand{\Tw}{\operatorname{Tw}}
\newcommand{\Id}{\mathrm{Id}}
\newcommand{\tr}{\mathrm{tr}}
\newcommand{\Ob}{\mathrm{Ob} \,}
\newcommand{\Hilb}{\mathrm{Hilb}}
\newcommand{\Conf}{\textrm{Conf}}
\newcommand{\reg}{\scriptscriptstyle{\mathrm{reg}}}
\newcommand{\sub}{\scriptscriptstyle{\mathrm{sub}}}
\newcommand{\scrA}{\EuScript{A}}
\newcommand{\scrB}{\EuScript{B}}
\newcommand{\scrC}{\EuScript{C}}
\newcommand{\scrD}{\EuScript{D}}
\newcommand{\scrY}{\EuScript{Y}}
\newcommand{\scrP}{\EuScript{P}}
\newcommand{\scrQ}{\EuScript{Q}}
\newcommand{\scrR}{\EuScript{R}}
\newcommand{\scrG}{\EuScript{G}}
\newcommand{\scrX}{\EuScript{X}}
\newcommand{\Spin}{\mathrm{Spin}}
\newcommand{\scrO}{\EuScript{O}}
\newcommand{\scrL}{\EuScript{L}}
\newcommand{\scrU}{\varcurlyvee}
\newcommand{\scrCap}{ \varcurlywedge }
\newcommand{\scrV}{\EuScript{V}}
\newcommand{\scrW}{\EuScript{W}}
\newcommand{\scrT}{\EuScript{T}}
\newcommand{\OO}{\EuScript{O}}
\newcommand{\FF}{\EuScript{F}}
\newcommand{\scrH}{\EuScript{H}}
\newcommand{\scrM}{\EuScript{M}}
\newcommand{\scrE}{\EuScript{E}}
\newcommand{\scrF}{\EuScript{F}}
\newcommand{\bF}{\mathbf{F}}
\newcommand{\Mbar}{\bar{M}}
\newcommand{\Mdbar}{\bar{\bar{M}}}
\newcommand{\Mod}[2]{\scrR^{#2}_{#1}}
\newcommand{\Modbar}[2]{\bar{\scrR}^{#2}_{#1}}
\newcommand{\Fuk}{\scrF}
\newcommand{\Chord}{{\EuScript X}}
\newcommand{\ro}{{\mathrm o}}
\newcommand{\CO}{{\mathcal CO}}
\newcommand{\sCO}{co}
\newcommand{\ev}{\mathrm{ev}}
\newcommand{\Nbar}{\bar{N}}
\newcommand{\Ndbar}{\bar{\bar{N}}}
\newcommand{\scrK}{\EuScript{K}}
\newcommand{\tens}[2][]{%
  \mathbin{\mathop{\otimes}\limits_{#2}^{#1}}%
}
\newcommand{\wpb}{\wp_{\mathrm{plait}}}
\newcommand{\wpcirc}{\wp_{\circ}}
\numberwithin{equation}{section}
\newcommand{\superscript}[1]{\ensuremath{^{\textrm{#1}}} }
\renewcommand{\th}[0]{\superscript{th}}
\newcommand{\st}[0]{\superscript{st}}
\newcommand{\comment}[1]{}
\title{Khovanov homology from Floer cohomology}
\author{Mohammed Abouzaid}
\address{Mohammed Abouzaid, Department of Mathematics,  
Columbia University, 2990 Broadway, New York, NY 10027, USA}
\author{Ivan Smith}
\address{Ivan Smith, Centre for Mathematical Sciences, University of Cambridge, Wilberforce Road, CB3 0WB, England.}
\date{v1: April 2015. v2: December 2017. M.A. was partially supported by NSF grants DMS-1308179,  DMS-1609148, and DMS-1564172, and by the Simons Foundation through its ``Homological Mirror Symmetry'' Collaboration grant. I.S. is partially supported by a Fellowship from EPSRC}
\begin{document}

\begin{abstract} This paper realises the Khovanov homology of a link in $S^3$ as a Lagrangian Floer cohomology group, establishing a conjecture of Seidel and the second author.	The starting point is the previously established formality theorem for the symplectic arc algebra over a field $\bk$ of characteristic zero.  Here we prove the symplectic cup and cap bimodules which relate different symplectic arc algebras are themselves formal over $\bk$, and construct a long exact triangle for symplectic Khovanov cohomology. We then prove the symplectic and combinatorial arc algebras are isomorphic over $\bZ$ in a manner compatible with the cup bimodules.  It follows that Khovanov cohomology and symplectic Khovanov cohomology co-incide in characteristic zero.
\end{abstract}

\maketitle


\section{Introduction}\label{Sec:Introduction}


 Let $\scrY_n = \chi|_{\Slice}^{-1}(t)$ be a smooth fibre of the restriction of the adjoint quotient  $\chi: \frak{sl}_{2n}(\bC) \rightarrow \bC^{2n-1}$ to a transverse slice $\Slice \subset  \frak{sl}_{2n}(\bC)$ at a nilpotent matrix with two equal Jordan blocks.  Let $\scrF(\scrY_n)$ denote the Fukaya category of closed exact Lagrangian branes in $\scrY_n$, as constructed in \cite{FCPLT}. The paper \cite{SS} defines a symplectic structure $\omega$ on $\scrY_n$ which is exact and has contact type at infinity, and an action of the braid group $Br_{2n}$ (by parallel transport varying $t$) on objects of $\scrF(\scrY_n)$.  Let $\kappa$ be a link in $S^3$ realised as the closure of a braid $\beta_{\kappa} \times \id \in Br_{2n}$, with $\beta_{\kappa} \in \Br_n$.  There is a  distinguished Lagrangian submanifold $L_{\wp_{\circ}} \subset \scrY_n$, and a relatively $\bZ$-graded Floer cohomology group 
 \[
 \Kh_{symp}(\kappa) = HF^*(L_{\wp_{\circ}}, (\beta_{\kappa} \times \id)(L_{\wp_{\circ}}))
 \]  
 called the \emph{symplectic Khovanov cohomology} of $\kappa$. The main theorem of \cite{SS} proved that this is indeed a link invariant (independent of the choice of $\beta_{\kappa}$, and in particular of $n$, up to shifts; the relative grading can be refined to an absolute grading if one orients $\kappa$), and conjectured that it co-incided with a singly graded version of Khovanov's combinatorial / representation-theoretic invariant $\Kh(\kappa)$ from \cite{Khovanov}.  This paper proves that conjecture in characteristic zero.
  
 The categories $\scrF(\scrY_n)$ and $\scrF(\scrY_{n+1})$ are related by various canonical bimodules $\cup_i$ and  $\cap_i$, for $1\leq i \leq 2n+1$, defined by symplectic analogues of the cup and cap bimodules of \cite{Khovanov:functor}, cf. Section \ref{Sec:MeetCup}.  Such bimodules play an implicit role in the construction of the link invariant $\Kh_{symp}(\kappa)$, and were further considered in the work of Rezazadegan  \cite{Reza}.  Here we prove that  the Floer cohomology algebra generated by the Lagrangian iterated vanishing cycles associated to upper half-plane crossingless matchings (the ``symplectic arc algebra")  is isomorphic over $\bZ$ to Khovanov's arc algebra \cite{Khovanov:functor}, and we prove that the bimodules $\cup_i$ and $\cap_i$ are formal over any field $\bk$ of characteristic zero.  We also construct a long exact triangle for symplectic Khovanov cohomology, analogous to the skein triangle obeyed (tautologically) by its combinatorial sibling.  It follows that combinatorial and symplectic Khovanov cohomologies are isomorphic in characteristic zero, in particular have the same total rank over $\bQ$.

\noindent \textbf{Outline of the paper.}  The paper broadly divides into three Parts.

\begin{enumerate}
\item In \cite{AbSm} we introduced an abstract formality criterion, due to Paul Seidel, for formality of an $A_{\infty}$-algebra.  In the first Part (Sections 2-4), we provide a complementary formality criterion for $A_{\infty}$-bimodules. We explain how one can sometimes implement this criterion for bimodules over Fukaya categories of Stein manifolds  via appropriate counts of holomorphic discs in a partial compactification, and as an application prove that  the cup bimodules are formal.

\item In the second Part (Sections 5-6), we prove that the symplectic arc algebra is isomorphic over $\bZ$ to Khovanov's combinatorial arc algebra $H_n$.  The essential difficulty is one of signs: over $\bZ$, Floer complexes are not intrinsically based by intersection points, but by orientations of abstract lines associated to $\cdbar$-operators.  We construct bases of the symplectic arc algebra on $2n$ strands  in which products of positive generators are positive linear combinations of positive generators by an argument inductive on $n$. The induction relies on reduction to special cases, on plumbing models, and on the existence of a natural map from the cohomology of a symplectic manifold to the center of its Fukaya category.

\item In the third part (Section 7 and the Appendix)  we construct the  exact triangle for symplectic Khovanov cohomology, and conclude the proof.   Our discussion of the exact triangle is essentially self-contained, but adapts currently unpublished more general results due to the first author and Ganatra \cite{Abouzaid-Ganatra}, partly based on forthcoming work of the first author and Seidel \cite{AS:Lef-Wrap}.  The argument uses a Fukaya category of a Morse-Bott Lefschetz fibration, similar in nature to the categories studied by Seidel in the Lefschetz case, and various canonical functors thereon. This formalism is reminiscent of the cobordism techniques developed by Biran and Cornea \cite{BC:Cob,BC:Lef}, rather than the quilted Floer groups of Wehrheim and Woodward \cite{WW:triangle}, and avoids technical issues\footnote{Even if one is primarily interested in closed Lagrangian submanifolds and their monodromy images, \emph{a priori} the quilt theory construction of the exact triangle requires compactness for spaces of curves whose only Lagrangian boundary condition is the correspondence itself.} stemming from non-compactness of the correspondences  in our setting. 

 \end{enumerate}

The last section of the paper draws the various pieces together to complete the proof. The basic architecture of the argument can be summarised, somewhat schematically, as follows.  Combinatorial Khovanov homology is essentially determined by the arc algebra $H_n$ and the collection of cup functors (bimodules) $\cup_i$ which relate $H_n$ and $H_{n+1}$, together with a particular $H_n$-module $P_{\wp_0}$.  More precisely, Khovanov constructs a braid group action on the derived category $D=D(mod-H_n)$, in which the braid group generators are obtained formally as cones over co-units of adjunctions $\cup_i \circ \cap_i \to \id$, and defines his invariant (for the knot closure of a braid $\beta$) to be $\Ext^*_D(P_{\wp_0}, (\beta\times\id)(P_{\wp_0}))$.  Symplectic Khovanov cohomology is obtained from a geometric braid group action on the Fukaya category $\scrF(\scrY_n)$, and a choice of distinguished Lagrangian submanifold (hence module) $L_{\wp_0}$, as a Floer group $HF^*(L_{\wp_0}, (\beta\times\id)(L_{\wp_0}))$. In fact, the symplectic theory really only uses the part of $\scrF(\scrY_n)$ which can be understood as the derived category of $A_{\infty}$-modules over the ``symplectic arc algebra" $\scrH_n^{symp}$ of \cite{AbSm}, defined by a particular finite collection of Lagrangians.  The essential content of the equivalence is that 
\begin{itemize}
\item the arc algebra and symplectic arc algebra agree at the level of cohomology, in a way which entwines  the distinguished modules $P_{\wp_0}$ and $L_{\wp_0}$, and the cohomological actions of the cup bimodules; (this paper, Sections 5-6);
\item the symplectic arc algebra is formal, hence quasi-isomorphic to its cohomology, as are the geometrically defined cup bimodules; (\cite{AbSm} and this paper, Sections 2-4);
\item the generators of the geometric braid group action of \cite{SS}, which are fibred Dehn twist monodromies, can be identified with the co-units over the adjunctions between cup and cap bimodules. This largely amounts to establishing a long exact triangle for the fibred Dehn twists; (this paper, Section 7 and the Appendix).
\end{itemize}
A technical but important point is that the formality criteria for a given  bimodule can be established  starting from any fixed quasi-isomorphism of the underlying $A_{\infty}$-algebra and its cohomological algebra, which enables one to establish formality of all the cup bimodules simultaneously. 
 The existence of the isomorphism $H_n \cong \scrH_n^{symp}$ is surprisingly involved, and we give an overview of the strategy at the start of Section 5. The proof furthermore shows that  the isomorphism is compatible with cup functors, and hence (using formality) with their adjoint cap functors.  Via the exact triangle, we infer that the Fukaya category $D\scrF(\scrY_n)$ generated by the Lagrangians associated to upper half-plane crossingless matchings is braid-group equivariantly equivalent to  $D(mod-H_n)$, which implies the main theorem. More precisely, it follows that for any oriented link $\kappa$,  and for the absolute grading on $\Kh_{symp}$ mentioned above, one has isomorphisms
\begin{equation} \label{eq:main}
\Kh_{symp}^k(\kappa)\  \cong\  \bigoplus_{i-j=k} \Kh^{i,j} (\kappa) \qquad \forall \, k\in\bZ
\end{equation}
which establishes \cite[Conjecture 2]{SS}.  Any Floer group $HF^*(L_{\wp_{\circ}}, (\beta\times \id)(L_{\wp_{\circ}}))$ also inherits a generalised eigenspace decomposition from the endomorphism induced by the \nc-vector field on $\scrF(\scrY_n)$ constructed in  \cite{AbSm}.  One can view this as an additional ``weight" grading, \emph{a priori} by elements in the algebraic closure $\overline{\bk}$, and not \emph{a priori} invariant under  Markov moves. 
\begin{Conjecture}
The relative weight grading on $HF^*(L_{\wp_{\circ}}, (\beta\times \id)(L_{\wp_{\circ}})) $ is Markov invariant, and recovers the relative second grading on  $\Kh_{symp}(\kappa)$.
\end{Conjecture}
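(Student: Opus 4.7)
The natural strategy is to transport the weight grading through the equivalence of categories established in the main theorem (equation \eqref{eq:main}) and then identify what it becomes on the combinatorial side. Concretely, one expects the weight grading on $HF^*(L_{\wp_\circ}, (\beta\times \id)(L_{\wp_\circ}))$ to correspond, under the isomorphism $D\scrF(\scrY_n)\simeq D(mod\text{-}H_n)$, to the intrinsic quantum grading on $\Kh(\kappa)$. Markov invariance would then be a consequence of identifying with a known invariant grading, rather than something to verify directly.

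\textbf{Step 1: Weight decomposition of the symplectic arc algebra.} The \nc-vector field $b$ from \cite{AbSm} is a canonical Hochschild cocycle of degree $1$ on $\scrF(\scrY_n)$ whose linearisation (or its cup-product action) gives an endomorphism of every morphism space. First I would compute the generalised eigenspace decomposition of this endomorphism on $\bigoplus_{\wp,\wp'} HF^*(L_\wp,L_{\wp'})$ for crossingless matchings $\wp,\wp'$, and compare it to the natural grading on Khovanov's arc algebra $H_n$ (where each tensor factor of $\bk[x]/x^2$ carries weights $0$ and $2$, with an overall circle-count shift). The expectation, consistent with the structure on the opers side of Bezrukavnikov--Okounkov geometric representation theory, is that the weight of an idempotent $e_\wp$ equals the ``vertical weight'' dictated by the number of arcs of $\wp$, while products of positive generators constructed in Part 2 of the paper have integer weight shifts matching the Khovanov differentials' $q$-degree.

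\textbf{Step 2: Weights of cup/cap bimodules and braid generators.} Because the formality theorem for the cup bimodules (and the resulting formality of the cap bimodules by adjunction) is established over any field of characteristic zero, it suffices to compute the weight of a single formal $A_\infty$-bimodule generator. Using the plumbing models of Section 6 and the explicit holomorphic disc counts that appear in the proof of formality, I would identify the weight of the bimodule generator $\cup_i$ with the quantum shift $\{1\}$ that appears in Khovanov's TQFT. One then bootstraps: since the braid group generators $\sigma_i, \sigma_i^{-1}$ act by cones on natural transformations between identity and $\cap_i\cup_i$, the weight grading on the cone is determined, up to the known $q$-grading shifts $\{-1\}, \{-2\}$ (respectively $\{2\}, \{1\}$) in Khovanov's construction.

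\textbf{Step 3: Matching for closures of braids.} The long exact triangle for symplectic Khovanov cohomology constructed in Part 3 is compatible with the weight decomposition because the \nc-vector field is a canonical object of the Fukaya category and commutes with the continuation maps and exceptional triangles used there. Hence there is a weight-graded long exact triangle refining the skein triangle. By induction on the number of crossings in a braid representative, weight eigenspaces of $\Kh_{symp}^\bullet(\kappa)$ match Khovanov's $j$-grading summands of $\Kh^{i,j}(\kappa)$. Once the grading is identified with the quantum grading on the combinatorial invariant, Markov invariance is automatic.

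\textbf{Main obstacle.} The principal difficulty is Step 1: pinning down the eigenvalues of the \nc-action on the symplectic arc algebra. The \nc-vector field is defined abstractly from a small neighbourhood of the compactification divisor of $\scrY_n$ and its values on specific Floer generators are not obviously computable. One plausible route is to exploit the $\mathbb{G}_m$-action coming from the Kostant section / adjoint quotient to realise $b$ as the infinitesimal generator of a geometric weight-grading action that lifts to the Fukaya category, and then to match this weight to the $q$-grading via the natural equivariant structure on $H_n$; but implementing this rigorously requires equivariant enhancements of \cite{SS} and of the formality machinery in Part 1 that are not developed here. A secondary obstacle is ensuring that all eigenvalues are integers (as opposed to lying in $\overline{\bk}$), which already seems to require the explicit geometric interpretation of $b$ just described.
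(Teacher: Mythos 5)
The statement you are addressing is formulated in the paper as a \emph{Conjecture} and is explicitly left open; there is no proof in the paper for you to match against. Your text is, as you say yourself, a proposal rather than an argument, and the obstacles you flag at the end are genuine --- indeed they echo the paper's own caveat that the generalised eigenvalues of the $\nc$-action live \emph{a priori} in $\overline{\bk}$ and that Markov invariance of the weight grading is not automatic.

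There is, however, a conceptual error in your Step 1 that propagates into Steps 2 and 3. Purity of the symplectic arc algebra (Theorem \ref{Thm:Arc_is_Pure}) says precisely that on $\bigoplus_{\wp,\wp'} HF^*(L_\wp,L_{\wp'})$ the weight grading \emph{coincides} with the cohomological grading. In particular the idempotents $e_\wp \in HF^0(L_\wp,L_\wp)$ all have weight $0$; there is no ``vertical weight dictated by the number of arcs of $\wp$,'' and the eigenspace decomposition you propose to compute is already known: it is the cohomological degree, which under the isomorphism $\scrH_n^{symp}\cong H_n$ is the $i-j$ collapse of Khovanov's bigrading, not the $q$-grading. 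No new information is extracted from the arc algebra itself. The content of the conjecture is that the weight grading can nevertheless disagree with the cohomological grading once $(\beta\times\id)(L_{\wp_\circ})$ is a genuine twisted complex of matching Lagrangians, because the weight grading on a mapping cone of an \emph{equivariant but not pure} morphism need not be the shifted cohomological grading of its pieces. To carry out Steps 2--3 you would therefore have to supply a weight-graded (equivariant) refinement of the cup/cap adjunction morphisms, of the unit $\Delta^{\Tw\scrH_n}\to\Delta^{\Tw\scrH_n}_{\cup_i\circ\cap_i}$, and of the entire Landau--Ginzburg Fukaya-category apparatus of Section 7 and the Appendix (the Orlov functor, the restriction bimodule, and the two-step filtration producing $\Delta_\phi$). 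None of this is constructed in the paper or in your sketch. So the obstacle is broader than ``pin down eigenvalues on the arc algebra''; purity already settles that. The real difficulty is proving $\nc$-equivariance of a substantial part of the exact-triangle machinery, which is exactly what the authors decline to undertake.
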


\Acknowledgements We are grateful to Sabin Cautis, Sheel Ganatra, Robert Lipshitz, Ciprian Manolescu, Jake Rasmussen and Paul Seidel, for their insights over the years on various aspects of this project.  We would also like to thank Paul Biran and Octav Cornea for sharing a preliminary version of their manuscript \cite{BC:Lef} with us, and to thank the referee for suggesting many corrections and improvements to the exposition. 


\section{Algebra} \label{Sec:Formality}

In this section we work over a coefficient field $\bk$, specialising where necessary to the case  in which $\bk$ has characteristic zero.

\subsection{Background}
Let $\scrA$ be a $\bZ$-graded cohomologically unital $A_{\infty}$-algebra over $\bk$, equipped with $A_{\infty}$-products
\begin{equation}
  \mu_{\scrA}^{d} \co \scrA^{\otimes d} \to \scrA, \, \, 1 \leq d
\end{equation}
of degree $2-d$. The first two operations satisfy the Leibniz equation\footnote{Our sign conventions follow those of Seidel in \cite{FCPLT}: elements of $\scrA$ are equipped with the reduced degree $||a|| = |a| -1$, and operators act on the right.}:
\begin{equation} \label{eq:Leibnitz}
  \mu_{\scrA}^{1}(  \mu_{\scrA}^{2}(a_2,a_1)) +   \mu_{\scrA}^{2}( a_2,\mu_{\scrA}^{1}( a_1)) + (-1)^{|a_1|-1} \mu_{\scrA}^{2}( \mu_{\scrA}^{1}(a_2), a_1)  = 0.
\end{equation}

The cohomology groups with respect to $\mu_{\scrA}^{1}$, denoted $A = H(\scrA)$, naturally form an $A_{\infty}$ algebra for which all operations vanish except the product, which is induced by $\mu_{\scrA}^{2} $. 
\begin{defn}
$\scrA$ is formal if it is quasi-isomorphic to $A$.
\end{defn}

The prequel  formulated and proved a necessary and sufficient condition for the formality of an $A_{\infty}$-algebra, due to Paul Seidel, in terms of the existence of a particular kind of degree one Hochschild cohomology class.   We recall that the Hochschild cochain complex $CC^*(\scrA,\scrA)$ has chain groups
\begin{equation} \label{eqn:CC_chain_complex}
CC^d(\scrA,\scrA)  = \prod_{s\geq 0} \Hom_d(\scrA[1]^{\otimes s},\scrA)
\end{equation}
where $[1]$ denotes downward shift by $1$ and $\Hom_d$ denotes $\bk$-linear maps of degree $d$, equipped with a differential
\begin{align}
\delta: CC^{d-1}(\scrA,\scrA) & \rightarrow CC^{d}(\scrA,\scrA) \\
(\delta \sigma)^d(a_d,\dots,a_1) & = \sum_{i,j} (-1)^{(|\sigma|-1)\dagger_{i}}
\mu^{d-j+1}_\scrA(a_d,\dots,\sigma^j(a_{i+j},\dots,a_{i+1}),\dots,a_1) \\ \notag
& + \sum_{i,j} (-1)^{|\sigma|+ \dagger_{i}} \sigma^{d-j+1}(a_d,\dots,\mu_\scrA^j(a_{i+j},\dots,a_{i+1}),\dots,a_1).
\end{align}
Here $\dagger_i = \sum_{k=1}^i (|a_k|-1)$.

\begin{Definition} An \nc-vector field  is a cocycle $b\in CC^1(\scrA,\scrA)$.
\end{Definition}

 In the definition, \nc \   stands for non-commutative. On a graded algebra, we have a canonical \nc-vector field called the \emph{Euler vector field}, which multiplies the graded piece $A^i \subset A$ of $A$ by $i$:
\begin{equation}
\label{Eqn:EulerField}
e: A^{i} \rightarrow A^{i}, \quad a \mapsto i \cdot a.
\end{equation}
More precisely, the fact that multiplication preserves the grading
\begin{equation}
|a_2 a_1| = |a_2| + |a_1|
\end{equation}
implies that there is a cocycle $e \in CC^1(A,A)$  which has no constant or higher order terms, i.e. only a term with $s=1$ in \eqref{eqn:CC_chain_complex}; this cocycle then defines a class in $HH^1(A,A)$.   Note that there is a natural projection of chain complexes
\begin{align} \label{eq:image_b_0}
 CC^*(\scrA,\scrA) & \rightarrow \scrA \\
b & \mapsto b^{0}
\end{align}
induced by taking the order-$0$ part of a Hochschild cochain.  Given an element of the kernel of this map, the first order part 
\begin{equation}
  b^{1} \co \scrA \to \scrA
\end{equation}
is a chain map, and hence defines an endomorphism of $A$.

\begin{Definition} \label{def:purity_algebra}
An \nc-vector field $b \in CC^1(\scrA,\scrA)$ is \emph{pure} if $b^0 = 0$, and the induced endomorphism of $A$ agrees with the Euler vector field. \end{Definition}

 If $\scrA$ admits a pure vector field, in a minor abuse of notation we say that $\scrA$ itself is pure.   The prequel paper proved:

\begin{Theorem}[Seidel] \label{Thm:Pure} 
Suppose $\bk$ has characteristic zero. If $b$ is a pure vector field on $\scrA$, then $\scrA$ is formal. Indeed, there is an equivalence
\begin{equation}
  \psi \co A \to  \scrA
\end{equation} 
acting trivially on cohomology, so $[\psi^1] = \id: A \to H(\scrA) = A$, with left inverse $\psi^{-1}$,  so that the $\psi^{-1}$-pullback of $b$ to $CC^*(A,A)$ agrees with the Euler vector field.  \qed
\end{Theorem}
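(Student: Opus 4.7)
The plan is to pass to the minimal model of $\scrA$ and kill the higher products inductively using the contraction provided by $b$. By the standard homological perturbation lemma, $\scrA$ is quasi-isomorphic to its minimal model, and by functoriality of the Hochschild complex under $A_{\infty}$-equivalences a pure nc-vector field transports to a pure nc-vector field. I would therefore assume from the outset that $\mu^1_\scrA = 0$, the underlying graded vector space is $A$, and the only nontrivial operations are $\mu^2_\scrA = \mu_A$ together with higher products $\mu^d_\scrA$ for $d \geq 3$. Using that $b^1 - e$ is null-homotopic through weight-preserving homotopies, an initial gauge transformation brings us to $b^1 = e$ on the nose while preserving $b^0 = 0$.

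The key computation is the expansion of the cocycle equation $\delta b = 0$ at each arity. The Gerstenhaber bracket satisfies $[e,\sigma] = \wt(\sigma) \cdot \sigma$ on weight-homogeneous Hochschild cochains, where the weight equals the degree of $\sigma$ as a multilinear operator; in particular $[e,\mu^d_\scrA] = (2-d)\mu^d_\scrA$. Inducting on $N \geq 3$, suppose successive gauge transformations have produced $\mu^d_\scrA = 0$ for $3 \leq d < N$. The arity-$N$ piece of $\delta b = 0$ then reduces, modulo contributions arranged to vanish, to an equation of the form
\begin{equation*}
  (2-N)\,\mu^N_\scrA + \delta_{\mu_A}(b^{N-1}) = 0,
\end{equation*}
where $\delta_{\mu_A}$ denotes the Hochschild differential of the honest graded algebra $(A,\mu_A)$. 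Because $2-N \neq 0$ in characteristic zero, this exhibits $\mu^N_\scrA$ as a Hochschild coboundary in $CC^*(A,A)$, so there is a gauge transformation $\psi_N = \id + g_N$ with $g_N$ of arity $N-1$ that eliminates $\mu^N_\scrA$ without disturbing the operations of lower arity. The primitive $g_N$ may be taken Euler-weight-homogeneous, hence $\psi_N$ commutes with $e$ and preserves $b^1 = e$.

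Composing the $\psi_N$ in order (the composition is well-defined arity by arity) yields an $A_{\infty}$-equivalence $\psi \co A \to \scrA$ under which the formal algebra structure on $A$ pulls back to $\mu_\scrA$ and the Euler vector field $e$ pulls back to $b$, as required. The main obstacle — and the reason for the characteristic zero hypothesis — is the division by $2-N$ in the inductive step: in finite characteristic $p$, the scalars $2-N$ fail to be invertible whenever $N \equiv 2 \pmod{p}$, and formality can genuinely fail. A secondary technical point is bookkeeping to ensure that the gauge transformation killing $\mu^N_\scrA$ can be chosen to preserve the normalization $b^1 = e$ achieved earlier, which is handled by the weight-homogeneity of the primitive $g_N$.
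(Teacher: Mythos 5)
This theorem is not proved in the paper: it is quoted from the prequel \cite{AbSm} with attribution to Seidel, so there is no in-paper argument to compare against. That said, your proposal is the standard inductive gauge-trivialization argument and it is essentially correct. Two remarks. First, passing to a minimal model makes the normalization $b^1 = e$ automatic: once the differential vanishes, the chain map $b^1$ coincides with its induced endomorphism of cohomology, which is $e$ by purity, so no initial gauge transformation is needed. Second, and more substantively, your weight-homogeneity argument for preserving $b^1 = e$ is not quite the right bookkeeping: since each $g_N$ has arity $N-1 \geq 2$, it cannot affect the arity-one part of any Hochschild cochain, so preservation of $b^1 = e$ is automatic. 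What you actually need to justify is the last clause of the theorem — that the whole of $b$ pulls back to $e$, not merely that $b^1$ stays equal to $e$. For this you should observe that the cocycle equation at arity $N$ exhibits $\frac{1}{N-2}\,b^{N-1}$ as a canonical Hochschild primitive for $\mu^N_\scrA$, and choosing $g_N$ to be exactly that multiple of $b^{N-1}$ simultaneously kills $\mu^N_\scrA$ and $b^{N-1}$ after the gauge. Iterating, the limiting equivalence makes the $A_\infty$-structure strictly associative and reduces $b$ to $e$ on the nose, which is what gives the full statement.
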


\subsection{Formality for bimodules} \label{Sec:Bimodules}

To shorten formulae, in this section we use the abbreviation
\[
\scrC(X_0,X_{1},\ldots,X_r)=\hom_{\scrC}(X_{r-1},X_r)\otimes \hom_{\scrC}(X_{r-2},X_{r-1}) \otimes \cdots \otimes \hom_{\scrC}(X_0,X_1)
\]
for any $A_{\infty}$-category $\scrC$, where $r\geq 1$ (for $r=0$ define the corresponding expression to equal the ground field).

Take two $A_{\infty}$-categories $\scrA, \scrB$.  Let $P$ be an $(\scrA,\scrB)$-bimodule.   This comprises  a collection of graded vector spaces $P(Y,X)$ for any objects $X \in \scrA$, $Y\in\scrB$, together with multi-linear maps
\[
\mu_{P}^{r|1|s}: \scrA(X_0,\ldots,X_r)\otimes P(Y_0,X_0) \otimes \scrB(Y_s,\ldots,Y_0) \to P(Y_s, X_r)
\]
for $r,s\geq 0$, and any objects $X_i,Y_j$ of $\scrA$ respectively $\scrB$.  (Our ordering convention is such that hom$_{\scrA}(X,X') = \scrA(X,X')$ and if $P$ is an $(\scrA,\scrB)$-bimodule there are maps
\[
\scrA(X,X') \otimes P(Y,X) \rightarrow P(Y,X')
\]
which fits with the ordering conventions for Floer theory multiplication from \cite{FCPLT}.) Assuming that the generators of the domain of $\mu_{P}^{r|1|s}$ are written $x_r\otimes \cdots \otimes x_1 \otimes \underline{m} \otimes y_1 \otimes \cdots \otimes y_s$, where we underline the bimodule element, we write the $A_\infty$ relations as:
\begin{equation} \label{eq:relation_bimodule}
\begin{array}{ll} 
0= \sum (-1)^{\star} \mu_{P}^{r-R+S|1|s}(x_r, \ldots, x_{R+1}, {\scriptstyle\mu_{\scrA}^{R-S+1}(x_R , \ldots , x_{S})} , x_{S-1} , \ldots , x_1, \underline{m} , y_1 , \ldots , y_s)+ \\
\;\;\,\, +
\sum (-1)^{\diamond} \mu_{P}^{r-R|1|s-S}(x_r , \ldots , x_{R+1} , {\scriptstyle\underline{\mu_{P}^{R|1|S}(x_{R},\ldots,x_1,\underline{m},y_1,\ldots,y_S)}} , y_{S+1} , \ldots , y_s) + \\
\;\;\,\, +
\sum (-1)^{\diamond} \mu_{P}^{r|1|s-R+S}(x_r , \ldots , x_1 , \underline{m} , y_1 , \ldots , y_{R-1} , {\scriptstyle\mu_{\scrB}^{R-S+1}(y_R , \ldots , y_{S})} , y_{S+1} , \ldots , y_s)
\end{array}
\end{equation}
summing over all $R,S$, and where the signs are
\[
\diamond = \sigma(y)_{S+1}^{s} \qquad \qquad \star = \sigma(y)_1^s + \mathrm{deg}(\underline{m}) + \sigma(x)_1^{S-1}
\]
with $\sigma_i^j = \sigma(x)_i^j = \sum_{\ell=i}^j (|x_{\ell}|-1)$.   The simplest equation in particular implies that the map $\mu_P^{0|1|0}$ squares to zero, hence defines the structure of a chain complex on each group $P(Y,X)$. Hence $P$ descends to a bimodule $HP$ of the corresponding cohomological categories, and we require that multiplication by the homological units of $H\scrA$ and $H \scrB$ agree with the identity map of $HP$. 

We will view a (differential) graded category as an $A_{\infty}$-category with vanishing higher products, so the usual associative composition $x\cdot y = (-1)^{|y|} \mu^2(x,y)$ on the $dg$-category is obtained by twisting the $A_{\infty}$-product by a degree-dependent sign. In particular, the cohomology of an $A_\infty$ algebra, which is a graded algebra equipped with the product induced by $(-1)^{|y|} \mu^2(x,y)$, will be considered as a (formal) $A_\infty$ algebra with the product induced by $\mu^2(x,y)$.

With this convention, bimodules form a differential graded category, with the morphisms $\scrH_{\scrA-\scrB}(P,Q)$ from $P$ to $Q$ (sometimes called pre-morphisms) given by a collection of multi-linear maps
\[
f^{r|1|s}: \scrA(X_0,\ldots,X_r)\otimes P(Y_0,X_0) \otimes \scrB(Y_s,\ldots,Y_0) \to Q(Y_s, X_r)
\]
for $r,s\geq 0$. The differential assigns to such a sequence $f$ the linear maps
\[
\begin{array}{ll}
(x_r, \ldots, x_1, \underline{m}, y_1, \ldots, y_s) \mapsto \sum (-1)^{\star} f^{r-R+S|1|s}(x_r, \ldots , x_{R+1} ,  {\scriptstyle \mu_{\scrA}^{R-S+1}(x_R, \ldots, x_S)}, x_{S-1} , \ldots x_1 , \underline{m} , y_1 , \ldots , y_s) +\\
\;\;\,\, +\sum (-1)^{\diamond} f^{r-R|1|s-S}(x_r, \ldots , x_{R+1} ,  {\scriptstyle\underline{\mu_{P}^{R|1|S}(x_R,\ldots,x_1,\underline{m},y_1,\ldots,y_S)}} , y_{S+1} , \ldots , y_s)+\\
\;\;\,\, +\sum (-1)^{\diamond} f^{r|1|s-S+R}(x_r, \ldots , x_1 , \underline{m} , y_1 , \ldots , y_{R-1} ,  {\scriptstyle\mu_{\scrB}^{S-R+1}(y_R, \ldots, y_S)}, y_{S+1} , \ldots , y_s) +\\
\;\;\,\, +\sum (-1)^{\diamond\mathrm{deg}(f)} \mu_{Q}^{r-R|1|s-S}(x_r, \ldots, x_{R+1},  {\scriptstyle\underline{f^{R|1|S}(x_R,\ldots,x_1,\underline{m},y_1,\ldots,y_S)}},y_{S+1},\ldots,y_s)
\end{array}
\]
summing over $R,S$, and where the signs are as before.  The product is given by
\begin{multline}
  \mu^2( f, g) (x_r, \ldots, x_1, \underline{m}, y_1, \ldots, y_s) = \\
\sum \sum (-1)^{|g| \cdot \diamond} f^{r-R|1|s-S}(x_r , \ldots , x_{R+1} , {\scriptstyle\underline{g^{R|1|S}(x_{R},\ldots,x_1,\underline{m},y_1,\ldots,y_S)}} , y_{S+1} , \ldots , y_s),
\end{multline}
and all higher order operations vanish.

In particular, the bimodule endomorphisms of a fixed bimodule $P$ form a cochain complex we denote  $(\mathcal{E}(P), \partial)$ with cohomology $\End_{\scrA,\scrB}(P)$. There are maps 
\begin{equation}
\begin{aligned}
\nu_{\scrA}: HH^*(\scrA,\scrA) \rightarrow \End_{\scrA,\scrB}(P) \\
\nu_{\scrB}: HH^*(\scrB,\scrB) \rightarrow \End_{\scrA,\scrB}(P) 
\end{aligned} 
\end{equation}
induced by a chain map which we formally denote by $\sigma \mapsto \mu^{r|1|s} \circ \sigma$, under which $\sigma$ eats some collection of the $\scrA$-inputs respectively the $\scrB$-inputs to $P$. 

\begin{Definition}
Let $b_{\scrA}$ and $b_{\scrB}$ be \nc-vector fields on $\scrA$ and $\scrB$.   A bimodule $P$ is \emph{equivariant} if the induced cohomology classes agree:
 \[
\nu_{\scrA}(b_{\scrA}) = \nu_{\scrB}(b_{\scrB}) \in \End^1_{\scrA,\scrB}(P).
 \]
 A choice of equivariant structure is an endomorphism $c_P \in \mathcal{E}^0(P)$ for which 
 \begin{equation} \label{Eqn:EquivtBimodule}
 dc_P = \nu_{\scrA}(b_{\scrA}) - \nu_{\scrB}(b_{\scrB}).
 \end{equation}
\end{Definition}

Let $b$ be an \nc-vector field on a category $\scrA$. \cite{AbSm} introduced a weaker, homotopy-theoretic version of purity of $b$ (in the sense of Definition \ref{def:purity_algebra}) which is also useful.  For an object $X \in Ob\,\scrA$, a $b$-equivariant structure on $X$ is a choice of cochain $c \in \hom^0(X,X)$ with $dc = b|_X$.  We say that $b$ defines a \emph{pure} structure on $\scrA$ if every object $X$ admits and is equipped with a choice of such an equivariant structure $c_X$, in such a way that the endomorphisms $a \mapsto b^1(a) - \mu^2(c_X, a) + \mu^2(a,c_{X'})$ of $\hom_{\scrA}(X,X')$ agree with the Euler field.  The stronger formality theorem proved in \cite[Corollary 2.13]{AbSm} is that a pure category $\scrA$ is formal. 

Now suppose that $b_{\scrA}$ and $b_{\scrB}$ induce pure structures on $\scrA$ and $\scrB$.  This means that all objects $X \in Ob\,\scrA$ and $Y \in Ob\,\scrB$  come with chosen equivariant structures $c_X, c_Y$.   The elements $c_P, c_X, c_Y$ induce an endomorphism of the cohomology $H^*(P(X,Y))$ (taking cohomology with respect to $\mu_P^{0|1|0})$. More precisely 
\begin{equation}\label{eq:equivariant_endo}
C^*(P(Y,X)) \ni \phi \mapsto c_P^1(\phi) - c_X \circ \phi + \phi \circ c_Y  \in C^*(P(Y,X))
\end{equation}
is a chain map, hence induces an endomorphism of $H^*(P(Y,X))$.  We say that $(P,c_P)$ is pure if this endomorphism agrees with the Euler field, i.e. acts in degree $i$ by multiplication by $i$.

\begin{Lemma} \label{Lem:Pure3}
If $c_P$ is pure, then $P$ is formal; more precisely, given $A_{\infty}$-equivalences $\psi_{\scrA} \co H(\scrA) \to \scrA$ and $ \psi_{\scrB} \co H(\scrB) \to \scrB$ acting trivially on cohomology and pulling back the \nc-vector fields on $\scrA$ and $\scrB$ to the corresponding Euler vector field, there are inverse equivalences of $H(\scrA)$-$H(\scrB)$ bimodules  $\psi \co H(P) \to P  $ and  $\phi \co P \to H(P)$  so that
\begin{equation}
  \psi \circ c_P \circ \phi = \mathrm{Euler}_{ H(P) }  \in \End_{H(\scrA),H(\scrB)}(H P) 
\end{equation}
\end{Lemma}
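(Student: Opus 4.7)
The plan is to derive Lemma \ref{Lem:Pure3} directly from Theorem \ref{Thm:Pure} by packaging the bimodule $P$ together with $\scrA$ and $\scrB$ into a single two-object $A_\infty$-category, so that an equivariant structure on $P$ becomes exactly the off-diagonal part of an \nc-vector field on this larger category.

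Concretely, I would form the $A_\infty$-category $\scrC$ with $\Ob(\scrC)=\{X,Y\}$ and hom spaces
\[
\scrC(X,X)=\scrA,\quad \scrC(Y,Y)=\scrB,\quad \scrC(Y,X)=P,\quad \scrC(X,Y)=0,
\]
whose operations come from the $A_\infty$-structures on $\scrA,\scrB$ and the bimodule operations $\mu_P^{r|1|s}$. Because $\scrC(X,Y)=0$, every nonzero composable word is either a pure string in $\scrA$, a pure string in $\scrB$, or of the form (word in $\scrB$)(one element of $P$)(word in $\scrA$); on words of the last type the $A_\infty$-axiom for $\scrC$ unpacks exactly to the bimodule relation (\ref{eq:relation_bimodule}). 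Next I would assemble $(b_\scrA,b_\scrB,c_P)$ into a Hochschild $1$-cochain $b_\scrC\in CC^1(\scrC,\scrC)$, whose components on pure words recover $b_\scrA,b_\scrB$ and whose components on mixed words recover the pieces $c_P^{r|1|s}$ of the equivariant endomorphism. A direct expansion of the Hochschild cocycle condition on $\scrC$ then reduces on the diagonal to the cocycle conditions for $b_\scrA,b_\scrB$ and on the off-diagonal to the equivariance equation (\ref{Eqn:EquivtBimodule}); in other words, the choice of equivariant structure on $P$ is precisely the data needed to extend the \nc-vector fields on $\scrA,\scrB$ to one on $\scrC$.

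Purity of $b_\scrC$ in the sense of Definition \ref{def:purity_algebra} (applied component-wise in the categorical setting) then translates directly to the hypotheses of the lemma. The vanishing $b_\scrC^0=0$ is inherited from $b_\scrA^0=b_\scrB^0=0$; the induced endomorphism of $H(\scrC)$ agrees with the Euler field on the diagonal by purity of $b_\scrA,b_\scrB$; and on the off-diagonal $H\scrC(Y,X)=HP$ the induced endomorphism is computed, after using the equivariant structures $c_X,c_Y$ on the objects of $\scrA$ and $\scrB$ coming from purity, by precisely the chain map of equation (\ref{eq:equivariant_endo}). Thus the assumed purity of $(P,c_P)$ is exactly the statement that $b_\scrC$ is pure on the off-diagonal hom space.

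Finally I would apply the $A_\infty$-categorical analogue of Theorem \ref{Thm:Pure} to $(\scrC,b_\scrC)$, obtaining an equivalence $\Psi\co H(\scrC)\to\scrC$ with left inverse $\Phi$ under which $b_\scrC$ pulls back to the Euler field on $H(\scrC)$. Restricting $\Psi,\Phi$ to each hom space simultaneously yields the required equivalences $H(\scrA)\to\scrA$ and $H(\scrB)\to\scrB$, together with the bimodule equivalence $\psi\co HP\to P$ with inverse $\phi=\Phi|_{HP}$; the identity $\psi\circ c_P\circ\phi=\mathrm{Euler}_{HP}$ is just the off-diagonal component of the pullback identity $\Psi^*b_\scrC=e_{H(\scrC)}$. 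The main obstacle in this approach is confirming that the proof of Theorem \ref{Thm:Pure} given in \cite{AbSm}, stated for $A_\infty$-algebras, extends without essential change to $A_\infty$-categories with a fixed object set. This is expected to be routine: Seidel's argument proceeds by an inductive gauge transformation using the weight decomposition of the Hochschild complex induced by the pure vector field, and both the decomposition and the iterative step are insensitive to the number of objects, being carried out component-by-component with the same invertibility of the Euler operator on non-zero-weight pieces in characteristic zero.
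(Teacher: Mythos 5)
Your proposal follows exactly the paper's argument: form the glued category $\scrA\coprod_P\scrB$ (your $\scrC$), observe that its Hochschild complex decomposes as $CC^*(\scrA,\scrA)\oplus\mathcal{E}_{\scrA,\scrB}(P)[-1]\oplus CC^*(\scrB,\scrB)$ so that $(b_\scrA,c_P,b_\scrB)$ gives a pure \nc-vector field, apply Theorem \ref{Thm:Pure}, and unwind. The details you add (the off-diagonal unpacking and the remark that the gauge-fixing argument of \cite{AbSm} is insensitive to the number of objects) are correct and in the spirit of the paper's sketch.
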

\begin{proof}
  We begin by using $\psi_{\scrA}$ and $\psi_{\scrB}$ to pull back $P$ to an $H(\scrA)$-$H(\scrB)$ bimodule. Equipping $H(\scrA)$ and $H(\scrB)$ with the Euler vector fields, the equivariant structure on $P$ pulls back to an equivariant structure in the category of $H(\scrA)$-$H(\scrB)$ bimodules, for which we use the same notation.

  Consider the category $H(\scrA) \coprod_{P} H(\scrB) $, whose set of objects is the union of the objects of $H(\scrA)$ and $H(\scrB)$, such that morphisms from objects of $H(\scrA)$ to those of $H(\scrB)$ are given by $P[1]$, and morphisms in the other direction vanish. The shift in degree implies that the products on $H(\scrA)$ and $H(\scrB)$ defined from $\mu^2_{\scrA}$ and $\mu^2_{\scrB}$ by passing to cohomology, together with operations  $\mu^{s|1|r}_{P}$, define an $A_\infty$ structure.

The Hochschild complex of this category is the direct sum of the Hochschild complexes of $H(\scrA)$ and $H(\scrB)$ with the endomorphism complex of $P$, so the data $(b_{H(\scrA)}, c_P, b_{H(\scrB)})$ induces an \nc-vector field on $H(\scrA) \coprod_{P} H(\scrB)  $. By assumption, this vector field is pure, so we conclude from Theorem \ref{Thm:Pure} that there are inverse equivalences
\begin{equation}
\xymatrix{ H(\scrA) \coprod_{P} H(\scrB) \ar[r]^-{\phi} & \ar[l]^-{\psi}  H(\scrA) \coprod_{H(P)}H(\scrB)}
\end{equation}
which map the \nc-vector field to the Euler vector field. Since the proof of Theorem \ref{Thm:Pure} proceeds by induction on the order of non-vanishing of the $A_\infty$-structure, we can insure that the induced endofunctors of $H(\scrA)$ and $H(\scrB)$ are the identity. Translating back from the  category $H(\scrA) \coprod_{P} H(\scrB) $ to the bimodule, we obtain the desired result.
\end{proof}

\subsection{Functors and bimodules}
\label{sec:functors-bimodules}
Let $\scrA$ and $\scrB$ be $A_{\infty}$ categories as before, and recall (e.g. from \cite[Section (1b)]{FCPLT}) that an $A_{\infty}$ functor $\scrF \co  \scrA \to \scrB$ assigns an object to $\scrB$ to each object of $\scrA$, together with maps
\begin{equation}
  \scrF^{d} \co \scrA(X_0,\ldots,X_d) \to \scrB( \scrF(X_0), \scrF(X_d))
\end{equation}
for each sequence $(X_0, \ldots, X_d)  $ of objects of $\scrA$. These maps are required to satisfy the $A_{\infty}$-equation for functors given, e.g. as Equation (1.6) of \cite{FCPLT}. The first such equation implies that $\scrF^1$  is a functor of cohomological categories, which we require to be unital.

The collection of functors from $\scrA$ to $\scrB$ forms an $A_{\infty}$-category, with morphisms from $\scrF$ to $\scrG$ given by pre-natural transformations, i.e. collections of maps
\begin{equation}
  T^{d} \co \scrA(X_0,\ldots,X_d) \to \scrB(\scrF(X_0),\scrG(X_d)).
\end{equation}
The $A_{\infty}$ operations are induced by the $A_{\infty}$ operations on the categories $\scrA$ and $\scrB$, together with the structure maps of the functors $\scrF$ and $\scrG$ (see e.g. \cite[Section (1d)]{FCPLT}).

Given a $\scrC-\scrD$ bimodule $P$, and functors $\scrF \co \scrA \to \scrC$ and $\scrG \co \scrB \to \scrD $, we obtain a \emph{$2$-sided pullback} $_{\scrF} P _{\scrG} $ which is an $\scrA-\scrB$-bimodule which assigns $P( \scrG Y, \scrF X)$  to a pair of objects $X$ and $Y$ (see \cite[Section 2.8]{Ganatra}). The structure maps are 
\begin{equation}
 \mu^{r|1|s}_{ _{\scrF} P _{\scrG}  }  = \sum \mu^{k|1|l}_{P} \circ \left( \scrF^{i_k} \otimes \cdots \otimes  \scrF^{i_1} \otimes \Id_{P} \otimes  \scrG^{j_k} \otimes \cdots \otimes  \scrG^{j_1} \right).
\end{equation}
When either $\scrF$ or $\scrG$ are the identity, we obtain \emph{$1$-sided pullbacks} which we denote  $  P _{\scrG} $ and $ _{\scrF} P $.

We shall be particularly interested in the case of the diagonal: recall that the diagonal bimodule of $\scrB$ (which we denote $\Delta^{\scrB}$)  assigns to a pair of objects $(Y,Y')$ the morphism space $\scrB(Y,Y')$, with structure maps $\mu^{r|1|s}= ( -1 )^{\diamond+1} \mu^{r+1+s} $. In this case, a functor $\scrF \co \scrA \to \scrB$ gives rise to $\scrA-\scrB$ bimodules and $\scrB-\scrA$-bimodules whose values on objects are:
\begin{align}
  {}_{\scrF}\Delta^{\scrB}(Y,X) & \equiv \scrB(Y, \scrF(X)) \\
  \Delta^{\scrB}_{\scrF}(X,Y) & \equiv \scrB(\scrF(X),Y). 
\end{align}
The following standard result is proved e.g. in \cite[Proposition 5.22]{LO:bimod}:
\begin{lem}[Yoneda Lemma] \label{lem:Yoneda-bimodules}
The assignment $\scrF \mapsto  {}_{\scrF}\Delta^{\scrB}$  (resp. $ \scrF \mapsto \Delta^{\scrB} _{\scrF} $) extends to a cohomologically fully faithful embedding from the category of functors to that of $\scrA-\scrB$ bimodules (resp. $\scrB-\scrA$-bimodules). \qed
\end{lem}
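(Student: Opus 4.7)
The plan is to define the $A_{\infty}$-functor $\Phi \co \mathrm{fun}(\scrA, \scrB) \to \mathrm{Bimod}(\scrA, \scrB)$ extending the object-level assignment $\scrF \mapsto {}_{\scrF}\Delta^{\scrB}$ of the statement, and then prove that $\Phi$ is cohomologically fully faithful by constructing a chain-level quasi-inverse on morphism complexes in the spirit of the one-sided Yoneda lemma for right $\scrB$-modules. I will treat the left-pullback case; the right-pullback case is entirely analogous.

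For the definition on morphisms, given a pre-natural transformation $T = \{T^d\}$ from $\scrF$ to $\scrG$, set
\[
\Phi(T)^{r|1|s}(x_r, \ldots, x_1, \underline{m}, y_1, \ldots, y_s) = \sum \pm \, \mu_{\scrB}\bigl( \scrG^{\bullet}(\cdots), \ldots, \scrG^{\bullet}(\cdots), T^{\bullet}(\cdots), \scrF^{\bullet}(\cdots), \ldots, \scrF^{\bullet}(\cdots), m, y_1, \ldots, y_s \bigr),
\]
where the sum ranges over all partitions of $(x_r, \ldots, x_1)$ into a central block on which some $T^d$ acts, outer blocks (of higher index) on which $\scrG$ acts, and inner blocks (of lower index, adjacent to $m$) on which $\scrF$ acts; signs follow the conventions of \cite{FCPLT}. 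The higher $A_{\infty}$-operations of $\Phi$ are defined by the analogous formulae with several marked interior blocks, one for each of the pre-natural transformations being composed. That these formulae satisfy the $A_{\infty}$-functor axioms is a direct term-by-term matching of the $A_{\infty}$-identities for pre-natural transformations with the differential and product on bimodule pre-morphisms recalled in Section \ref{Sec:Bimodules}.

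For cohomological fully faithfulness, construct a chain map in the opposite direction,
\[
\Psi \co \hom_{\scrA-\scrB}({}_{\scrF}\Delta^{\scrB}, {}_{\scrG}\Delta^{\scrB}) \to \hom_{\mathrm{fun}}(\scrF, \scrG), \qquad \Psi(f)^d(x_d, \ldots, x_1) = f^{d|1|0}(x_d, \ldots, x_1, \underline{e_{\scrF X_0}}),
\]
where $e_{\scrF X_0}$ is a cocycle representing the cohomological unit of $\scrB(\scrF X_0, \scrF X_0)$. Expanding $\Psi \circ \Phi(T)^d$, the only contribution surviving to cohomology is $\mu_{\scrB}^{2}(T^d(\cdots), e_{\scrF X_0}) \simeq T^d(\cdots)$: the remaining summands either insert $e_{\scrF X_0}$ into some $\mu_{\scrB}^{\geq 3}$ or apply a positive-arity $\scrF^{\bullet}$ to a non-empty block before multiplying by the unit, hence vanish on $H^{*}$ by $A_{\infty}$-unitality of $\scrB$. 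Thus $\Psi \circ \Phi$ is the identity on cohomology. That $\Psi$ is itself a quasi-isomorphism follows from the one-sided Yoneda lemma for right $\scrB$-modules applied fibrewise over the $\scrA$-inputs: filtering the bimodule morphism complex by the number $s$ of $\scrB$-tail inputs, the $E_1$-page identifies $\hom_{\mathrm{mod}-\scrB}(\mathcal{Y}_{\scrF X_0}, \mathcal{Y}_{\scrG X_r})$ with $\scrB(\scrF X_0, \scrG X_r)$ via evaluation at the unit, and reassembling across $\scrA$-inputs reconstructs the pre-natural transformation complex.

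The principal technical obstacle is the careful sign bookkeeping required to verify the $A_{\infty}$-functor axioms for $\Phi$, particularly since $\mathrm{fun}(\scrA, \scrB)$ is genuinely $A_{\infty}$ while bimodules form only a dg-category, so $\Phi$ must carry nontrivial higher operations. A secondary subtlety is that $\scrB$ is merely cohomologically unital, so the equality $\Psi \circ \Phi = \Id$ holds only on cohomology; one may either rectify $\scrB$ to a strictly unital quasi-isomorphic model or treat the correcting unit homotopies explicitly, as worked out in detail in \cite{LO:bimod}.
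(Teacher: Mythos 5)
The paper does not give its own argument for this lemma: it states it with a pointer to \cite[Proposition 5.22]{LO:bimod}, so there is no in-text proof to compare your sketch against. Your outline is the standard self-contained approach — build $\Phi$ on pre-natural transformations by distributing $\scrG$, $T$, $\scrF$ over the $\scrA$-inputs and post-composing with $\mu_{\scrB}$, build $\Psi$ by evaluating a bimodule pre-morphism at a cohomological unit in the module slot, and then argue both that $\Psi$ is a quasi-isomorphism and that $\Psi\circ\Phi$ is cohomologically the identity — and this is essentially the content of the cited reference and of the one-sided Yoneda lemma in \cite{FCPLT}.

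Two points in the sketch should be corrected or tightened before it would hold up. First, your filtration index is off: filtering the pre-morphism complex by the number $s$ of $\scrB$-tail inputs does not isolate the right $\scrB$-module comparison, because the associated graded differential in that filtration retains the left $\scrA$-action ($\mu^{\bullet|1|0}_P$, $\mu^{\bullet|1|0}_Q$, $\mu_{\scrA}$) rather than the right $\scrB$-action. You should filter by the number $r$ of $\scrA$-inputs instead; then the $E_0$ differential at filtration degree $r$, after fixing the sequence $(X_0,\ldots,X_r)$ of $\scrA$-objects, is precisely the differential on $\hom_{\mathrm{mod}\text{-}\scrB}\bigl(\mathcal{Y}_{\scrF X_0},\mathcal{Y}_{\scrG X_r}\bigr)$, and the one-sided Yoneda lemma identifies this column with $\scrB(\scrF X_0,\scrG X_r)$ via evaluation at the unit, which is exactly the target of $T^{r}$; reassembling across $r$ then gives the pre-natural transformation complex on the $E_1$-page. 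Second, the claim that the extra terms in $\Psi\circ\Phi(T)^{d}$ ``vanish on $H^*$'' is loose as stated, since the individual summands are not cocycles; what must be shown is that $\Psi\circ\Phi(T)-T$ is exact as a pre-natural transformation. You rightly flag that in the merely cohomologically unital setting this requires either a strictly unital replacement of $\scrB$ or an explicit unit homotopy, so this is a presentational rather than a logical gap, but it does need to be filled in a written version rather than waved away.
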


The essential image of the embedding of functors into bimodules can be determined as follows: we say that an $\scrA-\scrB$ bimodule $P$ is \emph{representable as a $\scrB$-module} if, for every object $X$ of $\scrA$, there is an object $\scrF(X)$ of $\scrB$ so that the $\scrB$-module $P( \_, X)$ is quasi-isomorphic to the (Yoneda) module $\scrB( \_, \scrF(X))$. The following result is proved, e.g. in \cite[Proposition 5.23]{LO:bimod}:
\begin{lem} \label{lem:representable_bimod_functor}
  If $P$ is  representable as a $\scrB$-module, then there is a functor $\scrF \co \scrA \to \scrB$, such that
  \begin{equation}
    P \cong {}_{\scrF}\Delta^{\scrB}.
  \end{equation} \qed
\end{lem}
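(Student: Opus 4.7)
The plan is to exploit the well-known equivalence between $\scrA$-$\scrB$ bimodules and $A_{\infty}$-functors from $\scrA$ to the $A_{\infty}$-category $\scrB\text{-mod}$ of right $\scrB$-modules, and then use the (quasi-)fully faithfulness of the Yoneda embedding $\scrY \co \scrB \hookrightarrow \scrB\text{-mod}$ to descend the resulting functor from $\scrB\text{-mod}$ to $\scrB$.

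First, I would package $P$ as an $A_{\infty}$-functor
\[
\tilde{\scrF} \co \scrA \to \scrB\text{-mod}, \qquad X \mapsto P(-,X),
\]
whose Taylor coefficients $\tilde{\scrF}^{d} \co \scrA(X_0,\ldots,X_d) \to \hom_{\scrB\text{-mod}}(P(-,X_0), P(-,X_d))$ send $(x_d,\ldots,x_1)$ to the pre-morphism of $\scrB$-modules whose $(1|s)$-component is $(\underline{m}, y_1,\ldots,y_s) \mapsto \mu_P^{d|1|s}(x_d,\ldots,x_1,\underline{m}, y_1,\ldots,y_s)$. The bimodule relations \eqref{eq:relation_bimodule} for $P$ translate precisely into the $A_{\infty}$-functor equations for $\tilde{\scrF}$, and unitality is inherited from that of the bimodule.

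Second, I would use the representability hypothesis to choose, for each object $X$ of $\scrA$, an object $\scrF X \in \scrB$ together with a quasi-isomorphism of $\scrB$-modules $\alpha_X \co P(-,X) \xrightarrow{\simeq} \scrY_{\scrB}(\scrF X)$ and a homotopy inverse $\beta_X$. Let $\scrB_{\mathrm{rep}}\subset \scrB\text{-mod}$ be the full subcategory on the modules $\scrY_{\scrB}(\scrF X)$; by the (ordinary) Yoneda lemma for $A_{\infty}$-categories, the embedding $\scrY\co \scrB \hookrightarrow \scrB\text{-mod}$ restricts to a quasi-equivalence $\scrB' \simeq \scrB_{\mathrm{rep}}$, where $\scrB'\subset \scrB$ is the full subcategory on the $\scrF X$'s.

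Third, I would produce the desired functor $\scrF \co \scrA \to \scrB$ by transporting $\tilde{\scrF}$ across this quasi-equivalence. Concretely, the composite of $\tilde{\scrF}$ with the homotopy equivalences $\alpha, \beta$ (viewed as a natural transformation up to homotopy) lands in $\scrB_{\mathrm{rep}}$ up to quasi-isomorphism. Pick an $A_{\infty}$-quasi-inverse $\scrY^{-1}\co \scrB_{\mathrm{rep}} \to \scrB'$ of Yoneda (whose existence is the standard homological-perturbation fact that any quasi-equivalence of $A_{\infty}$-categories admits a quasi-inverse); set $\scrF := \scrY^{-1}\circ \tilde{\scrF}$, with Taylor coefficients constructed inductively in $d$ so as to satisfy the functor equations, the $(d=1)$ term being a chosen chain-level lift of $[\alpha_{X_1}\circ \tilde{\scrF}^1(x_1)\circ \beta_{X_0}]$ under $\scrY$, and the higher $\scrF^d$ being the obstruction-killing corrections provided by acyclicity of the mapping cones arising from $\scrY^1$ being a quasi-isomorphism on each morphism complex.

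Finally, I would verify $P\cong {}_{\scrF}\Delta^{\scrB}$: applied at $(Y,X)$, the right-hand side is $\scrB(Y,\scrF X) = \scrY_{\scrB}(\scrF X)(Y)$, so $\alpha_X$ supplies a quasi-isomorphism $P(Y,X)\to {}_{\scrF}\Delta^{\scrB}(Y,X)$, and by construction this family is compatible with the bimodule operations on both sides, hence assembles into a bimodule quasi-isomorphism. The main obstacle is step three: assembling the higher Taylor coefficients of $\scrF$ coherently so that the $A_{\infty}$-functor equations hold on the nose rather than only up to homotopy. This is a routine but delicate homological-perturbation argument; one can alternatively appeal directly to the machinery of \cite[Section 2]{LO:bimod} to avoid writing it out by hand.
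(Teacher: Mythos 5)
The paper does not give a proof of this lemma: it cites \cite[Proposition 5.23]{LO:bimod} and ends with \qed, so there is no ``paper's own argument'' to compare against. Your proposal reconstructs the standard proof, and it is essentially correct: the three main moves (repackage the bimodule as an $A_{\infty}$-functor $\tilde{\scrF}\colon\scrA\to\scrB\text{-mod}$; observe that representability places $\tilde{\scrF}$ in the essential image of Yoneda; lift along Yoneda using cohomological full faithfulness) are exactly what the cited reference does.

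One remark on presentation. Your third step is phrased awkwardly: you write $\scrF := \scrY^{-1}\circ\tilde{\scrF}$, but $\tilde{\scrF}$ does not literally take values in $\scrB_{\mathrm{rep}}$, only up to the quasi-isomorphisms $\alpha_X$, so you then retreat to an ad hoc inductive construction whose coherence you do not actually verify. This can be made clean and completely avoid the by-hand obstruction theory. Let $\scrC\subset\scrB\text{-mod}$ be the full subcategory on both the modules $P(-,X)$ and the Yoneda modules $\scrY_{\scrB}(\scrF X)$. Then $\tilde{\scrF}$ factors honestly through $\scrC$, and the Yoneda embedding $\scrY\colon\scrB'\to\scrC$ is still a quasi-equivalence (it is cohomologically fully faithful, and every object of $\scrC$ is quasi-isomorphic to one in its image). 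Choosing a quasi-inverse $\scrG\colon\scrC\to\scrB'$ (which exists since we work over a field, e.g.\ \cite[Theorem 2.9]{FCPLT}), one sets $\scrF:=\scrG\circ\tilde{\scrF}$; the equivalence $\scrY\circ\scrG\simeq\Id_{\scrC}$ applied to $\tilde{\scrF}$ then directly yields $\tilde{\scrF}\simeq\scrY\circ\scrF$ as functors $\scrA\to\scrB\text{-mod}$, which is precisely a bimodule quasi-isomorphism $P\cong {}_{\scrF}\Delta^{\scrB}$. This packages your step four as well; no separate verification is needed.
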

By the Yoneda Lemma, the representing functor is in fact unique up to quasi-isomorphism. We shall need an additional result comparing composition of functors (see \cite[Sections (1b) and (1e)]{FCPLT}) to the tensor product of bimodules.  Recall the definition of the tensor product of $\scrA-\scrB$ and $\scrB-\scrC$ bimodules $P$ and $Q$. This is the $\scrA-\scrC$ bimodule, denoted $P \tens{\scrB} Q$, which is given by
\begin{equation}
  (P \tens{\scrB} Q)(Z,X) = \bigoplus_{\{Y_0,\ldots,Y_d\} \in \Ob \scrB} P(Y_d,X) \otimes \scrB(Y_0, \cdots, Y_d) \otimes Q(Z,Y_0),
\end{equation}
with differential induced by the structure maps of $P$ and $Q$ as $\scrB$ modules (together with the differential $\mu^1_\scrB$), and higher operations induced by the remaining structure maps of $P$ and $Q$ as bimodules (see \cite[Equation (2.14)]{Seidel:algebra_and_nat}).
 

\begin{lem} \label{lem:tensor_product_is_composition} 
Let $\scrF \co \scrA \to \scrB$ and $\scrG \co \scrB \to \scrC$ be $A_{\infty}$ functors.  There is a natural quasi-isomorphism of bimodules
  \begin{equation}
    _\scrF\Delta^{\scrB} \tens{\scrB} {}_\scrG\Delta^\scrC \longrightarrow  {}_{\scrG \circ \scrF} \Delta^\scrC.
  \end{equation}
\end{lem}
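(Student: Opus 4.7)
My strategy is to write down an explicit bimodule pre-morphism $\Phi \co {}_\scrF \Delta^{\scrB} \tens{\scrB} {}_\scrG \Delta^{\scrC} \to {}_{\scrG \circ \scrF} \Delta^{\scrC}$ induced by the structure maps of $\scrG$, check that it is closed in the dg-category of bimodules, and then verify it is a quasi-isomorphism by analysing it one Yoneda module at a time.

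\textbf{Definition of the map.} A generator of $({}_\scrF \Delta^{\scrB} \tens{\scrB} {}_\scrG \Delta^{\scrC})(Z, X)$ has the form
\[
p \otimes b_d \otimes \cdots \otimes b_1 \otimes q,
\qquad
p \in \scrB(Y_d, \scrF X),\ b_i \in \scrB(Y_{i-1}, Y_i),\ q \in \scrC(\scrG Z, \scrG Y_0).
\]
I would declare the $(0|1|0)$-component of $\Phi$ to be the Yoneda contraction
\[
\Phi^{0|1|0}(p \otimes b_d \otimes \cdots \otimes b_1 \otimes q) = (-1)^{\ast}\, \mu^{2}_{\scrC}\!\bigl( \scrG^{d+1}(p, b_d, \ldots, b_1),\, q \bigr),
\]
and extend to general $(r|1|s)$-components by interleaving the $\scrF^{i_{\ast}}$ on the $\scrA$-side (consuming the $x_i$'s) and partitioning the $\scrC$-inputs on the right among several entries of a single $\mu^{k}_{\scrC}$ also receiving all the $\scrG^{j_{\ast}}$. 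In other words, $\Phi$ is the unique composition built from the structure maps of the functors $\scrF$, $\scrG$ and the products $\mu_{\scrC}^{k}$ that is universal with respect to inserting internal $\scrB$-inputs; all signs are dictated by the Koszul rule applied to the reduced degrees.

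\textbf{Closedness.} I would then verify that $\Phi$ is a cycle in the complex of $(\scrA,\scrC)$-bimodule pre-morphisms. This is a purely formal computation: expanding $d\Phi$ using the bimodule differential written out in Section~\ref{Sec:Bimodules}, every term either witnesses (i) an application of the $A_{\infty}$ functor equations for $\scrG$ (or for $\scrF$, on the other side), (ii) the associativity axiom $\sum \mu_{\scrC}(\ldots \mu_{\scrC} \ldots) = 0$, or (iii) the identity that the structure maps of the tensor product $\tens{\scrB}$ include a contribution from $\mu_{\scrB}$ acting on the $b_i$'s, which cancels against the terms produced when $\scrG$ is asked to receive $\mu_{\scrB}$ as an input. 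These cancellations are the same ones that enter the proof that $\scrG \circ \scrF$ satisfies the functor equations, so the result is $d\Phi = 0$.

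\textbf{Quasi-isomorphism.} The main content is showing $\Phi$ is a quasi-isomorphism on each pair $(Z,X)$. Fix $X$, and view everything as a right $\scrC$-module in the variable $Z$. The right $\scrC$-module ${}_\scrF\Delta^{\scrB}(-,X)\tens{\scrB}{}_\scrG\Delta^{\scrC}$ is the two-sided bar construction, and the standard fact (cf.\ the bar resolution arguments in \cite[Sec.~2]{Ganatra}) is that tensoring a left $\scrB$-module $M$ with $\scrB(-, \scrF X)\tens{\scrB}(-)$ computes $M$ at the object $\scrF X$; here $M = {}_\scrG\Delta^{\scrC}$ with $M(\scrF X) = \scrC(-, \scrG \scrF X)$, which is precisely the right $\scrC$-module underlying ${}_{\scrG\circ \scrF}\Delta^{\scrC}(-,X)$. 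The map $\Phi$ restricted to the $(0|1|0)$-part is nothing other than this Yoneda contraction map, which is well known to be a quasi-isomorphism of right $\scrC$-modules. Equivalently, one can filter the tensor product by the number $d$ of internal $\scrB$-inputs; on the associated graded, $\Phi$ becomes the chain map computing the bar resolution of the Yoneda module of $\scrG \scrF X$, and this is a quasi-isomorphism. Naturality in $X$ (and compatibility with left $\scrA$-multiplication via $\scrF$) follows from the construction, and this is the only part that requires any real care; everything else is bookkeeping of signs and $A_{\infty}$-operadic identities.
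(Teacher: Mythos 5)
Your overall strategy is essentially the paper's: write down the contraction map, check it is closed, and deduce the quasi-isomorphism from acyclicity of the bar complex.  The paper organizes the reduction by fixing $Z \in \Ob\scrC$ and viewing the map as the natural morphism of left $\scrB$-modules $\Delta^{\scrB} \tens{\scrB} {}_{\scrG}\scrY_Z \to {}_{\scrG}\scrY_Z$, whereas you fix $X$ and use that tensoring the Yoneda module $\scrB(-,\scrF X)$ over $\scrB$ computes evaluation at $\scrF X$; these are the same bar-acyclicity fact deployed from opposite ends, and both reduce the bimodule quasi-isomorphism to a one-sided statement, so there is no substantive divergence.

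One technical caveat worth fixing: your displayed $(0|1|0)$-component
\begin{equation*}
\Phi^{0|1|0}(p \otimes b_d \otimes \cdots \otimes b_1 \otimes q) = (-1)^{\ast}\, \mu^{2}_{\scrC}\bigl( \scrG^{d+1}(p, b_d, \ldots, b_1),\, q \bigr)
\end{equation*}
is only the leading term of the contraction map.  The correct formula is the $A_{\infty}$-module structure map $\mu^{d+1|1}_{{}_\scrG \scrY_Z}(p,b_d,\dots,b_1,q)$ of the pulled-back Yoneda module, i.e. a sum over all ways of distributing $(p,b_d,\dots,b_1)$ among several $\scrG^{j_\ell}$'s each occupying a separate slot of a single $\mu^{*}_{\scrC}$ (for $d=1$, the missing term is already $\mu^3_{\scrC}(\scrG^1(p),\scrG^1(b_1),q)$).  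These extra terms are exactly what cancel, via the $A_\infty$-functor equation for $\scrG$, the contributions of the internal $\mu_{\scrB}$'s in the bar differential; with a single $\scrG^{d+1}$ the cochain is not closed.  You do allow multiple $\scrG^{j_{*}}$'s in your description of the higher $(r|1|s)$-components, so this is a local correction to the $r=s=0$ formula rather than a gap in the strategy.
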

\begin{proof}[Sketch of proof:]
The map from
\[
\scrB(X_0, \ldots, X_r) \otimes \scrB(Y_d,\scrF X_0) \otimes \scrB(Y_0, \cdots, Y_d) \otimes \scrC(Z_s, \scrG Y_0) \otimes \scrC(Z_s, \ldots, Z_0)
\]
to $\scrC(Z_0,(\scrG\circ\scrF) X_r)$ is: 
\begin{multline}
(x_r, \ldots, x_1, \underline{p},y_d, \ldots, y_1, \underline{q},y_1, \ldots, y_s)  \mapsto  
 \sum (-1)^{\diamond} \mu_{\scrC}^{k+e+s}\big(\scrG^{j_{e}}(x_{r} , \ldots, x_{r-j_{e}+1}) , \ldots , \\ \ldots,  \scrG^{i_k + j_1}(x_{j_1} , \ldots, x_1, \underline{p}, y_d, \ldots, y_{d-i_k+2}),  \ldots ,  \scrG^{i_1}(y_{i_1}, \cdots, y_1) , \underline{q},z_1, \ldots, z_s\big).
\end{multline}
At the linear level, we may fix an object $Z$ of $\scrC$. Letting $ {}_\scrG \scrY_{Z} $ denote the pullback under $\scrG$ of the left Yoneda module of $Z$, the above map is exactly the linear term in the natural map of left $\scrB$-modules 
\begin{equation}
  \Delta^{\scrB} \tens{\scrB} {}_\scrG \scrY_{Z}  \longrightarrow   {}_\scrG \scrY_{Z} 
\end{equation}
The result therefore follows from the fact that tensoring with the diagonal bimodule induces a quasi-isomorphism of bimodules by the acyclicity of the bar complex (see for instance \cite[Proposition 2.2]{Ganatra}).
\end{proof}

\subsection{Formality for functors}

If $\scrA$ and $\scrB$ are equipped with \nc-vector fields $b_{\scrA}$ and $b_{\scrB}$, we say that  $\scrF$ is pure if either $\Delta^\scrB_\scrF $ or $_\scrF\Delta^\scrB$ is pure. This implicitly means that $\scrA$ and $\scrB$ are equipped with pure equivariant structures and that the pullback of the diagonal bimodule is equipped with a pure equivariant structure; we write $\psi_{\scrA}$ and $\psi_{\scrB}$ for the induced equivalences from $H \scrA$ to $\scrA$ and $H \scrB$ to $\scrB$. In Corollary \ref{cor:purity_left_implies_right} below, we prove that purity as a left or right bimodule are equivalent conditions.

By Theorem \ref{Thm:Pure}, the pure equivariant structures on $\scrA$ and $\scrB$ induce quasi-equivalences of these categories with their cohomological categories. We say that $\scrF$ is formal if there is an equivalence of functors between $H \scrF$ and the composition
\begin{equation}\label{eq:descend_functor_to_cohomology}
\xymatrix{H \scrA \ar[r]^{\psi_{\scrA}} & \scrA  \ar[r]^{\scrF} & \scrB \ar[r]^{\psi^{-1}_{\scrB}} & H \scrB.}
\end{equation}
\begin{lem}
  The functor $\scrF$ is formal if and only if $ \Delta^\scrB_\scrF $ (or $_\scrF\Delta^\scrB $) is formal as a bimodule.
\end{lem}
\begin{proof}
The $H\scrB$-$H\scrA$ bimodule $ \Delta^{H\scrB}_{H\scrF} $ which represents $H \scrF $ is canonically isomorphic to $H\left(  \Delta^\scrB_\scrF   \right)  $. If $\scrF$ is formal, we conclude that the pullback of $\Delta^\scrB_\scrF $ to an $H\scrB$-$H\scrA$ bimodule is quasi-isomorphic to its cohomological module, hence is formal. In the other direction,  if $\Delta^\scrB_\scrF  $ is formal, we conclude that the bimodules representing   $H \scrF $ and the composition of $\scrF$ with the equivalences from $\scrA $ and $\scrB$ to their cohomologies are quasi-isomorphic, hence the corresponding functors are equivalent by the Yoneda Lemma \ref{lem:Yoneda-bimodules}.  
\end{proof}

\begin{prop} \label{prop:purity_implies_formal_functor}
$\scrF $ is pure if and only if it is formal.
\end{prop}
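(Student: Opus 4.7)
By Corollary \ref{cor:purity_left_implies_right} I can work with the right pullback $P := \Delta^{\scrB}_{\scrF}$ throughout, and the paragraph preceding the statement already reduces formality of the functor $\scrF$ to formality of the bimodule $P$. So the task becomes: show that $P$ admits a pure equivariant structure over $(b_\scrA, b_\scrB)$ if and only if $P$ is formal as an $(\scrA, \scrB)$-bimodule.

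The forward implication is immediate from Lemma \ref{Lem:Pure3}: a pure equivariant structure on $P$ yields inverse $A_\infty$-bimodule equivalences between $P$ and $H(P)$, compatible with the equivalences $H\scrA \to \scrA$ and $H\scrB \to \scrB$ coming from Theorem \ref{Thm:Pure}, which is formality of $P$.

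For the reverse implication my plan is a transport-of-structure argument. First I would verify that the cohomological bimodule $H(P) \cong \Delta^{H\scrB}_{H\scrF}$, regarded over $(H\scrA, H\scrB)$ with their Euler vector fields, carries a canonical pure equivariant structure. This should come from a short computation on the diagonal bimodule $\Delta^{H\scrB}$: the Hochschild cocycle $\nu_L(e_{H\scrB}) - \nu_R(e_{H\scrB})$ is bounded by the explicit cochain $m \mapsto |m|\cdot m$, and this relation survives pullback along $H\scrF$; the induced endomorphism \eqref{eq:equivariant_endo} on $H(P)(Y,X)$ is then degree multiplication, i.e.\ the Euler field, so the structure is pure. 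Next, I would interpret formality of $\scrF$ via Lemma \ref{lem:Yoneda-bimodules} as the statement that the pullback of $P$ along $\psi_\scrA, \psi_\scrB$ is quasi-isomorphic as an $(H\scrA, H\scrB)$-bimodule to $H(P)$. Transporting the pure equivariant structure from $H(P)$ across this quasi-isomorphism, and then pushing forward along $\psi_\scrA, \psi_\scrB$, should yield the desired pure equivariant structure on $P$ over $(\scrA, \scrB)$.

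The hard part will be making the transport rigorous: one needs naturality of the defining equation $dc_P = \nu_\scrA(b_\scrA) - \nu_\scrB(b_\scrB)$ under quasi-equivalences of the ambient categories that carry their \nc-fields, together with invariance of the purity condition (an equality of endomorphisms on cohomology) under quasi-isomorphism of bimodules. Here Theorem \ref{Thm:Pure} is essential because it guarantees that $\psi_\scrA$ and $\psi_\scrB$ genuinely intertwine $b_\scrA, b_\scrB$ with the Euler fields, so the equivariance condition does pull back correctly. An alternative I would keep in reserve is to run the whole argument on the category $\scrA \coprod_P \scrB$ of the proof of Lemma \ref{Lem:Pure3}: purity of $\scrA$, $\scrB$ together with formality of $P$ assembles into a quasi-equivalence $\scrA \coprod_P \scrB \simeq H\scrA \coprod_{H(P)} H\scrB$, on the right side of which the Euler \nc-field is tautologically pure; pulling it back and reading off its three components in the decomposition of $HH^1$ used in the proof of Lemma \ref{Lem:Pure3} then delivers the pure equivariant structure on $P$ directly.
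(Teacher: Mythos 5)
Your forward direction (pure implies formal) is essentially identical to the paper's argument: reduce formality of $\scrF$ to formality of the one-sided pullback (via the paragraph preceding the Proposition), invoke Lemma \ref{Lem:Pure3} to pass to the cohomological bimodule, and close with faithfulness of the Yoneda embedding into bimodules. The paper's displayed proof in fact handles \emph{only} this direction; the converse (formal implies pure) is asserted in the sentence after the proof, without argument, as the observation that a quasi-isomorphism from $H\scrF$ to the composition \eqref{eq:descend_functor_to_cohomology} ``equips $\scrF$ with a pure structure.'' Your transport-of-structure plan is a reasonable way to make this precise: equip $H(P) \cong \Delta^{H\scrB}_{H\scrF}$ over $(H\scrA,H\scrB)$ with the canonical Euler equivariant structure $m\mapsto |m|\cdot m$ (which is indeed pure since $\mu^{0|1|0}=0$ on cohomology), and move it across the quasi-isomorphism supplied by formality, using that $\psi_\scrA,\psi_\scrB$ carry the $\nc$-fields $b_\scrA,b_\scrB$ to the Euler fields. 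Your fallback of re-running everything in the glued category $\scrA\coprod_P\scrB$ is precisely the mechanism the paper already uses in proving Lemma \ref{Lem:Pure3}, so it dovetails cleanly with the surrounding framework.

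The one genuine flaw is your opening appeal to Corollary \ref{cor:purity_left_implies_right}: that corollary is derived \emph{from} Proposition \ref{prop:purity_implies_formal_functor} (its one-line proof begins ``Purity implies formality of $\scrF$''), so citing it here is circular. It is also unnecessary: the definition of purity for $\scrF$ is that \emph{either} $\Delta^\scrB_\scrF$ or $_\scrF\Delta^\scrB$ is pure, and the two cases are exchanged by a symmetry of the setup, so you should simply say ``by symmetry, assume the right pullback is the pure one'' — which is what the paper does (``We consider the case in which $_\scrF\Delta^\scrB$ is pure''). Replace the corollary citation with that observation and the rest of the plan stands, though for a complete proof you would still need to carry out the transport details you flag as ``the hard part.''
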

\begin{proof}
We consider the case in which  $_\scrF\Delta^\scrB  $ is pure: by Lemma \ref{Lem:Pure3}, the pullback of $_\scrF\Delta^\scrB  $ to an $ H \scrA- H \scrB $-bimodule is formal, hence equivalent to $ _{H\scrF} \Delta^{H\scrB} $. The result follows from the fact that the left-sided pullback defines a (cohomologically) fully faithful embedding from the category of functors to the category of bimodules.
\end{proof}
One can formulate the above Lemma more precisely, by noting that a quasi-isomorphism from $H \scrF $ to the composition in Equation \eqref{eq:descend_functor_to_cohomology} equips $\scrF$ with a pure structure, and that a pure structure induces a quasi-isomorphism between these two functors.

\begin{cor} \label{cor:purity_left_implies_right}
If $_\scrF\Delta^\scrB $ is pure, so are  $\Delta^\scrB_\scrF$ and $_\scrF \Delta^\scrB_\scrF$, and vice-versa replacing left by right sided pullbacks.
\end{cor}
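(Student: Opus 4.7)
The plan is to reduce everything to Proposition \ref{prop:purity_implies_formal_functor}, exploiting the observation (made in the paragraph preceding that proposition) that formality of $\scrF$ is an intrinsic property of the functor, characterized equally well by either one-sided Yoneda bimodule. From this point of view the asymmetry between left and right pullbacks in the definition of purity is only apparent, and the content of the corollary is that it can be removed.

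First I would handle the equivalence of purity for the two one-sided pullbacks. Assume $_\scrF\Delta^\scrB$ is pure. By Proposition \ref{prop:purity_implies_formal_functor}, $\scrF$ is formal, so there is an equivalence $H\scrF \simeq \psi_\scrB^{-1} \circ \scrF \circ \psi_\scrA$ as in \eqref{eq:descend_functor_to_cohomology}. Passing through the Yoneda embedding (Lemma \ref{lem:Yoneda-bimodules}) on the right-hand side, this equivalence produces a quasi-isomorphism of $H\scrB$-$H\scrA$-bimodules $\Delta^{H\scrB}_{H\scrF} \simeq H(\Delta^\scrB_\scrF)$. By the remark following Proposition \ref{prop:purity_implies_formal_functor} --- namely that a quasi-isomorphism of this form equips $\scrF$ with a pure structure --- one transports the Euler vector field on $\Delta^{H\scrB}_{H\scrF}$ back along this quasi-isomorphism to produce a pure equivariant structure on $\Delta^\scrB_\scrF$. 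The converse direction (starting from purity of $\Delta^\scrB_\scrF$) is identical after interchanging left and right.

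For the two-sided pullback the plan is to use the natural quasi-isomorphism
\begin{equation*}
{}_\scrF\Delta^\scrB \otimes_{\scrB} \Delta^\scrB_\scrF \;\longrightarrow\; {}_\scrF\Delta^\scrB_\scrF
\end{equation*}
of $\scrA$-$\scrA$-bimodules, a direct analog of Lemma \ref{lem:tensor_product_is_composition} with one of the functors taken to be the identity. The pure equivariant structures on the two factors combine into an equivariant structure on the tensor product via the standard formula $c_P \otimes \Id + \Id \otimes c_Q$ (augmented with the appropriate bar-complex terms), and one transports this to $_\scrF\Delta^\scrB_\scrF$ along the quasi-isomorphism. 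The induced endomorphism on cohomology under \eqref{eq:equivariant_endo} then decomposes as a sum of contributions from the two factors, each of which by hypothesis acts as the Euler field, so the total is again the Euler action and $_\scrF\Delta^\scrB_\scrF$ is pure.

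The main obstacle I anticipate is in this last step: one must check that the ``inner'' contributions $\nu_\scrB(b_\scrB) \otimes \Id$ and $\Id \otimes \nu_\scrB(b_\scrB)$ appearing in $d\bigl( c_P \otimes \Id + \Id \otimes c_Q \bigr)$ cancel on the tensor product over $\scrB$, so that the cocycle equation \eqref{Eqn:EquivtBimodule} --- here reading $dc = \nu_\scrA^{\mathrm{left}}(b_\scrA) - \nu_\scrA^{\mathrm{right}}(b_\scrA)$ on the $\scrA$-$\scrA$-bimodule --- is genuinely satisfied. This is a formal consequence of the equivariance equations for the two factors together with the bar-complex relations, but it is bookkeeping that requires careful attention to signs; everything else in the argument is a routine application of the Yoneda correspondence and of Proposition \ref{prop:purity_implies_formal_functor}.
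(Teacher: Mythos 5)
Your treatment of the one-sided pullback $\Delta^\scrB_\scrF$ is essentially the paper's own argument: purity of ${}_\scrF\Delta^\scrB$ gives formality of $\scrF$ (Proposition \ref{prop:purity_implies_formal_functor}), formality of $\scrF$ gives formality of the other one-sided pullback, and a formal bimodule inherits a pure equivariant structure by transporting the Euler field along the quasi-isomorphism to its cohomology. (One small wrinkle: the isomorphism $\Delta^{H\scrB}_{H\scrF}\cong H(\Delta^\scrB_\scrF)$ is canonical and holds without formality of $\scrF$; what formality of $\scrF$ buys is that $\Delta^\scrB_\scrF$, pulled back to an $H\scrB$--$H\scrA$ bimodule, is quasi-isomorphic to this cohomological bimodule.)

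Where you diverge genuinely from the paper is in your treatment of the two-sided pullback ${}_\scrF\Delta^\scrB_\scrF$. The paper does not construct the pure equivariant structure by hand: it simply notes that formality of the functor $\scrF$ also implies formality of ${}_\scrF\Delta^\scrB_\scrF$ (using, implicitly, that $\psi_\scrA$ is a quasi-equivalence, so the two-sided pullback of the diagonal along $\psi_\scrB\circ H\scrF\simeq \scrF\circ\psi_\scrA$ is quasi-isomorphic to ${}_{H\scrF}\Delta^{H\scrB}_{H\scrF}$), and then concludes purity by the same transport-of-Euler-field argument used for the one-sided case. Your proposed route --- decomposing ${}_\scrF\Delta^\scrB_\scrF$ as ${}_\scrF\Delta^\scrB\otimes_\scrB\Delta^\scrB_\scrF$ and building the equivariant structure out of $c_P\otimes\Id + \Id\otimes c_Q$ together with $b_\scrB$ acting on the bar slots --- would ultimately give the same conclusion, but it reintroduces exactly the kind of cochain-level sign bookkeeping that the formality framework was designed to avoid, and you rightly flag the cancellation of the ``inner'' $\nu_\scrB(b_\scrB)$ contributions as unresolved. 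The simpler observation you missed is that the one-line argument you already used for $\Delta^\scrB_\scrF$ applies verbatim to ${}_\scrF\Delta^\scrB_\scrF$: no tensor product decomposition, no hand-crafted equivariant cochain, is needed.
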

\begin{proof}
Purity implies formality of $\scrF$, which implies formality of $\Delta^\scrB_\scrF$ and $_\scrF \Delta^\scrB_\scrF$; hence the existence of a pure equivariant structures on these bimodules.
\end{proof}


\begin{rem}
By considering only the pure case, we avoided having to appeal to the following result whose proof uses duality of bimodules: an equivariant structure on $_\scrF\Delta^\scrB $ induces an equivariant structure on $\Delta^\scrB_\scrF$, and hence on $_\scrF \Delta^\scrB_\scrF $  via the natural equivalence
\begin{equation}
_\scrF \Delta^\scrB \tens{\scrB} \Delta^\scrB_\scrF \to  {}_\scrF\Delta^\scrB_\scrF.
\end{equation}
Moreover, equipping the two-sided pullback with this equivariant structure, the natural map
\begin{equation}
\Delta^\scrA \to  {}_\scrF\Delta^\scrB_\scrF
\end{equation}
of $\scrA-\scrA$-bimodules is equivariant. If $\scrF$ has an adjoint, cf. Section \ref{sec:adjunctions}, this map gives rise to the unit of the adjunction.
\end{rem}

\subsection{Adjunctions}\label{sec:adjunctions} 
Let $\scrF \co \scrA \to \scrB$ and $\scrG \co \scrB \to \scrA$ be $A_{\infty}$ functors. As before, denote by $_\scrF\Delta^\scrB$ the $\scrA-\scrB$ bimodule which is the pullback of the diagonal bimodule of $\scrB$ on the left by $\scrF$, and by $\Delta^\scrA_\scrG$ the   $\scrA-\scrB$ bimodule which is the pullback of the diagonal bimodule of $\scrA$ on the right by $\scrG$.
\begin{defn}
An \emph{adjunction} between $\scrF$ and $\scrG$ is an equivalence between $_\scrF\Delta^\scrB$ and $\Delta^\scrA_\scrG$ as $\scrA-\scrB$ bimodules.
\end{defn}

The equivalence between $_\scrF\Delta^\scrB$ and $\Delta^\scrA_\scrG$ induces equivalences of $A_{\infty}$-bimodules
\begin{align}
 _\scrF \Delta^\scrB_\scrF \cong _\scrF\Delta^\scrB \tens{\scrB} \Delta^\scrB_\scrF \cong  \Delta^\scrA_\scrG \tens{\scrB} \Delta^\scrB_\scrF  &  \cong   \Delta^ \scrA_{\scrG \circ \scrF } \\
 \Delta^\scrB_\scrF   \tens{\scrA}  {}_\scrF \Delta^\scrB  \cong \Delta^\scrB_\scrF   \tens{\scrA} \Delta^\scrA_\scrG &  \cong \Delta^ \scrB_{\scrF \circ \scrG }  .
\end{align}
In particular, the natural maps
\begin{align}
\Delta^\scrA & \to {}_\scrF\Delta^\scrB_\scrF \\
 \Delta^\scrB_\scrF   \tens{\scrA}  {}_\scrF\Delta^\scrB & \to \Delta^\scrB
\end{align}
induce maps called the unit and counit of the adjunction:
\begin{align}
 \Delta^\scrA & \to    \Delta^\scrA_{\scrG \circ \scrF } \\
 \Delta^\scrB_{\scrF \circ \scrG }  & \to \Delta^\scrB.
\end{align}
Given that the category of functors embeds fully faithfully into the category of bimodules, we conclude:
\begin{lem}
An adjunction between $\scrF$ and $\scrG$ induces $A_{\infty}$-natural transformations
\begin{align}
  \Id_{\scrA} & \to \scrG \circ \scrF  \\
\scrF \circ \scrG &  \to \Id_{\scrB}. 
\end{align} \qed
\end{lem}
These maps are the unit and counit of the adjunction. Repeating the construction at the cohomological level, we obtain adjunctions:
\begin{align}
  \Id_{H\scrA} & \to H\scrG \circ H\scrF  \\
H\scrF \circ H\scrG &  \to \Id_{H\scrB}.
\end{align}

\begin{prop}
If $\scrF$ is formal and is either left or right adjoint to $\scrG$, then $\scrG$ is formal. Moreover, the units and counits are formal; i.e. the unit and counit of the adjuction on $H \scrA$ and $H \scrB$ are cohomologous (as $A_{\infty}$-natural transformations) to the pullback of the unit and counit on $ \scrA$ and $ \scrB$.
\end{prop}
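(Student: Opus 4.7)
The plan is to transport formality across the adjunction and then read off the compatibility of units and counits from naturality. Suppose $\scrF$ is left adjoint to $\scrG$, so there is a quasi-isomorphism $_\scrF\Delta^\scrB \simeq \Delta^\scrA_\scrG$ of $\scrA$-$\scrB$ bimodules. Formality of $\scrF$ is, by the discussion around Proposition \ref{prop:purity_implies_formal_functor}, equivalent to the formality of $_\scrF\Delta^\scrB$ (pulled back to an $H\scrA$-$H\scrB$-bimodule), which then transfers across the adjunction equivalence to give formality of $\Delta^\scrA_\scrG$, and hence of $\scrG$. The right adjoint case is identical with $\Delta^\scrB_\scrF$ and $_\scrG\Delta^\scrA$ playing the roles of $_\scrF\Delta^\scrB$ and $\Delta^\scrA_\scrG$.

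Next, I plan to exhibit the units and counits at the bimodule level. The unit and counit of the adjunction $\scrF \dashv \scrG$ are represented by the bimodule maps
\[
\Delta^\scrA \to {}_\scrF\Delta^\scrB_\scrF \simeq \Delta^\scrA_{\scrG\circ\scrF}\qquad\text{and}\qquad \Delta^\scrB_{\scrF\circ\scrG}\simeq \Delta^\scrB_\scrF \tens{\scrA}{}_\scrF\Delta^\scrB \to \Delta^\scrB,
\]
and the corresponding cohomological adjunction $H\scrF \dashv H\scrG$ is represented by the same diagram with $\scrA,\scrB,\scrF,\scrG$ replaced by their cohomological versions. Under the equivalences $\psi_\scrA$ and $\psi_\scrB$ from the pure structures, the bimodules $_\scrF\Delta^\scrB$, $\Delta^\scrB_\scrF$, $\Delta^\scrA_\scrG$ and $_\scrG\Delta^\scrA$ are identified, by Proposition \ref{prop:purity_implies_formal_functor}, with their cohomological analogues for $H\scrF$ and $H\scrG$; the identifications among the various tensor products are governed by Lemma \ref{lem:tensor_product_is_composition}. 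To conclude that the units and counits agree up to homotopy, I then invoke the Yoneda Lemma \ref{lem:Yoneda-bimodules}, which reduces equivalence of natural transformations to equivalence of the corresponding maps of bimodules, together with the commutativity-up-to-homotopy of the squares displayed just before the statement of the proposition (naturality of adjunctions under equivalences).

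The main subtlety I expect to encounter is making precise the interaction between the quasi-isomorphism $_\scrF\Delta^\scrB\simeq \Delta^\scrA_\scrG$ that encodes the adjunction and the pure equivariant structures that encode formality: one must check that transporting the pure equivariant structure on $_\scrF\Delta^\scrB$ across the adjunction equivalence gives an equivariant structure on $\Delta^\scrA_\scrG$ which is again pure. This should be essentially automatic, since by construction the endomorphism on cohomology induced by an equivariant structure (as in \eqref{eq:equivariant_endo}) is transported to the analogous endomorphism by any bimodule quasi-isomorphism, and the Euler field condition depends only on this cohomological endomorphism. Once this point is settled, purity of $\scrG$ and the homotopy commutativity of the unit and counit squares both follow by unwinding definitions, with no further input beyond the results already recorded in this section.
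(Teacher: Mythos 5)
Your proof is correct and takes essentially the same route as the paper: the paper transfers a pure equivariant structure across the adjunction quasi-isomorphism $_\scrF\Delta^\scrB \simeq \Delta^\scrA_\scrG$ (and you transfer formality directly, which is equivalent by Proposition~\ref{prop:purity_implies_formal_functor} and discussed in your "main subtlety" paragraph), and then deduces formality of units and counits from naturality. Your unwinding of that naturality via Lemma~\ref{lem:Yoneda-bimodules}, Lemma~\ref{lem:tensor_product_is_composition} and the homotopy-commutative squares preceding the proposition is exactly what the paper's terse ``follows from the naturality of the corresponding morphisms'' is abbreviating.
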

\begin{proof}
Using the homological perturbation lemma, it suffices to prove the result assuming that $\scrA$ and $\scrB$ agree with their cohomology. In this case, we have a composite quasi-isomorphism
\begin{equation}
  \Delta^\scrA_\scrG  \cong  {_\scrF}\Delta^\scrB  \cong {_{H(\scrF)}}\Delta^\scrB  \cong   \Delta^\scrA_{H(\scrG)}
\end{equation}
where the first map is the data of the adjuction, the second is the formality of $\scrF$, and the last is induced by the adjunction after passing to cohomology. This proves that $\scrG$ is formal.

To show that the unit of the adjunction is formal, we use the embedding of the category of functors in the category of bimodules. We begin by noting that the compatibility of compositions with natural transformations implies that the formality quasi-isomorphisms for $\scrF$, $\scrG$, induce a formality quasi-isomorphism for their composite. Passing to bimodules, the compatibility of compositions with tensor products yields a (homotopy) commutative diagram
\begin{equation}
   \xymatrix{ \Delta^ \scrB_{\scrF \circ \scrG } \ar[r] \ar[d] &  \Delta^\scrB_\scrF   \tens{\scrA} \Delta^\scrA_\scrG  \ar[d]\\
 \Delta^\scrB_{H(\scrF) \circ H(\scrG)} \ar[r] &  \Delta^\scrB_{H(\scrF)}   \tens{\scrA} \Delta^\scrA_{H(\scrG)}  . }   
\end{equation}

Since the adjunction is represented by the composition
\begin{equation}
\Delta^ \scrB_{\scrF \circ \scrG } \to \Delta^\scrB_\scrF   \tens{\scrA} \Delta^\scrA_\scrG    \to  \Delta^\scrB,
\end{equation}
it thus suffices to compare the second map with the corresponding map of cohomological bimodules.  But this map was defined, using the data of the adjunction, via a factorisation
\begin{equation}
   \Delta^\scrB_\scrF   \tens{\scrA} \Delta^\scrA_\scrG  \to   \Delta^\scrB_\scrF   \tens{\scrA}   {_\scrF}\Delta^\scrB  \to  \Delta^\scrB.
\end{equation}
This implies that the chosen equivalence $\Delta^\scrA_\scrG \cong  \Delta^\scrA_{H(\scrG)}$ which we constructed above fits in a commutative diagram
\begin{equation}
  \xymatrix{ \Delta^\scrB_\scrF   \tens{\scrA} \Delta^\scrA_\scrG \ar[r] \ar[d] & \Delta^\scrB_\scrF   \tens{\scrA}   {_\scrF}\Delta^\scrB \ar[r] \ar[d] &  \Delta^\scrB \ar[d] \\
 \Delta^\scrB_{H(\scrF)}   \tens{\scrA} \Delta^\scrA_{H(\scrG)} \ar[r] &   \Delta^\scrB_{H(\scrF)}   \tens{\scrA}    {_{H(\scrF)}}\Delta^\scrB  \ar[r] &  \Delta^\scrB, }
\end{equation}
where the bottom maps are cohomological. This proves formality of the unit, and a similar argument yields formality of the counit. \end{proof}

\section{Bimodules over Fukaya categories}\label{Sec:Bimodule_Generalities}

This section collects geometric generalities in the spirit of \cite[Section 3]{AbSm}, but now concerning bimodules.  We recall the standing hypotheses of the prequel \cite{AbSm}, whose notation and conventions we adopt. We construct the Fukaya category in a slight generalisation of the framework introduced by Seidel \cite{FCPLT}, that allows for clean intersections of Lagrangians by combining Floer theory with Morse theory of auxiliary functions on intersections, as in Biran and Cornea's ``pearly trajectories'' \cite{BC:Pearl}. This approach was implemented by Seidel in \cite{Seidel:genus} and Sheridan in \cite{Sheridan}.  Depending on the situation, we achieve compactness for moduli spaces of Floer holomorphic discs by one of three mechanisms:  by exactness;  by using the existence of an effective compactification divisor at infinity which is nef on rational curves; or from the existence of a holomorphic map to a bounded domain in $\bC^r$, and maximum principle in the base coupled to exactness in the fibre.

This section is a direct extension of the results of \cite[Section 3]{AbSm} from categories to bimodules, and the reader will find a more leisurely exposition in the prequel.

\subsection{Geometric set-up}\label{Sec:Set-up}

The set-up introduced in \cite{AbSm} as a general setting for proving formality of a Floer $A_{\infty}$-algebra had the following ingredients. 
We begin with a smooth projective variety $\Mdbar$ of complex dimension $n$, equipped with a triple of reduced (not necessarily smooth or irreducible) effective divisors $D_0$, $D_{\infty}$, $D_r$. We denote by $\Mbar$ the symplectic manifold obtained by removing $D_{\infty}$ from $\Mdbar$, and by $M$ the symplectic manifold obtained by removing the three divisors from $\Mdbar$.  When the meaning is clear from context, we shall sometimes write $D_{0}$ for $D_{0} \cap \Mbar$, and $D_{r}$ for both $D_{r} \cap \Mbar$ and $D_{r} \cap M$. We assume:

\begin{Hypothesis} \label{Hyp:Main} 
  \begin{align} \label{eq:hyp_main_1}
& \parbox{30em}{the union  $D_0 \cup D_{\infty} \cup D_r$ supports an  ample divisor $D$ with strictly positive coefficients of each of $D_0, D_{\infty}, D_r$.} \\
& \parbox{30em}{$D_{\infty}$ is nef (or, at least, non-negative on rational curves).}\\ \label{eq:hyp_main_3}
&  \parbox{30em}{$\Mbar$ admits a meromorphic volume form $\eta$ which has simple poles along $D_0 \cap \Mbar$, and is holomorphic and non-vanishing away from this divisor (in particular in $M$).} \\
& \parbox{30em}{Each irreducible component of the divisor $D_0 \cap \Mbar$ moves in $\Mbar$, with base locus containing no rational curves.} 
\end{align}
\end{Hypothesis}

To slightly clarify the hypotheses, in our examples the ample divisor in the first part will have class Poincar\'e dual to the class $n_0 D_0 + n_{\infty} D_{\infty} + n_r D_r$ with $\{n_0,n_{\infty},n_r\} \in \bR_{+}$; since the $D_i$ are not assumed irreducible, one could also consider K\"ahler forms with class in the interior of the positive cone spanned by the irreducible components of the $D_i$.  In the final part, the ``moving" hypothesis asserts that if $D_0^{\dagger}$ is an irreducible component of $D_0$,  the line bundle $\mathcal{O}(D_0^{\dagger}) \to \Mdbar$ has global holomorphic sections when restricted to $\Mbar$, and that for any rational curve in $D_0^{\dagger} \cap \Mbar$ there is a holomorphic section of $\mathcal{O}(D_0^{\dagger})|_{\Mbar}$ which is not identically zero on that curve.   It may be worth pointing out that, when this set-up is applied to the Milnor fibre in \cite[Section 4]{AbSm}, one of the irreducible components of $D_0$ is a curve of negative square which does \emph{not} move on $\Mdbar$; in the sequel, we shall only use the existence of push-offs $D_0'$ of $D_0$ on the open piece $\Mbar$.

\begin{Remark} \label{Remark:Duke_correction}
The third hypothesis strengthens (and corrects) the hypothesis of \cite[Hypothesis 3.1]{AbSm}, where we allowed the meromorphic volume form on $\Mbar$ to have zeroes (but not poles) on $D_r$. In the two situations of relevance in \cite{AbSm}, the stronger hypothesis holds. In the first, $M$ is a Milnor fibre, and $D_r = \emptyset$, so the situations become identical. In the second, $M$ is the nilpotent slice viewed as an open subvariety of the Hilbert scheme of a Milnor fibre, and $D_r$ is a relative Hilbert scheme. This is contained in the exceptional locus of the crepant Hilbert-Chow morphism; the meromorphic volume form on the partially compactified Milnor fibre induces one on its symmetric product, and this pulls back to one on the Hilbert scheme which has no zeroes (cf. proof of \cite[Lemma 6.3]{AbSm}).  The stronger hypothesis is required for \cite[Lemma 3.2]{AbSm}, or equivalently to conclude \eqref{eqn:chern_number} below. 
\end{Remark}

\begin{Remark} 
All the holomorphic curves that we study lie in $\Mbar$, so that $\Mdbar$ is only used to ensure compactness for moduli spaces of stable discs in $\Mbar$, via algebraic arguments such as positivity of intersection. Any other method for ensuring compactness would achieve the same end, and our use of this specific formalism is in great part due to the fact that we rely on many results and constructions from \cite{AbSm}. At the end of this section (see Condition \eqref{eq:condition_domain_in_M}), we introduce certain domains $\mathring{M} \subset M$ and $\mathring{\Mbar} \subset \Mbar$ that were not needed in \cite{AbSm}, but which will be required to compare equivariant structures on different spaces.
\end{Remark}

Let $D'_0 \subset \Mbar$ be a divisor linearly equivalent to and sharing no irreducible component with $D_0$, and $B_0 = D_0 \cap D'_0$, which is then a subvariety of  $\Mbar$ of complex codimension $2$.

Fix a K\"ahler form $\omega_{\Mdbar}$ in the cohomology class Poincar\'e dual to $D$. Ampleness implies that $M$ is an affine variety, in particular an exact symplectic manifold which can be completed to a Weinstein manifold of finite type, modelled on the symplectization of a contact manifold near infinity; see \cite{Seidel:bias} for details.  We will denote by $\lambda$ a primitive of the symplectic form $\omega_M$ given by restricting $\omega_{\Mdbar}$ to $M$, so $d\lambda = \omega_M$.     By the third assumption above, $M$ has vanishing first Chern class; the fourth hypothesis then implies that for rational curves $C\subset \Mbar$, there is an identity 
\begin{equation} \label{eqn:chern_number}
\langle c_1(\Mbar), C\rangle = \langle D_0,C\rangle \geq 0.
\end{equation}

Let $A \in H_2(\Mbar; \bZ)$ be a $2$-dimensional homology class, with the property that
\begin{equation} \label{eq:intersection_0_1}
  \langle D_{r},A \rangle = 0 \textrm{ and }   \langle D_{0}, A \rangle = 1. 
\end{equation}
Consider the moduli space of stable rational curves in $\Mbar$ with one marked point
\begin{equation}
\scrM_{1}(\Mbar| 1) = \coprod_{\substack{A \in  H_2(\bar{M};\bZ) \\ \textrm{Condition \eqref{eq:intersection_0_1} holds}}} \scrM_{1; A}(\Mbar)
\end{equation}
which can be decomposed according to  the homology class $A \in H_2(\Mbar;\bZ)$ represented by each element.  Let $H_*^{lf}(\bullet;\bZ)$ denote the locally finite (or Borel-Moore) homology of a space $\bullet$. The evaluation image $\ev_1(\scrM_1(\Mbar| 1))$ defines a locally finite chain in $\Mbar$, and hence by Poincar\'e duality we obtain a well-defined associated Gromov-Witten invariant  $GW_1 \in H^2(M;\bZ)$ counting such curves (see \cite[Lemma 3.6]{AbSm}).

\begin{Hypothesis}  \label{Hyp:GWvanishes} 
  \begin{align}
 & GW_{1} = \sum_A GW_{1;A}|_M = 0 \in  H_{2n-2}^{lf}(M;\bZ) \cong H^2(M; \bZ); \\
& \parbox{30em}{$B_0$ is homologous to a cycle supported on the union $(D_0 \cap D_{r}) \cup D_0^{sing}$, where $D_0^{sing}$ denotes the singular locus of $D_0$.}
  \end{align}
\end{Hypothesis}

Appealing to Hypothesis \ref{Hyp:GWvanishes}, we fix cochains 
\begin{align} \label{eq:bounding_chain_gw}
  gw_{1} & \in C^1(M ; \bZ) \\ \label{eq:bounding_chain_infinity}
\beta_0 & \in C_{2n-3}^{lf}(D_0 , (D_{0} \cap D_{r})  \cup D_{0}^{sing}  ; \bZ)
\end{align}
satisfying 
\begin{align}
\partial  (gw_{1}) & = GW_{1} \\
\partial (\beta_0) & = [B_0].
\end{align}

Let $L \subset M$ denote an exact Lagrangian brane. Let $\Delta$ denote the closed unit disc, $\Modbar{(0,1)}{1}(L)$ denote the moduli space of Floer solutions comprising maps from $(\Delta, \partial \Delta)$ to $(M,L)$, with the origin being the unique point  mapping to $D_0$, and a point on the interval $(0,1)\subset \Delta$ mapping to $D_0'$ (the last condition imposes an additional real codimension one constraint).  We assume that perturbation data for the Floer equation is chosen as in \cite[Section 3.4]{AbSm}, so as in particular to make the fibre product arising in \eqref{eqn:building_equivt_str} below transverse.  By evaluation at $1$, we obtain a cochain
\begin{equation}
   \tilde{b}_{D}^{0} = \ev_{*}[\Modbar{(0,1)}{1}(L)  ]  \in C^{1}(L; \bZ).
\end{equation} 
Consider also the moduli space 
\begin{equation}
  \scrR_{1}^{1}(\Mbar; (1,0)| L  )
\end{equation}
of discs in $\Mbar$ with boundary on $L$ and one interior and one boundary marked point, with intersection numbers $(1,0)$ with $(D_0,D_r)$; this has a natural map $\scrR_1^1(\Mbar; (1,0)|L ) \rightarrow \Mbar$ via evaluation at the interior marked point.  We set
\begin{equation} \label{eqn:building_equivt_str}
\sCO^{0}( \beta_0)  =  \ev_{*}  [\beta_{0}\times_{\Mbar} \scrR_{1}^{1}(\Mbar; (1,0)| L  )]  \in C^{1}(L; \bZ).
\end{equation}
We recall from \cite[Lemma 3.11]{AbSm} that the sum of the restriction of $gw_{1}$ with $b_{D}^{0}$ and $\sCO^{0} \beta_0$ defines a cycle
\begin{equation} \label{eq:0-part-CC_cocycle}
b^0_D =  \tilde{b}_{D}^{0} + gw_1|L + \sCO^{0}( \beta_0) \in C^1(L; \bZ).
\end{equation}
A Lagrangian brane $L$ is \emph{(infinitesimally) equivariant} if the cycle in Equation \eqref{eq:0-part-CC_cocycle} is null-homologous. An \emph{(infinitesimally) equivariant} structure on $L$, over $\bk$,  is a choice of bounding cochain in $ C^0(L; \bk) $ for this cycle. 

\subsection{Restriction to subdomains}
\label{sec:restr-subd}

For our applications, we shall need to specify a domain $\mathring{M} \subset M$ which will contain all Lagrangians of interest. It will be important to know not only that stable curves in $M$ which intersect $\mathring{M}$ are contained in $\mathring{M}$, but that the same property holds for curves in $\bar{M}$ with respect to the closure $\mathring{\bar{M}}$ of $\mathring{M}$ in $\bar{M}$.

To this end, we assume that
\begin{equation} \label{eq:condition_domain_in_M}
 \parbox{38em}{$ \mathring{\bar{M}} \subset \bar{M}$ is a weakly pseudoconvex domain, i.e. a properly embedded codimension $0$ submanifold whose boundary is defined by a weakly plurisubharmonic function.}
\end{equation}
In our applications, $ \mathring{\bar{M}}$ will be the inverse image of a domain in $\bC$ under a holomorphic function defined on $\bar{M} $.

In this setting, an integrated version of the maximum principle, see \cite{Abouzaid-Seidel},  implies:
\begin{lem} \label{lem:integrated_maximum_principle}
If $\Sigma$ is a stable Riemann surface with boundary, and $u \co \Sigma \to \bar{M}$ a holomorphic map mapping the boundary to $\mathring{\bar{M}} $, then the image of $u$ is contained in $\mathring{\bar{M}}$. \qed
\end{lem}

From this, we conclude that exact Lagrangians in $\mathring{M} $  are the objects of a Fukaya category $\scrF(\mathring{M})$ whose definition uses only curves that remain in $\mathring{M}$, In particular, we have a fully faithful embedding
\begin{equation}
    \scrF(\mathring{M}) \subset\scrF(M).
  \end{equation}
The equivariant structure on such Lagrangians depends only on curves in  $ \mathring{\bar{M}}$. More precisely, let us denote by $D^{\mathring{\bar{M}}}_0$,  $D^{\prime,\mathring{M}}_0$, $B^{\mathring{M}}$, and $D^{\mathring{\bar{M}}}_r$ the intersections of $D_0$, $D_0'$, $B$, and $D_r$ with $\mathring{\bar{M}}$. Since the embedding $\mathring{\bar{M}} \subset \bar{M}$ is proper, the (relative) chains $ gw_{1}$ and $\beta_0$ restrict to (relative) chains $gw^{\mathring{M}}_{1} \in C^1(\mathring{M} ; \bZ)$, and $\beta^{\mathring{M}}_0 \in C_{2n-3}^{lf}(D^{\mathring{\bar{M}}}_0 , (D^{\mathring{\bar{M}}}_0 \cap D^{\mathring{\bar{M}}}_{r}) \cup D^{\mathring{\bar{M}},sing}_{0}  ; \bZ) $.

From Equation \eqref{eq:0-part-CC_cocycle} and Lemma \ref{lem:integrated_maximum_principle}, we conclude the following result:
\begin{lem}
For each Lagrangian $L \in \mathring{M}$, the obstruction $b^0_D$ to equivariance  depends only on (i) the moduli space of stable holomorphic discs in $\mathring{M}$ and $\mathring{\Mbar}$ with boundary on $L$, and (ii) the chains $gw^{\mathring{M}}_{1}$ and $\beta^{\mathring{M}}_0$. In particular, it is independent of the inclusions $\mathring{\Mbar} \subset \Mbar \subset \Mdbar$. \qed
\end{lem}

\subsection{Bimodules from product Lagrangians} \label{Sec:BimodulesWithoutQuilts}

Consider two quasiprojective K\"ahler manifolds $M$, $N$, with (partial) compactifications $M \subset \Mbar \subset \Mdbar$ and $ N \subset \Nbar \subset \Ndbar$  by divisors $D$, $D^{N}$ each satisfying Hypotheses \ref{Hyp:Main} and \ref{Hyp:GWvanishes}, in particular $D^{N}$ is supported on the union of three divisors $D^{N}_{0}$, $D^{N}_{r}$, and $D^{N}_{\infty}$. Assume in addition that we are given weakly pseudo-convex domains $\mathring{\bar{M}} \subset \bar{M}$ and $\mathring{\bar{N}} \subset \bar{N}$.

Let $W$ be a quasiprojective K\"ahler manifold with a (partial) compactification $W \subset \bar{W} \subset \bar{\bar{W}}$, equipped with an embedding
\begin{equation}
  \mathring{\bar{M}} \times \mathring{\bar{N}} \equiv  \mathring{\bar{W}}  \subset \bar{W}   
\end{equation}
which is compatible with the divisors in the sense that
\begin{align}
  D^{ \mathring{\bar{W}}}_0 & = (D^{\mathring{\bar{M}}}_0 \times  \mathring{\bar{N}})  \cup  (\mathring{\bar{M}}  \times D^{\mathring{\bar{N}}}_0) \\
  D^{ \mathring{\bar{W}}}_r & = (D^{\mathring{\bar{M}}}_r \times  \mathring{\bar{N}}) \cup  (\mathring{\bar{M}}  \times D^{\mathring{\bar{N}}}_r ).
\end{align}
We write $\pi_N$ for the projection from $\mathring{\bar{W}}$ to $\mathring{\bar{N}}$.  The domain $\mathring{\bar{W}}$ admits embeddings in both $\bar{\bar{W}}$ and $\Mdbar \times \Ndbar$, but the resulting Fukaya category of Lagrangians in $\mathring{\bar{W}}$ is  independent of these embeddings by Lemma \ref{lem:integrated_maximum_principle}, as is the equivariant structure on Lagrangians in $\mathring{W} \subset \mathring{\bar{W}}$. 

We shall assign  a $(\Fuk(\mathring{W})$-$\Fuk(\mathring{M}))$ bimodule $\scrK$ to each Lagrangian brane $K \subset \mathring{N}$.  By construction, this bimodule is representable, and hence corresponds to a functor on Fukaya categories. The theory of pseudo-holomorphic quilts due to Mau-Wehrheim-Woodward \cite{WW,MWW} could be used to give a more general construction than the one we provide here, provided the relevant moduli spaces of holomorphic curves with boundary on the correspondence behave well. We have avoided using quilt theory because we would need to discuss the behaviour of the correspondence at the boundary.

Given objects $L$ and $L'$ of $\Fuk(\mathring{M})$ and $\Fuk(\mathring{W})$, we choose a compactly supported Hamiltonian 
\begin{equation} \label{eq:Hamiltonian_bimodule}
  H_{L,L'} \co [0,1] \times \mathring{W} \to \bR
\end{equation}
whose flow maps $L \times K$ to a Lagrangian transverse to $L'$ and define 
\begin{equation}
  \scrK(L, L') \equiv CF^{*}(L \times K, L').
\end{equation}
We shall write $\Chord(L \times K,L')$ for the set of intersection points of the time-$1$ image of $L \times K$ and $L'$, so the vector space underlying the Floer complex is by definition the sum of the orientation lines $\ro_x$ associated to points $x\in\Chord(L\times K, L')$. 
Note that, given our conventions for multiplication from the right in Floer complexes, together with the usual conventions for bimodules as set out in Section \ref{Sec:Bimodules}, this is indeed an $\scrF(\mathring{W})$-left module and an $\scrF(\mathring{M})$-right module. To define the structure maps of this $A_{\infty}$ bimodule, we introduce the moduli space 
\begin{equation}
  \scrR^{r|1|s} \equiv \scrR^{r+s+2}
\end{equation}
which is a copy of the interior of the Stasheff associahedron.  Ordered counter-clockwise, the marked points on the boundary of an element of $ \scrR^{r|1|s}  $ are denoted
 \begin{equation}  \label{eq:marked_points_boundary_r|1|s}
  y_{0}, z_{|s}, \ldots, z_{|1}, y_{1},z_{1|}, \ldots, z_{r|}.
\end{equation}
This space admits a natural compactification with top dimensional boundary strata:
\begin{align} \label{eq:first_stratum_bimodule_moduli}
& \coprod_{1 \leq S \leq R \leq r }   \scrR^{r-R+S|1|s} \times \scrR^{R-S+2} \\ \label{eq:second_stratum_bimodule_moduli}
&  \coprod_{\substack{1 \leq R \leq r \\ 1 \leq S \leq s}  }  \scrR^{r-R|1|s-S} \times  \scrR^{R|1|S} \\ \label{eq:third_stratum_bimodule_moduli}
&  \coprod_{1 \leq R \leq S \leq s }   \scrR^{r|1|s-S+R} \times \scrR^{S-R+2} .
\end{align}
Here, the middle stratum corresponds to the case where the points $y_{0}$ and $y_{1}$ belong to different components of the stable curve. These three degenerations are depicted in Figure \ref{Fig:BimoduleFacets}.
\begin{center}
\begin{figure}[ht]
\includegraphics[scale=0.4]{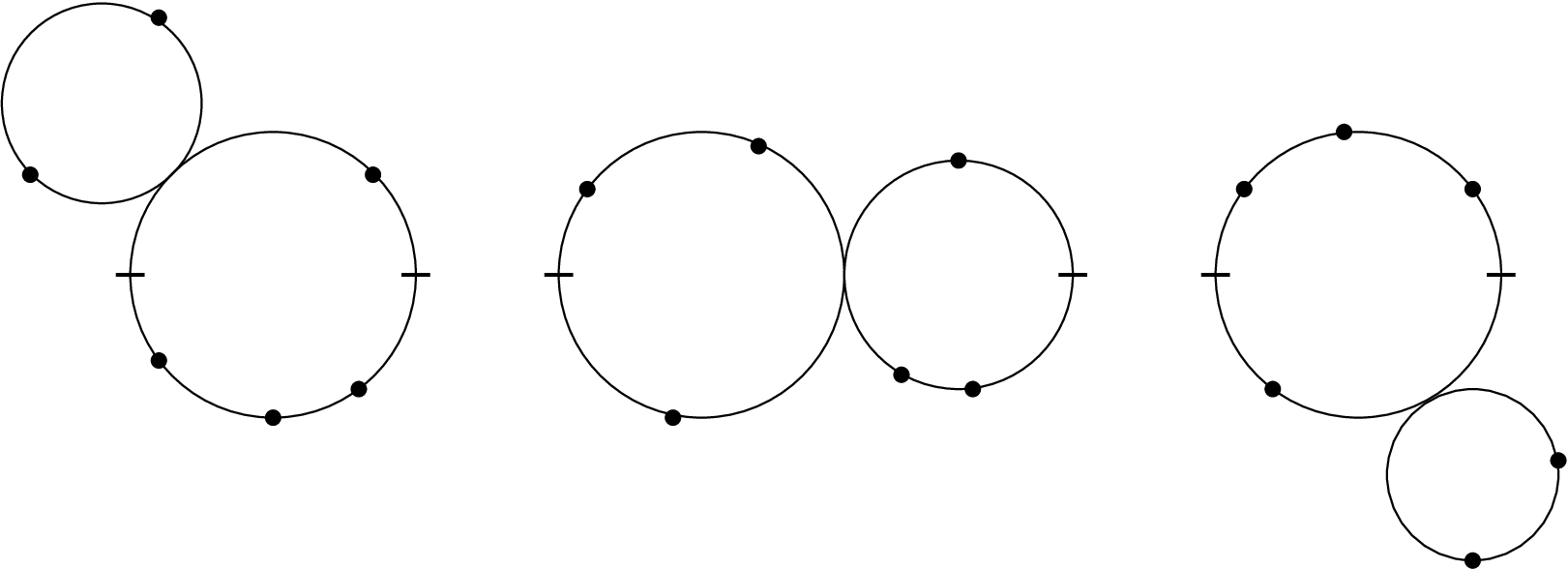}
\caption{Codimension one boundary facets to the moduli space $\scrR^{r|1|s}$\label{Fig:BimoduleFacets}}
\end{figure}
\end{center}

On a given surface, the Floer data defining the pseudo-holomorphic curve equation comprise a closed 1-form $\alpha_{\Sigma}$ vanishing on the boundary and a family of Hamiltonians $H_{z}$ on $\mathring{M}$ parametrised by $z \in \Sigma$. If $X_z$ is the Hamiltonian flow of $H_z$, the structure maps for the bimodule $\scrK$ are defined by solutions to the family of inhomogeneous equations 
\begin{equation} \label{eq:dbar_equation_discs_two_punctures}
(du - X_z \otimes \alpha_{\Sigma})^{0,1} = 0
\end{equation}
which are now parametrised by the universal family of punctured discs over $ \scrR^{r|1|s}  $.  
We require all the $H_z$ to be compactly supported, and use the product complex  structure on $\mathring{W}$. Near $y_{0}$ and $y_{1}$, the inhomogeneous term is induced by the function appearing in \eqref{eq:Hamiltonian_bimodule}, near $\{ z_{j|} | 1 \leq j \leq r \}$ by the Hamiltonians used in $\Fuk(\mathring{W})$, and near $\{z_{|j} | 1 \leq j \leq s\}$ by the pullbacks of the Hamiltonians used in the definition of $\Fuk(\mathring{M})$ under the projection $\mathring{W} \to \mathring{M}$.

More precisely, we first choose strip like ends $\{ \epsilon_{j} | j = 0,1 \}$, $\{ \epsilon_{j|} | 1 \leq j \leq r \}$, and  $\{\epsilon_{|j} | 1 \leq j \leq s\}$ near each of the punctures, which vary smoothly with respect to the modulus, and which are compatible near the boundary with  the data obtained from the boundary strata by gluing. The strip-like ends are incoming at all punctures except $y_{0}$, and outgoing at $y_{0}$, i.e. modelled on $Z_+$  at $\{y_{1}, z_{|j}, z_{i|}\}$ (respectively $Z_-$ at $y_{0}$), where 
\begin{equation} \label{eqn:in-out-striplike-ends}
  Z_- = (-\infty,0] \times [0,1] \textrm{ and } Z_+ = [0,\infty) \times [0,1]
\end{equation}
The inhomogeneous data consist of a $1$-form $\alpha$ on each punctured disc $\Sigma$ in $ \scrR^{r|1|s} $, and a Hamiltonian $H_{z}$ for each point $z \in \Sigma$. We require the pullback of $\alpha$ under each strip-like end to agree with $dt$.

To state the conditions on the Hamiltonian, fix sequences $(L_0, \ldots, L_{s})$ and $(Q_0, \ldots, Q_{r})$ of Lagrangian branes in $\mathring{M}$ and $\mathring{W}$, i.e. objects of $\Fuk(\mathring{M})$ and $\Fuk(\mathring{W})$. We then require that\footnote{We hope the use of $s$ as an index for boundary marked points on disks and as a co-ordinate on the strip $\bR\times [0,1]$ and hence for the Hamiltonians will cause no confusion.} 
\begin{align}
H_{\epsilon_{1}(s,t)} & = H_{L_{0}, Q_{0}}(t) \\
H_{\epsilon_{0}(s ,t)} & = H_{L_{s}, Q_{r}}(t) \\
H_{\epsilon_{i|}(s ,t)} & = H_{Q_{i-1}, Q_{i}}(t) \quad 1 \leq i \leq r \\
H_{\epsilon_{|j}(s ,t)} & = H_{L_{j}, L_{j-1}}(t) \quad 1 \leq j \leq s.
\end{align}
The Hamiltonians $H_{Q_{i-1}, Q_{i}} $ and $H_{L_{j}, L_{j-1}} $ are those which are respectively used in the definition of the Floer complexes $CF^*(Q_{i-1}, Q_{i})$ and $CF^*(L_{j}, L_{j-1})  $; in the second case, we omit the projection to $\mathring{M}$ from the notation. With these assumptions, finite energy solutions $u$ of Equation  \eqref{eq:dbar_equation_discs_two_punctures}, with boundary conditions given by $L_j'$ and $L_{j} \times K$  have the following convergence properties:
\begin{align}
\lim_{s \to +\infty} u \circ \epsilon_{1}(s ,t) & \in  \Chord(L_0 \times K, Q_0 ) \\
\lim_{s \to -\infty} u \circ \epsilon_{0}(s ,t) & \in  \Chord(L_s \times K, Q_r ) \\
\lim_{s \to +\infty} u \circ \epsilon_{j|}(s ,t) & \in  \Chord(Q_{i-1}, Q_{i} )  \quad 1 \leq i \leq r \\
\lim_{s \to +\infty} u \circ \epsilon_{|j}(s ,t) & \in  \Chord(L_{j}, L_{j-1} ) \times K \quad 1 \leq j \leq s.
\end{align}
Note that the last of these properties is implied by the removal of singularities theorem \cite{Oh}. Indeed, $u \circ \epsilon_{|j} $ is the product of a solution to Floer's equation on the half-strip with target $\mathring{M}$ and boundary conditions $L_{j-1}$ and $L_{j}$, with an ordinary holomorphic equation with target $\mathring{N}$ and boundary condition $K$. Oh's removal of singularities \cite{Oh} implies that the factor with target $\mathring{N}$ converges in the limit $s \to +\infty$ to a point in $K$.  (If there were no inhomogeneous perturbations, exactness of $K$ in $\mathring{N}$ would imply that the second factor was actually a constant disk.)

\begin{center}
\begin{figure}[ht]

\includegraphics[scale=0.4]{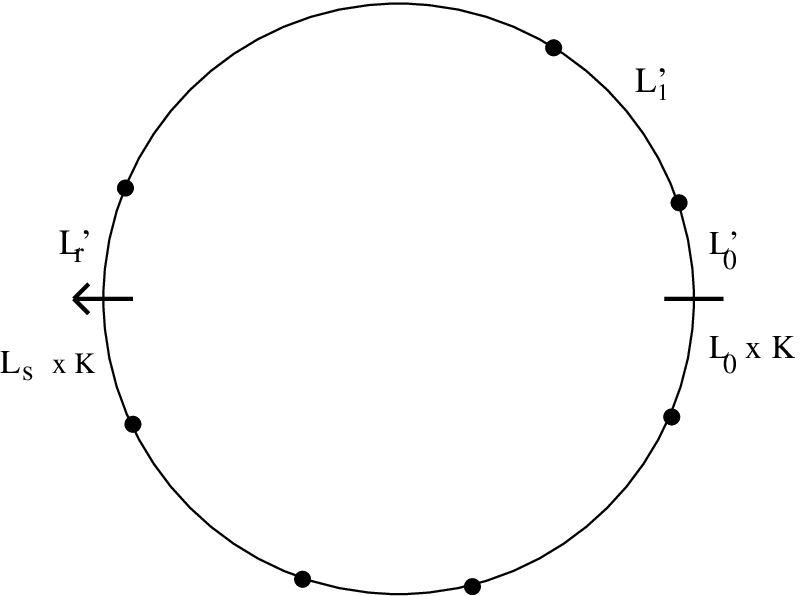}
\caption{Boundary conditions for the bimodule count\label{Fig:BimoduleLabels}}
\end{figure}
\end{center}

With this in mind, given chords
\begin{align} \label{eq:chord_bimodule_output}
  m_{0} & \in   \Chord(L_s \times K, Q_r )  \\
m_{1} & \in   \Chord(L_0 \times K, Q_0 )   \\
y_{i|} & \in  \Chord(Q_{i-1}, Q_{i} )  \quad 1 \leq i \leq r \\ \label{eq:chord_bimodule_input_M}
x_{|j} & \in  \Chord(L_{j}, L_{j-1} ), \quad 1\leq j\leq s
\end{align}
we define the moduli space
\begin{equation} 
   \scrR^{r|1|s}(m_{0}; y_{r|}, \ldots, y_{1|}, m_{1}, x_{|1}, \ldots, x_{|s})
\end{equation}
to be the space of  maps
\begin{equation}
  u \co \Sigma \to \mathring{W},
\end{equation}
for $\Sigma \in  \scrR^{r|1|s}$ with boundary segments mapping counter clockwise to 
\begin{equation}
(L_{s} \times K, \ldots, L_{0} \times K, Q_0, \ldots, Q_{r})  
\end{equation}
and asymptotic conditions along the ends given by the chords in Equations \eqref{eq:chord_bimodule_output}-\eqref{eq:chord_bimodule_input_M}, cf. Figure \ref{Fig:BimoduleLabels}. More precisely, near the puncture $z_{|j}$, we require that
\begin{equation}
  \lim_{s \to +\infty} \pi_{\mathring{M}} \circ u \circ \epsilon_{|j}(s ,t) = x_{|j}(t)
\end{equation}
where $\pi$ is the projection from $\mathring{W}$ to $\mathring{M}$. In particular, for each integer $1 \leq j \leq s$, we have natural evaluation maps
\begin{equation}\label{eq:just_now}
 \ev_{|j} \co  \scrR^{r|1|s}(\mathring{W}|m_{0}; y_{r|}, \ldots, y_{1|}, m_{1}, x_{|1}, \ldots, x_{|s}) \to K.
\end{equation}

For generic choices of Floer data, the domain of \eqref{eq:just_now} is a smooth manifold of dimension
\begin{equation}
\deg(m_{0}) + r + s -1 - \deg(m_{1}) - \sum_{j=1}^{s} \deg(x_{j})  - \sum_{i=1}^{r} \deg(y_{i}),
\end{equation}
and, having fixed brane data on all Lagrangians, it is naturally oriented relative to the orientation lines of the input and output chords. If
\begin{equation} \label{eq:degree_output_bimodule_operation}
  \deg(m_{0}) =  \deg(m_{1}) + \sum_{j=1}^{s} \deg(x_{j}) + \sum_{j=1}^{r} \deg(y_{i}) - r- s + 1,
\end{equation}
one therefore naturally associates to $u \in \scrR^{r|1|s}(\mathring{W}|m_{0}; y_{r|}, \ldots, y_{1|}, m_{1}, x_{|1}, \ldots, x_{|s}) $ a map
\begin{equation}
\ro_{y_{r|}} \otimes \cdots \otimes \ro_{y_{1|}} \otimes \ro_{m_{1}} \otimes  \ro_{x_{|1}} \otimes \cdots \otimes \ro_{x_{|s}} \to \ro_{m_{0}}.
\end{equation}
Taking the sum over all such pseudo-holomorphic curves $u$, for all collections of chords satisfying Equation \eqref{eq:degree_output_bimodule_operation}, we obtain the structure maps of $\scrK$ as an $A_{\infty}$-bimodule
\begin{equation}
\begin{aligned}
  \mu_{\scrK}^{r|1|s} \co \ CF^{*}(Q_{r-1}, Q_{r}) \otimes \cdots \otimes CF^{*}(Q_{0}, Q_{1}) \otimes   \scrK(L_{0},Q_{0})    \qquad \qquad &  \\ \qquad \qquad   \otimes\, CF^{*}(L_{1}, L_{0}) \otimes \cdots \otimes CF^{*}(L_{s}, L_{s-1}) \longrightarrow   \scrK(L_{s},Q_{r}).
  \end{aligned}
\end{equation}

To check that these operations satisfy the $A_{\infty}$-relation, we assume that the Floer data are compatible with the boundary decomposition in Equations \eqref{eq:first_stratum_bimodule_moduli}-\eqref{eq:third_stratum_bimodule_moduli} of the moduli spaces of domains. For the first stratum, this means that the component  that lies in $ \scrR^{R-S+2}$ carries the data used in the definition of $\Fuk(\mathring{W})$, while for the third stratum, the component  that lies in $ \scrR^{S-R+2}$ carries the product of the data used in the definition of $\Fuk(\mathring{M})$ with the (homogeneous) holomorphic curve equation with target $\mathring{N}$.  The strata of the moduli space  $\bar{\scrR}^{r|1|s}(\mathring{W}|m_{0}; y_{r|}, \ldots, y_{1|}, m_{1}, x_{|1}, \ldots, x_{|s})$ of virtual codimension $1$ are
\begin{equation}
\begin{aligned} \label{eq:first_stratum_bimodule_moduli_maps}
 \coprod_{\substack{1 \leq S \leq R \leq r \\ y \in \Chord(Q_{S-1}, Q_{R})} }   \scrR^{r-R+S|1|s}(\mathring{W}|m_{0}; y_{r|}, \ldots, y_{R+1|}, y, y_{S-1|}, \ldots, y_{1|}, m_{1}, x_{|1}, \ldots, x_{|s})  & \\  \qquad \qquad \times \  \scrR^{R-S+2}(\mathring{W}|y; y_{R|}, y, y_{S|}) 
 \end{aligned} \end{equation} \begin{equation}
 \begin{aligned} \label{eq:second_stratum_bimodule_moduli_maps}
  \coprod_{\substack{1 \leq R \leq r \\ 1 \leq S \leq s \\ m \in \Chord(L_{S} \times K, Q_{R})  }}  \scrR^{r-R|1|s-S}(\mathring{W}|m_{0}; y_{r|}, \ldots, y_{R+1|},  m, x_{|S+1}, \ldots, x_{|s}) & \\ \qquad \qquad  \times  \scrR^{R|1|S}(\mathring{W}|m; y_{R|}, \ldots, y_{1|},  m_{1}, x_{|1}, \ldots, x_{|S})
  \end{aligned}\end{equation} \begin{equation}
  \begin{aligned}   \label{eq:third_stratum_bimodule_moduli_maps}
  \coprod_{\substack{1 \leq R \leq S \leq s \\  x \in \Chord(L_{S}, L_{R-1})} }   \scrR^{r|1|s-S+R}(\mathring{W}|m_{0}; y_{r|},  \ldots, y_{1|}, m_{1}, x_{|1}, \ldots, x_{|R-1}, x, x_{|S+1}, \ldots,x_{|s})  & \\ \qquad \times \scrR^{S-R+2}(\mathring{M} |x; x_{|R}, \ldots , x_{|S}) .
\end{aligned}
\end{equation}
The last stratum can be more formally described as the fibre product
\begin{equation} \label{eq:stratum_fibre_product}
\begin{aligned}
    \scrR^{r|1|s-S+R}(\mathring{W}|m_{0}; y_{r|},  \ldots, y_{1|}, m_{1}, x_{|1}, \ldots, x_{|R-1}, x, x_{|S+1}, \ldots,x_{|s}) _{\ev_{|R}}\times_{\pi_{\mathring{N}}}  & \\ \qquad \qquad \left( \scrR^{S-R+2}(\mathring{M} |x; x_{R}, x, x_{S})  \times K \right). \end{aligned}
\end{equation}
Indeed, the space of maps from elements of $\scrR^{S-R+2} $ to $\mathring{W}$ that we are considering splits as the product of $ \scrR^{S-R+2}(\mathring{M} |x; x_{R}, y, x_{S})  $ with holomorphic discs with boundary on $K$. Since $K$  is an exact Lagrangian, all such latter curves are constant. Since the fibre product in Equation \eqref{eq:stratum_fibre_product} is taken over $K$, we can remove this factor from the right side, and obtain Equation \eqref{eq:third_stratum_bimodule_moduli_maps}.

Since constant discs are regular, and the data defining $  \scrR^{S-R+2}(\mathring{M} |x; x_{R}, x, x_{S})  $ are regular (part of the hypotheses that they could be used to define the Fukaya category),  the right factor in Equation \eqref{eq:stratum_fibre_product} is also regular. Proceeding by induction on the value of the sum $r+s$, we conclude:
\begin{lem}
  For generic choice of Floer data, 
  \begin{equation}
    \bar{\scrR}^{r|1|s}(\mathring{W}|m_{0}; y_{r|}, \ldots, y_{1|}, m_{1}, x_{|1}, \ldots, x_{|s})
  \end{equation}
is a compact $1$-dimensional manifold with boundary whenever
\begin{equation}
    \deg(m_{0}) =  \deg(m_{1}) + \sum_{j=1}^{s} \deg(x_{j}) + \sum_{i=1}^{r} \deg(y_{i}) - r- s + 2.
\end{equation}
Its boundary strata are given by Equations \eqref{eq:first_stratum_bimodule_moduli_maps}-\eqref{eq:third_stratum_bimodule_moduli_maps}. \qed
\end{lem}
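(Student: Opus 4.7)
The plan is the standard three-step pseudo-holomorphic curve argument (transversality, compactness, identification of boundary), adapted to this product/bimodule setting. For transversality, I would proceed by induction on $r+s$: assuming generic Floer data have been selected on all lower-dimensional strata (matching the inductive hypothesis near the boundary of $\scrR^{r|1|s}$ via gluing), a generic choice of the parametrised Hamiltonian $H_z$ on the interior of the universal family suffices, since the almost complex structure is kept fixed as the product $J_M \times J_N$. I would arrange simultaneously that the evaluation maps $\ev_{|R}$ of Equation \eqref{eq:just_now} are transverse to $K \subset N$, which is a further open dense condition needed for the identification of the third boundary stratum.

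For compactness, Gromov's theorem produces a limiting stable map; bounded energy follows from exactness of the Lagrangian boundary conditions combined with compact support of the Hamiltonian perturbations. The new feature here, relative to the setup of \cite[Section 3]{AbSm}, is the appearance of holomorphic discs in $N$ with boundary on $K$: by exactness of $K \subset N$ and removal of singularities, these are automatically constant, as noted already in the text preceding the statement. Sphere bubbles in $M' = \Mbar \times \Nbar$ split under the product complex structure into holomorphic spheres in each factor, and Hypothesis \ref{Hyp:Main} applied to both $\Mbar$ and $\Nbar$ ensures such bubbles occur in positive codimension and are avoided in one-dimensional moduli spaces. Disc bubbles on the Lagrangians $L'_j$ or $L_j \times K$ are similarly excluded by exactness of these Lagrangians in $M'$. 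Hence the only codimension-one degenerations are domain degenerations of $\bar{\scrR}^{r|1|s}$, giving precisely the three strata in Equations \eqref{eq:first_stratum_bimodule_moduli}-\eqref{eq:third_stratum_bimodule_moduli}.

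For the identification of boundary, I would apply a standard gluing analysis to each of the three strata. The first stratum (breaking off an $\scrR^{R-S+2}$ component carrying the data for $\Fuk(M')$ among the $M'$-punctures) produces \eqref{eq:first_stratum_bimodule_moduli_maps}, and the second stratum (strip breaking at one of the two ``bimodule'' punctures $y_0,y_1$, split by a chord in $\Chord(L_S \times K, L'_R)$) produces \eqref{eq:second_stratum_bimodule_moduli_maps}. The third stratum is the most delicate and is the main technical point: when an $\scrR^{S-R+2}$ bubble splits off among the $M$-punctures, the product complex structure makes the component map to $M' = M \times N$ decompose into a solution of Floer's equation with target $M$ and a homogeneous holomorphic disc with target $N$ and boundary on $K$. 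Exactness of $K$ forces the latter to be constant at a point of $K$, and the matching condition with the main component becomes exactly the fibre product over $K$ via $\ev_{|R}$ written in Equation \eqref{eq:stratum_fibre_product}, which is regular by the transversality set up above. The resulting space is \eqref{eq:third_stratum_bimodule_moduli_maps}.

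The main obstacle, as in the prequel, is the control of sphere bubbling off the compactification divisors; once this is handled by Hypothesis \ref{Hyp:Main} applied to both factors, the only genuinely new analytic input is the treatment of the $N$-factor along the $M$-punctures, which is resolved uniformly by $K$-exactness and Oh's removal of singularities. The inductive structure then carries the gluing and boundary identification through all dimensions of moduli spaces.
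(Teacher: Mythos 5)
Your proof is correct and follows essentially the same strategy as the paper: choose Floer data inductively (in $r+s$) compatible with the boundary decomposition of the domain moduli spaces, and identify the third stratum as the fibre product over $K$ of \eqref{eq:stratum_fibre_product}, where the $N$-component is a constant disc (hence regular) by exactness of $K \subset N$ together with removal of singularities. One small correction: the curves in $\scrR^{r|1|s}(M'|\cdots)$ map into the open exact manifold $M' = M \times N$ (not $\Mdbar \times \Ndbar$), so sphere bubbles are ruled out by Stokes' theorem exactly as disc bubbles are, and Hypothesis~\ref{Hyp:Main} plays no role here --- it is needed only for the moduli spaces $\Mod{(0,1)}{r|1|s}$ that carry interior marked points constrained to the compactification divisors, which appear later in the construction of the equivariant structure.
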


We now note that the stratum \eqref{eq:first_stratum_bimodule_moduli_maps} corresponds to the first term in Equation \ref{eq:relation_bimodule}, the stratum \eqref{eq:second_stratum_bimodule_moduli_maps} to the second term, and \eqref{eq:third_stratum_bimodule_moduli_maps} to the last term. We omit the discussion of signs, which is essentially the same as those for the usual $A_{\infty}$-relations in the Fukaya category; the details appear for instance in \cite{Ganatra}.  It follows that $\scrK$ does indeed define an $A_{\infty}$-bimodule. We will refer to  a bimodule $\scrK$ defined from a fixed Lagrangian brane $K$ in this way as an  \emph{elementary} bimodule.

Finally, we note that, fixing an object $L$ of $\Fuk(\mathring{M})$, the module over $\Fuk(\mathring{W})$ obtained by considering $\scrK(L \times K, \_)$ is quasi-isomorphic to the Yoneda module of $L \times K$. Appealing to Lemma \ref{lem:representable_bimod_functor}, we conclude that there is a functor
\begin{equation} \label{eq:kunneth_functor}
 \Fuk(\mathring{M}) \longrightarrow \scrF(\mathring{W}),
\end{equation}
unique up to quasi-isomorphism, which represents $\scrK$. Composing with the embedding of $\mathring{W}$ into $W$, we obtain: 
\begin{equation} \label{eq:kunneth_functor}
\scrF_K \co  \Fuk(\mathring{M}) \longrightarrow \scrF(W).
\end{equation}

\subsection{Equivariance for elementary bimodules}
We continue in the setting of the previous section, in particular assuming Hypothesies \ref{Hyp:GWvanishes}.  We have explained that, by counting  Maslov index two discs with two interior marked points, constrained to a half-line and meeting $D_0$ and $D_0'$,  this allows us to construct a cycle $b_D^0 \in C^1(L;\bZ)$, the vanishing of which amounts to existence of an equivariant structure on the brane $L$. \cite[Sections 3.5--3.9]{AbSm} explain that, given a choice of cochain $c_L$ with $dc_L = b_D^0$ for each object $L$ in (a subcategory of) $\scrF(M)$, by further considering moduli spaces of Maslov index two discs with two interior marked points and any number of boundary marked points, and with boundary constraints given by the cycles $c_L$ and the chains appearing in Hypothesis \ref{Hyp:GWvanishes}, one can construct the higher order terms of a \nc-vector field $b \in CC^1(\scrF(M),\scrF(M))$, with $b^0_M = b_D^0$. We refer to \cite{AbSm} for the details of the construction. Restricting to $\mathring{M}$, we obtain an  \nc-vector field on the Fukaya category of $\mathring{M}$, which, by Lemma \ref{lem:integrated_maximum_principle}, only depends on data defined on $\mathring{M}$ and $\mathring{\Mbar}$.

Suppose, then, we have \nc-vector fields $b^{\mathring{M}} \in CC^1(\Fuk(\mathring{M}),\Fuk(\mathring{M}))$ and $b^{\mathring{N}} \in CC^1(\Fuk(\mathring{N}),\Fuk(\mathring{N}))$, and $b^{W} \in CC^1(\Fuk(W),\Fuk(W))$ induced by choosing bounding cochains
\begin{alignat}{2}
  gw^{\mathring{M}}_{1} & \in C^1(\mathring{M} ; \bZ)  \quad  &  \beta^{\mathring{M}}_0 & \in C^1(D^{\mathring{\bar{M}}}_0 \setminus (D^{\mathring{\bar{M}}}_{0} \cap D^{\mathring{\bar{M}}}_{r})  \cup D_{0}^{\mathring{M},sing}  ; \bZ) \\ 
 gw_{1}^{\mathring{N}} & \in C^1(\mathring{N} ; \bZ)  \quad  & \beta_0^{\mathring{N}} & \in C^1(D_0^{\mathring{N}} \setminus (D_{0}^{\mathring{\bar{N}}} \cap D_{r}^{\mathring{\bar{N}}})  \cup D_{0}^{\mathring{\bar{N}},sing}  ; \bZ)  \\
  gw^{W}_{1} & \in C^1(W ; \bZ)  \quad  &  \beta^{W}_0 & \in C^1(D^{W}_0 \setminus (D^{W}_{0} \cap D^{W}_{r})  \cup D_{0}^{W,sing}  ; \bZ) 
\end{alignat}
For the bounding cochains on $W$, we assume that our chosen push-off of $D^{W}_0$ is given, in $\mathring{\bar{W}}$, by
\begin{align}
  D^{\prime, \mathring{\bar{W}}}_0 & = (D^{\prime,\mathring{\bar{M}}}_0 \times \mathring{\bar{N}})   \cup  (\mathring{\bar{M}}  \times D^{\prime,\mathring{\bar{N}}}_0).
\end{align}
We write $ gw^{\mathring{W}}_1 $ and $\beta^{\mathring{W}}_0 $ for the restrictions to $\mathring{W}$, and assume that we have equalities
\begin{align}
  gw^{\mathring{W}}_1 & = gw^{\mathring{M}}_1 \times [\mathring{N}] + [\mathring{M}] \times gw_1^{\mathring{N}}  \in C^1(\mathring{M} \times \mathring{N}; \bZ) \\ 
\beta^{\mathring{W}}_0 & = \beta^{\mathring{M}}_0 \times [\mathring{\Nbar}] + [\mathring{\Mbar}] \times gw_1^{\mathring{N}} \in C^1(D^{\mathring{W}}_0 \setminus (D^{\mathring{W}}_{0} \cap D^{\mathring{W}}_{r})  \cup D^{\mathring{W},sing}_{0}  ; \bZ)
\end{align}
where the fundamental cycles are defined by taking the sum of all top-dimensional simplices for a chosen triangulation. 
\begin{rem}
It would be sufficient to assume that $gw^{\mathring{W}}_1  $ and $\beta^{\mathring{W}}_0 $ differ by a boundary from the product chains, but this would complicate the discussion by introducing more notation.
\end{rem}

Let us now, in addition, assume that $K$ is equivariant with respect to $b^{\mathring{N}}$ over the field $\bk$. In particular, we fix a cochain
\begin{equation} \label{eq:bounding_cochain_K}
  c_{K} \in C^{0}(K; \bk)
\end{equation}
whose boundary is
\begin{equation} 
 b_{N}^{0} + gw_1^{\mathring{N}}|K + \sCO^{0}( \beta_0^{\mathring{N}}) \in  C^{1}(K; \bk).
\end{equation}

 For $k\geq 0$, let $\Mod{(0,1)}{k+1}$ denote the moduli space of domains comprising the disc $\Delta$
\begin{enumerate}
\item with two marked points  $z_0 = 0$ and $z_1  \in (0,1)$, and
\item with $k+1$ boundary punctures at  $p_0 = 1 \in\partial \Delta$ and points $\{p_1,\ldots,p_k\} \subset \partial \Delta \backslash \{1\}$ ordered counter-clockwise.
\end{enumerate}
It is important to note that the point $1$ will play the role of an output, while the points $p_i$ will correspond to inputs. 
Introduce the moduli spaces
\begin{equation} \label{eq:identify_mod_r_1_s}
    \Mod{(0,1)}{r|1|s} \cong   \Mod{(0,1)}{r+s+2},
\end{equation}
where the marked points on the boundary are labelled as in Equation \eqref{eq:marked_points_boundary_r|1|s}. We can describe the boundary of these moduli spaces exactly as in Section \ref{Sec:BimodulesWithoutQuilts}, except that it is more important to keep track of the position of the marked point $y_{1}$. In particular, the strata in Equations \eqref{eq:first_stratum_bimodule_moduli} split as
\begin{align} \label{eq:boundary_mod_r_1_s_first}
  &  \coprod  \Mod{(0,1)}{r-R+S|1|s} \times \scrR^{R-S+2}  \\
& \coprod  \Mod{(0,1)}{r-R|1|s-S} \times \scrR^{R|1|S}  \\ \label{eq:boundary_mod_r_1_s_third}
& \coprod  \Mod{(0,1)}{r|1|s-S+R} \times \scrR^{S-R+2} 
\end{align}
while the boundary strata in Equation  \eqref{eq:third_stratum_bimodule_moduli} split as
\begin{align} \label{eq:boundary_mod_r_1_s_fourth}
  &  \coprod  \scrR^{r-R+S|1|s} \times\Mod{(0,1)}{R-S+2}  \\ \label{eq:boundary_mod_r_1_s_fifth}
& \coprod \scrR^{r-R|1|s-S} \times  \Mod{(0,1)}{R|1|S}  \\ \label{eq:boundary_mod_r_1_s_sixth}
& \coprod   \scrR^{r|1|s-S+R} \times \Mod{(0,1)}{S-R+2} 
\end{align}

Given sequences $(L_0, \ldots, L_{s})$ and $(Q_0, \ldots, Q_{r})$ of Lagrangian branes in $\mathring{M}$ and $\mathring{W}$, and chords as in Equations  \eqref{eq:chord_bimodule_output}-\eqref{eq:chord_bimodule_input_M}, we obtain a moduli space
\begin{equation} \label{eq:5}
   \Mod{(0,1)}{r|1|s} (m_{0}; y_{r|}, \ldots, y_{1|}, m_{1}, x_{|1}, \ldots, x_{|s})
\end{equation}
of maps $u$ whose source lies in $    \Mod{(0,1)}{r|1|s} $, with boundary conditions as in Figure \ref{Fig:BimoduleLabels} and such that
\begin{equation}
  u(z_0) \in D_0 \textrm{ and } u(z_1) \in D'_0.
\end{equation}
These moduli spaces define a cochain 
\begin{equation}
  \tilde{c}_{\scrK} \in \mathcal{E}_{\scrB,\scrA}(\scrK)
\end{equation}
whenever $\scrA \subset \scrF(\mathring{M})$ and $\scrB \subset \scrF(\mathring{W})$ are subcategories of the Fukaya categories of $\mathring{M}$ and $\mathring{W}$ containing the branes $L_j$ respectively $Q_i$. In order to produce a cycle, we must add the contributions $\sCO_{\scrK} ( \beta^{\mathring{W}}_0)$  and $ \CO_{\scrK}(gw^{\mathring{W}}_1)$ which are defined in analogy with the corresponding structures at the level of categories; here $\CO_{\scrK}$ is the open-closed map
\[
\CO_{\scrK} \colon C^*(\mathring{W};\bk) \longrightarrow \mathcal{E}_{\scrB,\scrA}(\scrK)
\]
which is induced from the usual open-closed map of $\mathring{W}$.  In addition, we define an operator
\begin{equation}
  \nu(c_{K}) \in \mathcal{E}_{\scrB,\scrA}(\scrK)
\end{equation}
by counting the moduli spaces
  \begin{equation} \label{eq:fibre_product_moduli_product_bounding_cochain}
   \Mod{(0,1)}{r|1|s} (m_{0}; y_{r|}, \ldots, y_{1|}, m_{1}, x_{|1},  \ldots , x_{|s}) _{\ev_{|R}}\times_{K}  c_{K}
\end{equation}
(where $R$ varies). 
Here, $c_K$ is represented by a cycle in $K$, e.g. in terms of descending manifolds of critical points of a Morse function.

With this in mind, we define
\begin{equation} \label{eq:K_equivariant_structure}
  c_{\scrK} \equiv  \tilde{c}_\scrK  + \sCO_{\scrK} ( \beta^{\mathring{W}}_0)  + \CO_{\scrK}(gw^{\mathring{W}}_1) +   \nu(c_{K}) \in \mathcal{E}_{\scrB,\scrA}(\scrK).
\end{equation}

\begin{prop}
 The cochain $c_{\scrK}$ defines an equivariant structure on $\scrK$.
\end{prop}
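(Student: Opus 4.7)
The plan is to verify the identity $\partial c_{\scrK} = \nu_{\scrB}(b') - \nu_{\scrA}(b)$ in $\mathcal{E}_{\scrB,\scrA}(\scrK)$ by enumerating the codimension-one boundary strata of the compactified moduli spaces $\overline{\Mod{(0,1)}{r|1|s}}(m_{0}; x'_{r|}, \ldots, m_{1}, \ldots, x_{|s})$ underlying $\tilde{c}_{\scrK}$, and matching each stratum to a term in one of the other three summands of $c_{\scrK}$ or in $\nu_{\scrA}(b)$, $\nu_{\scrB}(b')$.

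The boundary strata fall into four classes: (a) source degenerations along the associahedron boundary, as in Equations \eqref{eq:boundary_mod_r_1_s_first}-\eqref{eq:boundary_mod_r_1_s_sixth}, which assemble into the pre-endomorphism differential $\partial \tilde{c}_{\scrK}$; (b) sphere bubbling at $z_{1} \to z_{0}$, where a rational component carrying both interior marked points splits off and, by positivity of intersection, generically meets the codimension-two subvariety $B_{0} = D_{0} \cap D'_{0}$; (c) disk bubbling when $z_{1}$ approaches $\partial\Delta$, producing a disk bubble with a single interior marked point constrained to $D'_{0}$; and (d) disk bubbling off a boundary segment mapped to $L_{i} \times K$, in which exactness of $K$ in $N$ together with Oh's removal of singularities forces the $N$-factor of the bubble to be constant.

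Each of strata (b), (c), (d) is then cancelled by a term coming from the other summands of $c_{\scrK}$. Stratum (b) is cancelled by $\partial \sCO_{\scrK}(\beta'_{0})$, using $\partial \beta'_{0} = [B_{0}]$, which follows from $\partial \beta_{0} = [B_{0}]$ and $\partial \beta_{0}^{N} = [B_{0}^{N}]$ together with the decomposition $D'_{0} = D_{0} \times \Nbar \cup \Mbar \times D_{0}^{N}$. Stratum (c), combined with $\partial\CO_{\scrK}(gw'_{1}) = \CO_{\scrK}(GW'_{1})$, reproduces the difference $\nu_{\scrB}(b') - \nu_{\scrA}(b)$: when the disk bubble forms at the output puncture $y_{0}$ one obtains $\nu_{\scrB}(\tilde{b}'_{D'})$, which assembles with the $gw'_{1}|L$ and $\sCO^{0}(\beta'_{0})$ contributions into the full cocycle $\nu_{\scrB}(b'_{D})$ defined in Equation \eqref{eq:0-part-CC_cocycle}; the analogous analysis at input punctures of type $z_{|j}$ and $z_{j|}$ gives $-\nu_{\scrA}(b_{D})$, with cross-terms in the $\scrB$-factor cancelling internally. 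Stratum (d) contributes, on each boundary segment mapped to $L_{i} \times K$, the open-closed image of $b^{0}_{D^{N}}|K + gw_{1}^{N}|K + \sCO^{0}(\beta_{0}^{N})$; since $c_{K}$ is by Equation \eqref{eq:bounding_cochain_K} a bounding cochain for this class, the operator $\nu(c_{K})$ of Equation \eqref{eq:fibre_product_moduli_product_bounding_cochain} provides the required primitive, so $\partial\nu(c_{K})$ cancels stratum (d).

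The principal technical obstacle is disentangling contributions that mix the $M$- and $N$-factors of $M' = M \times N$: a rational curve meeting $B_{0}$ may lie entirely in $\Mbar \times \{\mathrm{pt}\}$, in $\{\mathrm{pt}\} \times \Nbar$, or be a genuine product, and similarly for disk bubbles meeting $D'_{0}$, so the cancellations must respect the K\"unneth decompositions of $gw'_{1}$ and $\beta'_{0}$. Equally delicate is the sign bookkeeping for stratum (c) at input versus output punctures, which now includes the three distinct kinds of inputs $z_{j|}$, $y_{1}$, and $z_{|j}$ appearing in a bimodule operation; this is handled by extending symmetrically the sign analysis of \cite[Lemma 3.11]{AbSm} from the $A_{\infty}$-algebra setting to the $A_{\infty}$-bimodule setting, using the sign conventions for $\mu^{r|1|s}$ from Equation \eqref{eq:relation_bimodule}. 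Once all four strata have been matched, the identity $\partial c_{\scrK} = \nu_{\scrB}(b') - \nu_{\scrA}(b)$ follows.
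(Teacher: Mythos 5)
Your overall strategy — verifying $\partial c_{\scrK}=\nu_{\scrB}(b')-\nu_{\scrA}(b)$ by enumerating the codimension-one boundary strata of $\overline{\Mod{(0,1)}{r|1|s}}$ and matching each piece to one of the four summands of $c_{\scrK}$ — is the same as the paper's, and your understanding that the $\CO$, $\sCO$ contributions are adaptations of the prequel, while $\nu(c_K)$ is inserted to cancel a genuinely new bimodule-only stratum, is also right.

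However, your description of stratum (d) contains a genuine error, and it is an error precisely at the step the paper singles out as the "essential part of the computation." You assert that exactness of $K$ in $N$ plus Oh's removal of singularities forces the $N$-factor of a disc bubble on an $L_i\times K$ boundary segment to be constant, and then two sentences later say that this very stratum contributes the open-closed image of $b^0_{D^N}|K + gw_1^N|K + \sCO^0(\beta_0^N)$. These two claims are incompatible: a constant $N$-factor cannot produce the cochain $\tilde b^0_{D^N}$, which counts nonconstant discs in $\bar N$ with boundary on $K$ passing through $D_0^N$. The exactness argument applies only to bubbles carrying no interior marked point constraints in the $N$-direction. But the relevant bubble here is a $\Mod{(0,1)}{S-R+2}$-component carrying the two interior marked points constrained to lie in $D'_0=(D_0\times\bar N)\cup(\bar M\times D_0^N)$. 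When the interior marked point evaluates into the factor $\bar M\times D_0^N$, the $N$-factor is a disc in the \emph{partial compactification} $\bar N$ (not $N$) with boundary on $K$ and interior intersection with $D_0^N$; it has positive energy and is \emph{not} constant. This is exactly where the bimodule situation differs from the algebra situation.

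The correct bookkeeping is that this stratum splits into two sub-cases according to whether the $N$-factor of the $\Mod{(0,1)}{S-R+2}$ bubble is constant. In the constant case, the bubble is effectively a configuration in $M$ with interior marked points in $D_0\subset\bar M$, and it contributes one of the two terms $\nu_{\scrA}(b_\scrA)$ or $\nu_{\scrB}(b_\scrB)$ on the right-hand side of \eqref{Eqn:EquivtBimodule}. In the nonconstant case, the bubble is the fibre product with $\Mod{(0,1)}{1}(K)$ appearing in the paper's discussion of \eqref{eq:fibre_product_moduli_product_bounding_cochain}, and this is precisely what the additional term $\nu(c_K)$ in $c_{\scrK}$ is designed to cancel (using that $c_K$ bounds $b^0_{D^N}$ on $K$). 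By merging these two sub-cases into a single stratum (d) — and, worse, claiming they are both governed by a constancy argument that in fact excludes the second — you have lost the distinction that is the whole point of introducing $\nu(c_K)$, and your cancellation scheme would not close.
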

\begin{proof}[Sketch of proof]
The reader may wish to compare to the corresponding assertion in \cite[Proposition 3.20]{AbSm}.   We must verify Equation \eqref{Eqn:EquivtBimodule}, and identify the boundary strata of the moduli spaces with the terms in this equation. We discuss the essential part of the computation, neglecting the contributions of the maps $\CO$ and $\sCO$ which are straightforward adaptations of the situation for categories discussed in the prequel.

First, we use the embedding $\mathring{\bar{W}} \subset \Mdbar \times \Ndbar$ to see that all moduli spaces of stable discs that we are considering are compact.  The left hand side of Equation \eqref{Eqn:EquivtBimodule} corresponds to the boundary stratum of $ \Mod{(0,1)}{r+s+1} $ given in Equations \eqref{eq:boundary_mod_r_1_s_first}-\eqref{eq:boundary_mod_r_1_s_third} and Equation \eqref{eq:boundary_mod_r_1_s_sixth}. The two terms on the right hand side of Equation \eqref{Eqn:EquivtBimodule} respectively correspond to the boundary strata in Equations \eqref{eq:boundary_mod_r_1_s_fourth} and those in Equation \eqref{eq:boundary_mod_r_1_s_sixth} for which the map on the factor $\Mod{(0,1)}{S-R+2} $ is constant in the factor $\mathring{N}$.  If the map is not constant in this factor, then we obtain
\begin{equation}
   \Mod{(0,1)}{r|1|s} (m_{0}; y_{r|}, \ldots, y_{1|}, m_{1}, x_{|1},  x_{|s}) _{\ev_{|R}}\times_{K}   \Mod{(0,1)}{1}(K).
\end{equation}
In the absence of contributions from the maps $\sCO$ and $\CO$, this is precisely the boundary of the moduli space in Equation \eqref{eq:fibre_product_moduli_product_bounding_cochain}, and it is accounted for by the term $ \nu(c_{K})  $ in Equation \eqref{eq:K_equivariant_structure}.
\end{proof}

\subsection{Purity for elementary bimodules} \label{sec:equiv-elem}

We now further suppose that:

\begin{Hypothesis} \label{Hypothesis:Pure} The \nc-vector field $b_{M}$ is pure on $\scrA \subset \Fuk(\mathring{M})$, and $K \in \Fuk(\mathring{N})$ is pure.
\end{Hypothesis}
Denote by $\scrB = \scrA \times \{ K \}$ the full subcategory of $\Fuk(\mathring{W})$ comprising objects of the form $L' = L \times K$, for $L\in\scrA$.  The brane $K$ defines an elementary $(\scrB-\scrA)$-bimodule $\scrK$, as in the previous section.
\begin{Lemma} \label{lem:bimodule-pure}
Assuming Hypothesis \ref{Hypothesis:Pure}, $c_{\scrK} $ defines a pure equivariant structure on $\scrK$.
\end{Lemma}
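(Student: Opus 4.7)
The strategy is to exploit the product structure $M' = M \times N$ and a Künneth decomposition to reduce the purity of $\scrK$ to the individual purity hypotheses on $L \in \scrA$ and on $K$ (Hypothesis \ref{Hypothesis:Pure}). Fix $L \in \scrA$, write $L' = L \times K \in \scrB$, so that $\scrK(L,L') = CF^*(L',L')$. The task is to show that the chain map $\phi \mapsto c_{\scrK}^1(\phi) - c_{L'} \circ \phi + \phi \circ c_L$ induces the Euler field on $HF^*(L', L')$.

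First, I would install a Künneth quasi-isomorphism $CF^*(L', L') \simeq CF^*(L, L) \otimes_{\bk} CF^*(K, K)$, using product Hamiltonian perturbations and product almost complex structures on $M'$. The exactness of $K$ in $N$ forces all holomorphic discs with boundary on $K$ to be constant (by Oh's removal of singularities, as already invoked in Section \ref{Sec:BimodulesWithoutQuilts}), so the Floer data split cleanly along the two factors and all higher $A_{\infty}$-products respect the splitting.

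Second, I would show that each ingredient entering the purity operator decomposes under Künneth. Moduli of discs in $M'$ with a marked point in $D'_0 = (D_0 \times \Nbar) \cup (\Mbar \times D_0^N)$ split as the disjoint union of moduli of discs in $M$ meeting $D_0$ (paired with constant discs in $N$ mapping to points of $K$) and the mirror situation in $N$; the same applies to the moduli defining $\sCO$ and $\CO$, since by construction $gw_1' = gw_1 \times [N] + [M] \times gw_1^N$ and $\beta_0' = \beta_0 \times [\Nbar] + [\Mbar] \times \beta_0^N$. Consequently, a bounding cochain for $b^0_{D'}|_{L'}$ may be taken to be $c_{L'} = c_L \otimes [K] + [L] \otimes c_K$, and $c_{\scrK}$ decomposes up to coboundary in $\mathcal{E}(\scrK)$ into an $M$-piece, comprising the $M$-components of $\tilde c_\scrK$, $\sCO_{\scrK}(\beta'_0)$, and $\CO_{\scrK}(gw'_1)$, together with an $N$-piece, comprising the $N$-components of those three operators and $\nu(c_K)$.

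Finally, I would carry out the substitution. Under the Künneth isomorphism, and using purity of $c_L$ and $c_K$ to identify each with the Euler field on the appropriate factor, the $M$-components of $c_{\scrK}^1(\phi)$, $c_{L'} \circ \phi$, and $\phi \circ c_L$ combine to give $E_L \otimes \mathrm{id}$, while the $N$-components of $c_{\scrK}^1(\phi)$ and $c_{L'} \circ \phi$ combine to give $\mathrm{id} \otimes E_K$; together these yield the Euler field on $HF^*(L', L') \cong HF^*(L, L) \otimes HF^*(K, K)$. The principal technical obstacle is the Künneth compatibility of the cochain-level operators in the second step; the argument parallels the construction of $b' = b + b^N$ on $\Fuk(M')$, with exactness of $K$ ensuring that the only non-trivial contributions from the $N$-factor are those accounted for by $\beta_0^N$, $gw_1^N$, and the bounding cochain $c_K$.
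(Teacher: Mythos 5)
Your proposal follows essentially the same route as the paper: both proofs reduce to the K\"unneth theorem for Floer cohomology, choosing product Hamiltonian perturbations and product almost complex structures so that the equivariance operator splits as $\id \otimes b_N^1 + b^1 \otimes \id$, whence purity is inherited from the purity hypotheses on $\scrA$ and $K$. Your version is more detailed, unpacking the individual contributions ($\tilde c_\scrK$, $\sCO_{\scrK}(\beta_0')$, $\CO_{\scrK}(gw_1')$, $\nu(c_K)$) and noting the product form of the bounding cochains, which the paper leaves implicit under the phrase ``by forcing all equations to split.''

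One small imprecision: you fix a single $L$ and take $L' = L \times K$, so that $\scrK(L,L') = CF^*(L\times K, L\times K)$. To establish purity of $c_{\scrK}$ you must check the Euler condition on $\scrK(L_1, L_2')$ for \emph{all} pairs of objects $L_1 \in \scrA$, $L_2' = L_2 \times K \in \scrB$, not just the diagonal ones. The paper is careful to take $L_1, L_2$ possibly distinct, giving the K\"unneth isomorphism $\scrK(L_1, L_2') \cong CF^*(L_1, L_2) \otimes CF^*(K,K)$. Your argument extends verbatim to this case, but the statement as written omits it.
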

\begin{proof}
After unwinding definitions, this is a repackaging of the K\"unneth theorem in Floer cohomology.  Let $L_i \in \scrA$,  for $i=1,2$, and let $L_i' = L_i\times K \in \scrB \subset \Fuk(\mathring{N})$.  By choosing the Hamiltonian perturbation in Equation \eqref{eq:Hamiltonian_bimodule} to be the sum of the Hamiltonians on $\mathring{M}$ and $\mathring{N}$, we obtain an isomorphism of cochain complexes
\begin{equation} \label{first}
\scrK(L_1,L_2') \equiv CF^*(L_1,L_2) \otimes CF^*(K,K).
\end{equation}
The equivariant structure $c_{\scrK}$, together with the given \nc-vector fields on $\scrA$ and $\scrB$,  yields an endomorphism  of the cochain complex $\scrK(L_1,L_2')$ via \eqref{eq:equivariant_endo}. 
By forcing all equations to split, our choices of auxiliary data ensure that this endomorphism also splits as a sum
\begin{equation}
     \id \otimes b_{N}^{1} + b^{1}_M \otimes \id \ \in \ \mathrm{Aut}(\scrK(L_1,L_2')).
\end{equation}
Purity for $c_{\scrK}$ therefore follows from purity for $b_{N}^{1} $ and $b^{1}_M$.
\end{proof}
\begin{cor} \label{cor:Kunneth-formal}
The K\"unneth functor $\scrF_K: \scrA \rightarrow \scrB$ of \eqref{eq:kunneth_functor} is formal relative to the formality quasi-isomorphisms $\scrA \cong H \scrA$ and $\scrB \cong H \scrB$ induced by the equivariant structures on $\scrA$ and $\scrB$.
\end{cor}
\begin{proof}
This follows from the discussion before Proposition \ref{prop:purity_implies_formal_functor} together with Lemma \ref{lem:bimodule-pure}.
\end{proof}

\section{Cup bimodules}

\subsection{The Milnor fibre and the Hilbert scheme} \label{Sec:Hilb}

Let $A_{2n-1}$ denote the Milnor fibre
\[
\left \{ x^2 +y^2 + \prod_{i=1}^{2n} (z-i) = 0\right\} \subset \bC^3
\]
equipped with the restriction of the standard K\"ahler form from $\bC^3$. 
This is the total space of a Lefschetz fibration (from projection $\pi$ to the $z$-plane) with nodal fibres over $\{1,2,\ldots,2n\}$.  The Hilbert scheme of $n$ points $\Hilb^{[n]}(A_{2n-1})$ contains a distinguished divisor $D_{rel}$  which is the ``relative Hilbert scheme" of the projection to the $z$-plane. (This is a minor abuse of notation, initiated in the prequel, which comes from the fact that when $n=2$, $D_{rel}$ is closely related to the relative second symmetric product of that projection.)  More precisely, the inclusion $\bC[z] \subset \bC[x,y,z]$ yields an inclusion
\[
R = \bC[z] \cong \bC[z]/(\bC[z] \cap \langle x^2 +y^2 + \prod_{i=1}^{2n} (z-i)\rangle ) \subset \bC[x,y,z]/ \langle x^2 +y^2 + \prod_{i=1}^{2n} (z-i)\rangle = \mathcal{O}(A_{2n-1})
\]
and hence an ideal $\mathcal{I} \subset \mathcal{O}$ yields an ideal $\mathcal{I} \cap R \subset R$, which we regard as the projection of the subscheme $\mathcal{O}/\mathcal{I}$ under projection $A_{2n-1} \to \bC_z$. The scheme $R/(I \cap R)$ has length at most $n$, and we define $D_{rel}$ to comprise the ideals $I$ for which it has length $<n$.  When $n=2$, the relative Hilbert scheme is in fact smooth, being a desingularisation of the relative second symmetric product of the fibration over $\bC_z$. Note that there are subschemes supported on a single fibre which do \emph{not} lie in the relative Hilbert scheme: for instance, any length two subscheme supported at a smooth point of a fibre with infinitesimal tangent direction not tangent to the fibre.  The definition of the relative Hilbert scheme, in our sense,  extends naturally to fibred surfaces $Z \to B$ which are not globally affine: it is straightforward to define $D_{rel}^{n=2} \subset \Hilb^{[2]}(Z)$ by the interpretation as a desingularised relative symmetric product, and we can define the divisor  $D_{rel}$ for larger $n$ to be the image of the natural map $D_{rel}^{n=2} \times \Hilb^{[n-2]}(Z) \to \Hilb^{[n]}(Z)$.

 Let $\scrY_n \subset \Hilb^{[n]}(A_{2n-1})$ denote the open subset of the Hilbert scheme given by removing the relative Hilbert scheme (as defined above).   By Manolescu's work \cite{Manolescu}, this is holomorphically isomorphic to the Springer fibre appearing in \cite{SS} (see Section \ref{Sec:Flags}). We equip $\scrY_n$ with an exact K\"ahler form which is product-like (induced from the product form on $A_{2n-1}^n$) outside an arbitrarily small neighbourhood of the diagonal.

Let $p: \mathrm{Bl}_0(\bC^2) \to \bC^2$ denote the blow-up at the origin. The composition of this  with the trivial projection $\bC^2 \to \bC$, $(x,y) \mapsto x$, is a Lefschetz fibration. Indeed, $\mathrm{Bl}_0(\bC^2) = \{((x,y), [z:w]) \in \bC^2\times \bP^1 \, | \, xw=yz\}$ and in the chart $w=1$ the map becomes $\{x=yz\} \mapsto \bC_x$, which is just a presentation of the usual Lefschetz singularity.  It follows that the blow-up $Z$ of $\bP^1\times \bP^1$ at $2n$ points lying along a fixed section of the trivial projection $\bP^1 \times \bP^1 \to \bP^1$ is a Lefschetz fibration with $2n$ singular fibres, which we may view as a compactification $\bar{\bar{A}}_{2n-1}$ of the Milnor fibre.   $Z$ contains divisors defined by sections $s_0, s_{\infty}$ of its Lefschetz fibration structure over $\bP^1$ (one of which is the proper transform of the section that was blown up, the other coming from a disjoint section) and a smooth fibre $F_{\infty}$ at infinity. We will write $\bar{A}_{2n-1}$ for the properification of $A_{2n-1} \to \bC$ which is the complement of $F_{\infty} \subset Z$; this admits a Lefschetz fibration over $\bC$ with general fibre $\bP^1$ and $2n$ reducible fibres $\bP^1 \vee \bP^1$.  In the notation of Section \ref{Sec:Set-up}:

\begin{itemize}
\item The projective variety is $\Mdbar = \Hilb^{[n]}(Z)$.
\item $D_0$ is the divisor of subschemes whose support meets $s_0 \cup s_{\infty}$ in $Z$.  
\item $D_{\infty}$ is the divisor of subschemes whose support meets $F_{\infty}$.
\item $D_r \subset \Mdbar$ is the relative Hilbert scheme in the sense defined above.  
\end{itemize}

The previous discussion  implies that $M=\scrY_n$ and $\Mbar = \Mdbar \backslash D_{\infty} = \Hilb^{[n]}(\bar{A}_{2n-1})$. By taking push-offs of the sections $s_0, s_{\infty}$ in $Z\backslash F_{\infty}$ one obtains the linearly equivalent divisor $D_0'$.  When we consider bimodules over products $\scrY_n \times \scrY_{n-1}$ we compactify each factor as above.  That Hypotheses \ref{Hyp:Main} and  \ref{Hyp:GWvanishes} hold for these choices (in particular vanishing of the Gromov-Witten invariant $GW_1$ in this situation) is proved in \cite[Section 6]{AbSm}, which also gives an explicit choice of bounding cochain $gw_1$.  For the strengthened version of Hypothesis \ref{Hyp:Main} part (3), cf. Remark \ref{Remark:Duke_correction}, the fact that the holomorphic volume form on the Hilbert scheme has no zeroes or poles on $D_r$ follows from the explicit formula for the volume form given in \cite[p.766]{Beauville}.

\subsection{Crossingless matchings}

Any path $\gamma$ between critical values of $A_{2n-1} \rightarrow \bC$ defines a Lagrangian matching sphere, so an $n$-tuple of pairwise disjoint paths $\wp$ which join the critical points in pairs defines a Lagrangian submanifold $L_{\wp} \cong (S^2)^n \subset \scrY_n$.  The ``Markov I'' move of \cite[Lemma 48]{SS} implies that one can slide any arc of $\wp$ over any other arc (keeping the end-points fixed) without changing the Hamiltonian isotopy class of the associated Lagrangian submanifold $L_{\wp}$, even though the corresponding isotopy necessarily passes through the diagonal locus in the Hilbert scheme (where the K\"ahler form is not product-like).

\begin{Definition}
The symplectic arc algebra $\scrH_n^{symp}$ is the graded algebra $\oplus_{\wp,\wp'} HF^*(L_{\wp}, L_{\wp'})$, where  $\wp, \wp'$ run over the set of upper-half-plane crossingless matchings in $\bC$. The cochain level $A_{\infty}$-algebra will be denoted $ C\scrH_n^{symp} $.
\end{Definition}

Since $\scrY_n$ is exact and convex at infinity, and the Lagrangians $L_{\wp}$ are simply-connected (hence exact), Floer cohomology is defined unproblematically.  The Lagrangians $L_{\wp}$ are orientable and admit unique spin structures, so one can work in characteristic zero.  There are $\frac{1}{n+1}{2n \choose n}$ upper half-plane matchings, so the underlying graded algebra of $\scrH_n^{symp}$ is a finite-dimensional $\bk$-algebra.  In fact $c_1(\scrY_n)=0$ and the Lagrangians $L_{\wp}$ have vanishing Maslov class, hence admit gradings in the sense of \cite{seidel-graded}.  Any choice of gradings for the $L_{\wp}$ equips  the graded algebra $\scrH_n^{symp}$ with an absolute $\bZ$-grading.  There is no obvious preferred choice of grading, and indeed two distinct gradings will be important in the argument: one which is ``symmetric", which is useful for purity, discussed in Section \ref{Sec:Purity}; and one which is concentrated in even degrees, introduced in Section \ref{Sec:Basis}, which is useful for controlling signs in the cohomological algebra.  

For an upper-half-plane crossingless matching $\wp$, we will denote by $\bar{\wp}$ its reflection into the lower half-plane.  An iterative application of the Markov I move shows that $L_{\wp}$ is Hamiltonian isotopic to $L_{\bar{\wp}}$. For any upper half-plane matchings $\wp, \wp'$, the Lagrangians $L_{\wp}$ and $L_{\bar{\wp'}}$ meet transversely. As shown in \cite[Section 4.5]{AbSm}, the mod 2 degrees of all intersection points co-incide, so the Floer differential in $CF^*(L_{\wp}, L_{\bar{\wp'}})$ vanishes identically.   (As noted above, we will introduce a different grading on the symplectic arc algebra in Section \ref{Sec:Basis};  in the current situation, the  Floer differential vanishes because the relevant moduli spaces are empty, and hence vanishes for any choice of grading.) There is therefore a vector space isomorphism
\[
\scrH_n^{symp} \ \cong \ \bigoplus_{\wp, \wp'} CF^*(L_{\wp}, L_{\bar{\wp'}}).
\] 
As a relatively graded group, $CF^*(L_{\wp}, L_{\bar{\wp'}}) \cong H^*(S^2)^{\otimes c(\wp,\wp')}$, where $c(\wp,\wp')$ is the number of components of the planar unlink $\wp \cup \bar{\wp'}$, see \cite[Proposition 5.12]{AbSm}.


\subsection{Reduction to a basic case}\label{Sec:MeetInCodim2}

We will say that crossingless matchings $\wp, \wp'$ of $2n$ points \emph{meet in codimension} $k$, or that the corresponding Lagrangian submanifolds have that property, if $HF^*(L_{\wp}, L_{\wp'}) \cong H^*(S^2)^{\otimes (n-k)}$ has real rank $2n-2k$.  (The terminology is inherited from the structure of the ``compact core'' of the quiver variety underlying $\scrY_n$, cf. Lemma \ref{Lem:Wehrli}.)  Thus, in general, $\wp$ and $\wp'$ meet in codimension $n-c(\wp,\wp')$. The following is then manifest:

\begin{Lemma} \label{Lem:Meet_codimension_one}
The matchings $\wp$ and $\wp'$ meet in codimension zero if and only if $\wp = \wp'$, and meet in codimension one if and only if $\wp$ and $\wp'$ share exactly $n-2$ arcs. \qed
\end{Lemma}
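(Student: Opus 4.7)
The plan is to reduce everything to a pure combinatorial statement about the planar unlink $\wp\cup\bar{\wp'}$, using the formula $c(\wp,\wp')=\dim_{\R} L_{\wp}\cap L_{\wp'}/2 $ (equivalently, the stated identification of $CF^*$ with the cohomology of a product of spheres) recalled in the paragraph immediately preceding the lemma. By definition $\wp,\wp'$ meet in codimension $k$ iff $c(\wp,\wp')=n-k$, so the lemma becomes a count of loops in $\wp\cup\bar{\wp'}$.

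The key observation is that every loop in $\wp\cup\bar{\wp'}$ is an alternating concatenation of upper-half-plane arcs of $\wp$ with lower-half-plane arcs of $\bar{\wp'}$, since the arcs of $\wp$ and $\bar{\wp'}$ only meet at the $2n$ points on the real axis. Hence if the $i$-th loop contains $k_i$ arcs of $\wp$, it also contains $k_i$ arcs of $\bar{\wp'}$, and $\sum_i k_i=n$.

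First I handle codimension zero. Here $c(\wp,\wp')=n$, so all $k_i=1$. Thus each arc $\alpha$ of $\wp$ forms a closed loop together with a single arc of $\bar{\wp'}$ joining the same two endpoints; reflecting back, this means every arc of $\wp$ coincides with the corresponding arc of $\wp'$, so $\wp=\wp'$. The converse is obvious.

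Next I handle codimension one. If $\wp$ and $\wp'$ share exactly $n-2$ arcs, then those $n-2$ shared arcs contribute $n-2$ bigon components to $\wp\cup\bar{\wp'}$, and the two remaining arcs of $\wp$ together with the two remaining arcs of $\bar{\wp'}$ involve the same four endpoints and hence form a single additional loop (it cannot split into two bigons, for that would force the remaining arcs of $\wp$ and $\wp'$ to agree too, contradicting the assumption). Thus $c(\wp,\wp')=n-1$. Conversely, if $c(\wp,\wp')=n-1$ then $\sum k_i=n$ with $n-1$ positive integer summands forces exactly one $k_i=2$ and the remaining $n-2$ to equal $1$; the unit loops then give $n-2$ shared arcs, while the loop with $k_i=2$ comes from the four arcs (two of $\wp$, two of $\wp'$) in which the two matchings differ.

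The argument is essentially a bookkeeping exercise; there is no real obstacle, only the need to verify that in the shared-arc count a loop of length two in $\wp\cup\bar{\wp'}$ genuinely corresponds to an arc common to $\wp$ and $\wp'$ (which uses that $\wp$ and $\wp'$ are both upper-half-plane matchings, so reflection identifies arcs of $\wp'$ with arcs of $\bar{\wp'}$).
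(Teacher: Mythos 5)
Your proof is correct, and since the paper declares this lemma manifest and gives no argument, your careful write-out of the combinatorial bookkeeping — each component of $\wp\cup\bar{\wp'}$ is an alternating cycle using an equal number $k_i$ of arcs from each matching, and $\sum_i k_i=n$ with $c(\wp,\wp')$ components forces all $k_i=1$ when $c=n$ and exactly one $k_i=2$ when $c=n-1$ — is precisely the verification the paper intends. (The opening formula $c(\wp,\wp')=\tfrac{1}{2}\dim_{\bR}\bigl(L_{\wp}\cap L_{\wp'}\bigr)$ anticipates the clean-intersection identifications of Section 5; at this point of the paper $c$ is simply defined as the number of components of the planar unlink, which is all your argument actually uses.)
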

If $\wp$ and $\wp'  $ meet in codimension one, then, for any crossingless matching $\wp''$, the codimension of their intersection with $\wp''$ differs by one, i.e. 
\begin{equation}
  c(\wp,\wp'') =   c(\wp',\wp'') \pm 1.
\end{equation}
The following result asserts that there is a way of interpolating between any two crossingless matchings so that the codimensions form a monotone sequence.

\begin{Lemma} \label{Lem:Codim1Interpolate}
For any pair of crossingless matchings $\wp_0$, $\wp_k$ which meet in codimension $k$, there is an interpolating sequence
\[
\wp_0, \wp_1, \ldots, \wp_k
\]
such that $\wp_i$ and $\wp_{i+1}$ meet in codimension one, $\wp_0$ and $\wp_{i}$ in codimension $i$,  and $\wp_i$ and $\wp_{k}$ in codimension $k-i$.
\end{Lemma}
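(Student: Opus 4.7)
\emph{Proof plan.} I would induct on the codimension $k$. The base case $k=0$ is vacuous. For $k \geq 1$, it suffices to produce an intermediate matching $\wp_1$ with $c(\wp_0, \wp_1) = n-1$ and $c(\wp_1, \wp_k) = n-k+1$; the inductive hypothesis applied to $(\wp_1, \wp_k)$ then produces the remaining $\wp_2, \ldots, \wp_k$.

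\emph{Construction of $\wp_1$.} Consider the planar link $\wp_0 \cup \bar\wp_k \subset \bR^2$ obtained by superimposing $\wp_0$ with the reflection $\bar\wp_k$ of $\wp_k$ into the lower half-plane. Each component is either a bigon (corresponding to an arc shared by $\wp_0$ and $\wp_k$) or a longer loop alternating between $\wp_0$-arcs in the upper half-plane and $\bar\wp_k$-arcs in the lower. Since $c(\wp_0, \wp_k) = n-k < n$, there is a non-bigon loop. Select an innermost non-bigon loop $L$: one bounding a disk $D \subset \bR^2$ whose interior contains no other non-bigon loop (it may contain bigons, i.e., further shared arcs nested inside). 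Label the upper arcs of $L$ cyclically as $a_1, \ldots, a_m \in \wp_0$ with $m \geq 2$ and the lower arcs as $\bar\beta_1, \ldots, \bar\beta_m$, with $\beta_j \in \wp_k$ and $a_j, a_{j+1}$ joined along $L$ by $\bar\beta_j$ (indices mod $m$). Writing $a_j = \{p_j, q_j\}$, one has $\beta_j = \{q_j, p_{j+1}\}$. Fix any index $i$ and set
\[
\wp_1 := (\wp_0 \setminus \{a_i, a_{i+1}\}) \cup \{\beta_i,\ \alpha\}, \qquad \alpha := \{p_i, q_{i+1}\}.
\]
In other words, among the two non-crossing planar matchings of the four endpoints $\{p_i, q_i, p_{i+1}, q_{i+1}\}$, we replace the one used by $\wp_0$ with the other.

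\emph{Verifying codimensions.} That $c(\wp_0, \wp_1) = n-1$ is immediate: the matchings agree on $n-2$ arcs, and the swap creates in $\wp_0 \cup \bar\wp_1$ a single length-$4$ alternating loop together with $n-2$ bigons. For $c(\wp_1, \wp_k) = n-k+1$, note that the shared arcs between $\wp_1$ and $\wp_k$ are precisely those previously shared with $\wp_0$, together with the new arc $\beta_i$. Tracing the link $\wp_1 \cup \bar\wp_k$, one sees that the loop $L$ is ``pinched'' at $\beta_i$: in place of $L$ (of length $2m$) one finds a new bigon corresponding to $\beta_i$ together with a single alternating loop of length $2(m-1)$ using the arcs $a_1, \ldots, \hat{a}_i, \hat{a}_{i+1}, \ldots, a_m, \alpha$ and $\bar\beta_1, \ldots, \hat{\bar\beta}_i, \ldots, \bar\beta_m$. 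All other components of $\wp_0 \cup \bar\wp_k$ are unaffected, so $c(\wp_1, \wp_k) = c(\wp_0, \wp_k)+1$.

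\emph{Main obstacle.} The crux is showing that $\wp_1$ is indeed a crossingless matching of $2n$ points in the upper half-plane, i.e.\ that the new arcs $\beta_i$ and $\alpha$ can be drawn in the upper half-plane disjointly from each other and from every remaining arc of $\wp_0 \setminus \{a_i, a_{i+1}\}$. Here innermost-ness of $L$ is essential. The loop $L$, viewed in $\bR^2$, bounds the disk $D$, and by hypothesis $D$ contains only shared arcs $\wp_0\cap \wp_k$ in its interior; these lie inside sub-disks bounded by the $a_j$ and are unaffected by the local swap. The new arc $\alpha$ can be drawn as a narrow perturbation, into the upper half-plane, of the concatenation $a_i \ast \bar\beta_i \ast a_{i+1}$ (the portion of $L$ from $p_i$ to $q_{i+1}$); the new arc $\beta_i$ is already present in $\wp_k$ and, because everything between it and $L$ is either outside $D$ or a shared bigon inside, it can be drawn parallel to $\bar\beta_i$ on the upper side. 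A careful planar case analysis, distinguishing the possible cyclic orderings of $p_i, q_i, p_{i+1}, q_{i+1}$ on the real line, then confirms that no crossings are introduced, completing the inductive step.
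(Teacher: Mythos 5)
Your argument is more detailed than the paper's one-line sketch and you correctly isolate the crossing-freeness of $\wp_1$ as the crux, but there is a genuine gap exactly there: the assertion that \emph{any} index $i$ yields a crossingless $\wp_1$ is false, and the perturbation heuristic does not repair it. Take $n=3$, $\wp_0 = \{(1,4),(2,3),(5,6)\}$, $\wp_k = \{(1,2),(3,6),(4,5)\}$; these are valid upper-half-plane matchings meeting in codimension $k=2$, and $\wp_0 \cup \bar\wp_k$ is the single loop $1\to 4\to 5\to 6\to 3\to 2\to 1$, which is vacuously innermost. In the cyclic labelling $a_1=(1,4)$, $a_2=(5,6)$, $a_3=(2,3)$, choosing $i=2$ gives $\beta_2=(3,6)$, $\alpha=(2,5)$, so your recipe produces $\wp_1=\{(1,4),(3,6),(2,5)\}$ --- but $(1,4)$ crosses both $(2,5)$ and $(3,6)$, since $1<2<4<5$ and $1<3<4<6$ interleave. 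The perturbation argument breaks down because the endpoints $p_2=5$ and $q_3=2$ of $\alpha$ lie on opposite sides of the surviving arc $(1,4)\in\wp_0$: no upper-half-plane arc joins them without crossing it. Equivalently, there is \emph{no} crossingless matching containing $(3,6)$ and meeting $\wp_0$ in codimension one, so the arc $\gamma=\beta_2$ of $\wp_k$ is simply unusable. (By contrast, $i=1$ or $i=3$ do work here, producing $\wp_{\mathrm{mix}}$ or $\wp_{\mathrm{plait}}$ respectively.) So the ``careful planar case analysis'' you defer would in fact reveal an obstruction rather than confirm the unrestricted claim; what is missing is a proof that \emph{some} admissible index exists.

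A usable arc $\gamma=\{a,b\}\in\wp_k\setminus\wp_0$, with $\wp_0$-partners $a'$ of $a$ and $b'$ of $b$, is one for which the replacement arc $\{a',b'\}$ does not interleave with $\gamma$ on the real line; one then needs to show that such a $\gamma$ always exists (and also to verify that $\{a',b'\}$ and $\gamma$ avoid the remaining arcs of $\wp_0$, which is easier). For what it is worth, the paper's own proof is equally terse at this point (``pick any arc in $\wp_k$ not in $\wp_i$ and form the unique codimension-one neighbour containing it''), so your more explicit version is an improvement in exposition; but as written it has inherited rather than closed the gap. You should also note that the final codimension identities $c(\wp_0,\wp_i)=n-i$ require (and do follow from) the triangle inequality for the codimension function, which your inductive set-up tacitly uses.
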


\begin{proof} The proof is by induction on the the codimension. In the inductive step, we pick any arc in $\wp_k $ which does not lie in $\wp_i$; we define $\wp_{i+1}$ to be the unique matching containing this arc which meets $\wp_{i}$ in codimension $1$ (this process is illustrated in Figure \ref{Fig:IntermediateLags}). \end{proof}

\begin{center}
\begin{figure}[ht]
\includegraphics[scale=0.15]{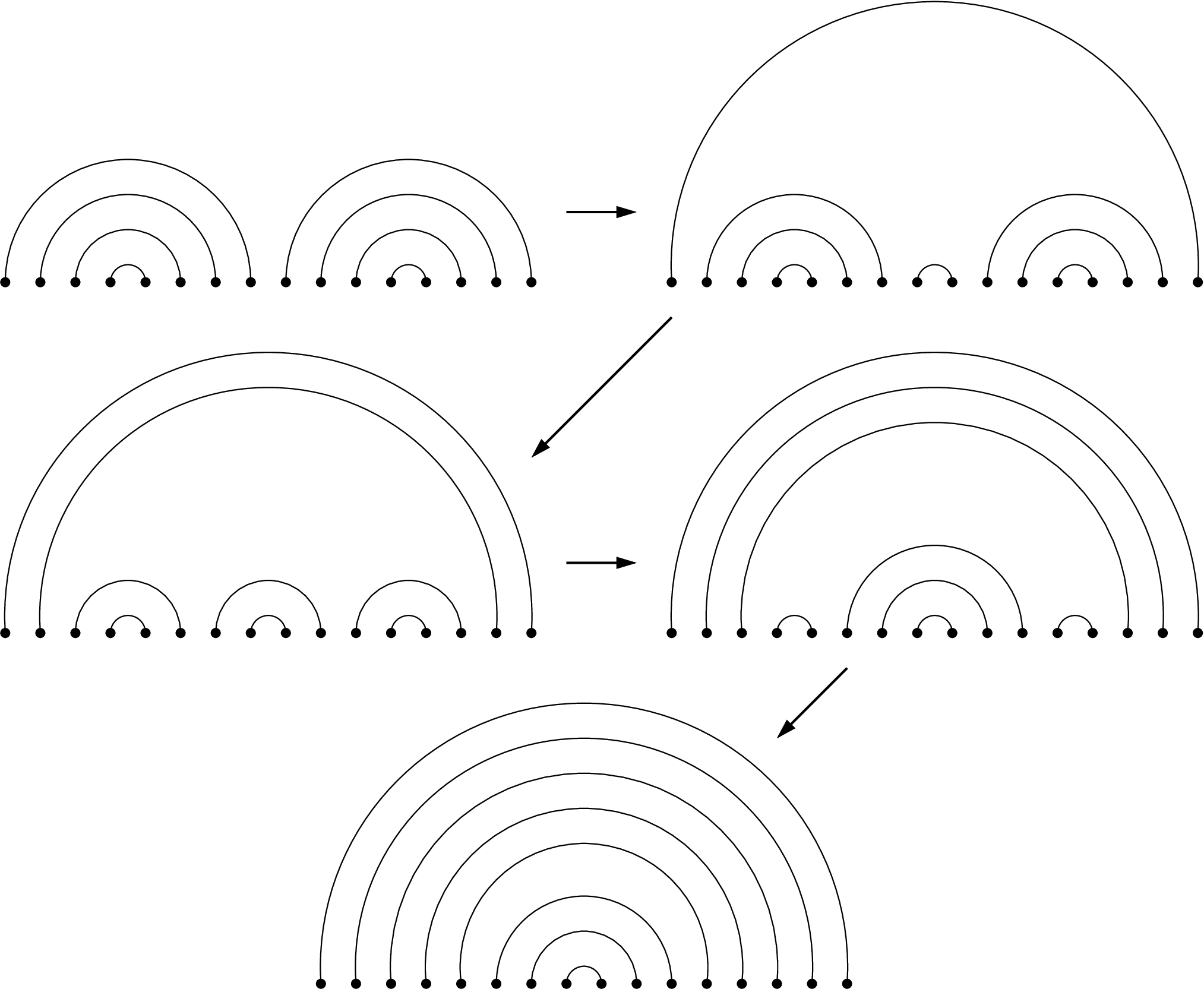}
\caption{Interpolating crossingless matchings by ones which intersect in codimension one\label{Fig:IntermediateLags}}
\end{figure}
\end{center}

Since $CF^*(L_{\wp}, L_{\bar{\wp'}}) \cong H^*(S^2)^{\otimes c(\wp,\wp'})$ as a relatively graded group, there is a well-defined rank one subspace 
\begin{equation} \label{Eqn:Min_degree}
k_{min}(\wp,\wp') \subset CF^*(L_{\wp}, L_{\bar{\wp'}}) \subset \scrH_n^{symp}
\end{equation} which is spanned by any minimal degree generator.   This subspace does not depend on the choice of graded structures on the Lagrangians.

\begin{Lemma}  \label{Lem:Constant_triangle}
Let $\wp, \wp'$ and $\wp''$ be crossingless matchings such that $\wp$ and $\wp'$ meet in codimension one.  Suppose moreover that $c(\wp,\wp'') = c(\wp',\wp'') -1$.  Then the Floer product
\[
k_{min}(\wp', \wp'') \otimes k_{min}(\wp,\wp') \longrightarrow k_{min}(\wp, \wp'')
\]
is non-trivial.
\end{Lemma}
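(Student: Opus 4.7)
The strategy is to produce, and count, an essentially \emph{constant} holomorphic triangle realising the Floer product of the minimum-degree generators. The name of the lemma and the codimension-one hypothesis point to the key reduction: $\wp$ and $\wp'$ share $n-2$ arcs, so the non-trivial geometry localises to the two arcs on which they differ, while the remaining $n-2$ arcs contribute product ``spectator'' factors, each a Lagrangian 2-sphere, on which the product acts through the identity on the unit class in $H^0(S^2)$. Under this reduction, the statement becomes a local computation involving three Lagrangians built from just $2$ non-shared arcs each, the other interactions being controlled by a K\"unneth-type argument in the spirit of Corollary~\ref{cor:Kunneth-formal}.

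In an appropriate plumbing model (or, equivalently, for a compatible almost-complex structure making the shared spheres split off locally), I would identify a distinguished point $p$ in the clean intersection $L_\wp\cap L_{\bar\wp'}\cap L_{\bar\wp''}$. Under the Morse--Bott/pearly Fukaya category formalism of Section~\ref{Sec:Set-up}, the minimum-degree generators of each of the three pairwise Floer complexes $CF^*(L_\wp,L_{\bar\wp'})$, $CF^*(L_{\wp'},L_{\bar\wp''})$ and $CF^*(L_\wp,L_{\bar\wp''})$ should be represented by (or canonically supported near) $p$, using the auxiliary Morse data on the clean intersection. The constant map at $p$, refined by these Morse data, then yields a rigid holomorphic triangle whose contribution to the Floer product reduces to an intersection number on the clean intersection locus, valued in the orientation lines determined by the spin/brane structures; by construction this intersection number is $\pm 1$. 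Non-vanishing of the product in the minimum-degree component then follows because any competing non-constant triangle has strictly positive symplectic area and, by an index/grading count, contributes only to strictly higher-degree summands.

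\textbf{Main obstacle.} The principal difficulty is making the constant-triangle picture genuinely rigorous: the Hilbert scheme $\scrY_n$ is not literally a product, so the factorisation into a $2$-arc local piece and an $(n-2)$-arc spectator piece is only approximate, and the K\"ahler form fails to split near the diagonal locus where the Lagrangians $L_\wp$ interact. One must therefore either work in a plumbing model that genuinely splits (and then transfer back via a deformation/cobordism argument that preserves the triangle count), or else arrange a family of almost-complex structures for which the constant triangle remains rigid and regular, while controlling all bubbling by exactness together with the nef hypothesis on $D_\infty$. Verifying that the minimum-degree intersection $p$ persists to a clean triple intersection in the localised model, and that the Morse-Bott count at $p$ is indeed $\pm1$ (not zero), is the technical heart; this relies on the explicit description of the Lagrangians $L_\wp$ as iterated vanishing cycles and on the combinatorics of the two non-shared arcs of $\wp,\wp'$ relative to $\wp''$ enforced by the hypothesis $c(\wp,\wp'')=c(\wp',\wp'')-1$.
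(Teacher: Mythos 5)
Your proposal takes essentially the same route as the paper: identify a rigid constant holomorphic triangle realising the product of minimal-degree generators, and use exactness to show it is the unique contribution in that degree. The paper in fact gives both of the two versions you hover between. The ``stronger'' version invokes the conormal-plumbing machinery (Lemma~\ref{lem:Conv3} together with Lemma~\ref{lem:OneArcChanged}); that is close to your clean-intersection plan, since those lemmas package precisely the Morse--Bott/pearly count at the triple intersection you want to localise to. The ``more direct'' version places $\wp,\wp'$ in the upper half-plane and $\wp''$ in the lower: then the relevant intersections with $L_{\bar\wp''}$ occur transversely at the critical points of $A_{2n-1}\to\bC$, the constant triangle at the odd critical points is a genuine (non-pearly) holomorphic map whose regularity is elementary to check, and exactness fixes the areas of all contributing triangles by the actions of the three isolated intersection points, so the constant one is the only contribution. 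That placement also neutralises the ``main obstacle'' you flag about the Hilbert scheme not splitting near the diagonal: once $\wp''$ is reflected to the lower half-plane, the intersection configuration stays in the region where the K\"ahler form is product-like, so no deformation/cobordism transfer or careful choice of family of almost-complex structures is needed. So your argument is correct in outline, but the paper's half-plane trick lets one avoid the hardest of the technical verifications you (rightly) identified.
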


\begin{proof} This follows from Lemma \ref{lem:Conv3} in conjunction with Lemma \ref{lem:OneArcChanged}, which together give an explicit plumbing model for the relevant Floer product.  One can also give a direct proof.    Lemma \ref{Lem:Meet_codimension_one} implies that $\wp$ and $\wp'$ share $n-2$ arcs, and the remaining two arcs belong to distinct components of $\wp' \cup \wp''$ but the same component of $\wp \cup \wp''$.  Place $\wp''$ in the lower half-plane and $\wp, \wp'$ in the upper half-plane. The minimal degree generator for $CF^*(L_{\wp}, L_{\wp'})$ comprises the odd intersection points if the critical points are labelled $\{1,\ldots, 2n\}$.   One sees that the constant triangle is a solution; that it is then regular follows from the fact that the triple of arcs at each intersection point in the $A_{2n-1}$-surface has the correct cyclic order, compare to \cite[Figure 14 \& Lemma 5.18]{AbSm}. Since the Lagrangians are exact, the area of any contributing holomorphic triangle is determined by the actions of the isolated intersection points, which implies that the constant triangle is then the only contribution to the product. 
\end{proof}

\subsection{Purity of the symplectic arc algebra}\label{Sec:Purity}

\newcommand{\mix}{\wp_\mathrm{mix}}

We summarise the main result of \cite{AbSm}. Certain of the Lagrangian submanifolds $L_{\wp}$ have a special role.
\begin{itemize}
\item The Lagrangian associated to the crossingless matching comprising a sequence of adjacent arcs joining the critical values in pairs $\{1,2\}, \{3,4\}, \ldots, \{2n-1,2n\}$ is denoted $L_{\wpb}$; this is the \emph{plait} matching.  
\item The Lagrangian associated to the crossingless matching comprising a sequence of nested arcs joining the critical values in pairs $\{1,2n\}, \{2,3\}, \ldots, \{2n-2,2n-1\}$ is denoted $L_{\mix}$; this is the \emph{mixed} matching.
\end{itemize}
\begin{center}
\begin{figure}[ht]
\includegraphics[scale=0.4]{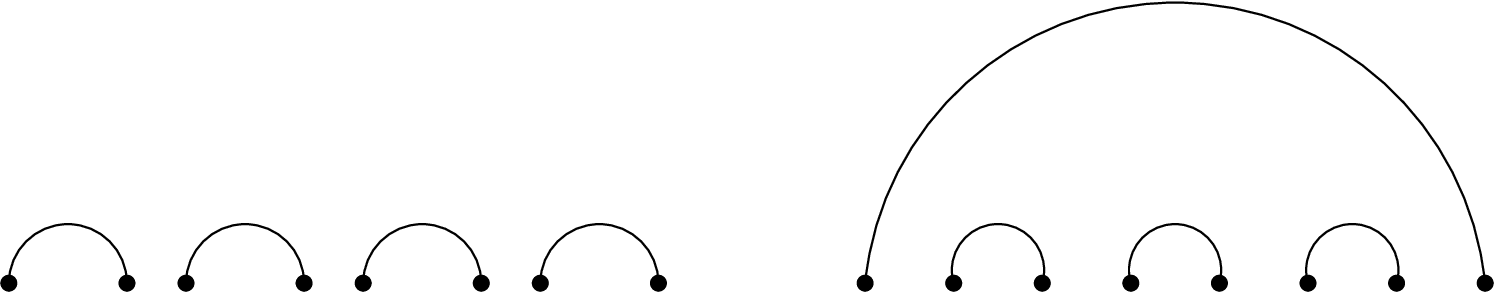}
\caption{The plait (left) and mixed (right) matchings of $2n$ points\label{Fig:PlaitMixed}.}
\end{figure}
\end{center}

\begin{Proposition}\cite{AbSm} \label{Prop:hittopclass}
For any $\wp, \wp'$, the Floer cohomology $HF^*(L_{\wp}, L_{\wp'})$ is a cyclic module, generated by $k_{min}(\wp,\wp')$, over each of $HF^*(L_{\wp}, L_{\wp}) $ and $HF^*(L_{\wp'}, L_{\wp'}) $.  Furthermore, the rank one subspace  of $HF^*(L_{\wp}, L_{\wp'})$ of largest cohomological degree lies in the image of the product 
\begin{equation} \label{eqn:keyproduct}
HF^*(L_{\wpb}, L_{\wp'}) \otimes HF^*(L_{\wp}, L_{\wpb}) \longrightarrow HF^*(L_\wp, L_{\wp'}).
\end{equation} \qed
\end{Proposition}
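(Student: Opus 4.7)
Both statements follow from a combined argument using the interpolation Lemma \ref{Lem:Codim1Interpolate}, the non-vanishing of $k_{min}$-compositions in Lemma \ref{Lem:Constant_triangle}, and a local plumbing computation describing the module structure of $HF^*(L_\wp, L_{\wp'})$ over $HF^*(L_\wp, L_\wp)$. I would proceed by first factoring $k_{min}(\wp, \wp')$ through $\wpb$, then proving cyclicity, and finally combining the two to obtain the top-degree factorisation.

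To factor $k_{min}(\wp, \wp')$ through $\wpb$, apply Lemma \ref{Lem:Codim1Interpolate} to obtain an interpolating sequence $\wp = \wp_0, \wp_1, \ldots, \wp_j = \wpb, \ldots, \wp_m = \wp'$ of codimension-one neighbours passing through the plait matching, with $c(\wp_i, \wp')$ strictly increasing in $i$. Lemma \ref{Lem:Constant_triangle} applies to each consecutive triple $(\wp_{i-1}, \wp_i, \wp')$, so iterating and using associativity of the Floer product on cohomology, the ordered product $k_{min}(\wp_{m-1}, \wp_m) \cdots k_{min}(\wp_0, \wp_1)$ is a nonzero scalar multiple of $k_{min}(\wp, \wp')$. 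Splitting this at index $j$ and invoking the same argument for the two sub-sequences on either side of $\wpb$, the left factor is (up to scalar) $k_{min}(\wp, \wpb) \in HF^*(L_\wp, L_\wpb)$ and the right factor is $k_{min}(\wpb, \wp') \in HF^*(L_\wpb, L_{\wp'})$. Hence $k_{min}(\wp, \wp')$ lies in the image of the Floer product $HF^*(L_\wpb, L_{\wp'}) \otimes HF^*(L_\wp, L_\wpb) \to HF^*(L_\wp, L_{\wp'})$.

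For cyclicity, identify $HF^*(L_\wp, L_\wp) \cong H^*((S^2)^n)$ via the arc decomposition of $L_\wp$ (one $S^2$ factor per arc of $\wp$) and $HF^*(L_\wp, L_{\wp'}) \cong H^*((S^2)^{c(\wp, \wp')})$ via the component decomposition of the unlink $\wp \cup \bar{\wp'}$; under these identifications $k_{min}(\wp,\wp')$ corresponds to $1$. The central geometric claim is that the degree-$2$ generator $x_\gamma \in HF^2(L_\wp, L_\wp)$ associated to an arc $\gamma$ of $\wp$ satisfies
\[
k_{min}(\wp, \wp') \cdot x_\gamma = e_{C(\gamma)},
\]
where $e_{C(\gamma)}$ is the degree-$2$ generator of $HF^*(L_\wp, L_{\wp'})$ corresponding to the component $C(\gamma)$ of the unlink containing $\gamma$, and the product is the right action of $HF^*(L_\wp, L_\wp)$. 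I would prove this by reducing via Lemma \ref{Lem:Codim1Interpolate} to the codimension-one case and computing the Floer product explicitly in a plumbing model of two matching spheres sharing an $S^2$. Cyclicity as a right module then follows immediately, because every basis element of $H^*((S^2)^{c(\wp,\wp')})$ is a monomial in the $e_C$'s, each realised by applying an $x_\gamma$ from a distinct $S^2$ factor; the symmetric argument gives left-cyclicity over $HF^*(L_{\wp'}, L_{\wp'})$.

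By the left cyclicity just established, $k_{max}(\wp, \wp') = \beta \cdot k_{min}(\wp, \wp')$ for some $\beta \in HF^*(L_{\wp'}, L_{\wp'})$. Substituting the factorisation from the second paragraph and using associativity,
\[
k_{max}(\wp, \wp') = \beta \cdot \bigl(k_{min}(\wpb, \wp') \cdot k_{min}(\wp, \wpb)\bigr) = \bigl(\beta \cdot k_{min}(\wpb, \wp')\bigr) \cdot k_{min}(\wp, \wpb),
\]
which realises $k_{max}(\wp, \wp')$ as an element of the image of $HF^*(L_\wpb, L_{\wp'}) \otimes HF^*(L_\wp, L_\wpb) \to HF^*(L_\wp, L_{\wp'})$, completing the proof. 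The main obstacle is the plumbing computation underlying the module-structure claim in the third paragraph: establishing the identity $k_{min}(\wp,\wp') \cdot x_\gamma = e_{C(\gamma)}$ requires a careful analysis of Floer products in the local model of two codimension-one adjacent matching spheres, with sign bookkeeping over the ground field $\bk$.
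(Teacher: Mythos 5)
The proposition ends with \qed\ because the paper takes it as given from the prequel \cite{AbSm}; there is no proof here to compare against. Nonetheless, your attempted argument has a genuine gap.

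The second paragraph is where things break down. Lemma \ref{Lem:Codim1Interpolate} produces a codimension-monotone interpolating sequence between $\wp$ and $\wp'$, but it gives you no control over which matchings appear along the way, and there is in general \emph{no} such sequence that passes through $\wpb$. Indeed, the lemma's conclusion forces $c(\wp_0,\wp_i)=n-i$ and $c(\wp_i,\wp_m)=n-(m-i)$ for a sequence of length $m=n-c(\wp,\wp')$; asking that $\wp_j=\wpb$ for some $j$ then pins down both $c(\wp,\wpb)$ and $c(\wpb,\wp')$ to very specific values which generically fail. The cleanest failure: take $\wp=\wp'$, so $m=0$ and the only interpolating sequence is the trivial one, yet $\wpb\neq\wp$ in general. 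Moreover the conclusion you extract is itself false: the product of the two minimal degree generators $\alpha_{\wpb,\wp'}\cdot\alpha_{\wp,\wpb}$ is typically \emph{not} in minimal degree. Concretely, with $n=2$ and $\wp=\wp'=\wp_{\mathrm{mix}}$, Lemma \ref{Lem:NonzeroProducts}(2) gives $\alpha_{\wpb,\wp_{\mathrm{mix}}}\cdot\alpha_{\wp_{\mathrm{mix}},\wpb}=\pm(v_1+v_2)\cdot 1$, a degree-two class, not the unit. The whole point of the ``Furthermore'' is that this product lands in the \emph{top} degree, not the bottom; trying to establish a minimal-degree factorisation through $\wpb$ misreads what needs to be shown, and the final paragraph, which multiplies by $\beta$ to recover $k_{\max}$, therefore rests on a false premise.

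The third paragraph (cyclicity) is closer to the mark but understates the input required. The paper ultimately establishes cyclicity via the clean-intersection comparison (Lemma \ref{Lem:IntersectionLocusIsCorrect} and Corollary \ref{cor:HF_simple_over_center}), which in fact proves the stronger statement that $HF^*(L_\wp,L_{\wp'})$ is cyclic over $H^*(\scrY_n)$; cyclicity over $HF^*(L_\wp,L_\wp)$ then follows since the latter receives $H^*(\scrY_n)$ via restriction. Your proposed identity $k_{\min}(\wp,\wp')\cdot x_\gamma=e_{C(\gamma)}$ is in the right spirit, but reducing to codimension one via interpolation is not automatic --- the codimension-one plumbing computations track only the components that change, and you still need to know the other factors act freely, which is what the clean-intersection model supplies uniformly.
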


Given a grading on  the plait Lagrangian $L_{\wpb}$, there is a unique grading on each matching Lagrangian $ L_{\wp}$ so that $HF^*(L_{\wpb}, L_{\wp})$ is symmetrically graded in the sense that the groups $HF^*(L_{\wp}, L_{\wpb})$ and $HF^*(L_{\wpb}, L_{\wp})$ are supported in the same range of degrees  $n-c(\wp,\wpb) \leq \ast \leq n+c(\wp,\wpb)$. 

The Gromov-Witten invariant $GW_1 \in H^*(\scrY_n)$ is proved to vanish in \cite{AbSm}; since the $L_{\wp}$ are simply-connected, it follows that they admit equivariant structures (relative to the $\nc$-vector field constructed from counting discs in the compactification $\Mdbar$).  Given a choice of brane structures on the Lagrangians (which fix absolute gradings on Floer cohomology groups), we wish to choose these equivariant structures so that the weight gradings enjoy the same symmetry as the cohomological gradings.
\begin{Theorem} \cite{AbSm} \label{Thm:Arc_is_Pure}
For any choice of equivariant structure on $L_{\wpb}$ , there are unique equivariant structures on the Lagrangians $ L_{\wp}$ such that $C\scrH^{symp}_n $ is pure. \qed
\end{Theorem}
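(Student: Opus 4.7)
Given $c_{L_{\wpb}}$, the space of equivariant structures on each other $L_\wp$ is an affine $\bk$-torsor parametrised by the freedom $c_{L_\wp} \mapsto c_{L_\wp} + \lambda \, e_{L_\wp}$. The self-action on $HF^*(L_\wp, L_\wp) \cong H^*((S^2)^n)$ is independent of this choice (commutator terms vanish since the algebra is commutative and $c_{L_\wp}$ is cohomologically central on the connected $L_\wp$); its agreement with the Euler field follows from the results of \cite{AbSm} recalled in the previous subsection. By \eqref{eq:equivariant_endo}, however, the shift $c_{L_\wp} \mapsto c_{L_\wp} + \lambda \, e_{L_\wp}$ does translate the induced endomorphism $D$ on $HF^*(L_\wp, L_{\wpb})$ and on $HF^*(L_{\wpb}, L_\wp)$ by constant multiples of $\mathrm{Id}$. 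The plan is to fix each $\lambda$ by demanding that $D$ act on the rank-one bottom-degree subspace \eqref{Eqn:Min_degree} of $HF^*(L_\wp, L_{\wpb})$ by the Euler scalar $n - c(\wp, \wpb)$; the main work is to show that this single normalisation suffices to force purity on every off-diagonal summand.

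The core observation is that for any preliminary $c_{L_\wp}^{(0)}$, the resulting $D^{(0)}$ on $HF^*(L_\wp, L_{\wpb})$ equals $\mathrm{Euler} + \mu \cdot \mathrm{Id}$ for a single scalar $\mu$. Since $b$ is a Hochschild cocycle, its cohomological action satisfies a graded Leibniz rule with respect to Floer multiplication. By Proposition \ref{Prop:hittopclass}, every class in $HF^*(L_\wp, L_{\wpb})$ has the form $k \cdot \beta$ with $k := k_{min}(\wp, \wpb)$ and $\beta \in HF^*(L_{\wpb}, L_{\wpb})$, so
\[
D^{(0)}(k \cdot \beta) \;=\; D^{(0)}(k) \cdot \beta \,+\, k \cdot D_{L_{\wpb}}(\beta) \;=\; \bigl(\mu_0 + |\beta|\bigr) \cdot k\beta,
\]
using $D^{(0)}(k) = \mu_0 \, k$ by rank-one dimension count in the bottom degree and the hypothesis that $D_{L_{\wpb}}$ is Euler. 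Since $|k\beta| - |\beta| = n - c(\wp, \wpb)$, we read off $\mu = \mu_0 - (n - c(\wp, \wpb))$; absorbing $\mu$ via a unique scalar shift of $c_{L_\wp}^{(0)}$ produces the desired $c_{L_\wp}$.

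To complete the verification of purity, the same derivation argument applied to the dual cyclic structure and the symmetric grading of $HF^*(L_{\wpb}, L_\wp)$ shows that $D = \mathrm{Euler}$ there as well (the two resulting conditions on $\lambda$ are compatible via the Floer pairing into $HF^*(L_\wp, L_\wp)$). For general $(\wp, \wp')$, Proposition \ref{Prop:hittopclass} places the top-degree class of $HF^*(L_\wp, L_{\wp'})$ in the image of the product \eqref{eqn:keyproduct} factoring through $L_{\wpb}$, so Leibniz combined with the already-established purity on the two ``$L_{\wpb}$-facing'' summands gives $D = \mathrm{Euler}$ on this top class; cyclicity over $HF^*(L_{\wp'}, L_{\wp'})$ then propagates purity throughout $HF^*(L_\wp, L_{\wp'})$. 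Uniqueness is immediate: any other pure equivariant structure on $L_\wp$ differs from $c_{L_\wp}$ by $\lambda \, e_{L_\wp}$, which would shift $D$ on $HF^*(L_\wp, L_{\wpb})$ by a non-zero scalar unless $\lambda = 0$. The main obstacle is rigorously establishing the Leibniz identity on cohomology: it follows from the Hochschild cocycle equation for $b$ together with \eqref{eq:equivariant_endo}, but one must carefully track the chain-level corrections $gw_1$, $\beta_0$, and $\sCO^0(\beta_0)$ from \eqref{eq:0-part-CC_cocycle} to ensure no coboundary ambiguity affects the scalar $\mu_0$ used for the normalisation.
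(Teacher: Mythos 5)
The paper does not contain a proof of Theorem~\ref{Thm:Arc_is_Pure}; it cites the prequel \cite{AbSm}, so there is no in-paper argument to compare against directly. Your algebraic framework --- normalize the equivariant structure on each $L_\wp$ by forcing the weight of $k_{\min}(\wp,\wpb)$ to match its cohomological degree, and then propagate purity by a Leibniz/derivation argument using cyclicity (Proposition~\ref{Prop:hittopclass}) and Poincar\'e duality --- does match the mechanism that this paper later attributes to \cite[Proposition 6.10]{AbSm}, namely that ``in the presence of a non-trivial Floer product, symmetry of the weights for two of the three possible pairs implies symmetry for the third.'' So the shape of your argument is right. Your uniqueness step is also correct: shifting $c_{L_\wp}$ by a scalar translates the induced endomorphism on $HF^*(L_\wp,L_{\wpb})$ by a nonzero multiple of $\mathrm{Id}$, so once one degree is pinned there is no freedom left.

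The gap is in the first paragraph, not where you flag it in the last. You assert that the self-action of the \nc-vector field on $HF^*(L_\wp,L_\wp)$ ``agrees with the Euler field'' by results ``recalled in the previous subsection.'' But the only \cite{AbSm} results recalled there are Proposition~\ref{Prop:hittopclass} (cyclicity and the top-class factorisation) and the theorem you are proving. Cyclicity alone cannot force diagonal purity: the cohomological action of $b^1$ on $H^*(L_\wp)\cong \bk[x_1,\dots,x_n]/(x_i^2)$ must be a degree-$0$ derivation killing the unit, but such a derivation is determined by an arbitrary endomorphism of the $n$-dimensional space $H^2(L_\wp)$, so ``derivation'' and ``Leibniz-compatible with cyclicity'' are far from implying ``multiplication by $2$ on $H^2$.'' Moreover, this action is manifestly independent of any choice of $c_{L_\wp}$ (the commutator term vanishes by graded-commutativity, as you note), so it is a hard geometric fact about the specific \nc-vector field built from disc counts in $\Mdbar$, and it is the heart of the \cite{AbSm} proof; your argument reduces the theorem to it but does not establish it. Secondary points: the consistency of the single scalar $\lambda$ with purity on \emph{both} $HF^*(L_\wp,L_{\wpb})$ and $HF^*(L_{\wpb},L_\wp)$ also quietly requires a nonvanishing product of the two minimal-degree classes in $HF^*(L_\wp,L_\wp)$ (this is Lemma~\ref{Lem:PlaitMixNonzeroProducts}, which needs citing), and the Leibniz rule on cohomology does need the chain-level bookkeeping you mention --- but both of these are comparatively routine; the diagonal-purity input is the real missing ingredient.
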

By Theorem \ref{Thm:Pure}, we obtain a fixed quasi-equivalence between $C\scrH^{symp}_n $ and its cohomological graded algebra, which is $\scrH^{symp}_n$.

\subsection{Some elementary bimodules} \label{Sec:MeetCup}

Consider $\scrY_n \subset  \Hilb^{[n]}(A_{2n-1})$, with $z: A_{2n-1} \rightarrow \bC$ having  critical values at $\{1,2,\ldots, 2n\}  \subset \bC$.  A choice of domain $U \subset \bC$ yields subdomains $A_{2n-1}|_U \subset (\bar{A}_{2n-1})|_U$, by restricting $z: A_{2n-1} \to \bC$ and its properification $\bar{A}_{2n-1} \to \bC$ to $U$, and hence yields an open subset 
\[
\mathring{\bar{M}}  = \bar{M}|_U = \Hilb^{[n]}((\bar{A}_{2n-1})|_U).
\]
Analogously, one can consider loci in the Hilbert scheme of $n$-tuples of points in $\bar{A}_{2n-1}$ precisely one of which is required to lie over $U$ and the remainder over a disjoint open set $U' \subset \bC\backslash U$, which brings one into the setting studied in Section \ref{Sec:BimodulesWithoutQuilts} with product-like embeddings 
\[
(\bar{A}_{2n-1})|_U \times \Hilb^{[n]}((\bar{A}_{2n-1})|_{U'}) = 
\mathring{\bar{M}} \times \mathring{\bar{N}} = \mathring{\bar{W}}
\]
in an obvious notation. 


We now specialise the general discussion to the particular setting of interest. Let $U^i \subset A_{2n+1}$ be the $z$-preimage a 2-disk $z(U^i) \subset \bC$ encircling the $(i,i+1)$-critical values, so $U^i$ retracts to a Lagrangian 2-sphere $L_i \subset A_{2n+1}$. See Figure \ref{Fig:Subdisk}.  Let $(U^i)' \subset A_{2n+1}$ denote the corresponding $z$-preimage for the larger open disk of Figure \ref{Fig:Subdisk}, which encircles all the other critical values.  
For each $1 \leq i\leq 2n$, let $\scrY^i_n \subset \scrY_{n+1} = M_{n+1}$ denote the open set $\mathring{M}_{n+1}$ associated to the open disc $z((U^i)') \subset \bC$, i.e. the open subset with quasiconvex boundary defined by the subschemes whose support lies in $(U^i)' \subset A_{2n+1}$. 
There is an obvious inclusion
\begin{equation} \label{Eqn:product}
U^i \times \scrY^i_n \hookrightarrow \scrY_{n+1}.
\end{equation}

\begin{center}
\begin{figure}[ht]
\includegraphics[scale=0.3]{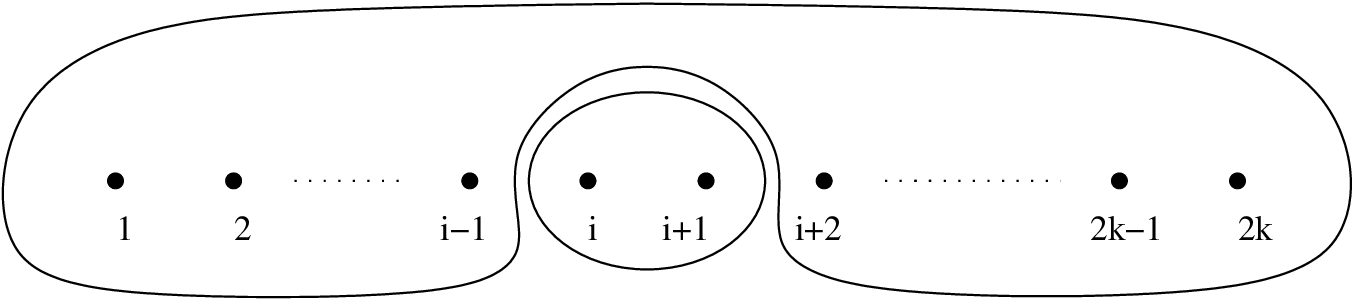}
\caption{Subsets of the base of the $A_{2k-1}$-surface.\label{Fig:Subdisk}}
\end{figure}
\end{center}

\begin{Lemma} \label{lem:Deform}
There is a distinguished equivalence $\scrF(\scrY^i_n) \to \scrF(\scrY_n)$. 
\end{Lemma}

\begin{proof}
A deformation of an open domain $U_t\subset \bC$ for which no critical point of the fibration $A_{2n-1}\to\bC$ crosses $\partial U_t$ at any time $t$ induces a deformation of pairs $(\bar{M}, \bar{M}|_{U_t})$, i.e. one has a family of codimension zero subdomains $\mathring{\bar{M}}(U_t) :=\bar{M}|_{U_t} \subset \bar{M}$ which all have pseudoconvex boundary.  Counting continuation solutions inside $\bar{M}$ yields equivalences between the categories $\scrF(\mathring{\bar{M}}(U_t))$ for different parameter values $t$.  

In the case at hand, we move the $(i,i+1)$-critical point pair towards infinity and then deform the subdisk defining $(U^i)' \subset A_{2n+1}$ to a standard disk in the base $\bC_z$ over which $\bar{A}_{2n-1}$    fibres.  This deformation of quasiconvex subdomains of (compactified) Milnor fibres induces a deformation of the corresponding quasiconvex subdomains $\mathring{\bar{M}}$ of Hilbert schemes. These yield the required quasi-isomorphisms $\scrF(\scrY_n) \simeq \scrF(\scrY^i_n)$.
\end{proof}



The inclusion $\scrY_n^i \times U^i \subset \scrY_{n+1}$ is not an equality, and $\bar{\bar{\scrY}}_{n+1}$ does not co-incide with the product compactification $\bar{\bar{\scrY}}^i_n \times \bar{\bar{U^i}}$. However, there is an embedding of partial compactifications (complements of the divisors of subschemes supported on the fibres over $\infty \in \bP^1$)
\begin{equation}
\overline{\scrY}^i_n \times \bar{U}^i \rightarrow \overline{\scrY}_{n+1}
\end{equation}
coming from the subset of the Hilbert scheme of $\bar{A}_{2n+1}$ of subschemes with a length one subscheme supported over $U^i$ and a length $n$ subscheme over $(U^i)'$. This embedding of partial compactifications  is compatible with the divisors $D_0, D_r$ at infinity,  which is exactly the situation considered in Section \ref{Sec:BimodulesWithoutQuilts} in the general discussion of product embeddings $\mathring{\bar{M}} \times \mathring{\bar{N}} = \mathring{\bar{W}}$.  

It follows that, via the inclusion $\scrY_{n+1} \supset \scrY_n^i \times U^i$, and the identification $\scrY_n^i \cong \scrY_n$ of Lemma \ref{lem:Deform}, the Lagrangian sphere $L^i \subset U^i$ defines an elementary $(\scrF(\scrY_{n+1}), \scrF(\scrY_{n}))$-bimodule.  By Lemma \ref{lem:representable_bimod_functor}, there is a corresponding functor 
\[
\cup_i: \scrF(\scrY_n) \longrightarrow \scrF(\scrY_{n+1})
\] 
which we call $\cup_i$.


\subsection{Cup functors are formal} \label{sec:cup-bimodules-are}

The theory of Section \ref{sec:equiv-elem} defines an equivariant structure on $\cup_i$. It remains to prove  that it is in fact pure, and hence formal.  We separate out cases depending on the parity of the index $i$ at which we include the new component of the matching when applying $\cup_i$.

\begin{Lemma} \label{lem:OddCupFormal}
If $i$ is odd, the cup functor $\cup_i$ is formal.
\end{Lemma}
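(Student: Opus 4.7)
The plan is to apply Corollary \ref{cor:Kunneth-formal} to the elementary bimodule representing $\cup_i$. Recall that this bimodule is built from the Lagrangian $L^i \subset U^i$ together with the inclusion $U^i \times \scrY_n^i \hookrightarrow \scrY_{n+1}$ of \eqref{Eqn:product}, combined with the identification $\scrF(\scrY_n) \simeq \scrF(\scrY_n^i)$ furnished by Lemma \ref{lem:Deform}. Thus it suffices to verify the hypotheses of Lemma \ref{lem:bimodule-pure}: namely, that the relevant full subcategory of $\scrF(\scrY_n)$ generated by the crossingless matching Lagrangians is pure (which is Theorem \ref{Thm:Arc_is_Pure}), and that $L^i \in \scrF(U^i)$ is pure with respect to an appropriate \nc-vector field.

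For the second purity, I would observe that $U^i$ is a neighbourhood of a single Lagrangian sphere, which is (a Liouville subdomain of) the $A_1$-Milnor fibre; under this identification $L^i$ is the Lagrangian associated to the unique plait matching of the pair of critical values $\{i,i+1\}$. Hence purity of $L^i$ is the $n=1$ case of Theorem \ref{Thm:Arc_is_Pure}, i.e.\ the existence of a pure equivariant structure on the plait Lagrangian $L_{\wpb}$, applied to $U^i$ with the compactification induced from the one used for $\scrY_{n+1}$.

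The essential point — and the one where the oddness of $i$ is used — is to verify that the \nc-vector field on $\scrF(\scrY_{n+1})$ restricts, under the inclusion of partial compactifications $\overline{U}^i \times \overline{\scrY}^i_n \hookrightarrow \overline{\scrY}_{n+1}$ of Remark \ref{Rem:caveat}, to the sum of the \nc-vector fields on the two factors. When $i$ is odd, $\{i,i+1\}$ is one of the plait pairs of the $2n+2$ critical values of $A_{2n+1}$, so the plait Lagrangian $L_{\wpb} \subset \scrY_{n+1}$ splits as $L^i \times L_{\wpb'}$ across the inclusion, where $L_{\wpb'} \subset \scrY_n^i$ is the $n$-strand plait. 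The pure equivariant structure on $L_{\wpb}$ furnished by Theorem \ref{Thm:Arc_is_Pure}, from which all other equivariant structures are propagated, can therefore be chosen compatible with the product decomposition. Granted this, the K\"unneth computation in the proof of Lemma \ref{lem:bimodule-pure} applies with the product symplectic form replaced by its restriction under the inclusion, and Corollary \ref{cor:Kunneth-formal} delivers the formality of $\cup_i$.

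The main obstacle is the last step: showing that the bounding cochains $gw_1$ and $\beta_0$ on $\overline{\scrY}_{n+1}$ used to define the \nc-vector field can be chosen to restrict, up to chain-level homotopy, to the sum of the corresponding cochains on $\overline{U}^i$ and $\overline{\scrY}^i_n$. Concretely, this requires analysing how the divisors $D_0$ and $D_r$ on $\Hilb^{[n+1]}(\bar{A}_{2n+1})$ pull back to the image of $\overline{U}^i \times \overline{\scrY}^i_n$, together with a degeneration analysis for the moduli spaces of rational curves producing $GW_1$. The odd-parity hypothesis is precisely what ensures that this restriction geometry respects the plait decomposition and that no ``mixed'' curves contribute; for even $i$ the analogous compatibility fails, and a different strategy is needed (handled separately in the companion lemma).
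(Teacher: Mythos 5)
Your setup is right and you have correctly located where the oddness of $i$ enters: it ensures $\wpb^{n+1}$ lies in $C\scrH_{n+1,i}^{symp}$, i.e.\ the plait factors through $\cup_i$. You also correctly invoke Corollary \ref{cor:Kunneth-formal} and Lemma \ref{lem:bimodule-pure}, and your identification of $L^i$ with the $n=1$ plait sphere is the right way to see its purity. But your finish is a gap, and in fact a detour onto a road the paper never takes.

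You propose to show that the bounding cochains $gw_1$ and $\beta_0$ on $\overline{\scrY}_{n+1}$ restrict, at chain level, to sums of the corresponding data on $\overline{U}^i$ and $\overline{\scrY}^i_n$, after which you would deduce that the ambient \nc-vector field restricts to the product one. You flag this yourself as ``the main obstacle'' requiring a degeneration analysis of the moduli spaces producing $GW_1$ and a pullback analysis of $D_0, D_r$ — and indeed, as written, the argument is not complete. The paper's proof never touches this. The point is that you do not need to compare \nc-vector fields at all; you only need to compare pure \emph{equivariant structures}, and Theorem \ref{Thm:Arc_is_Pure} already contains the uniqueness statement that makes this soft: a pure equivariant structure on the arc category is determined by its restriction to $L_{\wpb}$. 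So the argument is: Corollary \ref{cor:Kunneth-formal} makes $\cup_i$ formal relative to the \emph{product} equivariant structure on $\scrH_{n+1,i}^{symp}$, which is pure by K\"unneth; since $i$ is odd, $L_{\wpb}^{n+1} = \cup_i L_{\wpb}^n$ is in the image, and the freedom of $H^0(L^i)$ in choosing the equivariant (and graded) structure on $L^i$ lets you arrange that the product structure on $L_{\wpb}^{n+1}$ agrees with the one fixed in Theorem \ref{Thm:Arc_is_Pure}; then symmetric grading plus uniqueness of pure structures forces agreement on every $\cup_i L_{\wp}$. No claim about chain-level compatibility of $gw_1$, $\beta_0$, or ``mixed curves'' is ever made.

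So: same starting ingredients, same use of the odd-parity hypothesis, but you head toward a chain-level geometric compatibility statement that is harder than necessary and that you do not establish, whereas the paper closes the argument at the level of equivariant structures using the uniqueness clause of Theorem \ref{Thm:Arc_is_Pure}.
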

\begin{proof}
 Let $C\scrH_{n+1, i}^{symp}$ denote the subcategory of $C\scrH_{n+1}^{symp}  $ corresponding to crossingless matchings containing the $i$\th cup.  By construction, $\cup_i$ factors through the inclusion $C\scrH_{n+1, i}^{symp} \subset  C\scrH_{n+1}^{symp}$. By Corollary \ref{cor:Kunneth-formal}, the functor $\cup_i \co  C\scrH_{n}^{symp} \to C\scrH_{n+1, i}^{symp} $ is formal with respect to the product equivariant structure on $C\scrH_{n+1, i}^{symp}  $. Since the quasi-isomorphism from the cochains to cohomology is determined by the equivariant structure, it remains to show that we can choose the equivariant structure on the the vanishing cycle $S^2$ factor so that the product equivariant structure agrees with the restriction of the equivariant structure on $C\scrH_{n+1}^{symp}   $ fixed in Theorem \ref{Thm:Arc_is_Pure}.

 To this end, it is useful to recall that the set of graded structures on a Lagrangian $L$, if non-empty, is parametrised by $H^0(L)$; similarly,  the set of equivariant structures on $L$, if non-empty, is parametrised by $H^0(L)$.  Elements of $H^0(L)$ act respectively by shifting the homological gradings and the weights. The  hypothesis that $i$ is odd implies that the plait matching is an object of $C\scrH_{n+1, i}^{symp}$, and indeed that $L^{n+1}_{\wpb} = \cup_i L^{n}_{\wpb}$.  More precisely, $\cup_i L^{n}_{\wpb}$ is Lagrangian isotopic to a Lagrangian lying in the open set $\scrY_n^i \times U_i \subset \scrY_{n+1}$, and under an appropriate identification $\scrY_n^i \cong \scrY_n$, has the form $L^n_{\wpb} \times L^i$.  We can therefore choose the graded and equivariant structures on the vanishing cycle $L^{i}$ so that the product equivariant structure on $L^{n+1}_{\wpb} = \cup_i L^{n}_{\wpb} $ agrees with the one fixed in Theorem \ref{Thm:Arc_is_Pure}.

For each Lagrangian $L_{\wp} \in  C\scrH_{n}^{symp}$, the K\"unneth formula in Floer cohomology implies that, when equipped with product gradings, the Floer cohomology between $\cup_i L_{\wpb}$ and $\cup_i L_{\wp}$ is still symmetrically graded; hence the product grading on $ \cup_i L_{\wp} $ agrees with the one fixed for Lagrangians in $\scrY_{n+1}$. The product equivariant structure on $  \cup_i L_{\wp} $ has the property that weights and gradings agree (i.e. it is pure, again by K\"unneth); hence it also  agrees with the equivariant structure fixed in Theorem \ref{Thm:Arc_is_Pure}.

We conclude that the functor $\cup_i$ is pure with respect to the fixed equivariant structures from  Theorem \ref{Thm:Arc_is_Pure}, hence is formal.
\end{proof}

\begin{Lemma} \label{lem:EvenCupFormal}
If $i$ is even, the cup functor $\cup_i$ is formal.
\end{Lemma}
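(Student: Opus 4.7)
The argument parallels Lemma \ref{lem:OddCupFormal}. The cup functor $\cup_i$ factors through the inclusion $C\scrH_{n+1,i}^{symp} \subset C\scrH_{n+1}^{symp}$, and Corollary \ref{cor:Kunneth-formal} already gives formality of $\cup_i \co C\scrH_n^{symp} \to C\scrH_{n+1,i}^{symp}$ with respect to the K\"unneth product equivariant structure on the image.  The task is to arrange that this product structure matches the restriction to $C\scrH_{n+1,i}^{symp}$ of the fixed pure structure on $C\scrH_{n+1}^{symp}$ from Theorem \ref{Thm:Arc_is_Pure}, so that Corollary \ref{cor:Kunneth-formal} delivers formality of $\cup_i$ as a functor into $C\scrH_{n+1}^{symp}$ with its own fixed pure structure.

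For even $i$ the plait $\wpb^{n+1}$ of $2n+2$ points does not lie in $C\scrH_{n+1,i}^{symp}$, so the direct anchor of Lemma \ref{lem:OddCupFormal} is unavailable.  In its place, we use $\cup_i L_{\wpb^n}$, which lies in $C\scrH_{n+1,i}^{symp}$ by construction.  Since $c(\cup_i \wpb^n, \cup_i \wp) = c(\wpb^n, \wp) + 1$ for any upper half-plane matching $\wp$ of $2n$ strands (the two copies of the arc at position $i$ contribute an extra small loop to the planar unlink), the product graded structure on the anchor realises a symmetric grading with respect to every $\cup_i L_\wp$ over the same range of degrees $n+1 - c(\cup_i\wpb^n,\cup_i\wp) \leq * \leq n+1 + c(\cup_i\wpb^n,\cup_i\wp)$ as the grading fixed in $\scrY_{n+1}$ by Theorem \ref{Thm:Arc_is_Pure}.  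The residual integer shift is absorbed by the choice of grading on the $S^2$-factor $L^i$.

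With the gradings aligned, both the K\"unneth and restricted structures are pure equivariant structures on $C\scrH_{n+1,i}^{symp}$ relative to the same Euler field.  We then claim that such pure structures are unique up to a single global additive scalar: the only rescaling of the bounding cochains compatible with the Euler condition on every $HF^*(L,L)$ is a uniform scalar shift common to all objects, and cohomological unitality forces this scalar to be the same throughout the connected category.  This global scalar can in turn be absorbed by rescaling the equivariant bounding cochain on $L^i$, which via the K\"unneth formula acts as exactly the same uniform rescaling on every $\cup_i L_\wp \in C\scrH_{n+1,i}^{symp}$.  Matching on the single anchor $\cup_i L_{\wpb^n}$ therefore propagates to every object of $C\scrH_{n+1,i}^{symp}$, and formality of $\cup_i$ follows as in Lemma \ref{lem:OddCupFormal}.

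The main obstacle is justifying the uniqueness-up-to-scalar claim for pure equivariant structures on $C\scrH_{n+1,i}^{symp}$, to which Theorem \ref{Thm:Arc_is_Pure} does not directly apply since the plait is absent.  The cleanest route is to reprove the analogue of Theorem \ref{Thm:Arc_is_Pure} using $\cup_i L_{\wpb^n}$ in place of $L_{\wpb}$ as the distinguished matching within this subcategory: the required cyclicity and top-class generation statements (the analogue of Proposition \ref{Prop:hittopclass}) follow from the corresponding statements for $\wpb^n$ in $\scrY_n$ via the K\"unneth decomposition in $\scrY_n^i \times U^i$ (cf.\ Remark \ref{Rem:caveat}), after which the remainder of the argument transcribes verbatim.
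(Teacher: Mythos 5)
Your proposal takes a genuinely different route from the paper, and it hides the key technical input rather than eliminating it.

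The paper works directly with the triple $(L_{\wpb}^{n+1}, \cup_i L_{\wpb}^n, \cup_i L_{\wp}^n)$: since $\wpb^{n+1}$ and $\cup_i\wpb^n$ differ by a single handle-slide, the corresponding Lagrangians meet in codimension one, so Lemma~\ref{Lem:Constant_triangle} supplies a non-trivial Floer triangle product. Combined with Proposition~\ref{Prop:hittopclass} and cyclicity, a non-trivial product propagates weight symmetry: knowing it for two of the three pairs (the two in the image of $\cup_i$ via K\"unneth, the first two via the choice of equivariant structure on $L^i$) forces it for the third. Critically, this argument brings the plait $L_{\wpb}^{n+1}$ --- the object that anchors the ambient structure from Theorem~\ref{Thm:Arc_is_Pure} --- into the comparison even though it lies outside $C\scrH_{n+1,i}^{symp}$.

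Your route instead compares two structures living entirely on $C\scrH_{n+1,i}^{symp}$ and tries to deduce agreement from an abstract rigidity principle. The gap is in the grading alignment step, which you take for granted (``With the gradings aligned\ldots''). You assert that the K\"unneth grading is symmetric around $n+1$ with respect to $\cup_i L_{\wpb^n}$ ``over the same range of degrees\ldots as the grading fixed in $\scrY_{n+1}$ by Theorem~\ref{Thm:Arc_is_Pure}'' --- but the grading from Theorem~\ref{Thm:Arc_is_Pure} is defined by symmetry with respect to $L_{\wpb}^{n+1}$. That it is also symmetric with respect to the \emph{other} anchor $\cup_i L_{\wpb^n}$ is precisely the statement that needs proof, and it follows only from the same non-trivial-product rigidity (Proposition~\ref{Prop:hittopclass}) that you are trying to avoid invoking. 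In the odd-$i$ case this problem disappears because $\cup_i\wpb^n$ \emph{is} $\wpb^{n+1}$; in the even case the two anchors are distinct, and a non-trivial product linking them is what the paper uses and what your sketch implicitly requires.

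Regarding the uniqueness-up-to-scalar claim you flag as the ``main obstacle'': that claim is actually quite elementary and does not require reproving Theorem~\ref{Thm:Arc_is_Pure} --- shifting the equivariant structure on $L$ by $s_L \in H^0(L)\cong\bk$ shifts the weight on $HF^*(L,L')$ by $\pm(s_{L'}-s_L)$, so purity forces $s_L = s_{L'}$ across the Floer-connected subcategory. You identify the wrong obstacle: the grading alignment, not the scalar uniqueness, is the substantive step, and filling it in would effectively reproduce the paper's triangle-product argument.
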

\begin{proof}
While the distinguished component $L_{\wpb}^{n+1} $ is not in the image of the functor $\cup_i$, the matchings $\wpb^{n+1}$ and $\cup_i \wpb^n$ differ by a single handle-slide.   It follows that the corresponding Lagrangian submanifolds $L_{\wpb^{n+1}}$ and $L_{\cup_i \, \wpb^n}$ intersect in codimension one in the sense of Section \ref{Sec:MeetInCodim2}.   By Lemma \ref{Lem:Constant_triangle}, we therefore know that the Floer triangle product for the triple
\begin{equation}\label{eq:triple}
(L_{\wpb}^{n+1}, \cup_i L_{\wpb}^n, \cup_i L_{\wp}^n)
\end{equation}
is non-trivial.   As with the proof of purity of the symplectic arc algebra in \cite[Proposition 6.10]{AbSm}, in the presence of a non-trivial Floer product, symmetry of the weights for two of the three possible pairs of components implies symmetry of the weights for the third pair.  
Indeed, by Proposition \ref{Prop:hittopclass} the top degree class lies in the image of the product 
\[
HF^*(L^{n+1}_{\wpb}, \cup_i L^n_{\wp}) \otimes HF^*(\cup_i L^n_{\wpb}, L^{n+1}_{\wpb}) \longrightarrow HF^*(\cup_i L^n_{\wpb}, \cup_i L^n_{\wp})
\]
which determines its weight given the weights on the domain groups; weights of all other classes in the target are fixed by the cyclicity of the module action over $H^*(\cup_i L^n_{\wp})$ say.     For the two components of the triple \eqref{eq:triple} in the image of $\cup_i$  we have symmetry of weights by construction, and for the first two components we have symmetry by the choice of equivariant structure on $L^i=S^2$.  Varying $\wp$, we see that the equivariant structure on the target category is obtained by restriction from $C\scrH_{n+1}^{symp} $ as required.
\end{proof}

Note that, having fixed a quasi-equivalence $C\scrH_n^{symp} \simeq \scrH_n^{symp}$, we may achieve formality for all the $\cup_i$-bimodules simultaneously.



\section{Cohomology bases and conormal models}

In the combinatorial arc algebra $H_n$ from \cite{Khovanov:functor}, the module associated to a pair of matchings $\wp \cup \overline{\wp'}$ is $V^{\otimes c(\wp,\wp')}$, where $V = \bZ[x]/(x^2)$ has a distinguished basis $\{1,x\}$. We will pin down explicit bases for the corresponding Floer groups appearing in $\scrH_n^{symp}$ inductively in $n$,  by making systematic use of the constraints imposed by compatibility with module structures and cup-functors.

\noindent \emph{For this and the next section, all (Floer) cohomology groups are taken with $\bZ$ coefficients.}

\subsection{A summary of the argument}

To help the reader navigate the rest of this and the subsequent section, we give a brief overview of the structure of the argument which identifies the combinatorial and symplectic arc algebras.  The combinatorial arc algebra is built upon a TQFT in which part of the structure is a co-product $V \to V\otimes V$ which takes $1 \mapsto 1\otimes x + x \otimes 1$.  The fact that one only encounters terms of the shape $1\otimes x + x\otimes 1$, and not $1\otimes x - x \otimes 1$, is the key feature which distinguishes Khovanov homology from its sign-twisted sibling ``odd Khovanov" homology \cite{ROS:odd_khovanov}.  The symplectic arc algebra is built, as a graded vector space, out of Floer cohomology groups $HF^*(L_{\wp}, L_{\wp'})$ which are identified in clean intersection models with cohomology groups $H^*(L_{\wp} \cap L_{\wp'})$ (which are in turn certain tensor products of copies of $V$). The Floer products 
\begin{equation} \label{eqn:nice_triple}
H^*(L_{\wp'} \cap L_{\wp''}) \otimes H^*(L_{\wp} \cap L_{\wp'}) \to H^*(L_{\wp} \cap L_{\wp''})
\end{equation}
do not have any classical description for general triples $(\wp,\wp',\wp'')$, because there is no universal model for a neighbourhood of three cleanly intersecting Lagrangian submanifolds; but for certain triples we prove plumbing-type models are available, and infer the product \eqref{eqn:nice_triple}  can be described in terms of a convolution product.  

That convolution product depends on choices of orientation, which are themselves not canonical.  An important point is that if one picks complex orientations on all the $L_{\wp}$ and their iterated intersections, then the signs do not work out correctly (this observation goes back to \cite{SW}).

To simplify the situation, we prove that each group $H^*(L_{\wp} \cap L_{\wp'})$ is a cyclic module over $H^*(\scrY_n)$.  This allows us to focus attention on products of minimal degree generators, on the one hand, and the action of $H^*(\scrY_n)$  on the other.  We fix a convenient basis for $H^*(\scrY_n)$ (and hence each $H^*(L_{\wp})$ by appealing to a certain ``topological model" for the compact core, discussed in the next subsection.  Given that, the rest of the proof has three essential ingredients:
\begin{enumerate}
\item We show that all Floer products are determined as compositions of products which can be understood as convolutions. This involves an essential breaking of symmetry between the components of the core, and indeed gives a distinguished role to one particular matching $\wp_{plait}$, which has the feature that any $\wp$ is determined by $L_{\wp} \cap L_{\wp_{plait}}$.
\item We build by hand a basis for $\scrH_2^{symp}$, and then extend it inductively to all higher $\scrH_n^{symp}$ by insisting that the cup-functors $\cup_i$ are then compatible with bases -- in the sense that they take basis elements to positive linear combinations of basis elements -- for certain distinguished pairs, involving $\wp_{plait}$.
\item We prove that the bases constructed in fact behave well for all possible Floer products.  This is essentially a consistency check, which involves proving that there are sufficiently many non-vanishing Floer products which are ``well-understood".
\end{enumerate}
The details of the argument are quite intricate and use numerous features of our particular situation, which makes it hard to axiomative effectively, but we hope this will help keep the general picture in mind as we proceed.

\subsection{Spaces of flags and the compact core}\label{Sec:Flags}

\newcommand{\open}{\mathrm{open}}

Let $\Slice = \Slice_n \subset \mathfrak{gl}_{2n}(\bC)$ be the affine subspace consisting of matrices of the form
\begin{equation} \label{eq:y-matrix}
A = \begin{pmatrix}
A_{1} & I &&& \\
A_{2} && I && \\
\dots &&& \dots & \\
A_{n-1} &&&& I \\
A_{n} &&&& 0
\end{pmatrix}
\end{equation}
with  $A_k \in \mathfrak{gl}_2(\bC)$, and where $I \in \mathfrak{gl}_2(\bC)$ is the identity matrix. (In \cite{SS} we considered the codimension one subspace lying in $\frak{sl}_{2n}(\bC)$ and the adjoint quotient on configurations of total mass zero, but the results carry over \emph{mutatis mutandis} without the trace zero condition.)  The symmetric product $\Sym^{2n}(\bC)$ is identified, via symmetric polynomials, with $\bC^{2n}$.  Grothendieck \cite{Slodowy} described a simultaneous resolution of the adjoint quotient map (which takes a matrix to its collection of eigenvalues)
\begin{equation} \label{Eqn:AdjointQuotient}
\chi: \Slice \rightarrow \mathrm{Sym}^{2n}(\bC)
\end{equation}
via the space of pairs $(A,\bf{F})$, where $\bf{F}$ is a flag and $A\in \Slice$ preserves the flag. The simultaneous resolution maps to $\bC^{2n}$, the space of ordered configurations, since the flag orders the eigenspaces. The resolution of $\chi^{-1}(0)$ contains a ``compact core" which is the space of flags fixed by the distinguished nilpotent matrix where $A_j = 0 \ \forall j$.  We now summarise results of Cautis and Kamnitzer \cite{CK, K, CK2}, which give a fibrewise compactification of $\chi$.

Fix a vector space $\bC^2$ with basis $\{e_1,e_2\}$ and consider $\bC[z]$-submodules $F_i$ of $\bC^2\otimes \bC(z)$ which contain $F_0 = \bC^2\otimes \bC[z]$.  More precisely, for a tuple $\mathbf{w}=(w_1,\ldots, w_{2n}) \in \bC^{2n}$  we consider the space of flags of modules:
\[
Y_n^{\mathbf{w}} = \left\{F_0 \subset F_1 \subset \cdots \subset F_{2n} \subset \bC^2\otimes \bC(z), \ \mathrm{rk} (F_i/F_{i-1}) = 1, (z-w_i I) F_i \subset F_{i-1} \right\}.
\]
As we vary $\mathbf{w}$, these spaces fit into a family $\mathbf{Y}_n \rightarrow \bC^{2n}$; we think of the base as the space of \emph{ordered} configurations of $2n$ points in $\bC$.   There is an open subset
\[
Y_{n}^{\mathbf{w}, \open} = \left\{ (F_0,\ldots, F_n) \in Y_n^{\mathbf{w}}  \ \big| \  F_{2n} / F_0 = \langle z^{-1}e_1, z^{-1}e_2, \ldots, z^{-n}e_1, z^{-n}e_2\rangle \right\}
\]
We say $\mathbf{w}$ is generic if each $w_i \neq w_j$.  At the other extreme, when $\mathbf{w}=(0,\ldots,0)$, we have the resolution of the nilpotent cone in $\Slice$. The compact core 
\begin{equation} \label{Eqn:CompactCore}
Z \ = \ \left\{  (F_0,\ldots, F_{2n}) \ | \ z^{n}F_{2n} = F_0 \right\} \ \subset \ Y_{n}^{\mathbf{0}, \open}
\end{equation}
is the locus lying over the matrix given by $A_j = 0$ for each $j$ in \eqref{eq:y-matrix}. 

\begin{Proposition}\label{Prop:flags_etc} The family $\mathbf{Y}_n \rightarrow \bC^{2n}$ has the following properties.
\begin{enumerate}
\item Each fibre $Y_n^{\mathbf{w}}$ is an iterated $\bP^1$-bundle, diffeomorphic to $(S^2)^{2n}$.  If $\mathbf{w}$ is generic, then $Y_n^{\mathbf{w}} \cong (\bP^1)^{2n}$ is holomorphically a product.
\item $\mathbf{Y}_n$ is a fibrewise compactification of the simultaneous resolution \eqref{Eqn:AdjointQuotient}. In particular, if $\mathbf{w}$ is generic, then $Y_n^{\mathbf{w},\open} \cong \scrY_n^{\mathbf{w}}$ are holomorphically isomorphic.
\item The complement $Y_n^{\mathbf{w}} \backslash Y_{n}^{\mathbf{w}, \open}$ is an irreducible divisor, of class $(1,\ldots, 1) \in H^2(\bP^1)^{2n}$ if $\mathbf{w}$ is generic.
\end{enumerate}
\end{Proposition}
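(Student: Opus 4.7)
The plan is to treat the three assertions in order, all resting on an analysis of the one-step extension problem $F_{i-1} \subset F_i$ together with the results of Cautis--Kamnitzer \cite{CK,K,CK2} that frame the family $\mathbf{Y}_n$ as a fibrewise compactification of the Grothendieck simultaneous resolution.

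For (1), given a partial flag $(F_0,\ldots,F_{i-1})$, I would identify the possible $F_i$ as follows. Every such $F_i$ sits inside $(z-w_iI)^{-1}F_{i-1}$, and the quotient
\[
(z-w_iI)^{-1}F_{i-1}/F_{i-1} \ \cong \ F_{i-1}/(z-w_iI)F_{i-1}
\]
is a two-dimensional $\bC$-vector space (since $F_{i-1}$ has rank $2$ over $\bC[z]$). Valid extensions $F_i$ correspond to lines in this quotient, so the fibre of the projection forgetting $F_i$ is a $\bP^1$. Iterating yields the claimed iterated $\bP^1$-bundle structure, so each $Y_n^{\mathbf{w}}$ is smooth of complex dimension $2n$ and diffeomorphic to $(S^2)^{2n}$. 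For generic $\mathbf{w}$, the $\bC[z]$-module $\bC^2\otimes\bC(z)/F_0$ localises as a direct sum of pieces supported at the distinct $w_i$, so the flag problem decouples into $2n$ independent one-step choices and one obtains the holomorphic product structure $(\bP^1)^{2n}$.

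For (2), I would invoke the Cautis--Kamnitzer identification of $\mathbf{Y}_n$ with the fibrewise compactification of the Grothendieck simultaneous resolution of \eqref{Eqn:AdjointQuotient}. The open condition that $F_{2n}/F_0$ attains the maximal quotient $\langle z^{-j}e_k\rangle$ precisely selects those flags arising from the resolution of the slice (rather than the flags added to compactify it). For generic $\mathbf{w}$ the adjoint quotient has smooth fibres and the resolution map is an isomorphism; composing with Manolescu's identification of the resolved slice with the Hilbert scheme model $\scrY_n$ (invoked in Section \ref{Sec:Hilb}) then gives $Y_n^{\mathbf{w},\open} \cong \scrY_n^{\mathbf{w}}$.

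For (3), the complement $Y_n^{\mathbf{w}} \setminus Y_n^{\mathbf{w},\open}$ is the closed locus where $F_{2n}$ fails to realise the maximal quotient, a codimension-one determinantal condition cutting out a divisor. To compute its class for generic $\mathbf{w}$, I would work in the product coordinates from (1): the divisor is the zero locus of a natural global section of $\calO(1,\ldots,1)$, which at each flag step is linear in the $\bP^1$-coordinate that records whether the new generator extends the denominator depth. The main obstacle is irreducibility: the sum-of-coordinate-hyperplanes divisor has the same class $(1,\ldots,1)$ but is reducible, so one must identify the correct section. This is where I would rely most heavily on \cite{CK,K,CK2}, whose compactification comes with a canonical global section whose zero locus is irreducible and reduced by an explicit formula.
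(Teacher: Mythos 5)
Your handling of parts (1) and (2) is fine: for (1) you give a self-contained iterated-$\bP^1$-bundle argument via the one-step extension problem, which is more explicit than the paper (which simply cites Cautis--Kamnitzer, Section 2.2), and for (2) you cite the same source. The determinantal description of the divisor and the computation of its class $(1,\ldots,1)$ in (3) are also essentially in agreement with the paper.

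The gap is in the irreducibility claim of (3). You correctly flag this as the hard part --- the class $(1,\ldots,1)$ alone does not distinguish an irreducible divisor from a union of coordinate hyperplanes --- but you then defer entirely to \cite{CK,K,CK2}, asserting that the compactification comes with ``a canonical global section whose zero locus is irreducible and reduced by an explicit formula.'' That assertion is not something the cited papers plainly provide, and the present paper does not rely on such a statement. Instead, the paper's proof makes two specific moves that your proposal omits. First, since irreducibility is an open condition and the family $\chi$ is equivariant for a $\bC^*$-action rescaling the eigenvalues, it suffices to prove irreducibility for the central fibre $\mathbf{w}=0$; this reduction is crucial because it replaces a statement about generic fibres by a statement about one explicit fibre. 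Second, over the central fibre the divisor $Y_n^{\mathbf{0}} \setminus Y_n^{\mathbf{0},\open}$ is the locus where $z^n F_{2n} \to F_0$ fails to be an isomorphism, and the forgetful map to the space of flags of length $2n-1$ restricts to a degree-one map on this divisor; irreducibility follows since the target is irreducible. Your proposal would need to be supplemented by either this reduction-plus-degree-one argument or by a concrete reference substantiating the claimed explicit formula. Note also that the paper records that the divisor is in fact singular (via an Euler characteristic count), so your expectation of a nice ``irreducible and reduced'' section needs care; one cannot rule out reducibility simply by exhibiting a section in the right line bundle class.
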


\begin{proof} The first two statements are directly from \cite[Section 2.2]{CK} (see also \cite{K,CK2}). For the third, since irreducibility is an open condition and $\chi$ is equivariant for a $\bC^*$-action rescaling all eigenvalues, it suffices to work on the zero-fibre, i.e. with $\mathbf{w}=0$. In that case, $Y_n^{\mathbf{0}} \backslash Y_{n}^{\mathbf{0}, \open}$ is the locus where the natural map $z^n F_{2n} \rightarrow F_0$ is not an isomorphism, which is a divisor of determinantal type defined by vanishing of a section of the tensor product of the bundles $\mathcal{L}_i$ with fibres $F_i / F_{i-1}$.  These deform to $\mathcal{O}(1)$-bundles on the factors of $(\bP^1)^{2n}$, which gives the statement on the degree.  Finally, the divisor maps with degree one to the space of flags of length $2n-1$, which implies its irreducibility.  (An Euler characteristic computation shows that, although irreducible, the divisor is singular, but we will not require that fact.)
\end{proof}

 
 For an upper-half-plane crossingless matching $\wp$, let $\hat{L}_{\wp}$ denote the Lagrangian multi-antidiagonal  of $(\bP^1)^{2n}$ in which points paired by $\wp$ should take antipodal values, with the factor labelled by the odd co-ordinates positively oriented. For instance, when $n=2$ and we embed in $(\bP^1)^4$, the two core components are 
 \[
 \hat{L}_{\wpb} = \{(z, -1/\bar{z}, w, -1/\bar{w})\} \ \textrm{and} \ \hat{L}_{\mix} = \{ (z, -1/\bar{w}, w, -1/\bar{z})\}.
 \]

 \begin{Lemma}  \label{Lem:Wehrli}
 The compact core $Z$ is homeomorphic to $\cup_{\wp} \hat{L}_{\wp}$ by a map which is a diffeomorphism on each component. In particular, $Z$ is a union of copies of $(S^2)^n$, indexed by crossingless matchings, meeting pairwise cleanly, and the ``small antidiagonal" 
 \begin{equation} \label{Eqn:SmallAntidiagonal}
 \left\{ (z, -1/\bar{z}, z, -1/\bar{z}, \ldots, z, -1/\bar{z}) \right\} \subset (\bP^1)^{2n}
 \end{equation}
 lies in the common intersection locus of all of the $\hat{L}_{\wp}$.   \qed
 \end{Lemma}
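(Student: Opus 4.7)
The plan is to identify the compact core $Z$ with the Springer fibre of two-row Jordan type $(n,n)$, decompose it into irreducible components via the combinatorics of crossingless matchings, and then exhibit the homeomorphism to $\cup_\wp \hat L_\wp$ by reading off the sequence of line quotients $F_i/F_{i-1}$ of each flag.

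First I would pin down the ambient space. The condition $z^n F_{2n} = F_0$ together with $F_0 = \bC^2 \otimes \bC[z]$ forces $F_{2n} = z^{-n}\bC^2 \otimes \bC[z]$, so the flag lies in the fixed $2n$-dimensional quotient $F_{2n}/F_0$, on which multiplication by $z$ is a nilpotent endomorphism of Jordan type $(n,n)$. Hence $Z$ is the standard Springer fibre for a nilpotent with two equal Jordan blocks. The combinatorial decomposition of such a Springer fibre into its irreducible components is classical: the components are labelled by standard Young tableaux of shape $(n,n)$, which are in bijection with upper half-plane crossingless matchings $\wp$ on $2n$ points. Concretely, reading the flag from left to right, index $i$ is paired with $j>i$ when $F_j/F_{j-1}$ is the image of $F_i/F_{i-1}$ under the iterated $z$-action; this pairing yields $\wp$.

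Next I would check that each component is diffeomorphic to $(S^2)^n$. Each component is constructed inductively as an iterated $\bP^1$-bundle: at each step, one either opens a new "cup" (for which the next line in the flag is free, giving a $\bP^1$ factor) or closes an existing cup (for which the choice is forced by the module condition $zF_i \subset F_{i-1}$). A case analysis, matching the combinatorics of $\wp$, shows these iterated bundles are in fact trivial as smooth fibre bundles, so each component is diffeomorphic to $(S^2)^n$. This is the main content of Wehrli's paper \cite{Wehrli}.

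The third step is the explicit map $Z \to (\bP^1)^{2n}$. Given $(F_\bullet)\in Z$, the line $F_i/F_{i-1}$ is annihilated by $z$, so it inherits a canonical identification with a quotient line of the fixed copy of $\bC^2$; this yields a point of $\bP^1$. Sending the flag to the $2n$-tuple of these points defines a continuous map $Z \to (\bP^1)^{2n}$. Working component by component along the iterated bundle structure, one verifies that on the component labelled by $\wp$ the image is exactly the antidiagonal $\hat L_\wp$: paired coordinates are antipodal because the surjection $F_j/F_{j-1} \twoheadleftarrow F_i/F_{i-1}$ is implemented by multiplication by $z$, which, after the chosen trivialisation over the cup, is precisely the antipodal involution of $\bP^1$ (with the orientation convention singling out odd indices). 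The induction on $n$ can be set up so that opening a cup at position $(i,i+1)$ adds a sphere factor and imposes the antidiagonal relation on coordinates $i$ and $i+1$, matching the definition of $\hat L_\wp$. The small antidiagonal \eqref{Eqn:SmallAntidiagonal} corresponds to the locus where all the lines $F_i/F_{i-1}$ are (antipodally) proportional in a uniform way, and this locus manifestly satisfies the antidiagonal condition of every matching; hence it sits in $\cap_\wp \hat L_\wp$.

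The last step is to check clean intersection. Since each $\hat L_\wp$ is a smooth submanifold of $(\bP^1)^{2n}$ cut out by transverse antidiagonal conditions, the intersection $\hat L_\wp \cap \hat L_{\wp'}$ is computed by taking the union of antidiagonal conditions imposed by the two matchings; this is a smooth submanifold diffeomorphic to $(S^2)^{c(\wp,\wp')}$, and at each intersection point the tangent spaces sum to the tangent space of the ambient $(S^2)^{2n}$ modulo the common tangent, which is the definition of clean intersection. The expected main obstacle is the third step: controlling the trivialisations of the iterated $\bP^1$-bundles well enough to recognise, after the canonical identification $F_i/F_{i-1} \hookrightarrow \bP^1$, that the multiplication-by-$z$ identification of paired quotients really is the antipodal involution rather than some other projective transformation. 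Once the orientation and trivialisation conventions are fixed compatibly with the combinatorics of cups (i.e.\ the parity of the pairing), the remaining assertions are formal.
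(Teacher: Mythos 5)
The paper does not prove this statement: the \verb|\qed| directly after the lemma signals that it is simply cited from \cite{Wehrli}, so there is no in-paper argument to compare against. Your proposal is an attempt to reconstruct Wehrli's proof, and the overall architecture --- identify $Z$ with the $(n,n)$ Springer fibre, decompose into components indexed by crossingless matchings, exhibit each as an iterated $\bP^1$-bundle, and embed the whole thing in $(\bP^1)^{2n}$ --- matches the literature.

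However, your third step has a genuine gap, and it is precisely at the spot you yourself flag as the ``main obstacle.'' The claim that, because $F_i/F_{i-1}$ is annihilated by $z$, ``it inherits a canonical identification with a quotient line of the fixed copy of $\bC^2$'' does not hold. Being killed by $z$ means that $F_i/F_{i-1}$ is a module over $\bC[z]/z$, i.e.\ a bare line; it does not give a natural map to $\ker z \cong \bC^2$ or to $V/zV \cong \bC^2$, because $F_i/F_{i-1}$ is a \emph{subquotient} of $V=F_{2n}/F_0$, not a subspace of $\ker z$ or a quotient of $V$. Already for $i=2$ one sees the problem: if $zF_2=F_1$ then $F_2\not\subset\ker z$, while if $zF_2=0$ then $F_2=\ker z$ and $F_2/F_1$ is a quotient of $\bC^2$; the two cases behave differently and there is no single canonical recipe. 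The actual embedding $Z\hookrightarrow(\bP^1)^{2n}$ due to Khovanov and Wehrli is built by working through the iterated bundle structure on each component (via the sequence of intersections $F_i\cap\ker z$, which detects whether index $i$ opens or closes a cup), and the nontrivial content of Wehrli's paper is to show that the resulting component-by-component identifications with $(S^2)^n$ can be made compatible across overlaps.

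There is a second error in the same step: ``multiplication by $z$ \ldots is precisely the antipodal involution'' cannot be correct, since multiplication by $z$ is a $\bC$-linear (hence holomorphic) map, while the antipodal involution $z\mapsto -1/\bar z$ on $\bP^1$ is antiholomorphic. For $n=1$ the issue is already visible: $z$ acts as zero on $V=\ker z$, and the antipodal relation between $F_1$ and $F_2/F_1=\bC^2/F_1$ arises from the Hermitian identification $\bC^2/F_1\cong F_1^{\perp}$, not from $z$. More generally the antipodal structure on paired coordinates comes from the choice of a Hermitian metric on $\bC^2$ (orthogonal complement of a line), and this has to be built explicitly into the trivialisations. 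Your step (4) on clean intersection is fine once the embedding is in place, and your identification of the ambient Springer fibre and its component decomposition is correct, but the argument as written does not close the gap in step (3).
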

 
 \begin{proof} See \cite[Appendix A]{Russell-Tymoczko} and \cite{Wehrli} for two different proofs.
 \end{proof}
 
  Since the divisor at infinity in $\scrY_n^{\mathbf{w}}$ is ample when $\mathbf{w}$ is generic, one can define the Fukaya category of $\scrY_n$ with respect to a finite volume exact K\"ahler form which extends smoothly to the compactification, arguing via positivity of intersection at infinity to ensure compactness of moduli spaces of pseudoholomorphic discs.   The map $\mathbf{Y}_n \rightarrow \bC^{2n}$ is a differentiably trivial fibration with fibre $(S^2)^{2n}$, which is  symplectically trivial for a K\"ahler form in the class of the divisor at infinity by fragility of the two-dimensional Dehn twist \cite{Seidel:4dtwist} (of course the monodromy is not fragile relative to the divisor at infinity).  
  
  \begin{Lemma} \label{Lem:IsotopicCoreVCycle}
  The iterated vanishing cycle $L_{\wp}$ is smoothly isotopic to $\hat{L}_{\wp}$.
  \end{Lemma}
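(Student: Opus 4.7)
My plan is a deformation argument through the smooth fibration $\mathbf{Y}_n \to \bC^{2n}$. First, I would choose a path $\mathbf{w}_t$ from the generic configuration $\mathbf{w}_0 = (1,2,\ldots,2n)$, which underlies the Lefschetz fibration $A_{2n-1} \to \bC$ used to define $L_\wp$, to the nilpotent configuration $\mathbf{w}_1 = \mathbf{0}$. Since the family is smoothly locally trivial with fibre diffeomorphic to $(S^2)^{2n}$, and since the compactifying divisor $Y_n^{\mathbf{w}} \setminus Y_n^{\mathbf{w},\open}$ assembles into a smooth subbundle, I can select a smooth trivialisation of $\mathbf{Y}_n$ over this path which preserves the divisor, and hence also the open subfibration. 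This produces a smooth identification of $\scrY_n \cong Y_n^{\mathbf{w}_0, \open}$ with $Y_n^{\mathbf{0}, \open}$ that carries the submanifold $L_\wp$ to some submanifold $\tilde L \subset Y_n^{\mathbf{0}, \open}$; it then suffices to show that $\tilde L$ is smoothly isotopic to $\hat L_\wp$ inside $Y_n^{\mathbf{0}, \open}$.

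To show this, I would argue arc by arc in $\wp$ using Lemma \ref{lem:Deform}. Each arc connects a pair of critical values in the base of $A_{2n-1}$, and degenerating just that pair while holding the remaining $2n-2$ values fixed localises the question to a neighbourhood of the arc. In this local picture, the problem reduces to the case $n=1$: there, $\scrY_1 \cong T^*S^2$, the matching sphere is the zero section, and inside the compactification of $\bar A_1$ (a blowup of $\bP^1 \times \bP^1$) the zero section is smoothly isotopic to the antidiagonal $\{(z,-1/\bar z)\}$ via an explicit radial contraction of the cotangent fibres. Performing these local isotopies over the disjoint arcs of $\wp$ and using that the construction of $L_\wp$ factors as a product over the arcs of matching spheres in $A_{2n-1}$ (followed by descent to the Hilbert scheme), one assembles a smooth isotopy inside $\Hilb^{[n]}(\bar A_{2n-1})$ carrying $L_\wp$ to the product of antidiagonals, which is precisely the description of $\hat L_\wp$ supplied by Lemma \ref{Lem:Wehrli}.

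The main obstacle is matching the two ambient models --- the Lefschetz fibration / Hilbert scheme description in which $L_\wp$ is defined, and the iterated $\bP^1$-bundle / flag description of Cautis--Kamnitzer in which $\hat L_\wp$ lives --- in a way that singles out the correct component of the compact core $Z$. Since $Z$ contains one component $\hat L_{\wp'}$ for each crossingless matching $\wp'$, and these components meet cleanly, I must verify that the limiting submanifold $\tilde L$ lies in the component indexed by $\wp$ rather than some neighbour. Heuristically this is clear --- the combinatorial pattern of which pairs of critical values collide is exactly the data of $\wp$ --- but turning the intuition into a rigorous identification requires either an explicit coordinate comparison via Manolescu's isomorphism \cite{Manolescu} between the relevant open subset of $\Hilb^{[n]}(A_{2n-1})$ and the flag variety model, or an inductive argument exploiting the clean intersection of the components of $Z$ together with a discrete invariant (for instance the homology class in $H_{2n}(Z)$) that distinguishes the component indexed by $\wp$.
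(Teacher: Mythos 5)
Your proposal is in the right spirit---degenerate towards the nilpotent fibre and reduce arc by arc---but there are two substantive gaps, both of which the paper's proof addresses by a different mechanism.

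First, your Step 1 does not quite make sense as stated: there is no family of Lagrangians ``$L_\wp$'' lying over a path in $\bC^{2n}$ that a trivialisation of $\mathbf{Y}_n$ could carry along. The iterated vanishing cycle $L_\wp$ is defined at a single generic fibre by a parallel transport construction, and it is a nontrivial statement---not a consequence of local triviality of the ambient bundle---that it limits onto anything as $\mathbf{w} \to \mathbf{0}$. (Relatedly, the divisor $Y_n^{\mathbf{w}} \setminus Y_n^{\mathbf{w},\open}$ is singular, as the Example shows, so ``preserving the divisor'' is delicate.) The paper instead never attempts a single degeneration to $\mathbf{0}$; it exploits the fact that $L_\wp$ is by construction an \emph{iterated convolution} of the elementary Lagrangian correspondences arising from the $A_1$ Morse--Bott degenerations, one for each arc. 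Tracking the isotopy class then amounts to identifying each individual correspondence.

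Second, the key input that makes that identification possible---and that your arc-by-arc step is silently missing---is Kamnitzer's observation that, on the fibrewise \emph{compactification}, the singular fibre of the Morse--Bott degeneration (two eigenvalues colliding) is globally a product of the compactified smaller slice with a singular quadric cone. This global product structure is what shows that the coisotropic vanishing cycle correspondence is smoothly isotopic to the stabilised antidiagonal in $(S^2 \times S^2)$. The paper explicitly warns that this product structure is \emph{not} inherited by the open slices because it is incompatible with the divisors at infinity, so your plan to run the localisation ``inside $Y_n^{\mathbf{0},\open}$'' would hit exactly that obstruction. You also candidly acknowledge the remaining problem of singling out the correct component of the compact core $Z$; this is resolved automatically in the paper's argument because the convolution description builds $L_\wp$ out of correspondences indexed by the arcs of $\wp$, and the Beilinson--Drinfeld factorisation matches these bijectively with the factors of the product $\hat L_\wp$ in Wehrli's model. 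In short: replace ``trivialise the bundle and track $L_\wp$'' by ``identify each elementary correspondence via the compactified product structure, then compose.''
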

  
  \begin{proof} This follows from the description of the Morse-Bott degeneration of the spaces $Y_n^{\mathbf{w},\open} \subset Y_n^{\mathbf{w}}$ as a pair of eigenvalues coalesce, given in \cite[Section 3]{K}, which in turns relies on the factorisation property of the affine Grassmannian due to Beilinson and Drinfeld \cite[Section 5.3.10]{BD}.  In short, if we restrict the fibrewise compactification of $\Slice$ to a disc $D_{\epsilon}$ parametrising $\mathbf{w} = (\mu+\epsilon, \mu-\epsilon, w_3,\ldots,w_{2n})$, Kamnitzer shows the singular fibre at $\epsilon = 0$ is \emph{globally} a product, with one factor the compactification of the fibre of the smaller slice $\Slice_{n-1}$ over $\mathbf{w}' = (w_3,\ldots, w_{2n})$, and the other a singular quadric cone, i.e. a Hirzebruch surface $\mathbf{F}_2$ with the $(-2)$-sphere collapsed to a point, which arises as the singular fibre of the compactified slice $\Slice_1$. (The global product structure is not compatible with the divisors at infinity, so is not inherited by the resolution of the open slice.) On the compactification, this product structure shows the co-isotropic vanishing cycle is isotopic to the anti-diagonal in $(S^2\times S^2)$ stabilised by the submanifold $(S^2)^{2n-2}$ corresponding to $\mathbf{w}'$.  Since the vanishing cycles are iterated convolutions of the correspondences, the result follows. \end{proof}
  
 Lemma \ref{Lem:IsotopicCoreVCycle} constructs an isotopy in $Y_n^{\mathbf{w}}$ and not necessarily in $Y_{n}^{\mathbf{w}, \open}$, but that is sufficient to control the cohomological restriction maps $H^*(\scrY_n) \rightarrow H^*(L_{\wp})$.
  
Fix as usual the eigenvalues $\bf{l} = \{1,\ldots, 2n\} \subset \bC$ and consider the fibre $\scrY_n = \scrY_n^{\bf{l}}$ of the adjoint quotient.   According to \cite[Theorem 2]{Khovanov:crossingless}, the cohomology $H^*(\scrY_n;\bZ)$ is generated as a ring by $H^2(\scrY_n;\bZ)$, and is torsion-free, so we can identify $H^2(\scrY_n)$ and $H_2(\scrY_n)^{\vee}$ integrally.  By \cite[Proposition 5]{Khovanov:crossingless},  the map $H^2((\bP^1)^{2n}) \to H^2(\scrY_n)$ is surjective, and indeed $ H^2(\scrY_n) $ is the quotient of $H^2(\bP^1)^{2n}$ by the $(1, \ldots, 1)$ class; this can also be inferred from the Lefschetz hyperplane theorem and Proposition \ref{Prop:flags_etc}.
Via complex orientations, there is a distinguished basis $e_i$ for $H^2(\bP^1)^{2n} = \bZ^{2n}$, where $e_i$ has $1$ in the $i$-th place and 0's elsewhere. The elements $(-1)^{i+1} e_i$, with $1\leq i\leq 2n-1$, therefore define a basis of $H^2(\scrY_n;\bZ)$.  By restricting these elements to $L_{\wp}$, we obtain bases for $H^2(L_{\wp})$ which make the following true tautologically:
\begin{Corollary} \label{Cor:Same-1}
The elements $v_i=(-1)^{i+1} e_i$, with $1\leq i\leq 2n-1$, define a basis of $H^2(\scrY_n;\bZ)$ with the following property: under the restriction map
\[
H^2(\scrY_n;\bZ) \longrightarrow H^2(L_{\wp};\bZ)
\]
each basis element maps either to zero or to a basis element.
\end{Corollary}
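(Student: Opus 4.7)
The basis statement is immediate: by assumption $H^2(\scrY_n;\bZ)$ is the quotient $\bZ^{2n}/\langle e_1+\cdots+e_{2n}\rangle$, so any $2n-1$ of the $e_i$ descend to a $\bZ$-basis, and multiplying by signs preserves this property. The substance of the corollary lies in the behaviour under restriction, and my plan is to compute it on the ``straight'' model $\hat L_\wp$, which is visibly a product, and then transport the result back via the isotopy of Lemma \ref{Lem:IsotopicCoreVCycle}.

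By that lemma, $L_\wp$ is smoothly ambient-isotopic in $Y_n$ to $\hat L_\wp$, so the two restriction maps $H^2(Y_n) \to H^2(L_\wp)$ and $H^2(Y_n) \to H^2(\hat L_\wp)$ are intertwined by the induced isomorphism $H^2(L_\wp) \cong H^2(\hat L_\wp)$. Since $H^2(\scrY_n)$ is a quotient of $H^2(Y_n)$, the target restriction factors through the ambient one, and it suffices to compute $e_i|_{\hat L_\wp}$ for each $i$.

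The computation rests on the parity fact that in any upper-half-plane crossingless matching of $\{1,\ldots,2n\}$ the two endpoints of every arc have opposite parity (between any two paired endpoints the number of remaining points is even, proved by a short nesting induction). Writing $\alpha$ for the arc joining a unique odd $i_\alpha$ to a unique even $j_\alpha$, parametrize $\hat L_\wp \cong (\bP^1)^n$ by the odd coordinates. The projection of $\hat L_\wp$ onto the $i_\alpha$-th factor of $(\bP^1)^{2n}$ is then the identity, while the projection onto the $j_\alpha$-th factor is the antipodal map $z \mapsto -1/\bar z$, which is orientation-reversing on $S^2$. Letting $f_\alpha \in H^2(\hat L_\wp)$ denote the pullback of the positive generator from the $\alpha$-th factor in this odd parametrization, one gets $e_{i_\alpha}|_{\hat L_\wp} = f_\alpha$ and $e_{j_\alpha}|_{\hat L_\wp} = -f_\alpha$, and therefore
\[
v_{i_\alpha}|_{\hat L_\wp} = (+1)\,f_\alpha = f_\alpha, \qquad v_{j_\alpha}|_{\hat L_\wp} = (-1)(-f_\alpha) = f_\alpha.
\]
Thus every $v_i$ restricts to the positive basis element $f_{[i]}$ indexed by the arc of $\wp$ through $i$, and declaring $\{f_\alpha\}_{\alpha \in \wp}$ to be the basis of $H^2(L_\wp)$ makes the corollary tautological (each $v_i$ in fact hits a basis element, so no ``zero'' case arises in this proof; the zero case is allowed but not needed).

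The only real content is the orientation bookkeeping: one must verify that the antipodal map on $S^2$ is orientation-reversing and track which of the two coordinates of each arc carries the positive orientation in $\hat L_\wp$. The signs $(-1)^{i+1}$ in the definition of $v_i$ are precisely calibrated to cancel the sign introduced at each even-indexed projection, so that both endpoints of every arc contribute the same positive generator $f_{[i]}$ and the collection assembles consistently into a basis.
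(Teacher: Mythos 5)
Your proof is correct and is essentially the paper's argument made explicit: the paper declares the basis of $H^2(L_\wp)$ to be the restriction of the $v_i$ (hence the word ``tautologically''), and the substance of that declaration is exactly the orientation bookkeeping you carry out --- that the odd coordinate of each arc projects orientation-preservingly and the even one via the orientation-reversing antipodal map, so the signs $(-1)^{i+1}$ cancel the antipodal sign and both endpoints of an arc restrict to the same positive generator. You additionally observe that no $v_i$ actually restricts to zero, which is a harmless strengthening not needed for (or claimed by) the corollary.
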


The force of the construction  is that we restrict to zero or a basis element, and not to the negative of a basis element. 
We immediately obtain monomial bases for the entire cohomology $H^*(L_{\wp})$, which is generated in degree two.  Henceforth, we will abuse notation and denote by $v_j$ a basis element of either of $H^2(\scrY_n)$ of $H^2(L_{\wp})$, depending on the context.  

The choice of basis of $H^2(\scrY_n)$ also orients the 2-spheres  $V_i$ in view of the proof of Lemma \ref{Lem:IsotopicCoreVCycle}, which identifies them cohomologically with antidiagonals in factors of $(S^2)^{2n}$ (the $i$\th and $i+1$\st basis elements map to the same generator of the top cohomology of $V_i$). In particular, $H^2(L_{ \cup_i \wp})$ inherits a basis from the K\"unneth theorem, i.e. the decomposition
\begin{equation}
H^2(L_{ \cup_i \wp};\bZ) \longrightarrow H^0(L_{\wp}) \otimes H^2(V_i) \oplus H^2(L_{\wp}) \otimes H^0(V_i).
\end{equation}

\begin{Corollary} \label{Cor:Same}
The basis of $H^2(L_{\wp};\bZ)$ is preserved by the cup-functors, in the sense that the choice of basis induced by restriction from $H^2(\scrY_n;\bZ) $ agrees with the basis induced by applying the   K\"unneth theorem to a presentation as the image of a cup functor.
\end{Corollary}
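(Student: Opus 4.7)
The plan is to verify agreement of the two bases on $H^2(L_{\cup_i\wp};\bZ)$ by a direct coordinate-by-coordinate comparison on the model of $L_{\cup_i\wp}$ as the antidiagonal $\hat{L}_{\cup_i\wp} \subset (\bP^1)^{2n+2}$, which is legitimate by Lemma \ref{Lem:IsotopicCoreVCycle}. Both the restriction basis (from $H^2(\scrY_{n+1};\bZ)$) and the K\"unneth basis (from $H^2(L_\wp;\bZ)\oplus H^2(V_i;\bZ)$) are then pulled back to explicit combinations of the canonical generators $e_j$ of $H^2((\bP^1)^{2n+2};\bZ)$, and the question reduces to tracking signs.

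First, I would set up the index bookkeeping. Under the embedding $\scrY_n^i\times U^i \hookrightarrow \scrY_{n+1}$ of Section \ref{Sec:MeetCup}, two new critical values are inserted at positions $i$ and $i+1$, so the remaining $2n$ coordinates of $\scrY_{n+1}$ (indexed by $\{1,\ldots,2n+2\}\setminus\{i,i+1\}$) correspond to the $2n$ coordinates of $\scrY_n$ via the unique order-preserving bijection, namely $j\mapsto j$ for $j<i$ and $j\mapsto j-2$ for $j>i+1$. Because this relabelling shifts indices by an even number, parity is preserved, and so the sign $(-1)^{j+1}$ in the definition of $v_j^{(n+1)} = (-1)^{j+1}e_j$ matches $(-1)^{k+1}$ for the relabelled index $k$ in $v_k^{(n)}$. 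This deals with all indices $j\notin\{i,i+1\}$: their restrictions to $\hat{L}_\wp \subset \hat{L}_{\cup_i\wp}$ reproduce exactly the basis used to present the $L_\wp$-factor in the K\"unneth decomposition.

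Second, I would address the new factor $V_i$, which is the antidiagonal in $(\bP^1)^2$ at coordinates $i, i+1$. The discussion before the corollary fixes the orientation on $V_i$ (and hence the K\"unneth generator of $H^2(V_i;\bZ)$) \emph{by declaring it} to be the common restriction of $v_i^{(n+1)}$ and $v_{i+1}^{(n+1)}$; the only content is that these two restrictions really do coincide and not differ by a sign. This follows from the fact that the antipodal map $z\mapsto -1/\bar z$ on $\bP^1\cong S^2$ has degree $-1$, so $e_i|_{V_i}$ and $e_{i+1}|_{V_i}$ differ by a sign; the additional signs $(-1)^{i+1}$ and $(-1)^{i+2}$ built into $v_i^{(n+1)}$ and $v_{i+1}^{(n+1)}$ cancel this discrepancy, producing equal restrictions.

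Combining these two inputs identifies the restriction basis of $H^2(L_{\cup_i\wp};\bZ)$ with the K\"unneth basis coming from $L_\wp \times V_i$. Since both identifications are compatible with cup product and $H^*(L_{\cup_i\wp};\bZ)$ is generated in degree two, agreement on $H^2$ suffices. The main (and really only) obstacle is the delicate but routine matching of signs: the parity sign $(-1)^{j+1}$ must be compatible simultaneously with the index shift by two and with the degree $-1$ of the antipodal map on $S^2$. This compatibility is precisely why the signs were built into the basis convention of Corollary \ref{Cor:Same-1} in the first place, and is the heart of the verification.
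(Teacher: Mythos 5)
Your proof is correct and follows essentially the same route the paper takes: both invoke Lemma~\ref{Lem:IsotopicCoreVCycle} to transfer the computation to the topological model $\hat L_{\cup_i\wp}\subset(\bP^1)^{2n+2}$, and both rest on the observation (stated in the paragraph preceding the Corollary) that $v_i$ and $v_{i+1}$ restrict to the \emph{same} generator of $H^2(V_i)$. The paper's proof is a two-sentence appeal to ``iterated convolutions'' and ``by construction,'' whereas you spell out the two sign checks that make the construction work — the parity-preservation under the index shift by two, and the cancellation between the degree $-1$ of the antipodal map and the alternating sign $(-1)^{j+1}$ — which is exactly the content that the paper leaves implicit, and your accounting of both is accurate.
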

\begin{proof}
This follows since the Lagrangians $L_{\wp}$ are iterated convolutions of the correspondences, by Lemma \ref{Lem:IsotopicCoreVCycle}.  The orientations on the $V_i$ define bases for $H_2(L_{\wp})$, and hence bases for $H^2(L_{\wp})$, which by construction agree with those coming from $H^2(\scrY_n)$.
\end{proof}

\subsection{Another orientation convention} \label{Sec:Basis}

In \cite{SS}, and for the discussion of purity in Section \ref{Sec:Purity}, we adopted a grading convention in which the symplectic arc algebra was ``symmetrically" graded, so for distinct matchings $\wp, \wp'$ meeting in codimension $k$ the group $HF^*(L_{\wp}, L_{\wp'})$ was supported in degrees $k \leq \ast \leq 2n-k$.  For the purpose of computing products in the symplectic arc algebra, a different convention is more convenient.

\begin{Definition} \label{Def:Depth} Let $\wp$ be a crossingless matching in the upper half-plane $\frak{h}$. The \emph{depth} of an arc $\gamma \subset \wp$ is the number of other arcs $\gamma' \subset \wp\backslash \gamma$ which meet a vertical line from $\gamma$ to $\infty \in \frak{h}$.
\end{Definition}

Informally, depth is the number of arcs under which $\gamma$ is nested in $\wp$.  We orient the arcs $\gamma$ by the convention that arcs of even depth are oriented clockwise, and arcs of odd depth are oriented anticlockwise: see Figure \ref{Fig:Depth} for two examples. Equivalently, if the critical points are numbered $\{1,2,\ldots, 2n\}$ then all arcs are oriented \emph{towards} their even end-point.
\begin{center}
\begin{figure}[ht]
\includegraphics[scale=0.4]{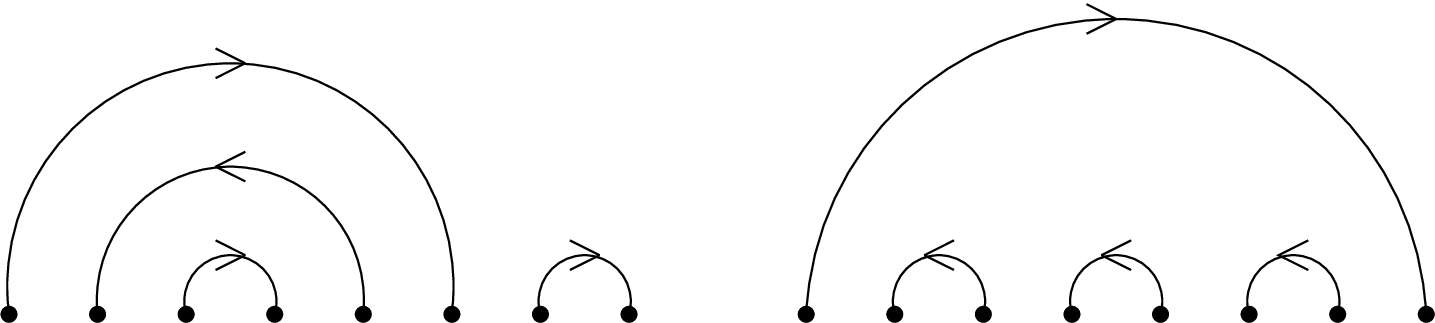}
\caption{Cohomology bases for components of the compact core\label{Fig:Depth}}
\end{figure}
\end{center}

In the Lefschetz fibration $z: A_{2n-1} \rightarrow \bC$, the monodromy of the generic fibre $T^*S^1$ around a critical point is a Dehn twist in the $S^1$, which preserves the orientation on $S^1$.  Fix once and for all the standard (anticlockwise) orientation on $S^1 \subset T^*S^1 = \bC^* \subset \bC$.  Then a choice of orientation for a matching path $\gamma$ determines a well-defined orientation on the Lagrangian sphere $L_{\gamma} \subset A_{2n-1}$, which means that Definition \ref{Def:Depth} fixes bases for $H_2(L_{\wp})$ for all the Lagrangians $L_{\wp}$.  

\begin{Lemma} \label{Lem:Orient_even}
The orientation convention of Definition \ref{Def:Depth} yields the same bases for $H_2(L_{\wp})$ as the bases of Corollary \ref{Cor:Same}. Furthermore, with these orientations the symplectic arc algebra is graded in even degrees.
\end{Lemma}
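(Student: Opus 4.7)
The plan is to handle the two assertions separately. For the orientation statement, the strategy is to reduce to a base case using the naturality exhibited in Corollary \ref{Cor:Same}, and then to match conventions by hand on that base case. For the even-degree statement, the strategy is to reduce from arbitrary pairs of matchings to codimension-one pairs via Lemma \ref{Lem:Codim1Interpolate} and then perform a single Maslov index computation.

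For the orientation agreement, I would argue as follows. Each component $L_\gamma \subset L_\wp$ corresponding to an arc $\gamma$ pairing endpoints $i<j$ carries two putative orientations. The depth convention orients $\gamma$ towards its even endpoint, and the fixed anticlockwise orientation on the $S^1 \subset T^*S^1$-fibre of $z\colon A_{2n-1}\to\bC$ lifts this to an orientation on $L_\gamma$. The compactification convention, via Lemma \ref{Lem:IsotopicCoreVCycle} and the antidiagonal description with odd coordinate positively oriented, together with the basis $v_i=(-1)^{i+1}e_i$ of Corollary \ref{Cor:Same-1}, gives the positive generator of $H^2(L_\gamma)$. Both conventions are natural under adjoining a depth-zero arc adjacent to existing ones (on the compactification side because of the K\"unneth splitting of Corollary \ref{Cor:Same}, on the depth side because adding an adjacent arc does not alter the parity of the depth of any existing arc). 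It therefore suffices to verify the agreement in a single depth-zero arc joining adjacent critical values $\{2k-1,2k\}$, which is an explicit computation in the $A_1$-surface $x^2+y^2+(z-2k+1)(z-2k)=0$ whose real part is an equatorial 2-sphere; there one checks directly that the standard complex orientation of the degenerated $\bP^1$-factor with odd coordinate matches the orientation induced by the anticlockwise $S^1$-convention paired with traversing the matching arc from $2k-1$ to $2k$. Inductively nesting arcs flips the parity of the odd endpoint of the nesting arc (changing $v_i$ to $v_{i+1}$ and thus introducing a sign $(-1)$), and simultaneously flips clockwise to anticlockwise in the depth convention, matching signs layer by layer.

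For the even-degree statement, by \cite[Section 4.5]{AbSm} all intersection points of $L_\wp$ and $L_{\bar{\wp'}}$ carry the same mod-two Maslov degree, so it suffices to exhibit one even-degree generator for each pair $(\wp,\wp')$. Fix the absolute grading by requiring $HF^*(L_\wp,L_\wp)\cong H^*((S^2)^n)$ supported in degrees $0,2,\ldots,2n$; this is consistent with the depth convention since the depth orientation is the one induced by a consistent choice of spin structure and graded lift coming from the antidiagonal model. Given $\wp\neq\wp'$, I would apply Lemma \ref{Lem:Codim1Interpolate} to produce a chain of codimension-one neighbours $\wp=\wp_0,\wp_1,\ldots,\wp_k=\wp'$, and use Lemma \ref{Lem:Constant_triangle} to produce a non-trivial triangle product from successive minimal-degree generators $k_{\min}(\wp_\ell,\wp_{\ell+1})$. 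Additivity of Maslov degree under the triangle product and inductive computation of the degree of $k_{\min}$ in the codimension-one plumbing model (where the local picture is the standard intersection of two Lagrangian spheres in the $A_2$-surface, producing even-graded generators) reduces the claim to the codimension-one case. The codimension-one check is a local plumbing computation that I would perform by identifying the local model with the cotangent bundle of $S^2$, where the index of the transverse intersection between the zero section and a Hamiltonian pushoff matches the grading from the depth convention.

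The main obstacle is the compatibility bookkeeping in the inductive step of the first claim: tracing through how the orientation sign twists under nesting on both sides requires a careful identification of the degeneration picture in Lemma \ref{Lem:IsotopicCoreVCycle} with the explicit antidiagonal $\{(z,-1/\bar z)\}$ coordinates. Once this base-case and inductive propagation is in place, the second claim follows fairly directly from Maslov additivity and the reduction to codimension one.
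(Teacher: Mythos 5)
Your approach to the even-degree claim diverges substantially from the paper's, which gives a one-step local argument that you have missed. The paper observes that an isolated intersection of $L_{\wp}$ with $L_{\bar{\wp'}}$ is a tuple of intersections of matching paths in $A_{2n-1}$, one per critical point. The depth convention orients every arc towards its even endpoint; consequently the path of $\wp$ and the path of $\bar{\wp'}$ incident at a given critical point are both oriented towards it (if it is even) or both away from it (if it is odd). This means the two thimbles are coherently oriented, so each local intersection sign on the surface is $+1$, the total sign is $+1$, and hence the mod-$2$ Maslov index is even. That is the entire proof. No interpolation, no plumbing, no reduction to codimension one.

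Your route --- interpolating with Lemma \ref{Lem:Codim1Interpolate}, multiplying minimal-degree generators via Lemma \ref{Lem:Constant_triangle}, and reading off degrees from the clean-intersection model --- has a genuine gap. The plumbing model (Proposition \ref{Prop:plumb}) specifies the $\bZ$-grading only after one has \emph{chosen} graded lifts of the two Lagrangians normalized so that $k_{\min}$ sits in degree $0$. Its $\bZ/2$-reduction is therefore automatic and says nothing about whether the \emph{depth orientations} produce even degrees; that compatibility is exactly what the lemma is asking you to prove, so invoking the plumbing model here is circular unless you separately verify that the plumbing-model grading reduces mod $2$ to the intersection-sign grading coming from the chosen orientations. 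The paper's local-sign argument is precisely that verification, carried out directly without the detour. You should also note that Proposition \ref{Prop:plumb} and Lemma \ref{Lem:CleanPairs} appear \emph{after} Lemma \ref{Lem:Orient_even} in the logical development, so leaning on them here would require a reorganisation. Finally, identifying the local model near the clean intersection $(S^2)^{n-1}\subset L_{\wp}$ with ``the cotangent bundle of $S^2$'' is not right: the model is $T^*L_{\wp}$ with $L_{\wp'}$ a conormal bundle to a codimension-$2$ submanifold, not a two-dimensional cotangent bundle.

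On the first claim, the paper is silent: its proof addresses only the evenness assertion, so there is nothing to compare against. Your base-case-plus-induction scheme for matching the two orientation conventions is a reasonable outline, and the observation that both are natural under the K\"unneth decomposition of Corollary \ref{Cor:Same} and under adding a new adjacent depth-zero arc is the right kind of reduction. However the inductive step as written --- ``nesting flips the parity of the odd endpoint of the nesting arc, changing $v_i$ to $v_{i+1}$'' --- does not parse as stated: nesting an arc does not change the labels of its endpoints, and the depth convention changes orientation of the \emph{inner} arcs, not the outer one, because it is their depth that increases. To make this rigorous you need to track, under a single nesting, how the generator of $H^2$ of the inner $S^2$ factor is identified under the isotopy of Lemma \ref{Lem:IsotopicCoreVCycle}, and check that the sign picked up agrees with the clockwise-to-anticlockwise flip. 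That is plausible but requires more care than your sketch gives.
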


\begin{proof}
An isolated intersection point of $L_{\wp}$ and $L_{\bar{\wp'}}$ is a tuple of intersections of matching paths in the $A_{2n-1}$-surface, and the sign of the intersection is given by the product of the corresponding signs on the surface.
At any critical point of the fibration $A_{2n-1}\rightarrow \bC$, the two matchings paths of $\wp$ and $\bar{\wp'}$ meeting at that critical point are coherently oriented, in the sense that the local isotopy of Lefschetz thimbles given by rotating one path to the other through thimbles is an orientation-preserving isotopy.  Therefore the local intersection number of the two thimbles in $A_{2n-1}$  is positive. (It may be helpful to compare to the case of two copies of the same matching sphere in $A_1$, which defines $S^2 \subset T^*S^2$, and to recall that Maslov indices agree with Morse indices in a cotangent bundle $T^*Q$ if $\langle \partial_{q_1},\ldots, \partial_{q_n},\partial_{p_1},\ldots,\partial_{p_n}\rangle$ is an oriented basis; this differs from the symplectic orientation of $T^*Q$  by a global sign $(-1)^{n(n+1)/2}$.) We conclude that the symplectic arc algebra is graded in even degrees. 

\end{proof}



\subsection{$A_2$-fibrations and the plumbing model for $n=2$}

 Recall that two submanifolds $Y_0, Y_1$  meet cleanly if they intersect along a submanifold, and
  \begin{equation}
  T_p (Y_0 \cap Y_1) = T_pY_0 \cap T_pY_1\end{equation}
  for every $p\in Y_0 \cap Y_1$.     The Lagrangians $\hat{L}_{\wp}$ meet pairwise cleanly. It is not known if $\hat{L}_{\wp}$ and $L_{\wp}$ are Lagrangian isotopic (in $\scrY_n$ rather than its compactification) in general, but \emph{pairs} of crossingless matching Lagrangians can be Hamiltonian isotoped into a clean intersection model. This follows from the description in \cite{SS} of the degeneration of the fibre $\scrY_n$ when three eigenvalues coalesce.

Take a tuple $\mu = (\mu_1, \dots,\mu_{2n})$ with $\mu_1 = \mu_2 = \mu_3$, and which are otherwise pairwise
distinct. The adjoint fibre $\scrY_n^{\mu}$ contains two orbits:
the regular orbit $\OO^{\reg}$ has an indecomposable Jordan block of
size 3 for the eigenvalue $\mu_1$, whilst the subregular orbit $\OO^{\sub}$
has two Jordan blocks of sizes $1,2$ (the minimal orbit  in the adjoint fibre $\chi^{-1}(\mu)$ consists
of matrices with three independent $\mu_1$-eigenvectors, but this orbit is disjoint from $\Slice$). The singular set of the fibre $\scrY_n^{\mu}$ is smooth, and can be canonically identified with  $\scrY_{n-1}^{\mu_{red}}$ where $\mu_{red} = (\mu_1, \mu_4, \ldots, \mu_{2n})$, see \cite[Lemma 25]{SS}.  

At a point $y \in \OO^{\sub} \cap \Slice$ let $E_y$ be the
$\mu_1$-eigenspace of its semisimple part $y_s$. These spaces form the fibres of a line bundle $\scrF \rightarrow \OO^{\sub} \cap \Slice$. We consider the associated vector bundle 
\begin{equation} \label{eq:associated-c4}
(\FF \setminus 0) \times_{\bC^*} \bC^4 = \bC \oplus \FF^{-2} \oplus
\FF^2 \oplus \bC.
\end{equation}
We also introduce the map (the versal deformation of the $A_2$-singularity)
\begin{equation} \label{eq:a2-map}
p: \bC^4 \longrightarrow \bC^2, \quad p(a,b,c,d) =
 (d,a^3-ad+bc).
\end{equation}
The relevant statement from \cite{SS} is then:

\begin{Lemma} \label{lem:local_A2_degeneration}
Let $P \hookrightarrow \Conf_{2n}(\bC)$ be a small
bidisc parametrized by $(d,z)$, corresponding to the set of
eigenvalues
\[
(\mu_1+\{\text{all solutions of $\lambda^3 - d \lambda  + z = 0
$}\},\mu_4,\dots,\mu_{2n}).
\]
There is a neighbourhood of $\OO^{\sub} \cap \Slice$ inside
$\chi^{-1}(P) \cap \Slice$, and an isomorphism of that with a
neighbourhood of the zero-section inside $(\FF \setminus 0)
\times_{\bC^*} \bC^4$, which fits into a diagram
\[
\begin{CD}
 \chi^{-1}(P) \cap \Slice
 @>{\text{local $\iso$ defined near $\OO^{\sub} \cap
 \Slice$}}>> {(\FF \setminus 0) \times_{\bC^*} \bC^4} \\
 @V{\chi}VV @V{p}VV \\
 P @>{\quad\qquad\quad(d,z)\quad\qquad\quad}>> \bC^2
\end{CD}
\]
where $p$ is given by \eqref{eq:a2-map} on each $\bC^4$ fibre.
\end{Lemma}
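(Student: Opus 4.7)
My plan is to prove this by a Brieskorn--Slodowy-type argument: identify the transverse slice $S_y$ to $\OO^{\sub}\cap\Slice$ inside $\chi^{-1}(P)\cap\Slice$ at a basepoint $y$ with the versal $A_2$-deformation in the form $p$; then globalize by tracking the $\bC^*$-action on the choice of $\mu_1$-eigenvector, which is the origin of the twisting by the line bundle $\FF$.

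First, fix $y \in \OO^{\sub}\cap\Slice$. Since $y_s$ has eigenvalue $\mu_1$ with multiplicity $3$ while $\mu_4, \ldots, \mu_{2n}$ remain simple throughout $P$, a local decomposition of generalized eigenspaces splits off the contribution of the latter (they give a smooth base over which our neighbourhood fibres trivially). The remaining interesting factor is a Slodowy slice to the subregular nilpotent in the $3$-dimensional generalized $\mu_1$-eigenspace, which is a transverse slice in a copy of $\mathfrak{sl}_3$ sitting inside $\Slice$. By Brieskorn's theorem this slice has complex dimension $4$ and the restriction of the adjoint quotient to it is the versal deformation of the $A_2$-singularity. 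In explicit coordinates, after choosing a cyclic vector in the $\mu_1$-generalized eigenspace, the slice becomes $\bC^4$ with coordinates $(a,b,c,d)$ and the deformation map is exactly $p(a,b,c,d)=(d, a^3-ad+bc)$; here $a$ and $d$ parametrize the elementary symmetric functions of the three colliding eigenvalues shifted by $\mu_1$, and $(b,c)$ are off-diagonal matrix entries in a companion-matrix presentation. This also identifies the base of the family with the bidisc $P$ via $(d,z)$ as stated.

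Second, I analyse how this identification depends on the chosen generator of $E_y$. Rescaling the cyclic vector by $\lambda \in \bC^*$ leaves the symmetric-function data $a,d$ invariant but scales $(b,c)$ with weights $(-2,+2)$, as a direct calculation with the $\mathfrak{sl}_2$-triple associated to the subregular nilpotent shows. The map $p$ is equivariant for this $\bC^*$-action since $bc$ has net weight zero. As $y$ varies over $\OO^{\sub}\cap\Slice$, the individual slices patch into the associated vector bundle
\[
(\FF\setminus 0)\times_{\bC^*}\bC^4 \;=\; \bC\,\oplus\,\FF^{-2}\,\oplus\,\FF^{2}\,\oplus\,\bC,
\]
and $\chi$ descends to the map $p$ on this bundle. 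Smooth dependence of Slodowy slices on the basepoint, combined with an equivariant tubular neighbourhood theorem along the smooth subvariety $\OO^{\sub}\cap\Slice$, then delivers the global isomorphism of the lemma.

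The main obstacle, I expect, is pinning down the weight decomposition $(0,-2,2,0)$ precisely: a slightly different choice of Slodowy slice or of cyclic vector could produce different weights summing to zero, yielding a different line-bundle twist. Confirming $(0,-2,2,0)$ requires choosing coordinates on the slice compatible with the $\mathfrak{sl}_2$-weight decomposition of the subregular triple, then identifying the weight spaces with companion-matrix entries in $\mathfrak{sl}_3$. Once this weight analysis is settled, smooth dependence of slices, global patching, and the commutativity of the square with $p$ are essentially formal.
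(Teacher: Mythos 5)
The paper does not prove this lemma; it is cited verbatim from \cite{SS} (Seidel--Smith, Lemma 29), where it is established by a direct computation in matrix coordinates on the slice $\Slice$. Your Brieskorn--Slodowy strategy is a reasonable alternative in outline — localize, invoke the universal property of the versal $A_2$-deformation for the subregular Slodowy slice, and globalize by tracking a $\bC^*$-action — but as written it has a genuine gap that you yourself flag: you assert the weight decomposition $(0,-2,2,0)$ but never compute it, and this weight computation \emph{is} the content of the lemma. Without it, you have only shown that a neighbourhood of $\OO^{\sub}\cap\Slice$ fibres over it with fibre the $A_2$-slice; you have not identified the associated bundle, which is the point. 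The analogous assertion with $\FF^{-1}\oplus\FF$ in place of $\FF^{-2}\oplus\FF^{2}$ would look equally plausible from your argument, but would be false and would ruin the later use of the lemma (the local $A_2$-fibration models for triples of Lagrangians). To close the gap you would need to exhibit a specific $\mathfrak{sl}_2$-triple $(e,h,f)$ through the subregular nilpotent acting on the $3$-dimensional generalized $\mu_1$-eigenspace, compute the $h$-weights on $e+\ker(\operatorname{ad}f)$, and then relate the resulting Kazhdan $\bC^*$-action to the geometric $\bC^*$-action on the line $\FF$; these are two \emph{a priori} different circle actions and their identification is not automatic.

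Two further imprecisions worth noting. First, you speak of a ``cyclic vector in the $\mu_1$-generalized eigenspace'' and a ``companion-matrix presentation,'' but the subregular element itself (Jordan type $(2,1)$) is not cyclic on that $3$-dimensional space — only the generic nearby element is. This is not fatal, but it means the coordinates $(a,b,c,d)$ must be defined intrinsically on the slice, not via a cyclic basis at the basepoint. Second, the claim that the directions corresponding to $\mu_4,\dots,\mu_{2n}$ ``split off trivially'' is not immediate from the explicit form \eqref{eq:y-matrix} of $\Slice$, since that slice is transverse to the nilpotent with two Jordan blocks of size $n$ in $\mathfrak{gl}_{2n}$, not to a subregular orbit in some embedded $\mathfrak{gl}_3$; you need either a transversality-of-slices argument (any two transverse slices through the same orbit are locally isomorphic as families over the adjoint quotient) or the direct calculation of \cite{SS}. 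Both points can be repaired, but both are exactly the places where \cite{SS}'s computational proof does real work that your sketch defers.
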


In particular, there is an open subset of a generic fibre $\scrY_n$, for a tuple of eigenvalues sufficiently close to $\bf{\mu}$, which is an $A_2$-fibration over $\scrY_{n-1}$.   In the lowest non-trivial case, an open subset of $\scrY_2$ is a (non-trivial) $A_2$-fibration over $T^*S^2$. For any arc $\gamma$ in the $A_2$-space, there is an associated Lagrangian submanifold of $\scrY_2$ given by taking the matching sphere $L_\gamma \subset A_2$ fibrewise over the zero-section of $T^*S^2$.  We will refer to such a Lagrangian as lying in \emph{fibred position}.

\begin{Lemma}[Section 4.3 of \cite{SS}] \label{Lem:CleanFor-n=2}
The Lagrangians $L_{\wpb}$ and $L_{\mix}$ in $\scrY_2$ can be Hamiltonian isotoped to lie in fibred position, given by arcs in $A_2$ which meet transversely once. \qed
\end{Lemma}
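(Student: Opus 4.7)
The plan is to degenerate the four-eigenvalue configuration so that three of the critical values $\mu_1,\mu_2,\mu_3$ coalesce while $\mu_4$ remains far away, and then to read off both Lagrangians inside the $A_2$-fibration over $\scrY_1 \cong T^*S^2$ provided by Lemma \ref{lem:local_A2_degeneration}.

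First, I would choose a path in $\Conf_4(\bC)$ from the generic configuration $(1,2,3,4)$ to a configuration in which $\mu_1,\mu_2,\mu_3$ are tightly clustered near a common point. Symplectic parallel transport for the simultaneous resolution $\chi\co\Slice\to\Conf_4(\bC)$ along this path induces a Hamiltonian isotopy of $\scrY_2^{(1,2,3,4)}$ with the fibre above the near-degenerate configuration, under which each matching Lagrangian is carried to the Lagrangian associated to the correspondingly deformed crossingless matching.

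Under this contraction, the two arcs of the plait $\wpb = \{1,2\}\cup\{3,4\}$ split as one \emph{internal} arc $\{1,2\}$ contained in the cluster and one \emph{external} arc $\{3,4\}$ joining a point of the cluster to $\mu_4$; the arcs of the mixed matching $\mix = \{1,4\}\cup\{2,3\}$ split analogously, with internal arc $\{2,3\}$ and external arc $\{1,4\}$. Applying Lemma \ref{lem:local_A2_degeneration}, the cluster becomes the $A_2$-fibre, while an external arc from the cluster to $\mu_4$ corresponds in the reduced fibre $\scrY_1 = T^*S^2 \cong A_1$ to the unique matching of its two critical values, namely the zero-section. Hence $L_\wpb$ becomes the matching sphere $L_{\{1,2\}}\subset A_2$ fibred over the zero-section, and $L_\mix$ becomes $L_{\{2,3\}}\subset A_2$ fibred over the zero-section. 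The arcs $\{1,2\}$ and $\{2,3\}$ in $A_2$ share precisely the critical value $\mu_2$ and are otherwise disjoint, so they can be arranged to meet transversely in a single point, as required.

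The main obstacle is to justify that symplectic parallel transport along the chosen path is a Hamiltonian (and not merely symplectic) isotopy, and that the identification in Lemma \ref{lem:local_A2_degeneration} does carry each transported matching sphere precisely onto the claimed fibred model. The former is standard for fibrations over contractible bases with exact total space. The latter is a manifestation of the factorisation of the Beilinson--Drinfeld affine Grassmannian already used in the proof of Lemma \ref{Lem:IsotopicCoreVCycle}: on the singular fibre over the triple-coalescence locus, the compactified geometry splits globally as the product of an $A_2$-degeneration and the $\scrY_1$-factor carrying the fourth eigenvalue, so that each iterated vanishing cycle transforms into the product of an $A_2$-matching sphere and the zero-section. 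The detailed verification in this $n=2$ case is carried out in \cite[Section 4.3]{SS}.
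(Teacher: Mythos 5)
Your strategy — coalescing three of the four critical values and reading both Lagrangians off the resulting $A_2$-fibration over $\scrY_1 \cong T^*S^2$ via Lemma~\ref{lem:local_A2_degeneration} — is the right one, and it is exactly the argument the paper runs in the more general Lemma~\ref{Lem:CleanPairs}, specialised to $n=2$. (The paper itself simply defers this base case to \cite[Section 4.3]{SS}, which is where the local $A_2$-model is established.) The identification of the internal arcs $\{1,2\}$ and $\{2,3\}$ with the two adjacent core arcs in $A_2$, meeting transversely at the image of $\mu_2$, and of both external arcs with the unique matching sphere of $\scrY_1$, is correct.

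Two points of precision are worth flagging. First, the paper (and \cite{SS}) is careful to coalesce the three eigenvalues \emph{along the arcs} of the two matchings, bringing them to one of the outermost of the three points; a generic contraction of $\mu_1,\mu_2,\mu_3$ to a cluster would not automatically carry $L_{\wpb}$ and $L_{\mix}$ into the fibred model near the singular orbit $\OO^{\sub} \cap \Slice$. Your phrase ``tightly clustered near a common point'' is too coarse to guarantee this; the degeneration path must be chosen compatibly with the two matchings so that the parallel transport deposits both Lagrangians in the neighbourhood where Lemma~\ref{lem:local_A2_degeneration} applies. Second, your appeal to the Beilinson--Drinfeld factorisation via Lemma~\ref{Lem:IsotopicCoreVCycle} is not quite the right citation: that lemma concerns the $A_1$-case (two eigenvalues coalescing), where Kamnitzer's global product structure on the \emph{compactified} slice is invoked. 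For the $A_2$-degeneration relevant here, the statement one actually uses is Lemma~\ref{lem:local_A2_degeneration} itself, which is a \emph{local} model near the subregular orbit and is proved in \cite{SS} by different (linear-algebraic) methods. These are fixable gaps rather than errors in the overall strategy.
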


    \begin{Lemma} \label{lem:ConormalModelForPairs}
  Let $L, L' \subset X$ be cleanly intersecting Lagrangian submanifolds of a symplectic manifold $X$. There is an open neighbourhood $U = U(L\cap L') \subset X$ of $L\cap L'$ in $X$, and a symplectic embedding
  $U \hookrightarrow T^*L$ taking $U \cap L$ to the zero-section and $U \cap L'$ to the conormal bundle of $L\cap L'$. 
    \end{Lemma}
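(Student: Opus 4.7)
The plan is to reduce to a normal form inside $T^*L$ in two stages: first use a Weinstein neighborhood of $L$ to replace $X$ by $T^*L$, and then conjugate $L'$ to the conormal bundle of $L\cap L'$ by a symplectomorphism which is the identity on a neighborhood of $L\cap L'$ in the zero section.

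\emph{Step 1 (Weinstein for $L$).} By the standard Lagrangian neighborhood theorem, there is a symplectic embedding of an open neighborhood of $L$ in $X$ into $T^*L$ carrying $L$ to the zero section. Replacing $X$ by such a neighborhood, we may assume $X\subset T^*L$ with $L$ the zero section, and $L'\subset T^*L$ a Lagrangian that intersects the zero section cleanly along $C:=L\cap L'$.

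\emph{Step 2 (linear model along $C$).} Fix $p\in C$. The clean intersection hypothesis gives $T_pL'\cap T_pL=T_pC$, so inside $T_pT^*L=T_pL\oplus T_p^*L$ the tangent space $T_pL'$ is a Lagrangian containing $T_pC$ and meeting $T_pL$ exactly in $T_pC$. The same is tautologically true of the conormal $T_p N^*C=T_pC\oplus (T_pC)^{\mathrm{ann}}$. Linear symplectic algebra then produces a linear symplectomorphism of $T_pT^*L$ carrying $T_pL$ to itself, fixing $T_pC$ pointwise, and sending $T_pL'$ to $T_pN^*C$. (Choose a complement $E_p\subset T_pL$ to $T_pC$; then $T_pT^*L$ decomposes as $T_pC\oplus E_p\oplus E_p^*\oplus (T_pC)^{\mathrm{ann}}$, and any such Lagrangian through $T_pC$ meeting $T_pL$ only along $T_pC$ is a graph over $(T_pC)^{\mathrm{ann}}\oplus E_p$ of the form one wants.) Choosing $E_p$ smoothly over $C$ bundles this fibrewise isomorphism into a symplectic bundle isomorphism of $T(T^*L)|_C$.

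\emph{Step 3 (globalise via Weinstein/Moser).} One can now invoke a relative Lagrangian neighborhood theorem: two Lagrangian submanifolds of a symplectic manifold that contain a common submanifold $C$ and whose symplectic normal bundles along $C$ are isomorphic are symplectomorphic on a neighborhood of $C$ by a map that is the identity on $C$. Applying this twice, we obtain a symplectomorphism $\Phi_1$ of a neighborhood of $C$ in $T^*L$ which maps $L$ to $L$ and sends $L'$ to a Lagrangian that agrees with $N^*C$ to first order along $C$. A Moser argument then interpolates $\omega$ on $\Phi_1(L')$ and the restriction of $\omega$ to $N^*C$, yielding a further symplectomorphism supported near $C$ that sends $\Phi_1(L')$ to $N^*C$ while fixing $C$ (and hence $L$ near $C$ up to a tangential adjustment one absorbs into the construction).

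\emph{Main obstacle.} The one genuinely nontrivial point is Step 3: one needs the isotopy produced by Moser to preserve the zero section $L$ near $C$, not merely pointwise on $C$. This is arranged by choosing the interpolating family of Lagrangians to all contain the zero section near $C$ (equivalently, by working in the symplectic normal bundle to $C$ inside $T^*L$ and using that $L/C$ and $N^*C/C$ are \emph{transverse} Lagrangian subbundles in the symplectic reduction, which is exactly the content of the clean intersection hypothesis). Once this is set up, Moser's trick produces a time-one map with all the required properties, and composing with $\Phi_1$ and the inverse of the Weinstein embedding of Step 1 gives the desired embedding $U\hookrightarrow T^*L$.
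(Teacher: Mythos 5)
Your proposal follows a natural but genuinely different route from the paper's, and your Step~3 — which you yourself flag as the ``main obstacle'' — is not actually closed.

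The paper (following Pozniak) does things in the opposite order, and the order matters. It chooses the Weinstein chart $\psi\colon U\to T^*L$ so that $L'$ is sent \emph{exactly} to the conormal bundle of $C=L\cap L'$ first. The payoff is that $\psi(L)$ is then $C^1$-tangent to the zero section along $C$, hence near $C$ is the graph of a closed $1$-form $\phi$ vanishing along $C$; writing $\phi=dk$ near $C$, the time-one Hamiltonian flow of $-k\circ\pi$ is the \emph{fibrewise} translation by $-dk$, which straightens $\psi(L)$ to the zero section. Because $dk$ vanishes along $C$ and the flow is fibrewise, this translation manifestly preserves the conormal $N^*C$ — no further argument is needed, and no Moser isotopy needs to be constrained to preserve anything.

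Your ordering (Weinstein for $L$ first, then adjust $L'$) is harder precisely because at the final stage you must produce an isotopy carrying $L'$ to $N^*C$ \emph{while fixing $L$ setwise}, and $L'$ is not graphical over $L$ (their intersection is all of $C$, not empty), so the graph-of-a-$1$-form trick is not directly available. The remedy you propose — ``choosing the interpolating family of Lagrangians to all contain the zero section near $C$'' — is confused as stated: the interpolants from $\Phi_1(L')$ to $N^*C$ contain $C$ but not the zero section, and even with interpolants meeting the zero section only in $C$, the Moser vector field is not automatically tangent to $L$. The observation about transversality of $L/C$ and $N^*C/C$ in the symplectic reduction of $TC^\omega/TC$ is the right ingredient, but you would need to actually build the Hamiltonian out of a function constant on $L$ (e.g.\ by an argument symmetric to the paper's, working in $T^*(N^*C)$ and noting that $L$ becomes the conormal of $C$ there). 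As written, Step~3 is a gap. If you adopt the paper's ordering, the whole difficulty evaporates.
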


\begin{proof}
See e.g.  \cite[Proposition 3.4.1]{Pozniak}.  Briefly, Weinstein's theorem gives a symplectomorphism $\psi: U \rightarrow T^*L$ taking $U \cap L'$ into the conormal bundle $\nu^*_{L'}$.  The image  $\psi(L \cap U)$ is tangent to the zero-section along $L\cap L'$, hence can locally be written as the graph of a 1-form $\phi$ which vanishes along $L\cap L'$, so which is exact $\phi=dk$ in a small neighbourhood of the intersection locus.  Composing the given symplectic embedding $\psi$ with the time-one Hamiltonian flow of $-k$, which preserves $\nu^*_{L'}$ and takes $\psi(L\cap U)$ into the zero-section, yields the desired map $U\hookrightarrow T^*L$. 
\end{proof}

According to \cite{Abouzaid:plumbing},  
 there is a ``plumbing model" for the Fukaya category of a pair of exact graded Lagrangians which meet pairwise cleanly.   We only require the somewhat simpler cohomological result, which goes back to Pozniak and follows fairly straightforwardly from Lemma \ref{lem:ConormalModelForPairs}; see for instance \cite{KOh}.  It may help to recall that given a diagram of cleanly intersecting submanifolds
\[
\xymatrix@!=0.6pc{
 &M\ar@{<-_)}[dl]_{i_1}\ar@{<-^)}[dr]^{i_2} &\\
Q_1\ar@{<-^)}[dr]_{j_1}&&Q_2\ar@{<-_)}[dl]^{j_2}\\ &Q_1 \cap Q_2&
}
\]
The convolution product 
\[
H^*(Q_1\cap Q_2)^{\otimes 2} \longrightarrow H^*(Q_i) 
\]
is by definition given by cup-product composed with the transfer (obtained from Poincar\'e duality on $Q_1\cap Q_2$, the push-forward on homology, and Poincar\'e duality on $Q_i$).

 \begin{Proposition} \label{Prop:plumb}
 Let $Q_1$ and $Q_2$ be exact Lagrangian submanifolds of $(M,\omega)$ which meet cleanly along $C = Q_1 \cap Q_2$ of codimension $d$. Grade the $Q_i$ so that the minimal degree generator of $HF^*(Q_1,Q_2)$ lies in degree $0$.  The Donaldson category with objects $Q_1, Q_2$ is  equivalent to the (ordinary) category 
\[
\xymatrixcolsep{5pc}\xymatrix{
Q_1\ar@(ul,dl)[]_ {H^*(Q_1)}  \ar@/^/[r]^{H^*(C)} & Q_2 \ar@(ur,dr)[]^{H^{*}(Q_2)} \ar@/^/[l]^{H^{*-d}(C)}
}
\]
and with the non-trivial compositions given by convolution.
\end{Proposition}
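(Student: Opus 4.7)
The plan is to reduce everything to a local conormal model and perform a Morse--Bott calculation. By Lemma \ref{lem:ConormalModelForPairs}, a neighbourhood $U$ of $C = Q_1 \cap Q_2$ in $M$ embeds symplectically into $T^*Q_1$ so that $Q_1 \cap U$ becomes the zero section and $Q_2 \cap U$ becomes the conormal bundle $\nu^*_C \subset T^*Q_1$. Since both Lagrangians are exact and intersect only along $C$, an action/energy argument of P\'ozniak type (see e.g.\ \cite{KOh}) confines every finite-energy Floer strip with boundary on $(Q_1,Q_2)$ and with asymptotics in $C$ to an arbitrarily small such neighbourhood, so the Floer complex $CF^*(Q_1,Q_2)$ may be computed entirely inside the linearised model $(T^*Q_1, Q_1 \cup \nu^*_C)$. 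The same applies to the Floer complex $CF^*(Q_2,Q_1)$ after swapping roles, and of course to $CF^*(Q_i,Q_i)$, which is computed by the Morse--Bott (pearly) model on $Q_i$.

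First I would establish the identification of morphism spaces at the cohomological level. In the conormal model one perturbs the Floer equation by a Morse function on $C$ (as in P\'ozniak's original argument, or Kasturirangan--Oh and Frauenfelder's refinements); the resulting chain complex is the Morse complex of $C$, which computes $H^*(C)$. With the grading normalisation that places the minimal generator in degree $0$, the Maslov index comparison for transverse conormal intersections yields the stated shift by $d$ when the roles of $Q_1$ and $Q_2$ are reversed. The endomorphism groups $HF^*(Q_i, Q_i)$ are canonically $H^*(Q_i)$ by the standard PSS/Morse-model identification for zero sections of cotangent bundles.

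Next I would identify the products. For the pure products $H^*(Q_i)^{\otimes 2} \to H^*(Q_i)$, the moduli of pearly Floer triangles with three boundary components on $Q_i$ retracts to Morse-theoretic $Y$-trees on $Q_i$, yielding the ordinary cup product (compare \cite{BC:Pearl}, and the pearly version of the Fukaya category set up in the excerpt). For the module products $H^*(Q_i) \otimes H^*(C) \to H^*(C)$ and $H^*(C) \otimes H^*(Q_i) \to H^*(C)$, one considers holomorphic triangles with two edges on $Q_i$ and one on $Q_j$; by the confinement argument these too lie in $T^*Q_1$, and a Morse--Bott neck-stretching replaces them with flow trees whose output lies in $C$. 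The count of such trees implements precisely the restriction map $H^*(Q_i) \to H^*(C)$ followed by the cup product on $H^*(C)$, i.e.\ the module action. The key product is $H^*(C) \otimes H^*(C) \to H^*(Q_i)$: here, a triangle with two vertices in $C$ and one vertex on $Q_i$ degenerates, in the linearised model, to a pair of gradient half-trajectories in $C$ meeting a single gradient trajectory in $Q_i$ that leaves $C$ transversely; this is the standard Morse-theoretic description of the convolution (cup product on $C$ followed by the Gysin/transfer map $H^{*}(C) \to H^{*+d}(Q_i)$), and matches degrees by the codimension shift.

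The main obstacle is the careful verification that all relevant Floer and pearly trajectories remain inside the conormal neighbourhood where the local model is valid, together with the matching of signs and of absolute gradings under the identifications above. The confinement is the technical core: once one has it, the identification with convolution follows by a standard Morse--Bott degeneration, and it is only the orientation/sign bookkeeping for the transfer map that requires real care. I would handle this by fixing spin structures on $Q_1, Q_2, C$ coherently with the conormal identification, so that the induced orientation of the Floer moduli spaces agrees with the one used to define the pushforward $H^*(C) \to H^{*+d}(Q_i)$.
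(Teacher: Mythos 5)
Your route is in the same spirit as the paper's but differs in one crucial respect, and the paper's version is both simpler and more decisive. You build the argument on a ``confinement'' step (Floer strips/triangles stay in the conormal neighbourhood) followed by a Morse--Bott neck-stretching to turn holomorphic triangles into flow trees. The paper instead observes directly that, in the local model of Lemma \ref{lem:ConormalModelForPairs}, \emph{all holomorphic triangles are constant}: the Lagrangians are exact, there is a real-valued action functional, and \emph{all} intersection points in $C$ carry the same action, so any non-constant polygon would have strictly positive energy against a zero action difference. Once one knows the triangles are literally constant, there is nothing to degenerate --- the computation reduces immediately to Morse theory (cup product, restriction, and the Gysin/transfer map), with no need for a neck-stretching argument or a separate confinement lemma. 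This is the content of the phrase ``standard Morse--Bott techniques apply'' in the proof, and it is also how P\'ozniak's original identification works.

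There are two smaller discrepancies worth flagging. First, the paper anchors the chain-level statement in the $dg$-plumbing model of \cite{Abouzaid:plumbing}, in which $\hom(Q_1,Q_2)$ and $\hom(Q_2,Q_1)$ are cochains $C^*(U)$ and $C^*(U,\partial U)$ on a tubular neighbourhood $U$ of $C$ in $Q_1$; the degree shift by $d$ is then the Thom isomorphism $H^*(U,\partial U)\cong H^{*-d}(C)$, which is cleaner than invoking a Maslov-index comparison for conormal intersections (though the two are of course consistent). Second, the confinement claim as you state it (``every finite-energy Floer strip\ldots\ to an arbitrarily small neighbourhood'') is not quite what you need and not how the P\'ozniak/Kasturirangan--Oh argument is phrased; the correct assertion, again following from exactness and the equality of actions along $C$, is that there are \emph{no} non-constant strips at all. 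If $C$ has several connected components this requires a word about actions on distinct components, but in the present case the grading hypothesis (minimal degree generator in degree $0$) makes the bookkeeping uniform. With these adjustments, your proof converges to the paper's; as written it substitutes a heavier degeneration argument where a one-line energy observation suffices.
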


Note that the grading convention breaks symmetry: it amounts to choosing an ordering of $\{Q_1,Q_2\}$.

\begin{proof}
In the $dg$-model from \cite{Abouzaid:plumbing}, the endomorphisms of objects are chain complexes underlying classical cohomology, and the morphisms between the objects are given by cochains on a neighbourhood $U \subset Q_1$ of the intersection locus $C$ in one direction, and cochains  relative boundary $C^*(U,\partial U)$ on that neighbourhood in the other. 
 The identification of the morphism groups as given follows on passing to cohomology, and replacing $H^*(U,\partial U) \simeq H^{*-d}(C)$,  by the Thom isomorphism theorem. For the product structure, note that in the model provided by Lemma \ref{lem:ConormalModelForPairs} all holomorphic triangles are constant; there is a real-valued action functional and all intersection points have the same value of the action, so standard Morse-Bott techniques apply.  The result then follows easily.
\end{proof}

The first arc algebra $H_1 = \bZ\langle 1,x\rangle$ is isomorphic to $\scrH_1^{symp} = H^*(S^2)$, where the cohomology group is based by Corollary \ref{Cor:Same}, which in this case just amounts to saying that the anti-diagonal in $\bP^1\times \bP^1$ inherits a unique orientation from the complex orientation of the first factor, and the opposite orientation of the second factor. This comes with three functors $\cup_i^{comb}: H_1 \rightarrow H_2$.  We also have the symplectic cup functors $\cup_i: \scrH_1^{symp} \rightarrow \scrH_2^{symp}$, which come from the associated elementary bimodules.

\begin{Proposition} \label{Prop:n=2}
There are bases of $HF^*(L_{\wpb}, L_{\mix})$ and of $HF^*(L_{\mix}, L_{\wpb})$ such that the algebra
\[
\scrH_2^{symp}  \ = \ H^*(L_{\wpb}) \oplus HF^*(L_{\wpb}, L_{\mix}) \oplus HF^*(L_{\mix}, L_{\wpb}) \oplus H^*(L_{\mix}) 
\]
is isomorphic to the combinatorial arc algebra $H_2$ in a manner which is compatible with the three cup functors $\cup_i: \scrH_1 \rightarrow \scrH_2$ respectively $\cup_i^{comb}: H_1 \rightarrow H_2$, $i\in \{1,2,3\}$. 
\end{Proposition}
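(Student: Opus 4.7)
The plan is to reduce the computation to a plumbing calculation. By Lemma \ref{Lem:CleanFor-n=2}, after a Hamiltonian isotopy we may arrange that $L_{\wpb}$ and $L_{\mix}$ lie in fibred position in the $A_2$-fibration over $T^*S^2$, so that they meet cleanly along a submanifold $C \cong S^2$ (the zero-section of $T^*S^2$, fibred by a single transverse intersection of two matching arcs in $A_2$). Since $L_{\wpb}$ and $L_{\mix}$ are each diffeomorphic to $S^2 \times S^2$, the intersection $C$ has codimension $d = 2$ in each, and Proposition \ref{Prop:plumb} identifies the Donaldson subcategory of $\scrF(\scrY_2)$ spanned by these two Lagrangians, after a shift determined by grading $HF^*(L_{\wpb}, L_{\mix})$ to begin in degree zero, with the category whose endomorphism algebras are $H^*(L_{\wpb}), H^*(L_{\mix})$, morphism spaces are $H^*(C)$ and $H^{*-2}(C)$, and compositions are the convolution products (restriction-then-cup in one direction, cup-then-transfer in the other).

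Next I would fix bases by pulling back the preferred generators of $H^2(\scrY_2;\bZ)$ from Corollary \ref{Cor:Same}. Under the restriction $H^2(\scrY_2) \to H^2(L_{\wp})$ each of the basis elements $v_1, v_2, v_3$ goes to either $0$ or a positive generator, and by Corollary \ref{Cor:Same} these bases are K\"unneth-compatible with the cup functors. The $H^2$-classes generate $H^*(L_{\wp})$ multiplicatively, fixing monomial bases for the endomorphism algebras. For the morphism spaces, the intersection $C$ inherits a basis by restriction from (either) endomorphism algebra to $H^*(C)$; the Thom isomorphism $H^{*-2}(C) \cong H^*(C, \partial\nu)$ then gives the basis in the other direction. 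Lemma \ref{Lem:Orient_even} ensures that the depth-orientation conventions agree with these, so that the symplectic arc algebra sits in even degrees and the intersection numbers entering the convolution products are non-negative.

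The bulk of the argument is then to verify that the resulting algebra is isomorphic to $H_2$ by matching structure constants. Since $H_2$ admits a small presentation (each component $(e_{\wp} H_2 e_{\wp'}) \cong V^{\otimes c(\wp,\wp')}$ with $V = \bZ[x]/x^2$, and multiplications given by cobordism-surgery formulae), and since convolution products in a $(S^2, S^2, S^2)$-configuration are computable from classical intersection-theoretic data, one matches both the internal products (squaring of generators, multiplications by $x$) and the mixing products $HF(L_{\wpb}, L_{\mix}) \otimes HF(L_{\mix}, L_{\wpb}) \to H^*(L_{\wpb})$ and its reverse. Compatibility with the cup functors $\cup_i$ ($i=1,2,3$) follows from the K\"unneth part of Corollary \ref{Cor:Same} together with the explicit form of $\cup_i^{comb}$: both sides send the generator $1 \in H_1$ to the identity of the target component, and both send $x \in H_1$ to the $H^2$-class corresponding to the restriction of the appropriate $v_j \in H^2(\scrY_2;\bZ)$ indexed by the position of the added cup.

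The main obstacle is sign control: ensuring that with the bases dictated by $H^2(\scrY_2;\bZ)$ and the depth-orientation convention, the convolution products from the plumbing model yield \emph{positive} integer combinations, so that the isomorphism with $H_2$ actually holds over $\bZ$ and not merely over $\bZ[\tfrac12]$ or after sign-twisting. This is the $n=2$ base case of the positivity-of-bases induction alluded to in the introduction, and it is forced by consistency of the complex orientations of the $\bP^1$-factors in the factorisation picture of Section \ref{Sec:Flags} with the Lefschetz orientation of $A_2$ used in Section \ref{Sec:Basis}; the signs entering the transfer map from $H^*(C)$ to $H^*(L_{\wpb})$ and the restriction $H^*(L_{\wpb}) \to H^*(C)$ must be traced through Weinstein's theorem in Lemma \ref{lem:ConormalModelForPairs} to confirm they are both positive with respect to these choices.
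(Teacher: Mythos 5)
Your overall strategy matches the paper's: isotope $L_{\wpb}$ and $L_{\mix}$ into fibred position via Lemma \ref{Lem:CleanFor-n=2}, apply the plumbing model of Proposition \ref{Prop:plumb}, fix bases via Corollary \ref{Cor:Same} and Lemma \ref{Lem:Orient_even}, and match the resulting convolution algebra against $H_2$. You also correctly identify that the genuine content is sign control over $\bZ$.

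Where the proposal has a gap is precisely at that last step. You defer the sign verification to ``tracing through Weinstein's theorem in Lemma \ref{lem:ConormalModelForPairs}'', but that lemma is an orientation-neutral normal-form statement and does not by itself resolve anything. The paper's actual mechanism is cleaner and purely topological: from the flag description (Section \ref{Sec:Flags}) one sees that the clean intersection $C \cong S^2$ sits inside each $L_{\wp} \cong S^2 \times S^2$ as the \emph{diagonal}, of self-intersection $+2$ rather than $-2$, so that the Euler class of its normal bundle is the $(1,1)$-class. Since the nontrivial convolution products (restriction then transfer) are exactly multiplication by this Euler class, positive generators multiply to positive combinations of positive generators — no tracing of Weinstein coordinates required. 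Your remark about ``consistency of complex orientations'' gestures at this fact but stops short of stating it; as written, a reader would not know the sign question has been settled. You would need to explicitly compute (or cite) that $[C]^2 = +2$ and hence $e(\nu_C) = (1,1)$.

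A second, smaller omission: you should actually spell out the structure-constant comparison. The paper exhibits the two products
\[
\Hom_{H_2}(\wpb,\mix) \otimes \Hom_{H_2}(\mix,\wpb) \longrightarrow \Hom_{H_2}(\mix,\mix)
\]
and its reverse as $\Delta \circ m$ in the Frobenius algebra $V = \bZ\langle 1,x\rangle$, and checks that the cohomological convolution (with the diagonal class $1\otimes x + x\otimes 1$ arising as $[C]\in H^2(S^2\times S^2)$) produces the same table. Saying ``one matches structure constants'' is not the same as doing it; in this particular $n=2$ case the table is small enough that the paper simply exhibits it, and the $(1,1)$-Euler-class fact is exactly what makes $1 \otimes 1 \mapsto 1\otimes x + x\otimes 1$ come out with a $+$ rather than a $-$.
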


\begin{proof}
 Lemma \ref{Lem:CleanFor-n=2} shows that $L_{\wpb}$ and $L_{\mix}$ can be isotoped to meet cleanly in a two-sphere $C$.  Knowledge of  $H^*(\scrY_2)$, or the flag description given in Section \ref{Sec:Flags}, shows that $C$ \emph{a priori} represents a class of square $\pm 2$ in each factor; with our orientation conventions, the plumbing model is that of two copies of $S^2 \times S^2$ which meet along the diagonal submanifold, of square $+2$.

We take the given bases of the groups $H^*(L_{\wpb})$, $H^*(L_{\mix})$ from Corollary \ref{Cor:Same}, and the induced basis of $H^*(C)$ coming from restriction: this gives a uniquely defined choice of generator of $H^2(C)$.  The gradings agree with those in Proposition \ref{Prop:plumb} by Lemma \ref{Lem:Orient_even}. The Floer products in the plumbing model are then cohomological (convolution products), as in Proposition \ref{Prop:plumb}. 
Since the Euler class of the normal bundle of the intersection locus is the $(1,1)$-class, products of positive generators all have positive coefficients.  Given this, it is straightforward to compare to the arc algebra $H_2$.

We spell out the final comparison in the two most interesting examples.  Consider the products 
\begin{align*}
\Hom_{H_2}(\wpb,\mix) \otimes \Hom_{H_2}(\mix,\wpb) \longrightarrow \Hom_{H_2}(\mix,\mix) \\
\Hom_{H_2}(\mix,\wpb) \otimes \Hom_{H_2}(\wpb,\mix) \longrightarrow \Hom_{H_2}(\wpb,\wpb).
\end{align*}
In both cases, the diagrammatic product involves first merging two circles, and then splitting again, hence is given by the composition $\Delta \circ m$, where 
\[
m: \bZ\langle 1,x\rangle^{\otimes 2} \longrightarrow \bZ\langle 1,x\rangle \quad \textrm{and} \quad \Delta: \bZ\langle 1,x\rangle \longrightarrow\bZ\langle 1,x\rangle^{\otimes 2}
\]
denote the product respectively co-product  in the Frobenius algebra $H^*(S^2) = \bZ\langle 1,x\rangle$. The composite $\Delta \circ m$ therefore takes 
\[
1\otimes 1 \mapsto 1\otimes x + x\otimes 1, \ 1\otimes x \mapsto x\otimes x, \ x\otimes 1 \mapsto x\otimes x,  \  x\otimes x \mapsto 0.
\]
The cohomology convolution product has exactly the same effect, with $1\otimes x + x\otimes 1$ arising from the cohomology class of the diagonal $C \subset S^2\times S^2$. The other cases, and compatibility with the cup-functors, follow similarly on unwinding the definitions.
\end{proof}

\subsection{Iterated $A_2$-fibrations and plumbing models for arbitrary pairs}

  Recall $c(\wp,\wp')$ denotes the number of components of the planar unlink $\wp\cup\overline{\wp'}$, and that the two associated Lagrangians \emph{meet in codimension one} if $c(\wp,\wp') = n-1$. We shall say that two arcs in the unlink are \emph{consecutive} if one belongs to $\wp$ and one to $\overline{\wp'}$ and they share exactly one end-point.

\begin{Lemma} \label{Lem:CleanPairs}
Any pair $L_{\wp}, L_{\wp'}$ may be Hamiltonian isotoped to meet pairwise  cleanly in a submanifold $(S^2)^{c(\wp,\wp')}$. In particular, any pair admits a plumbing model.
\end{Lemma}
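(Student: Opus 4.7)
The plan is to proceed by induction on $n$, with the base case $n=1$ trivial: there is a unique upper half-plane matching, so $L_\wp = L_{\wp'}$ is its own clean intersection locus $\cong S^2$, and $c(\wp,\wp')=1$. For the inductive step I would analyse the combinatorial structure of the planar unlink $\wp \cup \overline{\wp'}$, whose components are even-length cycles alternating between arcs of $\wp$ and of $\overline{\wp'}$; a degree count at each critical point shows that every innermost component has vertices at a consecutive run of critical values.

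If $\wp$ and $\wp'$ share an arc $\gamma$ (in particular whenever the unlink contains a bigon component), I would apply the Markov I moves of \cite[Lemma 48]{SS} simultaneously to both matchings, without changing their Hamiltonian isotopy classes, to arrange for $\gamma$ to join two adjacent critical values $\{i, i+1\}$ as a small arc near the boundary. Via the product embedding $U^i \times \scrY^i_{n-1} \hookrightarrow \scrY_n$ of Section \ref{Sec:MeetCup} and Remark \ref{Rem:caveat}, both Lagrangians then factor as $L_{\wp^-} \times L^i$ and $L_{\wp'^-} \times L^i$ with common factor $L^i \cong S^2$ meeting itself cleanly along itself, and the inductive hypothesis applied to the matchings $\wp^-, \wp'^-$ of $2(n-1)$ points in $\scrY_{n-1}$ produces a clean intersection in $(S^2)^{c(\wp^-,\wp'^-)+1} = (S^2)^{c(\wp,\wp')}$.

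When $\wp$ and $\wp'$ share no arcs, every component of the unlink has length at least four. The simplest sub-case is an innermost square over four consecutive critical values $\{i, i+1, i+2, i+3\}$, over which the restrictions of $\wp$ and $\wp'$ form the plait/mixed pair $\wpb, \mix$ in $\scrY_2$. After deforming the configuration so that these four critical values lie in a small disk $D \subset \bC$ disjoint from the other $2n-4$, the relevant region of $\scrY_n$ admits a product structure $\scrY_2 \times \scrY_{n-2}$ (again via Remark \ref{Rem:caveat}) through which both Lagrangians factor; Lemma \ref{Lem:CleanFor-n=2} places the $\scrY_2$ factor in clean position, and the inductive hypothesis does the same for the $\scrY_{n-2}$ factor. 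The same strategy, with several pairwise disjoint separating disks, handles any multi-component unlink whose components partition the $2n$ critical values into disjoint consecutive runs.

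The main obstacle is the single-cycle case, in which $\wp \cup \overline{\wp'}$ is a single $2n$-cycle, or more generally the nested case in which some unlink component has vertices that do not form a single consecutive run; here no product decomposition of $\scrY_n$ is directly available. My plan for this case is to invoke an iterated version of the $A_2$-degeneration of Lemma \ref{lem:local_A2_degeneration} --- collapsing triples of eigenvalues in succession so as to fibre a neighbourhood in $\scrY_n$ over a smaller slice with iterated $A_2$-type fibres --- to place both $L_\wp$ and $L_{\wp'}$ simultaneously in a fibred position generalising Lemma \ref{Lem:CleanFor-n=2}, with the relevant matching spheres sitting in the $A_2$-fibres. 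The conormal/plumbing model of Lemma \ref{lem:ConormalModelForPairs} then applies fibrewise to yield the clean intersection; verifying that the iterated degeneration can be arranged compatibly with \emph{both} matchings, and that the resulting clean locus has the correct dimension $c(\wp,\wp')$, is the most delicate ingredient.
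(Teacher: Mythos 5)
Your broad strategy --- inducting on $n$, disposing of shared arcs by reducing to a smaller slice, and invoking iterated $A_2$-degenerations for the remaining case --- is indeed the skeleton of the paper's argument, but you have not actually closed the argument, and you explicitly flag this yourself.  The paper's proof is more uniform and does fill the gap: it picks, within an innermost component that is not a bigon, a pair of \emph{consecutive arcs}, meaning one arc of $\wp$ and one arc of $\overline{\wp'}$ sharing a single endpoint; such a pair always exists since the component is an alternating cycle of length $\geq 4$.  It then applies the $A_2$-degeneration of Lemma~\ref{lem:local_A2_degeneration} bringing together precisely the three endpoints of these two arcs, by moving the eigenvalues \emph{along} those two arcs.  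Because the arcs themselves carry the degeneration, both $L_\wp$ and $L_{\wp'}$ are automatically fibred in the resulting $A_2$-fibration --- this is exactly the compatibility you identify as ``the most delicate ingredient'' but leave unverified.  The paper then iterates, and the iteration terminates precisely when the base Lagrangians are Hamiltonian isotopic (this uses innermost-ness to rule out the base Lagrangians differing by Markov I moves), which is what pins down the dimension of the clean intersection as $(S^2)^{c(\wp,\wp')}$.

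Two smaller issues in your case analysis.  First, in the shared-arc case Markov~I does not do what you want: it slides an arc over another arc of the \emph{same} matching without moving its endpoints, so it cannot arrange for $\gamma$ to join adjacent critical values.  The paper instead directly performs a Morse-Bott $A_1$-degeneration of $\scrY_n$ along the shared arc (a simpler version of Lemma~\ref{lem:local_A2_degeneration}), exhibiting a neighbourhood as a $T^*S^2$-bundle over $\scrY_{n-1}$ over which both Lagrangians fibre by zero-sections; no Markov moves and no product embedding are needed.  Second, your innermost-square sub-case relies on a product decomposition $\scrY_2 \times \scrY_{n-2} \hookrightarrow \scrY_n$ coming from a separating disk, which is neither stated in the paper nor obviously available in the form you want (Remark~\ref{Rem:caveat} concerns the $\cup_i$ embedding $\scrY_{n-1} \times U^i \hookrightarrow \scrY_n$, whose second factor is a single $A_1$-surface, not a $\scrY_2$); in any case this sub-case is unnecessary, since the $A_2$-degeneration handles it uniformly.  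Your claim that every innermost component lives on a consecutive run of critical values is plausible but unproven, and the paper's argument does not need it: the $A_2$-degeneration moves the three eigenvalues along the given arcs regardless of their positions on the real line.
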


\begin{proof}
We give an inductive argument.   Consider an innermost component of the unlink $\wp \cup \overline{\wp'}$, which is formed of matchings $\wp_{in}, \overline{\wp'_{in}}$ on a subset $2m \leq 2n$ of the critical points. If $\wp_{in} = \wp'_{in}$ then these reduced matchings are both composed of a single arc which joins adjacent critical points. There is then a Morse-Bott degeneration of $\scrY_n$ which brings these eigenvalues together along the common arc (which might as well lie on the real axis). In the corresponding open subset of $\scrY_n$, which is a $T^*S^2$-bundle over $\scrY_{n-1}$ by a simpler version of Lemma \ref{lem:local_A2_degeneration}, the Lagrangians $L_{\wp}$ and $L_{\wp'}$ are both fibred by copies of the zero-section. By the inductive hypothesis, the bases of these fibrations, which are Lagrangians in $\scrY_{n-1}$, can be isotoped to meet cleanly.  Therefore, assume no innermost component is an unknot meeting only two of the critical points.

 In that case, the unknot $\wp_{in} \cup \overline{\wp'_{in}}$  admits a pair of consecutive arcs.  We consider a degeneration of $\scrY_n$ in which the three eigenvalues which are end-points of these arcs come together at one of the outermost of the three points, by moving the eigenvalues along the given paths.  This yields an open subset of $\scrY_n$ which is an $A_2$-fibration over $\scrY_{n-1}$, and in which the matchings $\wp_{in}, \overline{\wp'_{in}}$ are explicitly fibred over (not necessarily half-plane) crossingless matchings in the base, with fibres being the core arcs of the $A_2$-space which meet transversely once.  This procedure can be iterated, until the Lagrangians in the base have no consecutive arcs, which happens only when they co-incide up to isotopy (the last conclusion uses the innermost condition; otherwise the arcs might differ by Markov I moves).  The upshot is that there is an open subset of $\scrY_n$ which is an iterated $A_2$-fibration over $\scrY_{n-k}$, where $n-k=c(\wp,\wp')$, in which the two Lagrangians are fibred over Hamiltonian isotopic Lagrangians in $\scrY_{n-k}$, with fibres which are themselves products of pairwise transverse vanishing cycles in the $A_2$-fibres.  The result follows. 
\end{proof}

\begin{Lemma} \label{Lem:IntersectionLocusIsCorrect}
In the situation of Lemma \ref{Lem:CleanPairs}, one may further assume that the intersection  of the cleanly intersecting Hamiltonian images of $L_{\wp}, L_{\wp'}$ is smoothly isotopic to the iterated antidiagonal $\hat{L}_{\wp} \cap \hat{L}_{\wp'}$. In particular, we have an isomorphism $HF^*(L_{\wp},L_{\wp'})  \cong  H^*(\hat{L}_{\wp} \cap \hat{L}_{\wp'}) $ compatible with the natural module structures of both cohomologies over $H^*(\scrY_n) $. 
\end{Lemma}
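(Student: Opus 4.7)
The plan is to match the outcome of the inductive clean-intersection construction of Lemma \ref{Lem:CleanPairs} component-by-component with the combinatorial description of $\hat{L}_{\wp} \cap \hat{L}_{\wp'}$ supplied by Lemma \ref{Lem:Wehrli}, and then deduce the Floer-cohomological statement from the plumbing model.

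First I would make the combinatorial side explicit. By Lemma \ref{Lem:Wehrli}, $\hat{L}_{\wp}$ is the multi-antidiagonal in $(\bP^1)^{2n}$ cut out by the antipodal conditions dictated by $\wp$. Imposing simultaneously the antipodal conditions from $\wp$ and from $\overline{\wp'}$ determines, on each connected component of the planar unlink $\wp \cup \overline{\wp'}$, a chain of antipodal identifications which leaves exactly one free $\bP^1$ parameter (since any closed cycle of antipodal maps on $\bP^1$ is the identity). Thus
\[
\hat{L}_{\wp} \cap \hat{L}_{\wp'} \ \cong \ (S^2)^{c(\wp,\wp')},
\]
with one $S^2$ factor canonically attached to each circle of the unlink.

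Second, I would trace through the inductive construction in the proof of Lemma \ref{Lem:CleanPairs} and match factors. Each innermost reduction contributes the expected $S^2$: in the ``$T^*S^2$-bundle'' case, where an innermost component is an unknot $\wp_{in}=\wp'_{in}$ on two adjacent critical values, both Lagrangians are fibred by the common zero-section, so their intersection in the bundle is an $S^2$ fibre, which I would identify with the antipodal $\bP^1$ attached to that circle; in the ``$A_2$-fibration'' case the fibres of $L_{\wp}$ and $L_{\wp'}$ are core arcs meeting transversely in a single point, contributing nothing to the dimension, and the construction descends to a smaller pair of matchings with strictly fewer consecutive arcs. Iterating outward through the nesting, the product of these $S^2$'s reproduces $(S^2)^{c(\wp,\wp')}$, in canonical bijection with the components of the unlink. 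I would then upgrade this component-wise identification to a smooth isotopy inside $\scrY_n$ by adapting, locally near each innermost component, the trivialisation of the Cautis--Kamnitzer family $\mathbf{Y}_n \to \bC^{2n}$ used in the proof of Lemma \ref{Lem:IsotopicCoreVCycle}.

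Third, the Floer-cohomological statement follows once clean intersection is established: by Proposition \ref{Prop:plumb}, the Floer complex of the graded pair $(L_{\wp}, L_{\wp'})$ is isomorphic to the cochain complex computing $H^*(L_{\wp} \cap L_{\wp'})$, and the Floer differential vanishes by the parity considerations of Lemma \ref{Lem:Orient_even}. Compatibility with the $H^*(\scrY_n)$-module structure follows because, in the clean-intersection / plumbing model, the closed-open action is given by classical restriction $H^*(\scrY_n) \to H^*(L_{\wp} \cap L_{\wp'})$ followed by cup product, and the smooth isotopy to $\hat{L}_{\wp} \cap \hat{L}_{\wp'}$ intertwines the two restriction maps.

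The hard part will be upgrading the component-by-component smooth identification to a genuine isotopy inside the open manifold $\scrY_n$, rather than merely inside the Cautis--Kamnitzer compactification $Y_n^{\mathbf{w}}$, since the isotopy of Lemma \ref{Lem:IsotopicCoreVCycle} was constructed on the compactification. The key point to check is that, near each innermost component of the unlink, the locally constructed isotopy can be confined to the open locus; this should hold because the antidiagonal loci arising from $\hat{L}_{\wp}$ and $\hat{L}_{\wp'}$ already avoid the divisor at infinity in the relevant slices, but coordinating the ``$T^*S^2$'' and ``$A_2$'' reductions consistently as one works outward through the nested components will require care.
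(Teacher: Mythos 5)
Your approach is essentially the same as the paper's: work component-by-component through the inductive construction of Lemma~\ref{Lem:CleanPairs}, identify each intersection sphere with the small antidiagonal via the simultaneous-resolution argument from Lemma~\ref{Lem:IsotopicCoreVCycle}, and deduce the Floer-cohomological consequence from the plumbing/Morse--Bott model. Your added explicit bookkeeping (the ``one free $\bP^1$ per circle of the unlink'' count, and the separate treatment of the $T^*S^2$-bundle and $A_2$-fibration cases) merely unpacks what the paper states compactly. One calibration worth noting: the ``hard part'' you flag---confining the isotopy to the open manifold $\scrY_n$ rather than the compactification $Y_n^{\mathbf{w}}$---is something the paper deliberately does \emph{not} attempt; the remark immediately after Lemma~\ref{Lem:IsotopicCoreVCycle} observes that the isotopy lives in the compactification, and that this suffices because all that is actually used downstream is the induced identification of cohomology restriction maps $H^*(\scrY_n) \to H^*(\cdot)$ and hence of $H^*(\scrY_n)$-module structures. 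So the concern is resolved by weakening the goal rather than strengthening the isotopy, and your proof would be fine if you made the same move.
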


\begin{proof}
In the inductive construction of the previous set-up, consider some innermost component of $\wp \cup \overline{\wp'}$, which involves a subset $I \subset \{1,2,\ldots, 2n\}$ of cardinality $2m \leq 2n$ of the critical points. The degenerations along successive arcs for this component do not affect any of the other components, by the innermost condition. Furthermore, they exhibit the space $\scrY_m$ as an iterated $A_2$-fibration over $T^*S^2$, and the Lagrangians $L_{\wp_{in}}$ and $L_{\wp'_{in}}$ intersect along a section of this iterated fibration, i.e. they both fibre over the zero-section with fibres meeting transversely once. It suffices to show that this intersection sphere is naturally identified with the small antidiagonal in a copy $(S^2)^{2m} \subset (S^2)^{2n}$ indexed by the subset $I$. As in Lemma \ref{Lem:IsotopicCoreVCycle},  this holds because the simultaneous resolution shows the vanishing cycles of an $A_1$- respectively $A_2$-degeneration depend up to smooth isotopy only on the corresponding pair respectively triple of critical points which are brought together.
\end{proof}

\begin{cor} \label{cor:HF_simple_over_center}
The Floer cohomology group $HF^*(L_{\wp},L_{\wp'})  $ is a cyclic module over $ H^*(\scrY_n) $, generated by any minimal degree generator. \qed
\end{cor}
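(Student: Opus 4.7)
The plan is to reduce the statement to an elementary fact about the restriction map between singular cohomology rings.  By Lemma~\ref{Lem:IntersectionLocusIsCorrect}, after a Hamiltonian isotopy arranging $L_{\wp}$ and $L_{\wp'}$ into clean position we have an isomorphism of $H^*(\scrY_n)$-modules
\[
HF^*(L_{\wp},L_{\wp'}) \ \cong\ H^*(C), \qquad C \,=\, \hat{L}_{\wp} \cap \hat{L}_{\wp'},
\]
where the module structure on the right is pullback under $C \hookrightarrow \scrY_n$ followed by the cup product, and where $C$ is identified with the intersection of the antidiagonals in Lemma~\ref{Lem:Wehrli}.  It therefore suffices to prove that $H^*(C)$ is cyclic over $H^*(\scrY_n)$, generated by the unit $1 \in H^0(C)$; this unit corresponds to the (one-dimensional) minimal-degree piece of $HF^*(L_{\wp},L_{\wp'})$, so any nonzero minimal-degree element will then serve as a generator.

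By Lemma~\ref{Lem:Wehrli} (and the proof of Lemma~\ref{Lem:IntersectionLocusIsCorrect}), the intersection $C$ decomposes as a product $\prod_J S^2_J$, one factor per component $J$ of the planar unlink $\wp \cup \bar{\wp'}$, where each $S^2_J \subset (\bP^1)^{|J|}$ is the small antidiagonal on the factors indexed by the vertices in $J$.  Since $H^*(C) \cong \bigotimes_J H^*(S^2_J)$ is generated as a ring by $H^2(C)$, and since $H^*(\scrY_n)$ is generated by $H^2(\scrY_n)$, cyclicity will follow once I show the restriction map $\rho \co H^2(\scrY_n) \to H^2(C)$ is surjective.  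For this I would invoke the basis $v_i = (-1)^{i+1} e_i$, $1 \le i \le 2n-1$, of Corollary~\ref{Cor:Same-1}: because each $S^2_J$ is the small antidiagonal on its coordinates, the class $e_i$ pulls back for any $i \in J$ to $\pm$ the generator of $H^2(S^2_J)$, and the restriction to other factors of $C$ is zero.  Ranging $J$ over the components of the unlink, the classes $\{\rho(v_i)\}$ hit a generator of $H^2(S^2_J)$ for each $J$, so $\rho$ is surjective on $H^2$.

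There is no real obstacle here: the main content has already been packaged into Lemma~\ref{Lem:IntersectionLocusIsCorrect} (which ensures the module-theoretic identification and not merely an additive one) and into the geometric basis of Corollary~\ref{Cor:Same-1} (which ensures positivity and survival of the $H^2$ classes under restriction to each antidiagonal component).  The only potential subtlety is bookkeeping: one must check that the ``open-closed to center'' action of $H^*(\scrY_n)$ on $HF^*(L_{\wp},L_{\wp'})$ really does agree with the cohomological restriction-then-cup action on $H^*(C)$ under the Pozniak-style model, but this compatibility is built into the statement of Lemma~\ref{Lem:IntersectionLocusIsCorrect}.
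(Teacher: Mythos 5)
Your proposal is correct and carries out exactly the argument the paper leaves implicit: the paper states the Corollary with only a $\qed$, treating it as an immediate consequence of Lemma~\ref{Lem:IntersectionLocusIsCorrect}, and your proof supplies the short chain of observations (module-compatible identification $HF^*(L_{\wp},L_{\wp'}) \cong H^*(\hat L_{\wp}\cap\hat L_{\wp'})$, product-of-antidiagonals description from Lemma~\ref{Lem:Wehrli}, surjectivity of restriction on $H^2$ via the basis from Corollary~\ref{Cor:Same-1}) that justifies it. Nothing is missing; the only small point worth noting explicitly is that surjectivity of $H^2(\scrY_n)\to H^2(C)$ is inherited from the surjectivity of $H^2\bigl((\bP^1)^{2n}\bigr)\to H^2(C)$ because the relation class $(1,\dots,1)$ restricts to zero on each antidiagonal factor (each unlink component contains equally many odd and even indices), so passage to the quotient causes no loss.
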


\subsection{Conormal triples}
Lemma \ref{lem:ConormalModelForPairs} asserts that a pair of cleanly intersecting Lagrangians can always be modelled by taking one to be a  conormal bundle in the cotangent bundle of the other. For three Lagrangians, the situation is slightly more subtle. We recall the linear situation.


\begin{Lemma} \cite{CLM}
Let $(V,\omega)$ be a symplectic vector space. An ordered triple of Lagrangian subspaces $(\Lambda_0, \Lambda_1, \Lambda_2)$ is determined up to symplectomorphism by the quintuple of integers
\begin{equation} \label{Eqn:quintuple}
b_{ij} = \dim (\Lambda_i \cap \Lambda_j), \ b_{012} = \dim (\Lambda_0 \cap \Lambda_1 \cap \Lambda_2), \ s(\Lambda_0,\Lambda_1,\Lambda_2)
\end{equation}
where $s$ is the Maslov triple index, i.e. the signature of the quadratic form on $\Lambda_0 \oplus \Lambda_1 \oplus \Lambda_2$ given by  $\omega(u_0,u_1) + \omega(u_1,u_2) + \omega(u_2,u_0).$  \qed
\end{Lemma}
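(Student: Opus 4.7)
The plan is to reduce the classification to the case of a pairwise transverse triple by iterated symplectic reduction, and then invoke Sylvester's law of inertia. All five listed invariants are manifestly preserved by any symplectomorphism of $V$, so it suffices to show that any two triples realising the same quintuple can be brought to a common normal form. First, I would reduce by the common intersection. The subspace $W := \Lambda_0 \cap \Lambda_1 \cap \Lambda_2$ is automatically isotropic; since $W \subset \Lambda_i$ for each $i$, all three Lagrangians lie in $W^\omega$, and the symplectic reduction $V' := W^\omega/W$ inherits Lagrangian subspaces $\bar{\Lambda}_i := \Lambda_i/W$ with vanishing triple intersection, pairwise intersection dimensions $b_{ij} - b_{012}$, and unchanged Maslov triple index (the latter a standard consequence of the cocycle property of the index under reduction by a common isotropic subspace). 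This reduces the problem to the case $b_{012} = 0$.

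Next, assuming $b_{012}=0$, I would reduce successively by the pairwise intersections. Reducing by the isotropic subspace $W_{01} := \Lambda_0 \cap \Lambda_1$ yields a symplectic space in which the images of $\Lambda_0$ and $\Lambda_1$ become transverse; the image of $\Lambda_2$ is $\Lambda_2 \cap W_{01}^\omega$ (its intersection with $W_{01}$ being zero), which a short dimension count (using that the pairing $V/W_{01}^\omega \times W_{01} \to \bR$ induced by $\omega$ is perfect, combined with $\Lambda_0 \subset W_{01}^\omega$) shows to be Lagrangian of the expected dimension, with unchanged pairwise intersections $b_{02}, b_{12}$. Iterating with $W_{02}$ and $W_{12}$ produces a pairwise transverse triple in a symplectic space of dimension $2(n - b_{01} - b_{02} - b_{12})$, with Maslov index still preserved.

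In this pairwise transverse setting I would identify $V \cong \Lambda_0 \oplus \Lambda_1 \cong T^*\Lambda_0$, with $\omega$ realising the natural pairing $\langle\cdot,\cdot\rangle \co \Lambda_0 \times \Lambda_1 \to \bR$ and hence the duality $\Lambda_1 \cong \Lambda_0^*$. Then $\Lambda_2$, being transverse to $\Lambda_0$, is the graph of a linear map $A \co \Lambda_1 \to \Lambda_0$; the Lagrangian condition makes $A$ self-adjoint, and transversality to $\Lambda_1$ forces $A$ invertible. The associated symmetric bilinear form $Q_A(y,y') := \langle Ay, y'\rangle$ on $\Lambda_1$ is therefore nondegenerate, and direct substitution into the defining quadratic form for $s$ shows that $s(\Lambda_0, \Lambda_1, \Lambda_2) = \pm\,\mathrm{sig}(Q_A)$. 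The stabiliser in $\mathrm{Sp}(V)$ of the pair $(\Lambda_0, \Lambda_1)$ is $\mathrm{GL}(\Lambda_1)$, acting on these forms by the standard change of basis, so Sylvester's law classifies the orbits by signature alone. Combining this with the two reductions (each invertible by direct sum with the obvious ``diagonal'' model realising the prescribed intersection tautologically) gives the full classification.

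The hardest part, I expect, will be verifying the invariance of the Maslov triple index under the successive symplectic reductions. This is a cocycle-type statement, standard but sign-sensitive, and is where most of the care will be needed; the rest of the argument is a dimension count together with the signature classification of real symmetric bilinear forms.
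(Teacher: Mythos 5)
The paper does not prove this lemma; it simply cites \cite{CLM} and marks the statement with $\square$, so there is no internal argument to compare against. On its own merits, your reduction strategy is natural and essentially sound: passing to the pairwise-transverse case and then invoking Sylvester is the right idea, and your analysis of that case---$\Lambda_2$ as the graph of a self-adjoint invertible $A\colon \Lambda_1 \to \Lambda_0$, with $s = \pm\mathrm{sig}(Q_A)$ and the stabiliser of $(\Lambda_0,\Lambda_1)$ in $\mathrm{Sp}(V)$ acting on $Q_A$ by congruence---is correct (the sign works out to $-\mathrm{sig}(Q_A)$ with the stated conventions, but this does not affect the classification).

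The step you flag as delicate is indeed the gap, but not quite for the reason you give. The ``cocycle/reduction'' invariance of the Maslov triple index that you cite is a statement about reducing by an isotropic subspace contained in \emph{all} the Lagrangians; that covers the first reduction by $W_{012}$, but it does not cover the reduction by $W_{01} = \Lambda_0 \cap \Lambda_1$, which in general is not contained in $\Lambda_2$. (Some care is genuinely needed here: reducing by an isotropic subspace contained in only one Lagrangian can certainly change the index---e.g.\ reducing a pairwise transverse triple in $\bR^2$ by $\Lambda_0$ itself annihilates everything.) Moreover, your final sentence, that the reductions are ``each invertible by direct sum with the obvious diagonal model,'' is the assertion that the decomposition exists, not an argument for it; without it, the reduction only shows that symplectomorphic triples have equal invariants, not the converse. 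Both problems are repaired at once by replacing reduction with an $\omega$-orthogonal direct sum decomposition: choose a complement $C$ of $\Lambda_2 \cap W_{01}^{\omega}$ inside $\Lambda_2$, set $V' = W_{01} \oplus C$ (symplectic, since $C$ pairs nondegenerately with $W_{01}$) and $V'' = (V')^{\omega}$, and check directly that $\Lambda_i = (\Lambda_i \cap V') \oplus (\Lambda_i \cap V'')$ for each $i$ (the key points being $\Lambda_0, \Lambda_1 \subset W_{01}^{\omega} = W_{01} \oplus V''$ and $\Lambda_2 \subset C^{\omega}$). On $V'$ the triple is the standard model with $\Lambda_0 \cap V' = \Lambda_1 \cap V' = W_{01}$ transverse to $\Lambda_2 \cap V' = C$, whose triple index vanishes; the Maslov index is manifestly additive under such orthogonal splittings, giving the desired invariance, and the direct-sum normal form is precisely the ``un-reduction'' you need. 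The same bookkeeping also handles the first step by $W_{012}$, so the whole argument can be uniformised this way.
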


\begin{Lemma}
If the dimensions in \eqref{Eqn:quintuple} satisfy 
\begin{equation} \label{numerical_maslov_constraint}
\dim_{\bC}(V)+2b_{012} = b_{01}+b_{02}+b_{12}
\end{equation} then the Maslov triple index $s=0$. 
\end{Lemma}

\begin{proof}
Let $m=\dim_{\bC}(V)$.  Let $\Lambda_{012}$ denote the triple intersection $\Lambda_0 \cap \Lambda_1 \cap \Lambda_2$ and $\Lambda_{ij}$ the corresponding double intersections for $i\neq j$.   Both the Maslov triple index and the condition \eqref{numerical_maslov_constraint} are invariant under symplectic reduction, so passing to $\Lambda_{012}^{\perp_{\omega}} / \Lambda_{012}$ we can assume $b_{012} = 0$. This means that the vector space $W = \Lambda_{01} \oplus \Lambda_{12} \oplus \Lambda_{20}$ has $\dim_{\bR}(W) =m$; since the quadratic form vanishes identically on $W$, it suffices to prove that the signature of $V/W$ is zero. But the $3m$-dimensional space $\Lambda_0 \oplus \Lambda_1 \oplus \Lambda_2$ contains isotropic subspaces $S = \Lambda_0 \oplus \Lambda_{12} \oplus \{0\}$ and $T = \{0\} \oplus \Lambda_{12} \oplus \Lambda_2$, which project to $V/W$ as transverse $m$-dimensional subspaces, hence $V/W$ is the direct sum of their images. 
\end{proof}

\begin{Lemma} \label{lem:model3}
Three cleanly intersecting real $m$-dimensional Lagrangians $L_0, L_1, L_2$ are locally symplectomorphic near $L_0 \cap L_1 \cap L_2$  to two conormal bundles inside $T^*L_0$ if and only if the dimension constraint holds:
\begin{equation} \label{dimconstraint}
m+ 2b_{012} = b_{01} + b_{12} + b_{02}.
\end{equation}
 In particular, this condition is symmetric under permuting indices.
\end{Lemma}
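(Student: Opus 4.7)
\emph{Necessity.} Suppose $L_0$ is identified with the zero section in $T^*L_0$ and $L_i = \nu^*_{S_i}$ for cleanly intersecting submanifolds $S_i \subset L_0$, $i=1,2$. Then $L_0\cap L_i = S_i$ so $b_{0i}=\dim S_i$, while $\nu^*_{S_1}\cap \nu^*_{S_2}$ fibres over $S_1\cap S_2$ with fibre equal to the annihilator of $T_pS_1 + T_pS_2$ in $T^*_pL_0$. A dimension count yields
\[
b_{12} \;=\; b_{012} + \bigl(m - b_{01} - b_{02} + b_{012}\bigr),
\]
which rearranges to \eqref{dimconstraint}. This establishes necessity.

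\emph{Pointwise normal form.} For the converse, fix $p\in L_0\cap L_1\cap L_2$. The constraint \eqref{dimconstraint}, combined with the observation immediately preceding the lemma, yields $s(T_pL_0,T_pL_1,T_pL_2)=0$. The quintuple of invariants \eqref{Eqn:quintuple} therefore agrees with that of the linear model $\bigl(\mathrm{zero\ section},\nu^*_{U^1_p},\nu^*_{U^2_p}\bigr)$ in $T^*_pL_0$ for any pair of subspaces $U^i_p\subset T_pL_0$ with $\dim U^i_p = b_{0i}$ and $\dim(U^1_p\cap U^2_p)=b_{012}$. By the Cappell--Lee--Miller classification the two triples are pointwise linearly symplectomorphic.

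\emph{Globalization.} Apply Lemma \ref{lem:ConormalModelForPairs} first to the pair $(L_0,L_1)$, yielding a local symplectic embedding into $T^*L_0$ under which $L_0$ is the zero section and $L_1=\nu^*_{S_1}$ with $S_1 := L_0\cap L_1$. Clean intersection of the triple makes $S_2 := L_0\cap L_2$ a smooth submanifold of $L_0$ of dimension $b_{02}$ meeting $S_1$ cleanly in $S_1\cap S_2 = L_0\cap L_1\cap L_2$. Assembling the pointwise linear identifications over the triple intersection, and extending via Weinstein's tubular neighborhood theorem, produces a smooth map $\phi$ from a neighborhood of $L_0\cap L_1\cap L_2$ into $T^*L_0$ which maps the three Lagrangians setwise onto $(\mathrm{zero\ section},\nu^*_{S_1},\nu^*_{S_2})$ and which is symplectic along $L_0\cap L_1\cap L_2$ itself.

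\emph{Moser correction.} The main obstacle is promoting $\phi$ to a genuine symplectomorphism while keeping all three Lagrangians preserved. For this one interpolates $\omega_t = (1-t)\omega + t\,\phi^*\omega_{\mathrm{std}}$ and seeks a primitive $\alpha$ of $\phi^*\omega_{\mathrm{std}}-\omega$ (defined near $L_0\cap L_1\cap L_2$) whose pullback to each of $L_0$, $L_1$, $L_2$ vanishes. A vector field $X_t$ determined by $\iota_{X_t}\omega_t = -\alpha$ is then automatically tangent to each Lagrangian, and its time-one flow composed with $\phi$ gives the desired local symplectomorphism. That such an $\alpha$ exists is a relative Poincar\'e statement for the three-Lagrangian configuration: primitives vanishing on any pair of the three Lagrangians are constructed by the standard relative de Rham lemma, and the obstruction to patching them into a simultaneously adapted primitive is exactly the Maslov triple index along $L_0\cap L_1\cap L_2$, which vanishes by the linear step. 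This final cohomological compatibility is the delicate point of the argument.
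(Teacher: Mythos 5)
Your proof follows essentially the same route as the paper's: necessity by a dimension count for the clean intersection of two conormal bundles inside $T^*L_0$; sufficiency by first applying Lemma~\ref{lem:ConormalModelForPairs} to the pair $(L_0,L_1)$, then invoking Cappell--Lee--Miller and the vanishing of the Maslov triple index (the observation preceding the lemma) to match tangent spaces along the triple intersection, and finally a Moser/Hamiltonian correction that preserves the two already-normalized Lagrangians. Two small comments. In the necessity step your dimension count
\[
b_{12} = b_{012} + \bigl(m - b_{01} - b_{02} + b_{012}\bigr)
\]
is the right one and is actually more careful than the paper's phrasing, which asserts the conormal intersection has dimension $2d$ — that holds only when $b_{01}+b_{02}=m$; your computation verifies \eqref{dimconstraint} in full generality. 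In the sufficiency step, the paper phrases the final correction as a Hamiltonian isotopy tangent to $L_0$ and $\nu^*_{S_1}$ that carries $L_2$ onto $\nu^*_{S_2}$ (in the spirit of the exactness argument in Lemma~\ref{lem:ConormalModelForPairs}), while you set it up as a relative Moser interpolation of symplectic forms; these are equivalent in substance, and your version correctly isolates the cohomological condition (vanishing of the triple index) as the crux of why a simultaneously adapted primitive exists.
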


\begin{proof}[Sketch]
Given two submanifolds $B_0, B_1 \subset L_0$ intersecting cleanly in a submanifold of dimension $d$, their conormal bundles $\nu^*_{B_i} \subset T^*L_0$ meet cleanly in a submanifold of dimension $2d$. This implies that a pair of conormal bundles satisfies the desired equality.  Conversely, suppose the dimension constraint holds, and work inside $T^*L_0$ with $L_1 = \nu^*_{L_0 \cap L_1}$. From the linear algebra classification of triples of Lagrangian subspaces, and the vanishing of the Maslov triple index established above, at a point  $p$ in the triple intersection $B$ we can suppose that $T_pL_2 = T_p(\nu^*_{L_0 \cap L_2})$.  One then constructs a local Hamiltonian isotopy, in an open neighbourhood $U$ of $B$,  taking $L_2\cap U $ to $\nu^*_{L_0 \cap L_2} \cap U$, via a flow tangent to $L_0\cap U$ and $\nu^*_{L_0 \cap L_1} \cap U$, as in Lemma \ref{lem:ConormalModelForPairs}.
\end{proof}

We point out that \eqref{dimconstraint} never holds if the triple intersection $L_0\cap L_1 \cap L_2$ co-incides with the three pairwise intersections $L_i \cap L_j$ (and the Lagrangians are not all identical). Given that, it is easy to see that there are triples of components $\hat{L}_{\wp_i}$ of the compact core $Z$ from Lemma \ref{Lem:Wehrli}  which, even though meeting cleanly, do not admit a conormal model.

For triples of Lagrangians, $A_2$-degenerations are not sufficient to bring the Lagrangians into clean intersection position (consider the case where no triple of critical points contains the end-points of at least one arc in each matching). Nonetheless, we have:

\begin{Lemma}\label{lem:OneArcChanged}
Any triple of Lagrangians $L_{\wp}, L_{\wp'}, L_{\wp''}$, two of which meet in codimension one,  may be Hamiltonian isotoped to simultaneously meet pairwise cleanly.  Moreover, the analogue of Lemma \ref{Lem:IntersectionLocusIsCorrect} again holds, and the cleanly intersecting Hamiltonian images  satisfy the conormal condition \eqref{dimconstraint}.
\end{Lemma}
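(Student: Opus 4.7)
Without loss of generality, assume $L_\wp$ and $L_{\wp'}$ meet in codimension one, so that $\wp$ and $\wp'$ share $n-2$ arcs and differ only at four critical points $\{p,q,r,s\}$ where their restrictions give the two crossingless matchings on four points. The plan is to extend the iterative degeneration argument of Lemma \ref{Lem:CleanPairs} to the triple, carrying $L_{\wp''}$ alongside.

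As in Lemma \ref{Lem:CleanPairs}, iterate innermost $A_1$- and $A_2$-degenerations of $\scrY_n$ guided by the combined diagram $\wp \cup \overline{\wp'} \cup \overline{\wp''}$: each reduces an open neighbourhood of the three Lagrangians to a fibration over a lower-dimensional nilpotent slice in which all three Lagrangians simultaneously fibre, while preserving the codimension-one condition on $\wp, \wp'$.  The reduction terminates in a residual local model around $\{p,q,r,s\}$ (together with whatever arcs of $\wp''$ cannot be degenerated away), in which Lemma \ref{Lem:CleanFor-n=2} places the pair $(L_\wp, L_{\wp'})$ in an $A_2$-fibered clean model and $L_{\wp''}$ is placed by an explicit Hamiltonian isotopy of its representing arc(s) in the $A_2$-base, chosen to meet the two arcs representing $\wp, \wp'$ pairwise transversely at a common point.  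This yields the simultaneous pairwise clean intersection, and the identification of each pairwise intersection locus with the iterated antidiagonal $\hat L_{\wp_i} \cap \hat L_{\wp_j}$ follows exactly as in Lemma \ref{Lem:IntersectionLocusIsCorrect}.

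For the conormal condition, the triple intersection is identified with $\hat L_\wp \cap \hat L_{\wp'} \cap \hat L_{\wp''}$, which is a disjoint union of two-spheres indexed by the connected components of the combined multigraph $\wp \cup \overline{\wp'} \cup \overline{\wp''}$: bipartiteness of this graph (every arc joins an odd to an even label) forces all its cycles to have even length, and hence ensures that the iterated antipodal constraints compose consistently. Writing $b_{ij} = 2\, c(\wp_i,\wp_j)$ and $b_{012} = 2\, c(\wp,\wp',\wp'')$ (where $c(\wp,\wp',\wp'')$ is the number of such components), the dimension condition \eqref{dimconstraint} reduces to the combinatorial identity
\[
n + 2\, c(\wp,\wp',\wp'') \;=\; c(\wp,\wp') + c(\wp,\wp'') + c(\wp',\wp'').
\]
This identity is established by induction on $n$: stripping an arc common to all three matchings decreases $n$, $c(\wp,\wp',\wp'')$, and each $c(\wp_i,\wp_j)$ by one, so both sides drop by three; when no such arc exists, the codimension-one hypothesis supplies an arc shared by $\wp$ and $\wp'$ which, together with a finite local analysis of how $\wp''$ interacts with the 4-point region, completes the inductive step.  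The base case $n=2$ is a direct check over the few available triples.

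The main obstacle is justifying simultaneous cleanability in the residual local model: for arbitrary triples of core components $\hat L_{\wp_i}$ the conormal condition fails (cf.\ the paragraph after Lemma \ref{lem:model3}), so the codimension-one hypothesis on $\wp, \wp'$ must be used essentially through the Hamiltonian flexibility of the $A_2$-Milnor-fibre base afforded by Lemma \ref{Lem:CleanFor-n=2}. Controlling how the arcs of an arbitrary third matching traverse the 4-point region where $\wp, \wp'$ differ, while respecting the ambient symplectic structure on $\scrY_n$, is the key technical step.
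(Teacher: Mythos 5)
Your plan for the first half (simultaneous clean intersection) is in the same spirit as the paper's, but less precise where precision matters. The paper's key observation is that, since $\wp$ and $\wp'$ share $n-2$ arcs, one should apply the iterative $A_2$-degeneration of Lemma \ref{Lem:CleanPairs} to a pair of consecutive arcs in which one is \emph{common to $\wp$ and $\wp'$} and the other lies in $\wp''$.  With that choice, the local $A_2$-fibration automatically manifests $L_\wp$ and $L_{\wp'}$ as fibred with \emph{identical} fibres, and $L_{\wp''}$ fibred with the transverse core sphere, so all three are simultaneously fibred without any further adjustment of $L_{\wp''}$ by hand. Your version leaves the placement of $L_{\wp''}$ in a ``residual local model'' to an unspecified Hamiltonian isotopy; the point of the paper's argument is that the right choice of consecutive arcs makes this step automatic.

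For the conormal condition \eqref{dimconstraint} you propose something genuinely different — reducing to the combinatorial identity $n+2\,c(\wp,\wp',\wp'') = c(\wp,\wp')+c(\wp,\wp'')+c(\wp',\wp'')$ and proving it by induction on $n$. The identity is correct, but your inductive step has a real gap. The step that strips an arc common to all three matchings is fine when such an arc exists, but when it does not (which is the generic case) the remaining sentence (``a finite local analysis of how $\wp''$ interacts with the 4-point region completes the inductive step'') is a placeholder rather than an argument: precisely what happens in that 4-point region is the whole content of the lemma, and it does not obviously localize. The paper instead gives a short direct dimension count that uses the codimension-one hypothesis exactly once and avoids any induction. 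In the notation $L_0=L_\wp$, $L_1=L_{\wp'}$, $L_2=L_{\wp''}$, $m=2n$: since $\wp,\wp'$ meet in codimension one, $b_{01}=2n-2$; since $c(\wp,\wp'')$ and $c(\wp',\wp'')$ differ by exactly $1$, one may take $b_{02}=2j$ and $b_{12}=2j+2$. The triple intersection is contained in the $2j$-dimensional $L_\wp\cap L_{\wp''}$, so $b_{012}\le 2j$. On the other hand the triple intersection has codimension at most $2$ in the $(2j+2)$-dimensional $L_{\wp'}\cap L_{\wp''}$: adding the two arcs of $\wp$ not in $\wp'$ to the graph of $\wp'\cup\overline{\wp''}$ merges components by at most one, because the endpoints of the second arc are already joined through the first arc and the two arcs of $\wp'$ at the four differing points. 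Hence $b_{012}\ge 2j$, so $b_{012}=2j$ and $2n + 2b_{012} = b_{01}+b_{02}+b_{12}$ as required. I suggest replacing your inductive scheme with this dimension count; your bipartiteness remark about the antipodal constraints is true but unnecessary, since the triple intersection is identified with a submanifold of the compact core of Lemma \ref{Lem:Wehrli}, where consistency is built in.
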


\begin{proof}
Suppose $L_{\wp}$ and $L_{\wp'}$ meet in codimension one, meaning that they share $n-2$ arcs. We repeat the argument of Lemma \ref{Lem:CleanPairs}, but considering consecutive arcs one of which is common to $\wp$ and $\wp'$ and the other of which lies in $\wp''$.  The corresponding $A_2$-degeneration bringing eigenvalues together along such a consecutive pair of arcs manifests all three Lagrangians as fibred simultaneously, with  $L_{\wp}$ and $L_{\wp'}$ having identical fibres in the local $A_2$-bundle, and $L_{\wp''}$ having fibre the other core sphere in the $A_2$-fibres. This can now be  iterated as before, and the analogue of Lemma \ref{Lem:IntersectionLocusIsCorrect} also holds as before.

For the final statement, 
in the notation of \eqref{dimconstraint}, let $L_0=L_{\wp}$, $L_1=L_{\wp'}$ and $L_2=L_{\wp''}$, and  set $m=2n$ to be the real dimension of $L_{\wp}$. Then $b_{01} = 2n-2$. The dimensions $b_{02}$ and $b_{12}$ differ by exactly 2, which means that (relabelling as necessary) we can assume $b_{02} = 2j$ and $b_{12} = 2j+2$.  The triple intersection has codimension at most 2 in $L_{\wp'}  \cap L_{\wp''}$, hence has dimension at least $2j$, but is also contained in the $2j$-dimensional submanifold $L_{\wp} \cap L_{\wp''}$. Therefore $b_{012} = 2j$, and $2n+2b_{012} = b_{01} + b_{02} + b_{12}$.
\end{proof}

The convolution model for Floer product for a pair of cleanly intersecting Lagrangians admits a generalisation to the case of two conormal bundles. Again from \cite{KOh,NadlerZaslow}:

\begin{Lemma} \label{lem:Conv3}
Let $A_i \subset Q$ be closed submanifolds of an oriented spin manifold $Q$, $i=1,2$.  Let $\nu_i^* \subset T^*Q$ denote the conormal bundle of $A_i$. The product 
\begin{equation}
HF^*(\nu_1, \nu_2) \otimes HF^*(Q, \nu_1)  \longrightarrow HF^*(Q,\nu_2)
\end{equation}
is a convolution product, given up to sign by restriction and push-forward
\begin{equation}
H^*(A_1 \cap A_2) \otimes H^*(A_1) \rightarrow H^*(A_1\cap A_2) \rightarrow H^*(A_2).
\end{equation} \qed
\end{Lemma}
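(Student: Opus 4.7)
The strategy is to reduce the computation to a Morse--Bott count via P\'ozniak's clean intersection model applied simultaneously to all three Lagrangians. First I would observe that $Q$, $\nu_1$, $\nu_2$ meet pairwise cleanly in $A_1$, $A_2$, $A_1 \cap A_2$ respectively (the last equality holds because the intersection of two conormal bundles over submanifolds $A_1, A_2 \subset Q$ coincides with the conormal bundle of $A_1 \cap A_2$ restricted to $A_1 \cap A_2$, which lies in the zero section). Moreover, the dimension relation \eqref{dimconstraint} is automatically satisfied here, so Lemma \ref{lem:model3} confirms that the triple is in simultaneous conormal position, and Lemma \ref{lem:ConormalModelForPairs} identifies each pairwise intersection as a clean one.

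By the cohomological clean-intersection model (Proposition \ref{Prop:plumb} applied pairwise, or equivalently the result of P\'ozniak), we identify
\begin{equation}
HF^*(Q,\nu_1) \cong H^*(A_1), \quad HF^*(Q,\nu_2) \cong H^*(A_2), \quad HF^*(\nu_1,\nu_2) \cong H^*(A_1 \cap A_2),
\end{equation}
with the gradings normalised so that the minimal-degree generator on each intersection sits in degree zero. (In the cotangent bundle, the canonical primitive $p\,dq$ restricts to zero on $Q$ and on each conormal $\nu_i$, so any two intersection points have equal action.)

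The action argument furthermore forces every holomorphic triangle with boundary on $(Q, \nu_1, \nu_2)$ contributing to the Floer product to have zero area, hence to be constant with image in $A_1 \cap A_2 \subset T^*Q$. All such constant triangles are Morse--Bott regular, so a standard Morse--Bott analysis in the spirit of \cite{Abouzaid:plumbing} (using auxiliary Morse functions $f_0$ on $A_1$, $f_1$ on $A_1 \cap A_2$, $f_2$ on $A_2$ compatible with the conormal model) replaces the triangle count by a count of Y-shaped gradient flow trees with one vertex on $A_1 \cap A_2$ and three edges lying respectively in $A_1$, $A_1 \cap A_2$, and $A_2$. This combinatorial count is well-known to compute the composition
\begin{equation}
H^*(A_1\cap A_2) \otimes H^*(A_1) \xrightarrow{\;\cup \circ (\id \otimes i^*)\;} H^*(A_1 \cap A_2) \xrightarrow{\;j_!\;} H^*(A_2),
\end{equation}
where $i\co A_1 \cap A_2 \hookrightarrow A_1$ and $j\co A_1 \cap A_2 \hookrightarrow A_2$ are the inclusions and $j_!$ is the Gysin (Umkehr) map; this is precisely the convolution product in the statement.

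The main technical obstacle is verifying the sign with which this convolution appears, which requires comparing the orientation of the moduli spaces of Morse flow trees (determined by the spin structure of $Q$ and brane data on $\nu_1, \nu_2$) to the orientations arising from Poincar\'e duality on $Q$, $A_1$, $A_2$, and $A_1 \cap A_2$ that define the convolution. This is a local computation around a single transverse triple intersection point, reducing to a linear model in $T^*\bR^m$; the outcome matches the standard sign computation recorded in \cite{KOh, NadlerZaslow}, which is the content of the phrase ``up to sign'' in the statement.
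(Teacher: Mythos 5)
The paper does not prove this lemma: it is quoted verbatim from \cite{KOh,NadlerZaslow}, as the text immediately preceding it and the terminal \(\qed\) indicate. So your proposal is a reconstruction of the cited argument rather than a comparison with a proof given in the paper. The overall strategy you describe --- exactness of the primitive \(p\,dq\) on \(Q\) and both conormals forcing all contributing triangles to be constant, followed by a Morse--Bott (pearl/flow-tree) count in a standard local model, with the sign deferred --- is indeed the mechanism underlying the cited references, so the outline is sound.

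There is, however, a genuine error in your opening geometric claim. You assert that \(\nu_1 \cap \nu_2\) ``coincides with the conormal bundle of \(A_1 \cap A_2\) restricted to \(A_1 \cap A_2\), which lies in the zero section.'' Neither part is correct in general. At a point \(q \in A_1 \cap A_2\), a covector \(p\) lies in \(\nu_1 \cap \nu_2\) precisely when it annihilates \(T_qA_1 + T_qA_2\); this is the annihilator bundle of \(TA_1 + TA_2\) over \(A_1 \cap A_2\), which is a proper subbundle of the conormal bundle of \(A_1 \cap A_2\) (the annihilator of \(TA_1 \cap TA_2\)) and lies in the zero section only when \(A_1 \pitchfork A_2\) (it has positive fibre rank otherwise, e.g.\ \(\nu_1 \cap \nu_1 = \nu_1\)). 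The conclusion \(HF^*(\nu_1,\nu_2) \cong H^*(A_1 \cap A_2)\) still holds, since the intersection is a vector bundle over \(A_1\cap A_2\) and hence homotopy equivalent to it, but your dimension bookkeeping for \eqref{dimconstraint} must use \(b_{12} = \dim(\nu_1\cap\nu_2) = 2\dim(A_1\cap A_2) + \dim Q - \dim A_1 - \dim A_2\), not \(\dim(A_1\cap A_2)\); with the corrected value the identity \eqref{dimconstraint} does in fact hold (as you assert without computing), whereas with your mistaken identification it would fail outside the transverse case. You should also record explicitly the standing hypothesis that \(A_1\) and \(A_2\) meet cleanly in \(Q\), which is needed for all of the above and is implicit in the way the lemma is applied (Lemma \ref{lem:OneArcChanged}). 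The final sign computation is asserted rather than carried out, but since the lemma is only stated ``up to sign'' that is acceptable.
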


\subsection{Some non-zero Floer products}

We collect together several non-vanishing results for Floer products which will be used in the construction of positive bases.  For a pair of matchings $\wp, \wp'$ we will denote by
\begin{equation} \label{Eqn:MinDegreeGenerator}
\alpha_{\wp,\wp'} \in k_{min}(\wp,\wp') \subset HF^*(L_{\wp},L_{\wp'})
\end{equation}
a minimal degree generator of $HF^*(L_{\wp}, L_{\wp'})$. There are two choices, differing by sign. 
By Corollary \ref{cor:HF_simple_over_center}, the Floer product
\begin{equation} \label{Eq:Now_Known}
HF^*(L_{\wp'}, L_{\wp''}) \otimes HF^*(L_{\wp},L_{\wp'}) \longrightarrow HF^*(L_{\wp}, L_{\wp''})
\end{equation}
 is completely determined by that of the minimal degree generators and the module structure.
Despite its seeming innocuity, the following result is the main reason why the arc algebra and its symplectic analogue agree, since it asserts that, for the simplest products, basis elements appear with coefficients that have the same sign.

We now consider a triple of matchings $\wp, \wp', \wp''$, and suppose $\wp$ and $\wp'$ meet in codimension one. As remarked after Lemma \ref{Lem:Meet_codimension_one}, necessarily $c(\wp,\wp'') = c(\wp',\wp'') \pm 1$.  
\begin{Lemma} \label{Lem:NonzeroProducts}
\begin{enumerate}
\item If $c(\wp,\wp'') = c(\wp',\wp'') -1$, then  
\[
\alpha_{\wp',\wp''} \cdot \alpha_{\wp,\wp'} = \pm \alpha_{\wp,\wp''}.\]
\item If $c(\wp,\wp'') = c(\wp',\wp'') +1$, then
\[
\alpha_{\wp',\wp''}\cdot\alpha_{\wp,\wp'} = \pm (v_{i_j}+v_{i_k}) \alpha_{\wp,\wp''} \neq 0
\]
for some $v_{i_j} , v_{i_k} \in H^*(\scrY_n)$ elements of the distinguished basis. Specifically, $ [v_{i_j}+v_{i_k}] \in H^*(\hat{L}_{\wp}\cap \hat{L}_{\wp''}) = HF^*(L_{\wp}, L_{\wp''})$ is Poincar\'e dual to the homology class of the triple intersection $[\hat{L}_{\wp} \cap \hat{L}_{\wp'} \cap \hat{L}_{\wp''}]$.
\end{enumerate}
\end{Lemma}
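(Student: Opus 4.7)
The plan is to place the triple $L_\wp, L_{\wp'}, L_{\wp''}$ in a simultaneous clean-intersection configuration realised by two conormal bundles inside a single cotangent bundle, and then compute the Floer product as a classical convolution. By Lemma~\ref{lem:OneArcChanged} we may assume, after a Hamiltonian isotopy, that the three Lagrangians meet pairwise cleanly and satisfy the conormal dimension identity~\eqref{dimconstraint}; writing
\[
A_1 \;:=\; L_\wp\cap L_{\wp'}, \qquad A_2 \;:=\; L_\wp\cap L_{\wp''},
\]
Lemma~\ref{lem:model3} puts $L_{\wp'}$ and $L_{\wp''}$ in Weinstein normal form as the conormal bundles of $A_1$ and $A_2$ inside $T^*L_\wp$. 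In this local model all Floer polygons are constant (as in the proof of Proposition~\ref{Prop:plumb}), so Lemma~\ref{lem:Conv3} identifies the Floer product with the classical convolution
\[
H^*(A_1\cap A_2)\otimes H^*(A_1) \longrightarrow H^*(A_1\cap A_2) \longrightarrow H^*(A_2),
\]
in which the first arrow is restriction-and-cup and the second is push-forward via Poincar\'e duality. Under the Pozniak-type isomorphism the minimal-degree generators $\alpha_{\wp,\wp'}$, $\alpha_{\wp',\wp''}$, $\alpha_{\wp,\wp''}$ of~\eqref{Eqn:MinDegreeGenerator} correspond (up to sign) to the fundamental classes $1\in H^0(A_1)$, $1\in H^0(A_1\cap A_2)$, $1\in H^0(A_2)$ respectively.

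Using $b_{01}=2(n-1)$, $b_{02}=2c(\wp,\wp'')$, $b_{12}=2c(\wp',\wp'')$ from Lemma~\ref{Lem:IntersectionLocusIsCorrect}, the conormal identity~\eqref{dimconstraint} yields
\[
\dim_{\bR}(A_1\cap A_2) \;=\; 2\bigl(c(\wp,\wp'')+c(\wp',\wp'')-1\bigr).
\]
In case~(1) this forces $A_1\cap A_2=A_2$ on dimensional grounds, so the convolution sends $(1,1)\mapsto 1\in H^0(A_2)$, yielding $\alpha_{\wp',\wp''}\cdot\alpha_{\wp,\wp'}=\pm\alpha_{\wp,\wp''}$ and sharpening Lemma~\ref{Lem:Constant_triangle} by pinning the coefficient to $\pm1$. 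In case~(2) the same count shows that $A_1\cap A_2$ has real codimension $2$ in $A_2$, so the convolution sends $(1,1)$ to $\mathrm{PD}_{A_2}[A_1\cap A_2]\in H^2(A_2)$; by Corollary~\ref{cor:HF_simple_over_center} this equals $x\cdot\alpha_{\wp,\wp''}$ for a unique $x\in H^2(\scrY_n)$, and it remains to identify $x$.

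Since $\wp$ and $\wp'$ share $n-2$ arcs by Lemma~\ref{Lem:Meet_codimension_one}, they differ on exactly four vertices $\{a,b,c,d\}$. The hypothesis $c(\wp',\wp'')=c(\wp,\wp'')-1$ implies that these four vertices lie on exactly two components $C_{i_j},C_{i_k}$ of the unlink $\wp\cup\bar{\wp''}$, namely the two that merge into a single component under the passage to $\wp'\cup\bar{\wp''}$. Under the identification $A_2\cong\prod_C (S^2)_C$ provided by Lemma~\ref{Lem:IntersectionLocusIsCorrect}, imposing the extra matching $\wp'$ on a point of $A_2$ identifies the $S^2$ factors labelled by $C_{i_j}$ and $C_{i_k}$ along a diagonal (after composing the antidiagonal relations coming from $\wp$), so $A_1\cap A_2$ is the image of a diagonal $S^2\hookrightarrow S^2_{C_{i_j}}\times S^2_{C_{i_k}}$, extended trivially along the remaining factors.

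The main obstacle is computing the Poincar\'e dual of this diagonal in the distinguished basis of Corollary~\ref{Cor:Same} and pinning down the overall sign. A direct calculation in $\bP^1\times\bP^1$ with the alternating basis $v_i=(-1)^{i+1}e_i$ shows that the diagonal has Poincar\'e dual $v_p+v_q$ for \emph{any} choice of vertices $p\in C_{i_j}$, $q\in C_{i_k}$: the alternating signs conspire with the antiholomorphic identifications along each component $C$ of the unlink to make the restriction $v_p|_{(S^2)_C}$ independent of the representative $p\in C$. Setting $i_j:=p$ and $i_k:=q$ gives the claimed class $v_{i_j}+v_{i_k}$, which simultaneously realises the Poincar\'e dual of the triple intersection $[\hat L_\wp\cap\hat L_{\wp'}\cap\hat L_{\wp''}]$. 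The residual $\pm$ sign aggregates the choices of lifts of the three $\alpha$'s together with the Pozniak normalisation; to nail it down I would restrict along the iterated $A_2$-fibration of Lemma~\ref{Lem:CleanPairs} and invoke the explicit $\scrY_2$ sign computation underlying Proposition~\ref{Prop:n=2}.
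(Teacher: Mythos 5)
Your proof is correct and follows essentially the same route as the paper's: you put the triple into Pozniak/conormal position via Lemma~\ref{lem:OneArcChanged}, identify the Floer product with convolution via Lemma~\ref{lem:Conv3}, and in case~(2) identify the Poincar\'e dual of the codimension-two triple intersection as an (anti)diagonal class inside two $S^2$-factors, restricting from $v_{i_j}+v_{i_k}\in H^2(\scrY_n)$. One small arithmetic slip worth fixing: the conormal constraint $m+2b_{012}=b_{01}+b_{02}+b_{12}$ with $m=2n$, $b_{01}=2(n-1)$, $b_{02}=2c(\wp,\wp'')$, $b_{12}=2c(\wp',\wp'')$ gives $\dim_{\bR}(A_1\cap A_2)=c(\wp,\wp'')+c(\wp',\wp'')-1$, not twice that, though your subsequent deductions (equality with $A_2$ in case~(1), real codimension $2$ in case~(2)) already use the correct value.
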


\begin{proof}

The non-vanishing of the product if $c(\wp,\wp'') = c(\wp',\wp'') -1$  follows immediately from Lemma \ref{lem:OneArcChanged} and Lemma \ref{lem:Conv3}. If $c(\wp,\wp'') = c(\wp',\wp'') +1$, the product $\alpha_{\wp',\wp''}\cdot\alpha_{\wp,\wp'}$ lands in degree exactly two higher than the minimal degree.  For the cleanly intersecting representatives of the three Lagrangians, the triple intersection $L_{\wp} \cap L_{\wp'} \cap L_{\wp''}$ co-incides with the intersection $L_{\wp} \cap L_{\wp''}$, cf. the proof of Lemma \ref{lem:OneArcChanged}. This defines a class 
\[
[L_{\wp} \cap L_{\wp'} \cap L_{\wp''}] \in H^2(L_{\wp'} \cap L_{\wp''}) \cong H^2((S^2)^{c(\wp',\wp'')}
\]
which is the product of the minimal degree generators by the convolution model for Floer products in conormal plumbings.  The cohomology class can be identified with the corresponding class in the model of Lemma \ref{Lem:Wehrli}, by Lemma \ref{Lem:IntersectionLocusIsCorrect}. 
Geometrically, the triple intersection is homologous to an antidiagonal in two  factors 
\[
S^2_{i_j} \times S^2_{i_k} \ \subset \ S^2_{i_1} \times \cdots \times S^2_{i_m} \ \subset (S^2)^{2n},
\] $m= c(\wp',\wp'')$, and each such is canonically the restriction of a class $(v_{i_j}+v_{i_k}) \in H^2(\scrY_n)$. This yields the second statement.
\end{proof}

\begin{Lemma} \label{Lem:EverythingFromMinimalDegree}
If $\wp, \wp'$ meet in codimension $k$, i.e. $c(\wp, \wp')=n-k$, the minimal degree generator $\alpha_{\wp,\wp'} \in HF^{*}(L_{\wp},L_{\wp'})$ can be written as a product
\[
\alpha_{\wp,\wp'} = \pm \prod_{j=k-1}^{0} \alpha_{\wp_i,\wp_{i+1}}
\]
where $\wp=\wp_0,\ldots,\wp_k=\wp'$ is a codimension one interpolating sequence as in Lemma \ref{Lem:Codim1Interpolate}.
\end{Lemma}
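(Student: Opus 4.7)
My plan is to proceed by induction on the codimension $k$, using Lemma \ref{Lem:NonzeroProducts}(1) to handle the ``last'' merge and an associativity-plus-subsequence argument to reduce to a smaller case. The base case $k=1$ is tautological since then the product consists of a single factor $\alpha_{\wp_0,\wp_1} = \alpha_{\wp,\wp'}$.

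For the inductive step, set $P_k := \prod_{j=k-1}^{0} \alpha_{\wp_j,\wp_{j+1}}$ and use associativity of the Floer product to regroup
\[
P_k \ =\ \Bigl(\prod_{j=k-1}^{1} \alpha_{\wp_j,\wp_{j+1}}\Bigr) \cdot \alpha_{\wp_0,\wp_1}.
\]
The first factor is the analogous product for the subsequence $\wp_1,\wp_2,\ldots,\wp_k$, which I claim is itself an interpolating sequence from $\wp_1$ to $\wp_k$ of codimension $k-1$. Granting this, the inductive hypothesis identifies the inner product with $\pm\alpha_{\wp_1,\wp_k}$. The triple $(\wp_0,\wp_1,\wp_k)$ then satisfies the hypotheses of Lemma \ref{Lem:NonzeroProducts}(1): the pair $(\wp_0,\wp_1)$ meets in codimension one by construction, and $c(\wp_0,\wp_k) = n - k = c(\wp_1,\wp_k) - 1$, so case (1) yields $\alpha_{\wp_1,\wp_k}\cdot\alpha_{\wp_0,\wp_1} = \pm\alpha_{\wp_0,\wp_k}$, completing the induction.

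The main obstacle is the subsequence claim, i.e.\ verifying that $\wp_1,\ldots,\wp_k$ inherits the three conditions from Lemma \ref{Lem:Codim1Interpolate}. The clean way to see this is to note that codimension agrees with the graph distance on crossingless matchings, where edges join codimension-one pairs: one inequality follows because a codim-one move alters $c(\wp,\cdot)$ by at most one (only two arcs of the matching change locally), while the reverse inequality is precisely the existence statement of Lemma \ref{Lem:Codim1Interpolate}. An interpolating sequence is thus a geodesic in this metric; but any subpath of a geodesic is a geodesic, so the triangle inequality forces $c(\wp_1,\wp_i)=n-(i-1)$ for every $i$, and together with the inherited codim-one consecutive property and the endpoint codimensions $c(\wp_i,\wp_k)=n-(k-i)$, this shows that the tail $\wp_1,\ldots,\wp_k$ is indeed an interpolating sequence in the sense required to apply the inductive hypothesis.
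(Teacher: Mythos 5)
Your proof is correct, and its engine is the same as the paper's: repeated use of Lemma~\ref{Lem:NonzeroProducts}(1) across the interpolating sequence. The organization differs, though, and in a way worth noting. The paper's (very terse) proof keeps $\wp_0$ as a fixed anchor and iteratively computes $\alpha_{\wp_i,\wp_{i+1}}\cdot\alpha_{\wp_0,\wp_i}=\pm\alpha_{\wp_0,\wp_{i+1}}$, using the observation $c(\wp,\wp_{i+1})=c(\wp,\wp_i)-1$; this applies Lemma~\ref{Lem:NonzeroProducts}(1) to the triple $(\wp_0,\wp_i,\wp_{i+1})$, where the codimension-one pair is $(\wp_i,\wp_{i+1})$, i.e.\ the pair \emph{not} adjacent in the product expression, so it invokes the natural symmetric variant of that Lemma (which holds by the same conormal-convolution argument, since Lemma~\ref{lem:OneArcChanged} only asks that two of the three Lagrangians meet in codimension one). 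You instead peel off $\alpha_{\wp_0,\wp_1}$, induct on the tail $\wp_1,\ldots,\wp_k$, and conclude via the triple $(\wp_0,\wp_1,\wp_k)$; this matches the stated form of Lemma~\ref{Lem:NonzeroProducts}(1) exactly as written, with $(\wp,\wp')=(\wp_0,\wp_1)$ the codimension-one pair. The price you pay is the subsequence claim, which you handle via the graph-metric observation that codimension equals distance and a subpath of a geodesic is geodesic; this is a correct and useful detail that the paper elides (it is essentially the content implicit in ``the interpolating sequence has minimal possible length''). Both routes are sound; yours is arguably the cleaner one to write out in full because it does not rely on a reflected version of Lemma~\ref{Lem:NonzeroProducts}(1).
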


\begin{proof}
The fact that the interpolating sequence has minimal possible length implies that 
\[
c(\wp,\wp_{i+1}) = c(\wp,\wp_i)-1
\] for each $i\geq 1$.  Then use Lemma \ref{Lem:NonzeroProducts}. 
\end{proof}

\begin{Lemma} \label{Lem:FormOfMinDegreeProduct}
For any $\wp, \wp', \wp''$, there is an identity
\begin{equation} \label{Eqn:UseCodim1}
\alpha_{\wp',\wp''} \cdot \alpha_{\wp,\wp'} = \pm \prod_j (v_{j_1} + v_{j_2}) \alpha_{\wp,\wp''}
\end{equation}
where each $(v_{j_1}+v_{j_2}) \in H^2(\scrY_n)$ is a sum of positive basis elements and is dual to an antidiagonal in $(S^2)^{2n}$ (the right side of \eqref{Eqn:UseCodim1} may vanish).
\end{Lemma}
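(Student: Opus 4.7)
The plan is to reduce to iterated applications of Lemma \ref{Lem:NonzeroProducts} by expressing $\alpha_{\wp',\wp''}$ as a product along a codimension-one interpolating sequence, and then peeling off the factors one at a time, always multiplying into the rightmost position where Lemma \ref{Lem:NonzeroProducts} directly applies. Concretely, fix an interpolating sequence $\wp' = \wp'_0, \wp'_1, \dots, \wp'_k = \wp''$ as in Lemma \ref{Lem:Codim1Interpolate}, in which consecutive matchings meet in codimension one and the codimensions $c(\wp',\wp'_i)$ are monotone. By Lemma \ref{Lem:EverythingFromMinimalDegree}, up to sign
\begin{equation}
\alpha_{\wp',\wp''} \;=\; \pm\, \alpha_{\wp'_{k-1},\wp'_k} \cdot \alpha_{\wp'_{k-2},\wp'_{k-1}} \cdots \alpha_{\wp'_0,\wp'_1}.
\end{equation}

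Using associativity, I would then prove by induction on $i$ the identity
\begin{equation}
\alpha_{\wp'_{i-1},\wp'_i} \cdots \alpha_{\wp'_0,\wp'_1} \cdot \alpha_{\wp,\wp'} \;=\; \pm\, c_i \cdot \alpha_{\wp,\wp'_i},
\end{equation}
where $c_i \in H^*(\scrY_n)$ is a product of classes of the form $(v_{j_1}+v_{j_2})$, each Poincar\'e dual to an antidiagonal $S^2_{j_1}\times S^2_{j_2}$ in the appropriate factor of $(S^2)^{2n}$ (the empty product is allowed). The base case $i=0$ is tautological with $c_0 = 1$. For the inductive step I left-multiply by $\alpha_{\wp'_i,\wp'_{i+1}}$: since $c_i$ comes from $H^*(\scrY_n)$, the natural map from ambient cohomology into the centre of the Fukaya category (used in the same way as in Section \ref{Sec:Purity}) lets me commute $c_i$ past $\alpha_{\wp'_i,\wp'_{i+1}}$, yielding
\begin{equation}
\pm\, c_i \cdot \bigl( \alpha_{\wp'_i,\wp'_{i+1}} \cdot \alpha_{\wp,\wp'_i} \bigr).
\end{equation}
Lemma \ref{Lem:NonzeroProducts} now applies directly to the triple $(\wp,\wp'_i,\wp'_{i+1})$ with $\wp'_i,\wp'_{i+1}$ meeting in codimension one, giving either $\pm\alpha_{\wp,\wp'_{i+1}}$ (when $c(\wp,\wp'_{i+1}) = c(\wp,\wp'_i)-1$) or $\pm(v_{j_1}+v_{j_2})\alpha_{\wp,\wp'_{i+1}}$ (when $c(\wp,\wp'_{i+1}) = c(\wp,\wp'_i)+1$); in either case we may take $c_{i+1} = c_i$ or $c_{i+1} = c_i\cdot(v_{j_1}+v_{j_2})$, completing the induction. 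The case $i=k$ gives the desired identity.

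The main point to verify carefully is the centrality step: that classes in $H^*(\scrY_n)$ coming from the open-closed map commute (up to the standard Koszul sign, which is trivial here because the $v_j$ live in even degree by Lemma \ref{Lem:Orient_even}) with Floer products between the $L_{\wp}$'s. This is standard but should be invoked explicitly, since without it the factors produced at successive stages cannot be collected into a single product of $(v_{j_1}+v_{j_2})$'s acting on $\alpha_{\wp,\wp''}$. Sign tracking is not delicate because the statement only asserts an identity up to $\pm$, and the description of each factor as Poincar\'e dual to an antidiagonal follows immediately from the geometric description in Lemma \ref{Lem:NonzeroProducts}(2) together with the identification of the intersection loci provided by Lemma \ref{Lem:IntersectionLocusIsCorrect}.
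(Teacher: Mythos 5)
Your proof is correct and follows essentially the same strategy as the paper's: decompose one of the two minimal-degree generators into a product of codimension-one generators via Lemma \ref{Lem:EverythingFromMinimalDegree}, then peel off factors one at a time using Lemma \ref{Lem:NonzeroProducts} together with centrality of the $H^*(\scrY_n)$-action. The only (cosmetic) difference is that you interpolate $\wp' \to \wp''$ whereas the paper interpolates $\wp \to \wp'$ and left-associates; you also usefully make explicit the centrality step that the paper's terse invocation of Lemma \ref{Lem:NonzeroProducts} leaves implicit.
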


\begin{proof}
Choose a sequence $\wp = \wp_0, \wp_1,\ldots, \wp_k = \wp'$ with 
where $k = n-c(\wp,\wp')$ as before. We now write 
\[
\alpha_{\wp',\wp''}\cdot\alpha_{\wp,\wp'} \ = \ \pm \prod_j \left(((\alpha_{\wp',\wp''}\alpha_{\wp_{k-1},\wp_k})\alpha_{\wp_{k-2},\wp_{k-1}})\cdots\alpha_{\wp_{0},\wp_{1}}\right)
\]
and again appeal to  Lemma \ref{Lem:NonzeroProducts}.
\end{proof}

\begin{Lemma} \label{Lem:PlaitMixNonzeroProducts}
For any $\wp,\wp'$, the products
\[
\alpha_{\wpb,\wp'} \cdot \alpha_{\wp,\wpb} \in HF^*(L_{\wp}, L_{\wp'})  \quad  \textrm{and} \quad  \alpha_{\mix,\wp'} \cdot \alpha_{\wp,\mix}  \in HF^*(L_{\wp}, L_{\wp'}) 
\]
are both non-zero.
\end{Lemma}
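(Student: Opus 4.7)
The plan is to combine Proposition \ref{Prop:hittopclass} with Corollary \ref{cor:HF_simple_over_center}: the former guarantees a top-degree (hence non-zero) class in the image of a certain multiplication map, and the latter reduces that image to the cyclic $H^*(\scrY_n)$-submodule generated by a single product, forcing that product to be non-zero.

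Concretely, the closed--open map $\CO \colon H^*(\scrY_n) \to HH^*(\scrF(\scrY_n))$ takes values in the centre of the Fukaya category, so every Floer group $HF^*(L_\wp, L_{\wp'})$ carries an $H^*(\scrY_n)$-action with respect to which the Floer product is bilinear. By Corollary \ref{cor:HF_simple_over_center}, every element of $HF^*(L_\wp, L_{\wpb})$ is of the form $v_a \cdot \alpha_{\wp,\wpb}$ and every element of $HF^*(L_{\wpb}, L_{\wp'})$ is of the form $v_b \cdot \alpha_{\wpb,\wp'}$, for some $v_a, v_b \in H^*(\scrY_n)$; bilinearity then gives
\[
(v_b\,\alpha_{\wpb,\wp'})\cdot(v_a\,\alpha_{\wp,\wpb}) \;=\; (v_a v_b)\cdot\bigl(\alpha_{\wpb,\wp'}\cdot\alpha_{\wp,\wpb}\bigr).
\]
Therefore the entire image of the multiplication map $HF^*(L_{\wpb}, L_{\wp'})\otimes HF^*(L_\wp, L_{\wpb}) \to HF^*(L_\wp, L_{\wp'})$ coincides with the cyclic submodule $H^*(\scrY_n)\cdot\bigl(\alpha_{\wpb,\wp'}\cdot\alpha_{\wp,\wpb}\bigr)$. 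Proposition \ref{Prop:hittopclass} says this image contains the top-degree class, which is non-zero; so $\alpha_{\wpb,\wp'}\cdot\alpha_{\wp,\wpb}$ itself cannot vanish.

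For the mix case I would run the identical argument with $\mix$ in place of $\wpb$, provided one knows an analog of Proposition \ref{Prop:hittopclass} asserting that the top-degree class of $HF^*(L_\wp, L_{\wp'})$ lies in the image of multiplication through $L_{\mix}$. The cleanest route to such an analog is to exploit the cyclic rotation of the $2n$ critical values: this is realised by a braid in $Br_{2n}$ sending the plait matching to the mix (up to isotopy of matchings), and since the braid group acts on $\scrF(\scrY_n)$ by autoequivalences, it transports the plait version of Proposition \ref{Prop:hittopclass} to a corresponding statement with $\mix$ as the distinguished intermediate matching. Alternatively, one can re-run the structural argument from the prequel \cite{AbSm} with $\mix$ playing the role of $\wpb$, using the iterated $A_2$- and plumbing models developed in Sections \ref{Sec:MeetInCodim2}--\ref{Sec:Basis}.

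The main subtlety is the mix case. In the braid-theoretic approach one must check that Proposition \ref{Prop:hittopclass} continues to hold when the distinguished intermediate matching is allowed to be an arbitrary (not necessarily upper half-plane) crossingless matching, which amounts to a mild extension of the cited result rather than a genuinely new statement. In the direct approach one must verify that the specific geometric input from \cite{AbSm} -- which exploits the factorised structure of $\wpb$ as a disjoint union of adjacent short arcs -- carries over to the nested configuration of $\mix$. Either route is essentially bookkeeping, but it is the one place in the argument where the symmetry between $\wpb$ and $\mix$ must be made explicit.
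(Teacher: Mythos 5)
Your argument is essentially correct and, for the $\mix$ case, lands on precisely the paper's mechanism: the paper degenerates the eigenvalues to the roots of unity, observes that rotation by $\exp(i\pi/n)$ exchanges the cyclic analogues of $\wpb$ and $\mix$, and transports the full statement of the lemma (for all $\wp,\wp'$) by this symplectomorphism together with Hamiltonian invariance. One small but useful observation is that it is cleaner, as the paper does, to transport the \emph{lemma} itself rather than Proposition~\ref{Prop:hittopclass}: the lemma is already quantified over all pairs of matchings, so rotating and then using Markov~I moves to return each rotated matching to the upper half-plane introduces no new statement to verify, whereas transporting Proposition~\ref{Prop:hittopclass} forces you to first formulate and justify a version with a non--half-plane intermediate, which is exactly the bookkeeping you flagged. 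For the $\wpb$ case, you reconstruct the conclusion from Proposition~\ref{Prop:hittopclass} together with Corollary~\ref{cor:HF_simple_over_center} and centrality of the image of $\CO$; the paper instead cites \cite[Corollary~5.19]{AbSm} directly. Your derivation is a perfectly valid alternative (and nicely displays why the top-class statement forces non-vanishing of the minimal-degree product), but it is logically downstream of the same inputs, so the two routes are equivalent in substance.
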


\begin{proof}
For $\wpb$, this  is exactly \cite[Corollary 5.19]{AbSm}.  To argue for $\mix$, consider for a moment the fibre $\scrY_n^{\mathbf{\mu}}$ of the adjoint quotient corresponding to placing the $2n$ critical values at the roots of unity.  There are  ``cyclic" analogues  $\wpb^{cyc}, \mix^{cyc}$ of $\wpb, \mix$ for this configuration, which define Lagrangian submanifolds of $\scrY_n^{\mathbf{\mu}}$ which are Hamiltonian isotopic to our usual crossingless matching Lagrangians via parallel transport in the family over configuration space $\Conf_{2n}(\bC)$ joining $\mu$ and $\{1,2,\ldots, 2n\}$.  The matchings $\wpb^{cyc}$ and $\mix^{cyc}$ are exchanged by the symplectomorphism of $\scrY_n^{\mathbf{\mu}}$  induced by cyclic rotation by $\exp(i\pi/n)$ in the base of the complex surface $A_{2n-1}$. That, and Hamiltonian invariance of the statement of the Lemma,  implies the corresponding non-vanishing for products involving $L_{\mix}$.
\end{proof}

\section{An inductive construction of positive bases}

\subsection{Positive pairs, triangles, and triples} Our aim is to construct a  basis of the symplectic arc algebra which is positive and preserved by cup functors, in the following sense.

\begin{Definition} \label{Defn:PositiveBasis}
A basis of $\scrH_n^{symp}$ is \emph{positive} if every product of minimal degree generators is zero or of the form given in \eqref{Eqn:UseCodim1} with a $+$ sign. We say a map of based vector spaces \emph{preserves bases} if it takes every basis element to zero or to a basis element.
\end{Definition}

Fix monomial bases of $HF^*(L_{\wp}, L_{\wp}) = H^*(L_{\wp})$ for each $\wp$ as in Corollary \ref{Cor:Same}. Such a basis determines a trace 
\begin{equation}
\tr \co H^{2n}(L_{\wp}) \to \bZ
\end{equation}
which is dual to the cohomology class $[L_{\wp} ] = v_{i_1} \cdot v_{i_2} \cdot \cdots \cdot v_{i_n}$.
\begin{lem} \label{lem:top_degree_product}
An $n$-fold product $  \prod_{j=1}^{n} (v_{j_1} + v_{j_2}) $ of sums of positive generators of $H^2(L_{\wp})$ either vanishes, or is a positive multiple of $ [L_{\wp} ] $.   \qed
\end{lem}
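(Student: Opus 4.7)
The plan is to expand the product and observe that each of the $2^n$ resulting monomials evaluates, under the K\"unneth decomposition $H^*(L_\wp) \cong H^*(S^2)^{\otimes n}$, to either $0$ or $+[L_\wp]$, so the sum is manifestly non-negative.

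First, recall from Corollary \ref{Cor:Same-1} that the basis $\{v_i\}_{1\leq i\leq 2n-1}$ of $H^2(\scrY_n;\bZ)$ has the property that each restriction $v_i|_{L_\wp} \in H^2(L_\wp)$ is either zero or a distinguished basis element of $H^2(L_\wp)$. Under the K\"unneth isomorphism $L_\wp \cong \prod_{k=1}^n V_k$, each $v_i|_{L_\wp}$ is either zero or the pullback of the generator of $H^2$ of exactly one factor $V_k$, where $V_k$ is the matching sphere associated to the $k$\th arc of $\wp$. The trace is normalised so that the product of the $n$ generators of $H^2(V_1),\ldots,H^2(V_n)$ (pulled back to $L_\wp$) equals $[L_\wp]$.

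Now expand
\[
\prod_{j=1}^{n} \bigl( v_{j_1} + v_{j_2} \bigr) = \sum_{\epsilon \in \{1,2\}^n} \prod_{j=1}^{n} v_{j_{\epsilon_j}}.
\]
Each summand is a product of $n$ classes in $H^2(L_\wp)$, each of which is either zero or a distinguished generator of one of the tensor factors $H^2(V_k)$. Since $H^*(V_k) = H^*(S^2)$ has vanishing square on its degree $2$ generator, a summand is nonzero in $H^{2n}(L_\wp)$ if and only if the $n$ restrictions are all nonzero and hit $n$ \emph{distinct} factors $V_1, \ldots, V_n$, one apiece; in that case the summand equals $+[L_\wp]$ (no sign appears, because all classes are even degree and the ordered product of the generators of the $H^2(V_k)$ is $+[L_\wp]$ by our normalisation).

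Therefore $\prod_{j=1}^n (v_{j_1}+v_{j_2})$ equals $N \cdot [L_\wp]$, where $N$ is the number of $\epsilon \in \{1,2\}^n$ for which the corresponding monomial survives. Either $N=0$, in which case the product vanishes, or $N>0$, in which case the product is a positive multiple of $[L_\wp]$. There is no obstacle beyond carefully invoking the K\"unneth identification and Corollary \ref{Cor:Same-1}; in particular, no sign cancellations can occur, since every surviving monomial contributes with the same sign.
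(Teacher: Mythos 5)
Your argument is correct, and it is the expansion/K\"unneth argument the paper treats as immediate (the lemma is stated with \qed\ and no proof). Each of the $2^n$ monomials in the expansion is a product of $n$ distinguished degree-two generators, hence vanishes unless the $n$ factors $H^2(V_k)$ are each hit exactly once, in which case the monomial equals $+[L_\wp]$ by the normalisation of the trace; so the sum is a non-negative multiple of $[L_\wp]$.
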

The cohomological shadow of the fact that the Fukaya category is cyclic is the statement that the trace is symmetric, i.e that we have
\begin{equation} \label{eq:trace_cyclic}
\tr( \alpha \cdot \beta) = \tr(\beta \cdot \alpha)
\end{equation}
whenever $\alpha \otimes \beta \in HF^*(L,L') \otimes HF^*(L',L)$, cf. \cite[Section 12(e)]{FCPLT}. Note that there is no sign in the above formula, because all morphisms have even degree according to the grading convention fixed in Definition \ref{Def:Depth}.

For any $\{\wp,\wp'\}$, $HF^*(L_{\wp}, L_{\wp'})$ is a quotient of both $HF^*(L_{\wp}, L_{\wp})$ and $HF^*(L_{\wp'}, L_{\wp'})$ via the natural module structures. A choice of minimal degree generator $\alpha_{\wp,\wp'}$ yields a basis for $HF^*(L_{\wp}, L_{\wp'})$ by multiplication by elements of $H^*(L_{\wp})$ on the right or by elements of $H^*(L_{\wp'})$ on the left; these bases agree, since in both cases we can re-interpret the multiplication as coming from an element of $H^*(\scrY_n)$, which acts centrally.  In line with Definition \ref{Defn:PositiveBasis}, we will more generally say that  a product of  minimal degree generators is ``positive" (or is  ``positive with respect to the minimal degree generator") if it has the form of \eqref{Eqn:UseCodim1} with a $+$ sign. 
We say that a pair $(\wp,\wp')$ is positive if we have chosen  minimal degree generators $\alpha_{\wp,\wp'}$ and $\alpha_{\wp',\wp}$ of the Floer groups between them so that any of the conditions in the following result hold:
\begin{lem} \label{lem:positive_basis_pair}
The following are equivalent:
\begin{enumerate}
\item There are positive basis elements $v_{j_k} \in H^2(\scrY_n)$  such that $\prod_{j} (v_{j_1} + v_{j_2}) \alpha_{\wp',\wp} \cdot \alpha_{\wp,\wp'} $ is a positive multiple of the fundamental class $ [L_{\wp} ]  $.
\item There are positive basis elements $v_{j_k} \in H^2(\scrY_n)$  such that $\prod_{j} (v_{j_1} + v_{j_2}) \alpha_{\wp,\wp'} \cdot \alpha_{\wp',\wp}  $ is a positive multiple of the fundamental class $ [L_{\wp'} ]  $.
\item The products $\alpha_{\wp,\wp'} \cdot \alpha_{\wp',\wp} $ and $\alpha_{\wp',\wp} \cdot \alpha_{\wp,\wp'}   $ are positive.
\end{enumerate}
\end{lem}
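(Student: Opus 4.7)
The plan is to derive all three equivalences from two ingredients: the explicit form of the relevant Floer products given by Lemma \ref{Lem:FormOfMinDegreeProduct} together with the non-negativity of top-degree products from Lemma \ref{lem:top_degree_product}, and the Calabi--Yau cyclicity identity \eqref{eq:trace_cyclic} combined with the centrality of $H^{*}(\scrY_n)$ in the Fukaya category coming from the open--closed image in Hochschild cohomology.

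First I would apply Lemma \ref{Lem:FormOfMinDegreeProduct} to the triples $(\wp,\wp',\wp)$ and $(\wp',\wp,\wp')$ to obtain
\begin{equation*}
P := \alpha_{\wp',\wp} \cdot \alpha_{\wp,\wp'} = \epsilon_1 \prod_i (v_{a_i}+v_{b_i}) \in H^{*}(L_\wp), \quad Q := \alpha_{\wp,\wp'} \cdot \alpha_{\wp',\wp} = \epsilon_2 \prod_j (v_{c_j}+v_{d_j}) \in H^{*}(L_{\wp'}),
\end{equation*}
with $\epsilon_1, \epsilon_2 \in \{\pm 1\}$; condition (3) is precisely $\epsilon_1 = \epsilon_2 = +1$. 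For any additional $X = \prod_k (v_{l_{k,1}}+v_{l_{k,2}}) \in H^{*}(\scrY_n)$ of positive-sum form, Lemma \ref{lem:top_degree_product} asserts that $X|_{L_\wp} \cdot P$ is either zero or a positive integer multiple of $\epsilon_1 [L_\wp]$, and similarly for $X|_{L_{\wp'}} \cdot Q$. It follows immediately that condition (1) forces $\epsilon_1 = +1$ and condition (2) forces $\epsilon_2 = +1$, giving one half of (3) in each case.

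The crux is then the equivalence (1) $\Leftrightarrow$ (2), which I would obtain from the trace cyclicity. Centrality of the restrictions from $H^{*}(\scrY_n)$ in the Fukaya category gives $\alpha_{\wp,\wp'} \cdot X|_{L_\wp} = X|_{L_{\wp'}} \cdot \alpha_{\wp,\wp'}$ in $HF^{*}(L_\wp, L_{\wp'})$, so that
\begin{equation*}
\tr_\wp(X|_{L_\wp}\cdot P) = \tr_\wp(X|_{L_\wp}\cdot \alpha_{\wp',\wp}\cdot\alpha_{\wp,\wp'}) = \tr_{\wp'}(\alpha_{\wp,\wp'}\cdot X|_{L_\wp}\cdot \alpha_{\wp',\wp}) = \tr_{\wp'}(X|_{L_{\wp'}}\cdot Q),
\end{equation*}
the middle equality being \eqref{eq:trace_cyclic}. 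A top-degree class of the shape $X|_{L_\wp}\cdot P$ is a positive multiple of $[L_\wp]$ precisely when its trace is positive (and similarly for $Q$), so the displayed identity witnesses (1) and (2) with the \emph{same} choice of $X$. This yields (1) $\Leftrightarrow$ (2), and together with the preceding sign analysis it completes the implications (1) $\Rightarrow$ (3) and (2) $\Rightarrow$ (3).

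It remains, under the assumption $\epsilon_1 = \epsilon_2 = +1$ with $P \neq 0$, to produce a witnessing $X$ for condition (1). Expanding $P$ in the distinguished monomial basis of $H^{*}(L_\wp) \cong H^{*}((S^2)^n)$ afforded by Corollary \ref{Cor:Same}, some basis monomial $v_I$ has nonzero coefficient, and its Poincar\'e dual monomial $v_{I^c}$ multiplies $P$ into a nonzero multiple of $[L_\wp]$. Since the restriction $H^2(\scrY_n) \to H^2(L_\wp)$ has $(n-1)$-dimensional kernel, each factor $v_j$ of $v_{I^c}$ is the restriction of an admissible pair $(v_{l_1}+v_{l_2}) \in H^2(\scrY_n)$ (choosing $v_{l_1}$ restricting to $v_j$ and $v_{l_2}$ to zero), and the product of these pairs gives the required $X$. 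The main obstacle in the argument is verifying that the pairs produced this way are of the particular ``sum of positive basis elements dual to an antidiagonal in $(S^2)^{2n}$'' form appearing in Lemma \ref{Lem:FormOfMinDegreeProduct}; this should follow from the redundancy in the $(2n-1)$ generators of $H^2(\scrY_n)$ restricting onto the $n$-dimensional $H^2(L_\wp)$.
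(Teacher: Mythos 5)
Your proof follows the same route as the paper: the equivalence of (1) and (2) comes from the trace cyclicity identity \eqref{eq:trace_cyclic} combined with centrality of $H^*(\scrY_n)$, and the equivalence with (3) from Lemma \ref{lem:top_degree_product} together with cyclicity of the module structure. The paper compresses all of this into three sentences, but the ingredients are identical.

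There is one wrinkle in your witness construction for (3) $\Rightarrow$ (1). You propose to lift each factor $v_j$ of $v_{I^c}$ to a pair $(v_{l_1}+v_{l_2})\in H^2(\scrY_n)$ by choosing $v_{l_1}$ restricting to $v_j$ and $v_{l_2}$ restricting to zero, and you justify the existence of such a $v_{l_2}$ by the fact that the restriction $H^2(\scrY_n)\to H^2(L_\wp)$ has $(n-1)$-dimensional kernel. But the kernel being large does not mean it contains a positive basis element, and indeed for $L_\wp$ it does not: each $v_i$ for $1\leq i \leq 2n-1$ restricts to the generator of $H^2$ of the sphere factor corresponding to the arc of $\wp$ containing $i$, so every positive basis element of $H^2(\scrY_n)$ restricts non-trivially to $H^2(L_\wp)$ (the kernel is spanned by differences $v_a - v_b$ over arcs $(a,b)$, not by basis elements). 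The fix is straightforward: for an arc $(a,b)$ contributing the factor $v_j$, take $v_{l_1}=v_a$ and $v_{l_2}=v_b$, so $(v_{l_1}+v_{l_2})|_{L_\wp}=2v_j$; the product $\prod_k(v_{l_{k,1}}+v_{l_{k,2}})$ then equals $2^{|I^c|}v_{I^c}$ upon restriction, which still gives the required positive multiple of $[L_\wp]$. Separately, the "obstacle" flagged in your last sentence — that the pairs must be dual to antidiagonals — is not actually a requirement of the lemma; conditions (1)–(3) only ask that the $v_{j_k}$ be positive basis elements, so any lift works once the restriction is as desired.
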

\begin{proof}
Pick $v= \prod_{j} (v_{j_1} + v_{j_2})  $ so that $v\cdot \alpha_{\wp,\wp'} $ is a top-degree generator of  $HF^*(L_{\wp}, L_{\wp'})$.  Since all Floer morphisms that we are considering have even degree, the equivalence of the first two statements follows from the fact that the Poincar\'e duality pairing introduces no signs, cf. \eqref{eq:trace_cyclic}.  The final part follows from cyclicity of the module structure. \end{proof}

We now consider three matchings $\{ \wp_i \}_{i=0,1,2}$, and choices of minimal degree generators $ \alpha_{\wp_i,\wp_{i+1}}$, where the index is cyclic (i.e. considered modulo $3$).
\begin{Lemma} \label{lem:positive-triples-cyclic} The following conditions are equivalent:
\begin{enumerate}
  \item The triple product $ \alpha_{\wp_1,\wp_2} \cdot \alpha_{\wp_0,\wp_1}  \cdot \alpha_{\wp_2,\wp_0}  $ is positive. 
\item The three possible triple  products involving  $ \alpha_{\wp_1,\wp_2}$, $ \alpha_{\wp_0,\wp_1}$ and $ \alpha_{\wp_2,\wp_0} $ are positive.
  \end{enumerate}
\end{Lemma}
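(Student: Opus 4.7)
The implication $(2) \Rightarrow (1)$ is immediate. For $(1) \Rightarrow (2)$, write the three cyclic triple products as
\begin{equation*}
P_j \;=\; \alpha_{\wp_{j-1},\wp_j} \cdot \alpha_{\wp_{j-2},\wp_{j-1}} \cdot \alpha_{\wp_j,\wp_{j-2}} \;\in\; HF^*(L_{\wp_j},L_{\wp_j}) = H^*(L_{\wp_j}),
\end{equation*}
with indices taken modulo $3$. Applying Lemma \ref{Lem:FormOfMinDegreeProduct} twice (once to the inner pair $\alpha_{\wp_{j-2},\wp_{j-1}} \cdot \alpha_{\wp_j,\wp_{j-2}}$, then to the resulting product with $\alpha_{\wp_{j-1},\wp_j}$) expresses $P_j = \epsilon_j Y_j$, where $\epsilon_j \in \{\pm 1\}$ and $Y_j \in H^*(L_{\wp_j})$ is a product of sums of positive basis elements; positivity of $P_j$ in the sense of Definition \ref{Defn:PositiveBasis} is precisely the statement $\epsilon_j = +1$ (with the convention that $P_j = 0$ is positive).

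The key tool is cyclicity of the trace \eqref{eq:trace_cyclic}: combined with centrality of the action of $H^*(\scrY_n)$ on the Fukaya category, one obtains
\begin{equation*}
\tr(w \cdot P_0) \;=\; \tr(w \cdot P_1) \;=\; \tr(w \cdot P_2) \qquad \text{for every } w \in H^*(\scrY_n).
\end{equation*}
Assume first that $P_2 \neq 0$, so $Y_2 \neq 0$. Since $H^*(L_{\wp_2})$ is generated in degree $2$ by restrictions of the positive basis elements of $H^2(\scrY_n)$ (Corollary \ref{Cor:Same}), we can choose $w$ to be a product of sums of such positive basis elements for which $w \cdot Y_2$ lies in top degree and is nonzero; by Lemma \ref{lem:top_degree_product}, $w \cdot P_2 = \epsilon_2 c_2 [L_{\wp_2}]$ for some $c_2 > 0$, giving $\tr(w \cdot P_2) = \epsilon_2 c_2 \neq 0$. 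The trace identity then forces $\tr(w \cdot P_1) \neq 0$, and in particular $w \cdot P_1 \neq 0$; but $w \cdot P_1 = \epsilon_1 \cdot w \cdot Y_1$ is (up to the sign $\epsilon_1$) a product of sums of positive basis elements in $H^*(L_{\wp_1})$, so Lemma \ref{lem:top_degree_product} again gives $w \cdot P_1 = \epsilon_1 c_1 [L_{\wp_1}]$ with $c_1 > 0$. Equating traces yields $\epsilon_1 c_1 = \epsilon_2 c_2 > 0$, hence $\epsilon_1 = \epsilon_2$. The same argument applied to $P_0$ gives $\epsilon_0 = \epsilon_2$, establishing $(1) \Leftrightarrow (2)$ in this case.

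The remaining case $P_2 = 0$ follows a similar idea: the trace identity gives $\tr(w \cdot P_1) = 0$ for all $w \in H^*(\scrY_n)$, and the surjectivity of the restriction $H^*(\scrY_n) \to H^*(L_{\wp_1})$ together with non-degeneracy of the Poincar\'e pairing on $H^*(L_{\wp_1})$ force $P_1 = 0$; the argument for $P_0$ is identical. The main substantive obstacle is the first step---the iterated application of Lemma \ref{Lem:FormOfMinDegreeProduct} to rewrite $P_j$ as a signed product of sums of positive generators---together with the necessity to choose the single central element $w \in H^*(\scrY_n)$ that detects the sign $\epsilon_2$ via the top class of $H^*(L_{\wp_2})$ and simultaneously, via cyclicity, detects $\epsilon_1$ and $\epsilon_0$ on the other two Lagrangians; both are afforded by the structural results already in place.
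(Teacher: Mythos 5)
Your proof is correct and follows the same essential strategy as the paper: express each cyclic triple product as a signed product of sums of positive generators (via Lemma \ref{Lem:FormOfMinDegreeProduct}), pair with a suitable central class $w\in H^*(\scrY_n)$ to hit the top degree, and invoke cyclicity of the trace \eqref{eq:trace_cyclic} together with centrality to equate the resulting signs. The paper's proof is a compressed version of exactly this; your write-up is more explicit, and in particular you handle the degenerate case $P_j=0$ (which the paper silently absorbs) by combining surjectivity of the restriction $H^*(\scrY_n)\to H^*(L_{\wp_j})$ with non-degeneracy of Poincar\'e duality.
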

\begin{proof}
Consider a product $\prod_{j} (v_j + v'_j)$  of sums of positive generators so that 
\begin{equation}
\prod_{j} (v_{j} + v'_{j})  \alpha_{\wp_1,\wp_2} \cdot \alpha_{\wp_0,\wp_1}  \cdot \alpha_{\wp_2,\wp_0} 
\end{equation}
is a positive multiple of the top degree generator of the cohomology of $ L_{\wp_2}$. Associativity and cyclicity of the trace imply that the product in a different order (but with the same cyclic order) is a positive multiple of a top degree generator of the cohomology of $ L_{\wp_0}$ or $L_{\wp_1}  $. 
\end{proof}
We say that $\{ \wp_i \}_{i=0,1,2}$ form a \emph{positive triangle} if we have fixed minimal degree generators $\alpha_{\wp_i,\wp_j} $ so that the conditions of Lemma \ref{lem:positive-triples-cyclic} hold in either cyclic ordering. There is no obstruction to a triangle being positive: given two of the three morphisms, we can pick the third so that all cyclic products are positive, and the choices for the two cyclic orderings are independent.  
\begin{defn}
The matchings $( \wp_0,\wp_1,\wp_2)$ form a \emph{positive triple} if the corresponding triangle as well as all pairs are positive.
\end{defn}

\begin{lem} \label{lem:positive-triples}
If $ (\wp_0,\wp_1,\wp_2) $ form a positive triangle, two of the pairs are positive, and there are three non-cyclically ordered generators, involving all three Lagrangians, whose product does not vanish, then $ (\wp_0,\wp_1,\wp_2)$ form a positive triple. 
\end{lem}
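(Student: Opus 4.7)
The plan is to convert all positivity hypotheses into sign equations for the pair products supplied by Lemma \ref{Lem:FormOfMinDegreeProduct}, and then close the system using non-vanishing of the given non-cyclic triple. For each ordered triple $(i,j,k)$ with $i\ne j$ and $j\ne k$ write
\begin{equation*}
\alpha_{\wp_j,\wp_k}\cdot\alpha_{\wp_i,\wp_j}=\sigma(i,j,k)\,P_{i,j,k}\,\alpha_{\wp_i,\wp_k},\qquad\sigma(i,j,k)\in\{\pm 1\},
\end{equation*}
where $P_{i,j,k}\in H^*(\scrY_n)$ is a non-negative integer combination of monomials in the positive basis of Corollary \ref{Cor:Same}; set $S_{\{i,j\}}:=\sigma(i,j,i)$, so that positivity of the pair $(\wp_i,\wp_j)$ is precisely $S_{\{i,j\}}=+1$ (Lemma \ref{lem:positive_basis_pair}). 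By the symmetry of the hypotheses we assume that the two positive pairs are $(\wp_0,\wp_1)$ and $(\wp_1,\wp_2)$, so $S_{\{0,1\}}=S_{\{1,2\}}=+1$, and we must show $S_{\{0,2\}}=+1$.

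Expanding each of the two positive cyclic triple products in both possible bracketings, in each of the three cyclic permutations (all of which are positive by Lemma \ref{lem:positive-triples-cyclic}), yields the identities
\begin{equation*}
\sigma(2,0,1)=\sigma(1,2,0)=\sigma(0,2,1)=\sigma(1,0,2)=+1,\qquad\sigma(0,1,2)=\sigma(2,1,0)=S_{\{0,2\}}.
\end{equation*}
Each bracketing couples one $\sigma$-sign with one $S$-sign so that their product equals $+1$; the known values $S_{\{0,1\}}=S_{\{1,2\}}=+1$ immediately pin the four ``mixed'' $\sigma$-signs to $+1$, while the unknown pair $(\wp_0,\wp_2)$ enters only through the two remaining equations, forcing $\sigma(0,1,2)=\sigma(2,1,0)=S_{\{0,2\}}$. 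At this stage every sign in sight has been fixed except for the common value $S_{\{0,2\}}$.

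It remains to use non-vanishing of the non-cyclic triple. Up to the symmetries of the indices (which merely relabel the $\sigma$-signs), it suffices to consider $T=\alpha_{\wp_0,\wp_2}\cdot\alpha_{\wp_1,\wp_0}\cdot\alpha_{\wp_0,\wp_1}\in HF^*(L_{\wp_0},L_{\wp_2})$; the other five shapes of non-cyclic triple are treated by identical bracketings and the same sign data. Associativity of the Floer product gives
\begin{align*}
T&=\alpha_{\wp_0,\wp_2}\cdot(\alpha_{\wp_1,\wp_0}\cdot\alpha_{\wp_0,\wp_1})=S_{\{0,1\}}\,Q\,\alpha_{\wp_0,\wp_2}=+\,Q\,\alpha_{\wp_0,\wp_2},\\
T&=(\alpha_{\wp_0,\wp_2}\cdot\alpha_{\wp_1,\wp_0})\cdot\alpha_{\wp_0,\wp_1}=\sigma(1,0,2)\,\sigma(0,1,2)\,R\,\alpha_{\wp_0,\wp_2}=\sigma(0,1,2)\,R\,\alpha_{\wp_0,\wp_2},
\end{align*}
where $Q,R\in H^*(\scrY_n)$ are non-negative combinations of monomials in the positive basis and act centrally. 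By the construction of the positive basis for $HF^*(L_{\wp_0},L_{\wp_2})$ in Section \ref{Sec:Basis} (compare Corollary \ref{Cor:Same}), both $Q\,\alpha_{\wp_0,\wp_2}$ and $R\,\alpha_{\wp_0,\wp_2}$ are non-negative integer combinations of the positive basis vectors, and they are nonzero because $T\neq 0$. The equation $+\,Q\,\alpha_{\wp_0,\wp_2}=\sigma(0,1,2)\,R\,\alpha_{\wp_0,\wp_2}$ between two nonzero non-negative combinations therefore forces $\sigma(0,1,2)=+1$, so $S_{\{0,2\}}=+1$ and the pair $(\wp_0,\wp_2)$ is positive by Lemma \ref{lem:positive_basis_pair}. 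The main technical point is the case distinction alluded to at the start of this paragraph: direct enumeration of the six non-cyclic shapes shows that in each, one bracketing involves only $\sigma$-signs already fixed to $+1$ (together perhaps with a known positive pair), while the other bracketing contributes exactly one factor of $S_{\{0,2\}}$, $\sigma(0,1,2)$ or $\sigma(2,1,0)$, so non-vanishing of the triple forces $S_{\{0,2\}}=+1$ in every case.
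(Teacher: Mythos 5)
Your proof takes essentially the same route as the paper's: associativity of the Floer product, the positive-triangle and positive-pair hypotheses, and the given non-vanishing product, combined with the fact that a product of positive basis classes against a minimal degree generator is a non-negative (and if non-zero, therefore sign-determinate) combination of basis vectors. The systematic $\sigma(i,j,k)$ bookkeeping is a cleaner way to organise the same case analysis, and in the end the computation for your chosen $T$ reproduces the paper's Case 1 argument in disguise.

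There is one point you slide over which the paper handles more carefully. A sign $\sigma(i,j,k)$ can only be pinned down to $+1$ from triangle positivity when the relevant \emph{cyclic} triple product is itself non-zero; positivity of a vanishing product is vacuous (the product is ``positive'' by convention but carries no sign information). For your specific $T=\alpha_{\wp_0,\wp_2}\cdot\alpha_{\wp_1,\wp_0}\cdot\alpha_{\wp_0,\wp_1}$, one has to show that the cyclic products in \emph{both} cyclic orderings ($1\to0\to2\to1$, needed for $\sigma(1,0,2)$, and $0\to1\to2\to0$, needed for $\sigma(0,1,2)=S_{\{0,2\}}$) are non-zero. That does follow from $T\neq 0$, but only after an application of Poincar\'e duality / cyclicity of the trace to pass from the given triple to a non-vanishing quadruple product and then factor it in the right way—this is precisely what the paper's Case 2 does explicitly. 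Your one-line claim that ``the other five shapes of non-cyclic triple are treated by identical bracketings'' therefore papers over a genuine case distinction: in some shapes both cyclic orders are forced non-zero by direct factoring of $T$, while in others one must route through duality, and the proof sketch should flag this. As written, the argument is morally correct and matches the paper's strategy, but the non-vanishing bookkeeping needs to be made as explicit as the sign bookkeeping.
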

\begin{proof}
By relabelling the matchings and using cyclic symmetry, we may assume that the pairs $(\wp_0, \wp_1)$ and $(\wp_0, \wp_2)$ are positive, that the triangle is positive, and that either  (1) $ \alpha_{\wp_2,\wp_0} \cdot \alpha_{\wp_1,\wp_2}  \cdot \alpha_{\wp_2,\wp_1}   $   or (2) $  \alpha_{\wp_2,\wp_1} \cdot \alpha_{\wp_0,\wp_2}  \cdot \alpha_{\wp_2,\wp_0}  $  is non-zero.   We must show in either case that the pair $ (\wp_1, \wp_2) $ is positive.

{\bf Case 1:} By associativity, we have 
\begin{equation}
   \alpha_{\wp_2,\wp_0} \cdot \left( \alpha_{\wp_1,\wp_2}  \cdot \alpha_{\wp_2,\wp_1} \right)    =   \left( \alpha_{\wp_2,\wp_0} \cdot \alpha_{\wp_1,\wp_2} \right)  \cdot \alpha_{\wp_2,\wp_1}.
\end{equation}
Since the pair $(\wp_0, \wp_1)$ is positive, positivity of the triangle implies that the product $  \alpha_{\wp_2,\wp_0} \cdot \alpha_{\wp_1,\wp_2} $ is  positive with respect to $\alpha_{\wp_1, \wp_0}$ (i.e. the product of a positive basis element of $H^*(\scrY_n)$ with this class). Since the pair $(\wp_0, \wp_2)$ is positive, positivity of the triangle further implies that the product of this class with $ \alpha_{\wp_2,\wp_1}  $ is positive with respect to $\alpha_{\wp_2, \wp_0} $. Equating the left and right hand sides above, we conclude that the product $ \alpha_{\wp_1,\wp_2}  \cdot \alpha_{\wp_2,\wp_1}  $ is positive.

{\bf Case 2:} By Poincar\'e duality, our assumption implies that we have a non-zero product
\begin{equation}
 \left(    \alpha_{\wp_1,\wp_2} \cdot  \alpha_{\wp_2,\wp_1} \right) \cdot \left( \alpha_{\wp_0,\wp_2}  \cdot \alpha_{\wp_2,\wp_0} \right) =    \alpha_{\wp_1,\wp_2} \cdot  \left( \alpha_{\wp_2,\wp_1}  \cdot  \alpha_{\wp_0,\wp_2} \right) \cdot \alpha_{\wp_2,\wp_0}.
\end{equation}
Using positivity of the triangle under both cyclic orderings, we see that the right hand side is positive as in the previous case. Since the pair  $(\wp_0, \wp_2)$ is positive, the expression $  \alpha_{\wp_2,\wp_1} \cdot \alpha_{\wp_0,\wp_2}  \cdot \alpha_{\wp_2,\wp_0}  $ is positive with respect to $\alpha_{\wp_2,\wp_1}   $. We conclude that $ \alpha_{\wp_1,\wp_2}  \cdot \alpha_{\wp_2,\wp_1}  $ is positive.
\end{proof}



 \begin{cor} \label{cor:positivity_triple_codim_1}
 Assume that $(\wp_0, \wp_1, \wp_2)$ form a positive triangle, and two of $(\wp_0, \wp_1, \wp_2)$ meet in codimension one. Given positive bases for two pairs of matchings  $(\wp_i, \wp_j) $, there are unique bases for the remaining pair which yields a positive triple. 
 \end{cor}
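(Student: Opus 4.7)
The plan is to reduce the statement to Lemma~\ref{lem:positive-triples}, supplying its final hypothesis (non-vanishing of a non-cyclic triple) using the codimension-one assumption.

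For uniqueness of the remaining bases, I would argue that the two cyclic triple products from the positive triangle structure each impose an independent sign constraint on one of the two remaining generators, given the four generators already fixed by the two positive pairs.  As remarked after Lemma~\ref{lem:positive-triples-cyclic}, the two cyclic orderings constrain the two remaining generators independently, so the bases of the remaining pair are pinned down.

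For existence, with the generators so determined, positivity of the remaining pair reduces by Lemma~\ref{lem:positive-triples} to producing a non-vanishing non-cyclic triple product of minimal generators involving all three Lagrangians.  Let $\wp_i, \wp_j$ denote the codimension-one pair and $\wp_k$ the third matching.  By Lemma~\ref{Lem:NonzeroProducts}, any two-fold product of minimal generators whose inner factor is $\alpha_{\wp_i, \wp_j}$ or $\alpha_{\wp_j, \wp_i}$ is non-zero and equals a positive center element (either $\pm 1$ or $\pm(v_{i_a}+v_{i_b})$) times a minimal generator of the target Floer group.  Post-composing with a third minimal generator and using associativity, I would assemble a non-cyclic triple of one of the two forms appearing in the proof of Lemma~\ref{lem:positive-triples}; its non-vanishing then reduces to showing that the resulting central action does not annihilate the target minimal generator, with signs controlled by positivity of whichever of the two given pairs is available.

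The main obstacle is verifying this last non-vanishing of the central action.  It is controlled by the explicit description of $HF^*(L_{\wp}, L_{\wp'})$ as the cohomology of an iterated antidiagonal in a product of spheres (Lemma~\ref{Lem:IntersectionLocusIsCorrect}) and by the cyclic $H^*(\scrY_n)$-module structure of $HF^*$ (Corollary~\ref{cor:HF_simple_over_center}): the codimension-one hypothesis pins down the specific basis elements that arise in Lemma~\ref{Lem:NonzeroProducts}(2), and the corresponding sum $v_{i_a}+v_{i_b}$ acts non-trivially on the target minimal generator via restriction to the relevant clean-intersection locus.  Once this non-vanishing is established, Lemma~\ref{lem:positive-triples} applies and completes the proof of existence.
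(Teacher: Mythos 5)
Your overall strategy is the same as the paper's: reduce to Lemma~\ref{lem:positive-triples} and supply its non-vanishing hypothesis from the codimension-one condition via Lemma~\ref{Lem:NonzeroProducts}. The uniqueness part is fine and is exactly the observation recorded after Lemma~\ref{lem:positive-triples-cyclic}.

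The one place where your write-up is weaker than it needs to be is the ``main obstacle'' you flag at the end — the non-vanishing of the central multiplication. You do not actually need to argue separately that $(v_{i_a}+v_{i_b})$ acts non-trivially on a target minimal generator, because by associativity you can re-parenthesize so that Lemma~\ref{Lem:NonzeroProducts}(2), which \emph{already asserts} non-vanishing as part of its conclusion, gives you the answer directly. Concretely: write $\{\wp_a,\wp_b\}$ for the codimension-one pair, $\wp_c$ for the third matching, and relabel so that $c(\wp_a,\wp_c)=c(\wp_b,\wp_c)-1$. Consider the non-cyclic product $\alpha_{\wp_b,\wp_c}\cdot\alpha_{\wp_a,\wp_b}\cdot\alpha_{\wp_b,\wp_a}$, which involves all three matchings. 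Grouping as $\bigl(\alpha_{\wp_b,\wp_c}\cdot\alpha_{\wp_a,\wp_b}\bigr)\cdot\alpha_{\wp_b,\wp_a}$, the inner product equals $\pm\alpha_{\wp_a,\wp_c}$ by Lemma~\ref{Lem:NonzeroProducts}(1), and the outer product $\alpha_{\wp_a,\wp_c}\cdot\alpha_{\wp_b,\wp_a}=\pm(v_{i_j}+v_{i_k})\alpha_{\wp_b,\wp_c}\neq 0$ by Lemma~\ref{Lem:NonzeroProducts}(2). No further analysis of restriction maps, iterated antidiagonals, or clean-intersection loci is needed; the non-vanishing is built into the Lemma. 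Your instinct that the geometry controls the central action is correct (it is how Lemma~\ref{Lem:NonzeroProducts}(2) is proved), but invoking it again at this point duplicates work that the paper has already packaged into the $\neq 0$ of that Lemma's statement.
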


\begin{proof}
This follows on combining  Lemma \ref{Lem:NonzeroProducts}, which yields a suitable non-vanishing product,  with the previous result.
\end{proof}

To ease notation, in the next Lemma we write $\alpha_{ij}$ for $\alpha_{\wp_i,\wp_j}$. 

\begin{Lemma} \label{Lem:ConormalTripleProducts} Let $\wp_0,\wp_1,\wp_2, \wp_3$ be matchings. If the triples containing $ (\wp_0, \wp_3) $  are positive and the triangle $(\wp_0, \wp_1, \wp_2) $ is positive, then the remaining triangle $(\wp_1, \wp_2, \wp_3)$ is positive whenever the cyclic products $ \alpha_{01}  \cdot \alpha_{30} \cdot \alpha_{23} \cdot \alpha_{12} $ and $ \alpha_{03} \cdot \alpha_{10} \cdot \alpha_{21} \cdot \alpha_{32}   $  do not vanish.
\end{Lemma}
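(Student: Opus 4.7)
The plan is to use the non-vanishing of the two cyclic $4$-fold products to pin down the unknown signs in $\alpha_{23}\cdot\alpha_{12}$ and $\alpha_{21}\cdot\alpha_{32}$; by Lemma~\ref{lem:positive-triples-cyclic}, this will suffice to conclude that $(\wp_1,\wp_2,\wp_3)$ forms a positive triangle.

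First I would compute the non-vanishing product $P=\alpha_{01}\cdot\alpha_{30}\cdot\alpha_{23}\cdot\alpha_{12}\in H^{*}(L_1)$ in two associations. Using the first, $\alpha_{01}\cdot(\alpha_{30}\cdot\alpha_{23})\cdot\alpha_{12}$, the positive triple $(\wp_0,\wp_2,\wp_3)$ gives $\alpha_{30}\cdot\alpha_{23}=c\,v\,\alpha_{20}$ with $c>0$ and $v$ a product of sums of positive basis elements in $H^{*}(\scrY_n)$; pulling $v$ out centrally and invoking the positive triangle $(\wp_0,\wp_1,\wp_2)$ to make $\alpha_{01}\cdot\alpha_{20}\cdot\alpha_{12}$ a positive multiple of $[L_1]$, I obtain $P$ as a non-zero positive multiple of $v\cdot[L_1]$. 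Reassociating as $(\alpha_{01}\cdot\alpha_{30})\cdot(\alpha_{23}\cdot\alpha_{12})$, the positive triangle $(\wp_0,\wp_1,\wp_3)$ gives $\alpha_{01}\cdot\alpha_{30}=c_A v_A\alpha_{31}$; Lemma~\ref{Lem:FormOfMinDegreeProduct} writes $\alpha_{23}\cdot\alpha_{12}=\epsilon\,v_B\alpha_{13}$ for some $\epsilon\in\{\pm1\}$; and positivity of the pair $(\wp_1,\wp_3)$ writes $\alpha_{31}\cdot\alpha_{13}=v_C\cdot 1$ with $v_C$ again a product of sums of positive basis elements. After collecting the central factors, $P=\epsilon\, c_A\, v_A v_B v_C$, and Lemma~\ref{lem:top_degree_product} identifies the non-vanishing product $v_A v_B v_C$ with a positive multiple of $[L_1]$. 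Comparing with the first computation forces $\epsilon=+1$, so $\alpha_{23}\cdot\alpha_{12}$ is positive with respect to $\alpha_{13}$, and Lemma~\ref{lem:positive-triples-cyclic} promotes this to the desired positivity of the triangle $(\wp_1,\wp_2,\wp_3)$ in this cyclic ordering.

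The analogous argument applied to $Q=\alpha_{03}\cdot\alpha_{10}\cdot\alpha_{21}\cdot\alpha_{32}$, using the reverse cyclic orderings of the positive triangles and triples involving $\wp_0$, handles the opposite cyclic ordering of $(\wp_1,\wp_2,\wp_3)$. The main obstacle, beyond careful bookkeeping of which positivity hypothesis (pair, triangle, or triple) justifies each step, is ensuring via Lemma~\ref{lem:top_degree_product} that each central product appearing after a reassociation genuinely reaches top degree inside $H^{*}(L_1)$ or $H^{*}(L_3)$; this is precisely what the non-vanishing hypothesis on $P$ and $Q$ guarantees.
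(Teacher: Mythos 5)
Your proof is essentially the same as the paper's: both compute the nonzero cyclic quadruple product $P$ via the two reassociations $(\alpha_{01}\cdot\alpha_{30})\cdot(\alpha_{23}\cdot\alpha_{12})$ and $\alpha_{01}\cdot(\alpha_{30}\cdot\alpha_{23})\cdot\alpha_{12}$, using the positive triple $(\wp_0,\wp_2,\wp_3)$ and the positive triangle $(\wp_0,\wp_1,\wp_2)$ to show $P$ is positive, and then the positive triple $(\wp_0,\wp_1,\wp_3)$ together with $P\neq 0$ to extract the sign of $\alpha_{23}\cdot\alpha_{12}$; the reverse cyclic product $Q$ is handled symmetrically.

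One small correction in the write-up, which does not affect the structure of the argument: the quadruple product $P$ need not reach top degree in $H^{*}(L_1)$, so phrases like ``positive multiple of $v\cdot[L_1]$'' and the invocation of Lemma \ref{lem:top_degree_product} to identify $v_Av_Bv_C$ with a multiple of $[L_1]$ are misplaced. What you actually need (and what makes the sign comparison go through) is merely that a product of sums of positive generators of $H^2(L_1)$ expands as a nonnegative integer combination of the monomial basis of $H^*(L_1)$, so a nonzero positive combination cannot equal $-1$ times another nonzero positive combination; this holds in every degree, not just the top one. Also, where you write ``positive triangle $(\wp_0,\wp_1,\wp_3)$'' to deduce $\alpha_{01}\cdot\alpha_{30}=c_Av_A\alpha_{31}$ with $c_A>0$, you really need the positive \emph{triple}, since splitting off the pair $\alpha_{31}\cdot\alpha_{13}=v_C$ requires positivity of the pair $(\wp_1,\wp_3)$ and the nonvanishing from the hypothesis; the hypothesis of the Lemma supplies exactly the triple, so this is fine, but it is worth saying precisely.
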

\begin{proof}
We consider one of the two possible ordering: since $ (\wp_0, \wp_1, \wp_3)  $ is a positive triangle, the product $\alpha_{01}  \cdot \alpha_{30} $  is a positive multiple of $\alpha_{31}$. So $\alpha_{31} \cdot \alpha_{23} \cdot \alpha_{12}  $  is positive if and only if the quadruple cyclic product is positive. On the other hand, $ \alpha_{30} \cdot \alpha_{23}  $ is a positive multiple of $\alpha_{20} $ (because the triple  $ (\wp_0, \wp_2, \wp_3)  $ is positive), hence the quadruple product is positive because $ \alpha_{01}  \cdot \alpha_{20}  \cdot \alpha_{12} $ is positive.
\end{proof}

We note that non-trivial iterated products as required for the second part of Lemma \ref{Lem:ConormalTripleProducts} arise naturally when combining Lemmas \ref{Lem:NonzeroProducts} and \ref{Lem:EverythingFromMinimalDegree}. 


\subsection{Description of the basis\label{Sec:Induction}}  We construct a basis of $\scrH_n^{symp}$ by the following procedure. Denote by $V_i$ both the Lagrangian 2-sphere in $A_{2n-1}$ joining the points $\{i, i+1\} \subset \{1,2, \ldots,2n\}$, and also the Lagrangian 2-sphere fibre of the elementary correspondence associated to the Morse-Bott degeneration which brings $\{i,i+1\}$ together.  Note that all these spheres are oriented.

We say a matching \emph{contains an odd cup} if it contains the arc joining $2j+1, 2j+2$ for some $0 \leq j \leq n-1$, and \emph{contains an even cup} if it contains the arc joining $2j, 2j+1$ for some $1 \leq j\leq n-1$.  Every matching contains at least one cup. The matching $\wpb$ is singled out by containing all $n$ odd cups, and $\mix$ is singled out by containing all $(n-1)$ even cups.

The basis is constructed inductively, so we assume that we already have bases for the algebra $\scrH_{n-1}^{symp}$, with the properties that (i) they are compatible with all cup-functors $\cup_j: \scrH_{n-2}^{symp} \rightarrow \scrH_{n-1}^{symp}$, where compatible means that any basis element is taken either to zero or to a basis element by any given $\cup_j$, and (ii) the algebra is isomorphic in the given bases to the arc algebra $H_{n-1}$, by an isomorphism entwining $\cup_j$ and $\cup_j^{comb}$.  The induction is based by Proposition \ref{Prop:n=2}, so we may assume throughout that $n\geq 3$. 

\begin{enumerate}
\item Fix bases of all groups $H^*(L_{\wp})$ as in Corollary \ref{Cor:Same}.
\item Suppose $\wp$ contains an odd cup $\cup_{2j+1}$. Then both $\wp$ and $\wpb$ lie in the image of some $\cup_{odd}$, say $\wp = \cup_{2j+1}(\wp^r)$, $\wpb = \cup_{2j+1} (\wpb^r)$ ($r$ for reduced), and hence the K\"unneth theorem gives  canonical isomorphisms
\begin{align*}
HF^*(L_{\wpb}, L_{\wp}) & =  HF^*(L_{\wpb^r}, L_{\wp^r}) \otimes H^*(V_{2j+1}) \\
HF^*(L_{\wp},L_{\wpb}) & =  HF^*(L_{\wp^r}, L_{\wpb^r}) \otimes H^*(V_{2j+1}).
\end{align*}
By induction we have a basis for the first factor on the right, by the orientation convention we have a basis for the second, and we take the induced basis.
\item Suppose $\wp$ contains an even cup $\cup_{2j}$. Write $\wp = \cup_{2j}(\wp^r)$, $\mix = \cup_{2j} (\mix^r)$,  and use K\"unneth and induction to take the bases induced from 
  \begin{align*}
    HF^*(L_{\mix}, L_{\wp}) & = HF^*(L_{\mix^r}, L_{\wp^r}) \otimes H^*(V_{2j}) \\
HF^*(L_{\wp}, L_{\mix}) & = HF^*( L_{\wp^r}, L_{\mix^r}) \otimes H^*(V_{2j})
  \end{align*}
\item We fix a basis of $HF^*(L_{\wpb}, L_{\mix})$ as follows.  Since $n \geq 3$ there is at least one matching $\wp$ which contains both an odd and an even cup.  Pick some such; we then have bases for the pairs $(\wpb,\wp)$ and $(\mix,\wp)$ (by the previous steps), and the triangle $\{\wp,\wpb,\mix\}$ admits a non-trivial product by Lemma \ref{Lem:PlaitMixNonzeroProducts}.  We pick the generators $\alpha_{\wpb,\mix}$ and $\alpha_{\mix,\wpb}$ so as to make this a positive triangle. 
\item If $\wp$ contains \emph{no} odd cup it necessarily contains an even cup so bases for the pair $(\mix,\wp)$ are fixed in step (3). The triangle $(L_{\wp}, L_{\wpb}, L_{\mix})$ again admits a non-trivial product, and we pick the unique minimal generators for $(L_{\wp}, L_{\wpb})$ making this a positive triangle.
\item For any $\wp, \wp'$ we now have bases for the pairs $(L_{\wp}, L_{\wpb})$ and $(L_{\wp'}, L_{\wpb})$. Since any triangle involving $L_{\wpb}$ has non-zero products, we now choose the basis for $(L_{\wp}, L_{\wp'})$ to make the triangle $(L_{\wp}, L_{\wp'}, L_{\wpb})$ a positive triangle.
\end{enumerate}

At this stage, all groups in $\scrH_n^{symp}$ have bases.  The remaining task is threefold:  to show that the bases described above are well-defined (independent of the choices made along the way); to show that these bases are preserved by cup functors $\scrH_{n-1} \rightarrow \scrH_n$; and to show that with respect to these bases, products of positive generators are linear combinations of positive generators in a way that matches the product in $H_n$.

\subsection{Well-definition of the basis} We show independence of choices.

\begin{Lemma}
In Steps (2) or (3) above, the choice of odd respectively even cup in $\wp$ does not affect the resulting minimal degree generator.
\end{Lemma}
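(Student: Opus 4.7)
The two sub-cases (Step (2) with $\wpb$ and odd cups, Step (3) with $\mix$ and even cups) are entirely symmetric, so I would treat the odd case; the even case follows \emph{mutatis mutandis}. Suppose $\wp$ contains two distinct odd cups $\cup_{2j+1}$ and $\cup_{2k+1}$. Since $\wpb$ contains every odd cup, we may write simultaneously $\wp = \cup_{2j+1}(\wp^r_j) = \cup_{2k+1}(\wp^r_k)$ and $\wpb = \cup_{2j+1}(\wpb^r_j) = \cup_{2k+1}(\wpb^r_k)$, and further reduce through both cups to obtain doubly-reduced matchings $\wp^{rr}$, $\wpb^{rr}$ of $2(n-2)$ points. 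The claim is that the element $\alpha_{\wpb^r_j,\wp^r_j}\otimes 1_{V_{2j+1}}$ defined via the first K\"unneth isomorphism coincides with $\alpha_{\wpb^r_k,\wp^r_k}\otimes 1_{V_{2k+1}}$ defined via the second, after inclusion into $HF^*(L_\wpb, L_\wp)$.

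The key step is to interpret both constructions topologically via Lemma \ref{Lem:IntersectionLocusIsCorrect}, which furnishes a natural isomorphism of $H^*(\scrY_n)$-modules $HF^*(L_\wpb, L_\wp) \cong H^*(\hat{L}_\wpb \cap \hat{L}_\wp)$. The intersection $\hat L_\wpb \cap \hat L_\wp$ is diffeomorphic to $(S^2)^{c(\wpb,\wp)}$, and those two of its $S^2$ factors corresponding to the shared cups are canonically $V_{2j+1}$ and $V_{2k+1}$. The product Lagrangian structure from \eqref{Eqn:product}, which is the source of the Floer K\"unneth used in Step (2), is compatible with the product decomposition of the intersection locus: the embedding $U^i \times \scrY_n^i \hookrightarrow \scrY_{n+1}$ witnesses $\hat L_\wpb \cap \hat L_\wp$ as a product $V_{2j+1} \times (\hat L_{\wpb^r_j} \cap \hat L_{\wp^r_j})$, and the Floer K\"unneth then matches the classical topological K\"unneth
\begin{equation*}
H^*(\hat L_\wpb \cap \hat L_\wp) \;\cong\; H^*(V_{2j+1}) \otimes H^*(\hat L_{\wpb^r_j} \cap \hat L_{\wp^r_j})
\end{equation*}
under Lemma \ref{Lem:IntersectionLocusIsCorrect}. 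The same holds for $\cup_{2k+1}$ in place of $\cup_{2j+1}$.

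Now, under the identification with $H^*(\hat L_\wpb \cap \hat L_\wp)$, the minimal degree generator lies in $H^0$, which has rank one and a distinguished generator $1$. By the inductive hypothesis applied to $\scrH_{n-1}^{symp}$ (and its base case in Proposition \ref{Prop:n=2}), the class $\alpha_{\wpb^r_j,\wp^r_j}$ corresponds under Lemma \ref{Lem:IntersectionLocusIsCorrect} to $1 \in H^0(\hat L_{\wpb^r_j} \cap \hat L_{\wp^r_j})$; tensoring against $1_{V_{2j+1}} \in H^0(V_{2j+1})$ and applying the compatibility above, the resulting element in $HF^*(L_\wpb,L_\wp)$ is sent to $1 \in H^0(\hat L_\wpb \cap \hat L_\wp)$. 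The identical computation via $\cup_{2k+1}$ also yields $1$. Since both constructions produce the same intrinsic class, they coincide. The argument for $HF^*(L_\wp, L_\wpb)$ is identical (reversing the order of arguments), and the argument for Step (3) is formally the same after substituting $\mix$ for $\wpb$ and even cups for odd cups, since $\mix$ contains every even cup.

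The main point to verify is the compatibility between the Floer K\"unneth used in Step (2) and the topological K\"unneth of the intersection locus under Lemma \ref{Lem:IntersectionLocusIsCorrect}; this is the only place where geometry, rather than formal cohomological manipulation, enters the argument. This compatibility is not a deep fact, however, since both K\"unneths originate from exactly the same product structure $V_{2j+1}\times \scrY_n^i \subset \scrY_{n+1}$ of Section \ref{Sec:MeetCup}: the Floer K\"unneth comes from forcing all pseudo-holomorphic equations to split as in the proof of Lemma \ref{lem:bimodule-pure}, while the topological K\"unneth comes from the induced product decomposition of the clean intersection locus. Once this geometric compatibility is observed, well-definedness of the minimal degree generator reduces to the tautological fact that $1 \in H^0$ is canonical.
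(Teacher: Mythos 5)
Your argument is correct in substance but takes a noticeably different route from the paper's, and it imports geometric content that the paper's proof deliberately avoids. The paper disposes of this lemma in one sentence by arguing purely inside Floer theory: writing $\wp$ as a double cup $\cup_{2j+1}\cup_{2k+1}(\wp^{rr})$ and $\wpb$ similarly, inductive hypothesis~(i) (compatibility of the $\scrH_{n-1}^{symp}$ basis with cup functors out of $\scrH_{n-2}^{symp}$) gives $\alpha_{\wpb^r_j,\wp^r_j}=\alpha_{\wpb^{rr},\wp^{rr}}\otimes 1_{V_{2k+1}}$ and $\alpha_{\wpb^r_k,\wp^r_k}=\alpha_{\wpb^{rr},\wp^{rr}}\otimes 1_{V_{2j+1}}$, so both Step-(2) recipes produce the same triple tensor $\alpha_{\wpb^{rr},\wp^{rr}}\otimes 1_{V_{2j+1}}\otimes 1_{V_{2k+1}}$ by associativity of the Floer K\"unneth isomorphism. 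No appeal to the clean-intersection model is needed.

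Your proof instead routes through Lemma~\ref{Lem:IntersectionLocusIsCorrect} to identify $HF^*(L_\wpb,L_\wp)$ with $H^*(\hat L_\wpb\cap\hat L_\wp)$, then argues that both K\"unneth constructions land on $1\in H^0$, which is canonical. This is a perfectly good picture, and it is illuminating geometrically, but it trades the paper's purely formal argument for two additional inputs you must supply: first, the compatibility between the Floer K\"unneth used in Steps (2)--(3) and the topological K\"unneth of the intersection locus under the module isomorphism of Lemma~\ref{Lem:IntersectionLocusIsCorrect} --- you flag this and it is indeed plausible (both come from the same product structure $U^i\times\scrY^i_n\hookrightarrow\scrY_{n+1}$), but it is a geometric verification the paper doesn't make or need here; and second, a slightly stronger reading of inductive hypothesis~(ii) --- namely that the isomorphism $\scrH_{n-1}^{symp}\cong H_{n-1}$ is the Stroppel--Webster one, which sends each $\alpha_{\wp,\wp'}$ to $+1$ in $H^0(\hat L_\wp\cap\hat L_{\wp'})$ rather than merely being an abstract algebra isomorphism entwining cups. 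Both of these extra facts are true in the end, but the paper's route, which invokes only the K\"unneth theorem and hypothesis~(i), is tighter and keeps this lemma independent of the clean-intersection machinery, which it saves for other parts of the inductive scheme.
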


\begin{proof} Immediate from the K\"unneth theorem, and the fact that the basis for $\scrH_{n-1}^{symp}$ is compatible with all the cup functors from $\scrH_{n-2}^{symp}$.
\end{proof}

In step (4), we make an arbitrary choice of a matching which contains both an odd and an even cup.   When $n\geq 4$ one can interpolate between different choices, as follows. We are defining the basis for the pair $(L_{\wpb},L_{\mix})$ by choosing a matching $\wp = \cup_{2i+1}\cup_{2j}\wp'$ which contains both an odd and an even cup.  Note that necessarily $i$ and $j$ are not adjacent, i.e. the four end-points of the cups $\cup_{2j}$ and $\cup_{2i+1}$ are distinct, since both belong to some matching. Let $\wp_{2j}$ denote the matching which differs minimally from $\wpb$ whilst containing the cup joining the points $\{2j, 2j+1\}$, so it does contain one even cup.  One considers a configuration
\begin{equation}
  \label{eq:mix-plait-well-defined}
  \xymatrix{
&L_{\mix} \ar@{-}[ldd] \ar@{.}[d] \ar@{-}[rdd] & \\
& L_{\wpb} & \\
L_{\wp} \ar@{-}[ur] \ar@{-}[rr] & & L_{\wp_{2j}} \ar@{-}[ul]
}
\end{equation}
The matchings in the bottom triple $(\wpb, \wp_{2j}, \cup_{2i+1}\cup_{2j}\wp')$ all contain $\cup_{2j}$, so this is a positive triple by the K\"unneth theorem and the choices fixed at Step (2). Similarly the three matchings in the outer triple $(\mix, \wp_{2j}, \cup_{2i+1}\cup_{2j}\wp')$ all contain the odd cup $\cup_{2i+1}$, hence this is a positive triple by the K\"unneth theorem.  There is a non-trivial product involving all four matchings, since $(\wpb, \wp_{2j})$ form a codimension one pair by construction of $\wp_{2j}$, and there is a non-trivial product involving the three matchings in the top left triple by Lemma \ref{Lem:PlaitMixNonzeroProducts}.  Therefore, the two possible basis elements for $(\wpb, \mix)$ on the dotted arrow defined by either choosing $\wp$ or $\wp_{2j}$ co-incide by applying Lemma \ref{Lem:ConormalTripleProducts}.  Iterating, one can compare any two choices of $\wp$ in Step (4) consistently. 

We record an important consequence of the above construction:
\begin{lem}  \label{lem:mix-plait-positive}
The triple $ (\mix, \wp_{2j},\wpb) $ is positive. In particular, so is the pair $(\mix, \wpb) $.
\end{lem}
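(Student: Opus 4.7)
The plan is to combine the well-definedness argument immediately preceding the lemma with Corollary~\ref{cor:positivity_triple_codim_1} in order to upgrade positivity of the triangle $(\mix,\wp_{2j},\wpb)$ to positivity of the triple.

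First I would observe that $\wp_{2j}$ is itself a legitimate choice of auxiliary matching in Step~(4) of the construction: by definition it contains the even cup $\{2j,2j+1\}$, and for $n\geq 3$ it inherits at least one odd cup from $\wpb$ away from the site of the local modification. Running Step~(4) with $\wp_{2j}$ in place of $\wp$ therefore produces bases for $HF^*(L_{\wpb},L_{\mix})$ and $HF^*(L_{\mix},L_{\wpb})$; the well-definedness argument just carried out (via Lemma~\ref{Lem:ConormalTripleProducts} applied to the diagram~\eqref{eq:mix-plait-well-defined}) ensures that these coincide with the bases fixed in Step~(4). In particular, $(\mix,\wp_{2j},\wpb)$ is already a positive triangle with respect to our fixed bases.

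Next, the remaining two pairs are positive: $(\wpb,\wp_{2j})$ by Step~(2) applied with any odd cup common to $\wpb$ and $\wp_{2j}$, with positivity inherited from the K\"unneth factors via the inductive hypothesis, and $(\mix,\wp_{2j})$ by Step~(3) applied with the even cup $\{2j,2j+1\}$. Moreover $\wpb$ and $\wp_{2j}$ meet in codimension one by the very construction of $\wp_{2j}$. Corollary~\ref{cor:positivity_triple_codim_1} therefore provides unique minimal degree generators of $(\mix,\wpb)$ which promote the positive triangle to a positive triple.

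The one point requiring verification is that the bases prescribed by Corollary~\ref{cor:positivity_triple_codim_1} agree with those fixed in Step~(4), and this is the place where care is needed. Both bases are singled out by the positive triangle condition on $(\mix,\wp_{2j},\wpb)$ together with the pre-fixed generators on the other two edges: by Lemma~\ref{lem:positive-triples-cyclic}, the two cyclic triple products around $(\mix,\wp_{2j},\wpb)$ each pin down the sign of exactly one of $\alpha_{\mix,\wpb}$ and $\alpha_{\wpb,\mix}$. Consequently the two sets of generators agree, and $(\mix,\wp_{2j},\wpb)$ is a positive triple; in particular $(\mix,\wpb)$ is a positive pair.
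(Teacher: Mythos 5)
Your proof is correct and follows essentially the same route as the paper's: both establish positivity of the triangle $(\mix,\wp_{2j},\wpb)$ via the preceding well-definedness discussion (where, as you correctly observe, $\wp_{2j}$ is itself a legitimate choice in Step~(4)), note that the pairs $(\mix,\wp_{2j})$ and $(\wpb,\wp_{2j})$ are positive by K\"unneth, and invoke Corollary~\ref{cor:positivity_triple_codim_1} using the fact that $\wpb$ and $\wp_{2j}$ meet in codimension one. Your final paragraph spells out a consistency check that the paper leaves implicit; it is not needed as an additional step, since Corollary~\ref{cor:positivity_triple_codim_1} does not introduce fresh generators but rather asserts that the generators already pinned down by the positive triangle condition (i.e.\ precisely those fixed in Step~(4)) yield a positive triple.
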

\begin{proof}
The triangle $ (\mix, \wp_{2j},\wpb) $ is positive by the previous discussion, whilst the pairs $(\mix, \wp_{2j}) $ and $ (\wpb, \wp_{2j}) $ are positive by   the K\"unneth theorem. The result then follows from Corollary \ref{cor:positivity_triple_codim_1}.
\end{proof}

There is no further choice at Steps (5) and (6), so the bases are consistently determined.

\subsection{The case $n=3$}

We now prove independence of choices when $n=3$.  There are exactly two matchings containing an odd and an even cup:  the plait matching joins pairs $\{(12),(34),(56)\}$, the mixed matching joins pairs $\{(16),(23),(45)\}$ and the two possible matchings containing both an odd and even cup are
\[
\wp_{25} = \{(14),(23),(56)\} \qquad \textrm{and} \qquad \wp_{14} = \{(12),(36),(45)\}
\]
(the subscripts indicate which cups the matchings contain). Consider the triangle with vertices $\mix, \wp_{25}, \wp_{14}$ and with an interior vertex labelled $\wpb$, cf. Figure \ref{Fig:4Triangles}. The previous steps of the inductive construction have fixed bases for the pairs including $\mix$ or $\wpb$ and one of the other two vertices, at Steps (2) and (3).  Moreover, the definition of the basis for the pair labelled $(\wp_{14}, \wp_{25})$ at step (6) ensures that the triangle $(\wpb,\wp_{14},\wp_{25})$ is positive (this is independent of the choice in Step (4)). The basis for the pair $(\wpb,\mix)$ can be fixed by choosing either of the other two vertices, and making the corresponding triangle positive.  That leaves two further triangles: the outer triple, and the remaining internal triangle. These have the same sign by Lemma \ref{Lem:ConormalTripleProducts} (since Lemmas \ref{Lem:EverythingFromMinimalDegree} and \ref{Lem:PlaitMixNonzeroProducts} together imply that there is a non-trivial product involving all four Lagrangians) but that sign is not determined by consistency with choices already made, so we need to compute it directly.

\begin{figure}[ht]
\[
\xymatrix{
&L_{\mix} \ar[ldd]_{\gamma} \ar@{.>}[d] \ar[rdd]^{\alpha} & \\
& L_{\wpb} & \\
L_{\wp_{25}} \ar[ur]_{\delta} \ar@{-}[rr] & & L_{\wp_{14}} \ar[ul]^{\beta}
}
\]
\caption{Checking positivity when $n=3$ \label{Fig:4Triangles}}
\end{figure}
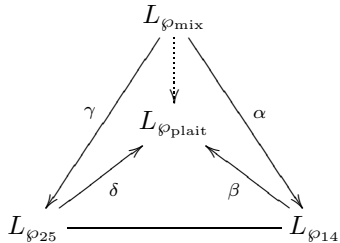

\begin{Lemma}
Suppose $n=3$. The two minimal degree generators for the pair $(\wpb,\mix)$ obtained from Step (4) of the inductive strategy respectively using $\wp_{14}$ or $\wp_{25}$ agree.
\end{Lemma}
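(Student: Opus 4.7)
The plan is to apply Lemma \ref{Lem:ConormalTripleProducts} to the four-vertex configuration of Figure \ref{Fig:4Triangles} to reduce the statement to the positivity of a single remaining triangle, and then to verify that positivity by an explicit computation in a conormal model.

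Concretely, I would fix the basis of $(L_\wpb, L_\mix)$ using, say, $\wp = \wp_{14}$, making the triangle $(\wpb, \mix, \wp_{14})$ positive by construction. Combined with the positivity of $(\wpb, \wp_{14}, \wp_{25})$ coming from Step~(6), and with the non-vanishing four-fold cyclic products supplied by Lemmas \ref{Lem:EverythingFromMinimalDegree} and \ref{Lem:PlaitMixNonzeroProducts}, applying Lemma \ref{Lem:ConormalTripleProducts} (with $\wp_0 = \wpb$, $\wp_3 = \mix$, $\wp_1 = \wp_{14}$, $\wp_2 = \wp_{25}$) shows that the inner triangle $(\wpb, \mix, \wp_{25})$ is positive if and only if the outer triangle $(\mix, \wp_{14}, \wp_{25})$ is positive. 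The well-definition claim therefore reduces to verifying positivity of the outer triangle.

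For this, I would first check that the triple $(L_\mix, L_{\wp_{14}}, L_{\wp_{25}})$ admits a conormal model. Identifying the clean intersections with antidiagonals in $(\bP^1)^6$ via Lemma \ref{Lem:IntersectionLocusIsCorrect}, the three pairwise intersections have real dimensions $4, 4, 2$ and the triple intersection is a $2$-sphere, so the dimension constraint \eqref{dimconstraint} of Lemma \ref{lem:model3} holds: $6 + 2 \cdot 2 = 10 = 4 + 4 + 2$. Combining Lemmas \ref{Lem:CleanPairs} and \ref{lem:model3}, I can place the three Lagrangians simultaneously as the zero section and two conormal bundles $\nu^*_{A_1}, \nu^*_{A_2}$ inside $T^*L_\mix$, where $A_1 = L_\mix \cap L_{\wp_{14}}$ and $A_2 = L_\mix \cap L_{\wp_{25}}$ are diffeomorphic to $(S^2)^2$ and meet cleanly in $S^2$. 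By Lemma \ref{lem:Conv3}, the Floer product around the outer triangle is then computed as the convolution of cohomological restriction and push-forward, which by Lemma \ref{Lem:NonzeroProducts}(2) sends the pair of minimal degree generators to a positive multiple of $\alpha_{\mix, \wp_{14}}$ with coefficient a class $v_i + v_j \in H^2(\scrY_3)$ Poincar\'e dual to the triple intersection---a sum of positive basis elements in the distinguished basis of Corollary \ref{Cor:Same}.

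The main obstacle is ensuring that the basis $\alpha_{\wp_{14}, \wp_{25}}$, fixed in Step~(6) by the positivity requirement on the triangle $(\wpb, \wp_{14}, \wp_{25})$, is consistent with the sign produced by the convolution model used above. The natural way to address this is to observe that the triangle $(\wpb, \wp_{14}, \wp_{25})$ also admits a conormal model (the dimension constraint holds identically), and in that model the analogous convolution computation produces the same kind of Poincar\'e-dual class, so Step~(6)'s convention agrees with the geometrically natural one. This consistency then transports positivity from the triangle $(\wpb, \wp_{14}, \wp_{25})$ to $(\mix, \wp_{14}, \wp_{25})$, completing the proof.
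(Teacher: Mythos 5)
Your reduction to the positivity of the outer triangle $(\mix, \wp_{14}, \wp_{25})$ is the same reduction the paper performs in the prose immediately before this Lemma. The issue is that this reduction is explicitly where the paper \emph{stops} and says the sign ``is not determined by consistency with choices already made, so we need to compute it directly'' --- and your argument does not supply that computation.

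Here is the gap concretely. You invoke Lemma \ref{lem:model3} and Lemma \ref{lem:Conv3} to conclude that the product of the chosen minimal degree generators around the outer triangle is a positive multiple of a basis element. But Lemma \ref{lem:Conv3} only identifies the Floer product with the cohomological convolution \emph{up to sign}, and Lemma \ref{Lem:NonzeroProducts}(2) is likewise stated with a $\pm$. The reason those lemmas cannot do better is that the minimal degree generators $\alpha_{\mix,\wp_{14}}$, $\alpha_{\wp_{25},\mix}$, and $\alpha_{\wp_{14},\wp_{25}}$ are fixed by three independent recipes --- the first two by K\"unneth decompositions with respect to the \emph{different} even cup functors $\cup_4$ and $\cup_2$ in Step~(3), and the third by the positivity requirement for the triangle through $\wpb$ in Step~(6) --- and there is no a priori reason that these three generators are the ``positive'' generators of a single conormal model for the outer triangle. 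Your final paragraph asserts that ``Step~(6)'s convention agrees with the geometrically natural one,'' but this is not established by pointing out that $(\wpb, \wp_{14}, \wp_{25})$ also admits a conormal model: the two conormal models (one built around $\wpb$, the other around $\mix$) arise from different degenerations, and the compatibility of the chosen bases under the transition between them is precisely the content of the Lemma. In effect, your proof assumes what it sets out to prove.

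The paper closes this gap by a direct chain-level computation. The four matchings are isotoped by Markov~I moves (which the paper checks preserve the fixed cohomology bases), then placed simultaneously in an $(A_2 \times A_2)$-fibration over $T^*S^2$, where all four Lagrangians are fibred over the zero-section. In this explicit plumbing model the two candidate generators $\mu^2(\beta,\alpha)$ and $\mu^2(\delta,\gamma)$ are computed as tensor products of fibrewise morphisms (identity fundamental classes or transverse intersection points in the two $A_2$-factors), and one sees that they represent literally the same Floer cycle --- not merely cycles that agree up to a sign. That is the missing content in your argument.
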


\begin{figure}[ht]
\includegraphics[scale=0.25]{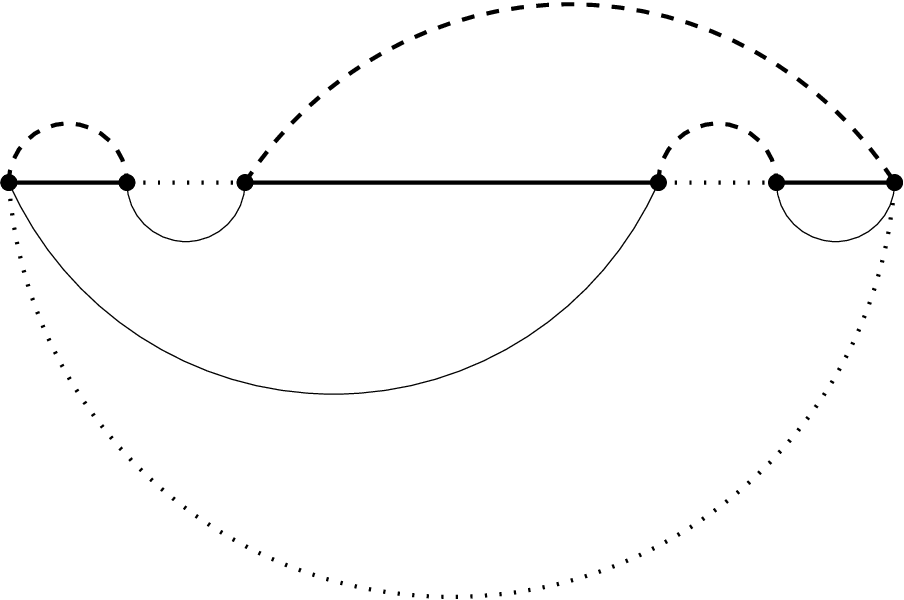}
\caption{Four matchings when $n=3$\label{Fig:4matchings}}
\end{figure}

\begin{proof}
We draw the four matchings in question in the diagram of Figure \ref{Fig:4matchings}, where we have used the Markov I move to slide certain arcs into the lower half-plane to remove excess intersections. The Markov I move preserves the orientation of each arc, hence preserves the bases of $H_2(L_{\wp})$ and the associated monomial bases of $H^*(L_{\wp})$, so this Hamiltonian isotopy introduces no signs into the computation of Floer products.  

We have drawn the six critical points grouped into two triples, and consider the degeneration which simultaneously collapses these triples, i.e. we work in an open subset of $\scrY_3$ which is an $(A_2\times A_2)$-fibration over $T^*S^2$. In this local model, the four matchings are all fibred over the zero-section, with fibres being products of the two basic real arcs in $A_2$, one in each factor of $A_2 \times A_2$.  The key claim is then that the two products corresponding to $\beta\circ\alpha$ and $\delta\circ\gamma$ in Figure \ref{Fig:4Triangles} agree.  We see this by explicit computation in the $(A_2\times A_2)$-fibred plumbing model. Schematically, the arrows $\alpha,\beta,\gamma,\delta$ are as follows: \newline
\[
\xymatrix{
\alpha & \bullet \ar@{.}[r] & \bullet \ar@{.}[r] & \bullet \ar@/_1pc/[ll] 
&  & \bullet \ar@{-}[r] & \bullet & \bullet \\
 \beta & \bullet \ar@{-}[r] & \bullet & \bullet & & \bullet \ar@{.}[r] \ar@/^1pc/[rr]  & \bullet \ar@{.}[r] & \bullet \\
  \gamma & \bullet & \bullet  \ar@{-}[r] & \bullet & & \bullet \ar@{.}[r] \ar@/^1pc/[rr]  & \bullet \ar@{.}[r] & \bullet \\
   \delta & \bullet \ar@{.}[r] & \bullet \ar@{.}[r] & \bullet \ar@/^1pc/[ll] & & \bullet  & \bullet \ar@{-}[r] & \bullet 
}
\newline
\]

The six critical points are grouped in the triples which define the local $(A_2\times A_2$)-fibration. A  solid line indicates that the morphism is between two Lagrangians which share that arc in Figure \ref{Fig:4matchings}, and represents the fundamental class of the corresponding $S^2$-factor of the $A_2$-fibre; an arrow between a pair of dotted lines denotes the morphism given by the transverse intersection point of the corresponding matching spheres in the $A_2$-fibre.  Since the fundamental class is a cohomological unit,  $\beta\circ\alpha$ and $\delta\circ\gamma$ define the tensor product of the curved arrows in respectively the first two and last two rows, hence represent the same Floer cycle.
\end{proof}

This shows that there is no ambiguity in the construction of the bases when $n=3$. By construction, all triangles among the quadruple $(\wpb,  \wp_{14}, \wp_{25}, \mix)  $ are positive. In each case, two pairs are also positive by the K\"unneth theorem, so Corollary \ref{cor:positivity_triple_codim_1} implies that all four triples are positive.

There is only one other matching when $n=3$, namely the ``horseshoe matching'' $\wp_{\circ}$ (joining pairs $\{(16),(25),(34)\}$) which enters into the definition of $\Kh_{symp}$ . Consider the diagram
\[
\xymatrix{
&L_{\mix} \ar@{.}[ldd] \ar@{-}[d] \ar@{-}[rdd] & \\
& L_{\wpb} & \\
L_{\wp_{\circ}} \ar@{-}[ur] \ar@{.}[rr] & & L_{\wp_{14}} \ar@{-}[ul]
}
\]
where the solid lines denote pairs for which Floer groups were chosen either in Step (2) or Step (4) of the inductive construction, and the dotted lines are the pairs for which a choice is made only thereafter.  The choice of bases up to Step (4) ensures that  all solid lines connect positive pairs, and  the top right (solid) triple is positive.  The two dotted arrows are then chosen to make the remaining two internal triangles positive.  Note that the bottom triangle is in fact a positive triple because $(\wpb, \wp_{14}) $ is a codimension $1$ pair. There is again a non-zero product  involving all four Lagrangians since two meet in codimension one, so Lemma \ref{Lem:ConormalTripleProducts} implies that all triangles are positive.  Noting that the top left triangle contains a codimension $1$ pair, we conclude that the top left triple is also positive by Corollary \ref{cor:positivity_triple_codim_1}. We conclude that all triples are positive.

Finally, by symmetry between $L_{\wp_{14}}$ and $L_{\wp_{25}}$ one easily checks that all products are positive when $n=3$.  An examination of the  argument furthermore shows that the bases for $\scrH_3^{symp}$ are indeed compatible with the cup functors $\cup_i: \scrH_2^{symp} \rightarrow \scrH_3^{symp}$; compare to the formally identical Lemma \ref{Lem:CupsOK} proved below.


\subsection{Cup-compatibility and positivity}
We now return to the task of proving positivity and compatibility with the cup functors for $n \geq 4$.

In steps (2)-(3) of the construction of bases, new generators are constructed from old ones using the K\"unneth theorem, and the positivity of the relevant pairs is inherited. The morphism constructed in step (4) was shown to be positive in Lemma \ref{lem:mix-plait-positive}.

\begin{lem} \label{lem:other_basis_also_works}
For the choice of basis in step (5), $(L_{\wpb}, L_{\wp})$ is a positive pair. 
\end{lem}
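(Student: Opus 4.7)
The plan is to invoke Lemma \ref{lem:positive-triples} with $\wp_0 = \mix$, $\wp_1 = \wpb$, and $\wp_2 = \wp$, and conclude that $(\mix, \wpb, \wp)$ forms a positive triple; the desired positivity of the pair $(L_\wpb, L_\wp)$ is then part of the conclusion. The first hypothesis, positivity of the triangle $(\mix, \wpb, \wp)$, is the defining property of the basis chosen in step (5). For the second hypothesis, two of the three pairs are known to be positive: $(\mix, \wpb)$ by Lemma \ref{lem:mix-plait-positive}, and $(\mix, \wp)$ because the assumption that $\wp$ has no odd cup forces it to contain at least one even cup $(2j,2j+1)$; both $\mix$ and $\wp$ then lie in the image of $\cup_{2j}$, the basis was fixed via the K\"unneth isomorphism in step (3), and positivity is inherited from the inductive positivity of the reduced pair $(\mix^r, \wp^r)\in\scrH^{symp}_{n-1}$ together with the trivial positivity of the Frobenius product on the vanishing-cycle factor $H^*(V_{2j})$.

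The remaining hypothesis requires a non-vanishing non-cyclic triple product involving all three Lagrangians. I propose using the Case 1 triple
\[
T \ = \ \alpha_{\wp, \mix} \cdot \alpha_{\wpb, \wp} \cdot \alpha_{\wp, \wpb}.
\]
Applying Lemma \ref{Lem:FormOfMinDegreeProduct} together with the positivity of each cyclic triple product of the triangle and the two positive pairs $(\mix,\wpb)$ and $(\mix,\wp)$, one derives the sign relations
\[
\alpha_{\wp, \mix} \cdot \alpha_{\wpb, \wp} \ = \ +V\,\alpha_{\wpb, \mix}, \qquad \alpha_{\wpb, \mix} \cdot \alpha_{\wp, \wpb} \ = \ +V^*\,\alpha_{\wp, \mix}
\]
for positive cohomology classes $V, V^* \in H^*(\scrY_n)$: each sign is pinned down by equating a cyclic triple product with a double product landing in the positive $(\mix,\cdot)$ pair. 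By associativity, $T = VV^*\,\alpha_{\wp, \mix}$, and pairing on the left with $\alpha_{\mix,\wp}$, using the positivity of $(\mix,\wp)$ to compute $\alpha_{\mix, \wp} \cdot \alpha_{\wp, \mix} = +V_{mp}\cdot 1_{L_\mix}$, yields $\alpha_{\mix, \wp} \cdot T = V V^* V_{mp}\cdot 1_{L_\mix}$. But this also equals the forward cyclic triple product $\alpha_{\mix, \wp} \cdot \alpha_{\wpb, \mix} \cdot \alpha_{\wp, \wpb}$, which is non-zero by the positive triangle condition. Hence $T\ne 0$.

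With all three hypotheses verified, Lemma \ref{lem:positive-triples} concludes that $(\mix, \wpb, \wp)$ is a positive triple, and in particular the pair $(L_\wpb, L_\wp)$ is positive.

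The main obstacle will be the careful bookkeeping of the positive multipliers $V$, $V^*$, $V_{mp}$ and the verification that each intermediate product lies in the claimed Floer cohomology group with the correct degree, as dictated by Lemma \ref{Lem:FormOfMinDegreeProduct}. This is facilitated by the geometric description of these classes as Poincar\'e duals of the relevant Lagrangian intersection loci via Lemma \ref{Lem:IntersectionLocusIsCorrect}, which ensures degree compatibility throughout the chain of equalities.
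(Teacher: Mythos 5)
Your plan is to apply Lemma \ref{lem:positive-triples} directly to the triple $(\mix,\wpb,\wp)$, which differs from the paper's route: the paper introduces the auxiliary matching $\wp_{2j}$ (a codimension-one neighbour of $\wpb$ containing the even cup $\cup_{2j}$ through which $\wp$ factors) and then invokes Corollary \ref{cor:positivity_triple_codim_1} together with Lemma \ref{Lem:ConormalTripleProducts} applied to the quadruple $(L_{\wp_{2j}}, L_{\wpb}, L_{\wp}, L_{\mix})$.

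There is a genuine gap in your non-vanishing argument. To apply Lemma \ref{lem:positive-triples} you must exhibit a non-zero non-cyclic triple product in $(\mix,\wpb,\wp)$, and at the crucial moment you assert that the cyclic product $\alpha_{\mix, \wp} \cdot \alpha_{\wpb, \mix} \cdot \alpha_{\wp, \wpb}$ ``is non-zero by the positive triangle condition.'' This is false: by Definition \ref{Defn:PositiveBasis}, ``positive'' explicitly includes the value zero, and the positive-triangle condition of Lemma \ref{lem:positive-triples-cyclic} places no non-vanishing constraint whatsoever. Worse, in the step~(5) situation the cyclic triple product very often \emph{does} vanish for degree reasons: its image lies in $H^*(L_\wp)$ in (even) degree $2\big(3n - c(\mix,\wp) - c(\wpb,\mix) - c(\wp,\wpb)\big) = 2\big(3n - 1 - c(\mix,\wp) - c(\wp,\wpb)\big)$, and since $\wp$ shares no arcs with $\wpb$ (having no odd cup) and is distinct from $\mix$, the codimensions satisfy $c(\mix,\wp), c(\wp,\wpb) \le n-1$, so the degree can easily exceed $2n$, in which case the product is identically zero. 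Note also that the same defect contaminates the earlier ``sign-pinning'' step of your argument: if the cyclic products all vanish, they pin down nothing. The intermediate non-vanishing facts you do have, namely $\alpha_{\wpb,\mix}\cdot\alpha_{\wp,\wpb}\neq 0$ and $\alpha_{\mix,\wp}\cdot\alpha_{\wp,\mix}\neq 0$ (from Lemma \ref{Lem:PlaitMixNonzeroProducts} and from positivity of the pair $(\mix,\wp)$), do not compose to give non-vanishing of the triple, precisely because the subsequent multiplication may land above the top degree.

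This is exactly the obstacle the paper's auxiliary matching $\wp_{2j}$ is designed to sidestep: the codimension-one pair $(L_{\wpb}, L_{\wp_{2j}})$ supplies the needed non-vanishing via Lemma \ref{Lem:NonzeroProducts} (feeding into Corollary \ref{cor:positivity_triple_codim_1}), and the transfer of positivity from the triangle $(L_{\wp_{2j}}, L_{\wpb}, L_{\wp})$ to the triangle $(L_{\mix}, L_{\wpb}, L_{\wp})$ is done by the quadruple Lemma \ref{Lem:ConormalTripleProducts}, whose non-vanishing hypotheses are again controllable through the codimension-one pair. Your argument cannot be repaired within the triple $(\mix,\wpb,\wp)$ alone; some fourth matching meeting $\wpb$ in codimension one is essential.
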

\begin{proof}
Fix an integer $j$ so that $L_{\wp}$ lies in the image of $\cup_{2j}$. Given the choices of bases on the pairs $ (L_{\wpb}, L_{\wp_{2j}}) $ and $ (L_{\wp}, L_{\wp_{2j}}) $,  which are induced by the K\"unneth theorem, and noting that the pair $ (L_{\wpb}, L_{\wp_{2j}})  $ meet in codimension $1$,  Corollary \ref{cor:positivity_triple_codim_1} implies that there is a unique choice of bases for the pair $(L_{\wpb}, L_{\wp})$ so that the triple $(L_{\wp_{2j}}, L_{\wpb}, L_{\wp})$ is positive. To show that the choice of basis fixed in step (5) agrees with this new one, it suffices to show that the triangle $ (L_{\mix}, L_{\wpb}, L_{\wp}) $ is positive for the new choice of basis for the pair  $(L_{\wpb}, L_{\wp})$. This is immediate from Lemma \ref{Lem:ConormalTripleProducts}, applied to the quadruple $(L_{\wp_{2j}}, L_{\wpb}, L_{\wp}, L_{\mix})$:
\begin{equation}
  \xymatrix{
&L_{\mix} \ar@{-}[ldd] \ar@{-}[d] \ar@{-}[rdd] & \\
& L_{\wpb} & \\
L_{\wp} \ar@{.}[ur] \ar@{-}[rr] & & L_{\wp_{2j}} \ar@{-}[ul]
}
\end{equation}
Lemma \ref{lem:mix-plait-positive} implies that the triple $(L_{\wp_{2j}}, L_{\wpb}, L_{\mix})$ is positive. On the other hand, the triple $(L_{\wp_{2j}}, L_{\wpb}, L_{\wp})  $ is positive by construction, while  $(L_{\wp_{2j}}, L_{\wp}, L_{\mix})$ is positive by K\"unneth.
\end{proof}

\begin{lem} \label{lem:triple_plait_positive}
All triples containing $\wpb$ are positive. In particular, all pairs are positive.
\end{lem}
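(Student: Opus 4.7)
The plan is to induct on $k = n - c(\wp, \wp')$, the codimension of the non-$\wpb$ pair inside the triple $(\wpb, \wp, \wp')$. The triangle $(\wpb, \wp, \wp')$ is positive by construction in step (6) of Section \ref{Sec:Induction}, and the two pairs $(\wpb, \wp)$ and $(\wpb, \wp')$ are already positive by K\"unneth in steps (2)--(3) or by Lemma \ref{lem:other_basis_also_works}. Hence it suffices to show that $(\wp, \wp')$ itself is a positive pair. The base case $k \leq 1$ is immediate: if $\wp = \wp'$ there is nothing to prove, and if $(\wp, \wp')$ meet in codimension one then Corollary \ref{cor:positivity_triple_codim_1} yields the positive triple directly.

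For $k \geq 2$, I would invoke Lemma \ref{Lem:Codim1Interpolate} to choose an intermediate matching $\wp''$ with $c(\wp, \wp'') = n-k+1$ and $c(\wp'', \wp') = n-1$, so that $(\wp, \wp'')$ has codimension $k-1$ and $(\wp'', \wp')$ has codimension one. One may take $\wp'' \neq \wpb$; in the degenerate alternative $\wp'$ itself meets $\wpb$ in codimension one, and Corollary \ref{cor:positivity_triple_codim_1} again finishes the job. The inductive hypothesis then guarantees that the triples $(\wpb, \wp, \wp'')$ and $(\wpb, \wp'', \wp')$ are positive.

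Next, apply Lemma \ref{Lem:ConormalTripleProducts} to the quadruple $(\wp_0, \wp_1, \wp_2, \wp_3) = (\wpb, \wp, \wp', \wp'')$: the two triples containing $(\wpb, \wp'')$ are positive by the previous step, the triangle $(\wpb, \wp, \wp')$ is positive by step (6), and the required non-vanishing of the cyclic quadruple products follows, as in the remark immediately after Lemma \ref{Lem:ConormalTripleProducts}, from Lemmas \ref{Lem:NonzeroProducts}, \ref{Lem:EverythingFromMinimalDegree} and \ref{Lem:PlaitMixNonzeroProducts}. The output is that the triangle $(\wp, \wp', \wp'')$ is positive. Applying Corollary \ref{cor:positivity_triple_codim_1} to this triangle, using that $(\wp', \wp'')$ meets in codimension one and that the pairs $(\wp, \wp'')$ and $(\wp'', \wp')$ are already known to be positive, shows that $(\wp, \wp')$ is a positive pair. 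This completes the induction, and the ``in particular'' clause follows because every pair sits inside some positive triple with $\wpb$.

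The main technical obstacle is verifying the non-vanishing of the cyclic quadruple products fed into Lemma \ref{Lem:ConormalTripleProducts}. The idea is to factor $\alpha_{\wp, \wp'}$ along the minimal interpolating sequence through $\wp''$ using Lemma \ref{Lem:EverythingFromMinimalDegree}, so that the quadruple product is expressed as a composition in which the codimension-one pair $(\wp', \wp'')$ contributes a nonvanishing round-trip product in $HF^*(L_{\wp'}, L_{\wp'})$, while Lemma \ref{Lem:PlaitMixNonzeroProducts} handles the segments of the product passing through $\wpb$. This is precisely the kind of cancellation-free iterated product alluded to in the paragraph following Lemma \ref{Lem:ConormalTripleProducts}.
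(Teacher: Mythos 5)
Your overall strategy---fix a codimension-one auxiliary matching $\wp''$, apply Lemma~\ref{Lem:ConormalTripleProducts} to a quadruple containing $\wpb$ and $\wp''$, then invoke Corollary~\ref{cor:positivity_triple_codim_1}---is the same as the paper's. But the paper inducts on $c(\wp,\wpb)$ (decreasing) rather than on the codimension $n-c(\wp,\wp')$, and chooses $\wp''$ to meet $\wp$ in codimension one with $c(\wp'',\wpb)=c(\wp,\wpb)+1$, i.e.\ $\wp''$ sits one step \emph{closer to $\wpb$ than $\wp$ does}. This is not a cosmetic difference. The choice is tailored precisely so that the hypothesis of Lemma~\ref{Lem:ConormalTripleProducts} (non-vanishing of both cyclic quadruple products) can actually be verified: the partial product $\alpha_{\wp,\wp''}\cdot\alpha_{\wp',\wp}$ sits inside the cyclic product, Lemma~\ref{Lem:NonzeroProducts} identifies it with $\pm(v_1+v_2)\alpha_{\wp',\wp''}$ or $\pm\alpha_{\wp',\wp''}$, and the remaining triple product---which now passes only through $\wpb$, $\wp'$, $\wp''$---is controlled by Lemma~\ref{Lem:PlaitMixNonzeroProducts} together with the degree bound coming from the conormal constraint \eqref{dimconstraint} on the triangle $(\wpb,\wp,\wp')$.

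In your set-up the codimension-one pair is $(\wp',\wp'')$ with $\wp''$ the penultimate step of the interpolation between $\wp$ and $\wp'$; the relation of $\wp''$ to $\wpb$ is unconstrained. This is where the gap is. When you cite Lemmas~\ref{Lem:NonzeroProducts}, \ref{Lem:EverythingFromMinimalDegree}, \ref{Lem:PlaitMixNonzeroProducts} and the remark following Lemma~\ref{Lem:ConormalTripleProducts} for the non-vanishing, you are invoking exactly the machinery that the paper deploys, but without the structural features that make it go through. Concretely, writing out the degree of the cyclic product $\alpha_{\wpb,\wp}\cdot\alpha_{\wp'',\wpb}\cdot\alpha_{\wp',\wp''}\cdot\alpha_{\wp,\wp'}$ and comparing with the top degree $2n$ of $HF^*(L_\wp,L_\wp)$, the condition for this not to vanish for degree reasons alone is $k+1\le c(\wpb,\wp)+c(\wpb,\wp'')$. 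Using the dimension constraint \eqref{dimconstraint} for the triangle $(\wpb,\wp,\wp'')$ this reduces to asking that the triple intersection $\hat L_{\wpb}\cap\hat L_\wp\cap\hat L_{\wp''}$ have dimension $\ge 4$, which can fail (the generic triple meets only along the small antidiagonal). With the paper's choice the analogous bound reduces, via \eqref{dimconstraint} applied to the triangle $(\wpb,\wp,\wp')$ and the relation $c(\wp'',\wpb)=c(\wp,\wpb)+1$, to an inequality that is automatically satisfied. So either you must verify the degree estimate for your specific $\wp''$ (and this is not automatic), or you should adjust the construction to pick $\wp''$ adjacent to $\wp$ and one step nearer $\wpb$, which is in effect what the paper does.

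One small separate point: your parenthetical ``one may take $\wp''\ne\wpb$'' needs justification --- Lemma~\ref{Lem:Codim1Interpolate} constructs a particular interpolating sequence, and if the only interpolator at that step is $\wpb$ you need the separate argument you sketch (codimension-one $(\wpb,\wp')$, then Corollary~\ref{cor:positivity_triple_codim_1}). That part of your argument is fine; the real issue is the non-vanishing above.
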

\begin{proof}
Recall that matchings $\wp$ and $\wp''$ meet in codimension $n-c(\wp,\wp'')$, where $c(\wp,\wp'')$ is the number of components of the planar unlink $\wp\cup \overline{\wp''}$. Assume, by decreasing induction on $c(\wp,\wpb) $ , that the triple $(\wp, \wp', \wpb) $ is positive whenever $c(\wp,\wpb) \geq  c(\wp',\wpb) $. The base case $\wp = \wpb$ is a reformulation of the positivity of pairs containing $\wpb$.

In  the inductive step, we consider a Lagrangian $L_{\wp''}$ so that $ c(\wp, \wp'') = n-1 $, and $c(\wp, \wpb) = c(\wp'', \wpb) -1$. By the inductive hypothesis, in the quadruple $(L_{\wpb}, L_{\wp}, L_{\wp'}, L_{\wp''})$, the triples containing $(L_{\wpb}, L_{\wp''})$ are both positive. The codimension $1$ condition for $\wp$ and $\wp''$ implies, via Lemma \ref{lem:OneArcChanged} and Lemma \ref{Lem:NonzeroProducts}, that either
\begin{equation}
  \alpha_{\wp, \wp'} \cdot \alpha_{\wp'', \wp} = \pm \alpha_{\wp'', \wp'} \textrm{ or }    \alpha_{\wp'', \wp} \cdot \alpha_{\wp', \wp''} = \pm \alpha_{\wp', \wp}.
\end{equation}
The two cases are similar; we consider the first. Reversing the roles of $\wp$ and $\wp''$ in Lemma \ref{Lem:NonzeroProducts}, we see that multiplication by $\alpha_{\wp,\wp''}$ is also necessarily non-trivial, yielding
\begin{equation}
    \alpha_{\wp, \wp''} \cdot \alpha_{\wp', \wp} = \pm (v_1+v_2) \alpha_{\wp', \wp''}.
\end{equation}
We therefore obtain a non-zero product among the Lagrangians $  (L_{\wpb}, L_{\wp'}, L_{\wp}, L_{\wp''})$ in forward and backward ordering. Applying Lemma   \ref{Lem:ConormalTripleProducts}, we conclude that the triangle  $   (L_{\wp'}, L_{\wp}, L_{\wp''}) $ is positive. The inductive hypothesis yields that the pairs containing $L_{\wp''} $ are positive. Since $  ( L_{\wp}, L_{\wp''})  $ is a codimension $1$ pair, we conclude that the triple $   (L_{\wp'}, L_{\wp}, L_{\wp''}) $ is positive, hence the pair $ (L_{\wp'}, L_{\wp}) $ is positive, which completes the proof of the inductive step. 
\end{proof}

\begin{Lemma} \label{Lem:ProductsOK}
For the bases constructed by the induction of Section \ref{Sec:Induction}, all Floer products are positive in the sense of Definition \ref{Defn:PositiveBasis}.
\end{Lemma}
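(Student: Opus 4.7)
I would reduce positivity of an arbitrary product to positivity of codimension-one triangles, which was already established during the proof of Lemma \ref{lem:triple_plait_positive}. Indeed, the inductive step of that lemma concludes, via Lemma \ref{Lem:ConormalTripleProducts} applied to the quadruple $(\wpb, \wp', \wp, \wp'')$, that the triangle $(L_{\wp'}, L_{\wp}, L_{\wp''})$ is positive for arbitrary $\wp'$ and any codimension-one pair $(\wp, \wp'')$; the required non-vanishing of cyclic quadruple products is supplied by the codimension-one structure together with Lemma \ref{Lem:NonzeroProducts}. Since $c(\cdot, \wpb)$ changes by exactly one across any codim-1 move (by the observation following Lemma \ref{Lem:Meet_codimension_one}), every codimension-one pair is reached by the induction, and \emph{every} triangle containing a codim-1 pair is positive.

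Given arbitrary matchings $\wp, \wp', \wp''$, fix a codimension-one interpolating sequence $\wp = \sigma_0, \sigma_1, \ldots, \sigma_k = \wp'$ from Lemma \ref{Lem:Codim1Interpolate}, so $c(\wp, \sigma_j) = n - j$. A short induction on $j$ shows that $\gamma_{j+1} := \alpha_{\sigma_j, \sigma_{j+1}} \cdot \gamma_j$, with $\gamma_1 = \alpha_{\sigma_0, \sigma_1}$, satisfies $\gamma_j = +\alpha_{\wp, \sigma_j}$ on the nose: each inductive step is a product in the triangle $(\wp, \sigma_j, \sigma_{j+1})$, which contains a codim-1 pair, so codim-1 triangle positivity together with Lemma \ref{Lem:FormOfMinDegreeProduct} (the degree forcing no central factor to appear) yields $+\alpha_{\wp, \sigma_{j+1}}$. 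In particular the sign in Lemma \ref{Lem:EverythingFromMinimalDegree} is $+$ for this sequence:
\[
\alpha_{\wp, \wp'} \;=\; \alpha_{\sigma_{k-1}, \sigma_k} \cdot \alpha_{\sigma_{k-2}, \sigma_{k-1}} \cdots \alpha_{\sigma_0, \sigma_1}.
\]

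By associativity, the target product now reads
\[
\alpha_{\wp', \wp''} \cdot \alpha_{\wp, \wp'} \;=\; (\cdots((\alpha_{\wp', \wp''} \cdot \alpha_{\sigma_{k-1}, \sigma_k}) \cdot \alpha_{\sigma_{k-2}, \sigma_{k-1}})\cdots) \cdot \alpha_{\sigma_0, \sigma_1},
\]
which I would evaluate by bracketing from the left. By induction and centrality of $H^\ast(\scrY_n)$, each partial product $\beta_j$ takes the form $\prod_i(v_{i_1} + v_{i_2}) \cdot \alpha_{\sigma_{k-1-j}, \wp''}$; multiplying by the next codim-1 generator $\alpha_{\sigma_{k-2-j}, \sigma_{k-1-j}}$ gives a pairwise product in a triangle whose first pair is codim one. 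Codim-1 triangle positivity combined with Lemma \ref{Lem:NonzeroProducts} then produces $\beta_{j+1}$ in the same positive form, possibly acquiring one additional $(v_{i_1}+v_{i_2})$ factor from Case 2. Iterating through all $k$ steps yields the identity $\alpha_{\wp', \wp''} \cdot \alpha_{\wp, \wp'} = \prod_j(v_{j_1}+v_{j_2}) \cdot \alpha_{\wp, \wp''}$ with $+$ sign (or equals $0$ if some intermediate $\beta_j$ vanishes), matching Definition \ref{Defn:PositiveBasis}. The main point requiring care is that the $+$ sign accumulates cleanly through all $k$ codim-1 multiplications; but since each step is positive by codim-1 triangle positivity and the central classes commute freely with the Floer product, this is routine.
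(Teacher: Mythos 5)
Your argument is correct and follows essentially the same route as the paper: reduce arbitrary products to codimension-one triangles, and settle those via the quadruple $(\wpb, \wp', \wp, \wp'')$ together with Lemma~\ref{Lem:ConormalTripleProducts}. The paper simply does these two steps in the opposite order, citing Lemma~\ref{Lem:EverythingFromMinimalDegree} in one sentence for the reduction and then running the quadruple argument; your left-bracketed iteration through the interpolating sequence is a correct and more detailed rendering of that one sentence, spelling out how the $+$ signs accumulate via centrality of $H^*(\scrY_n)$.

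One small imprecision worth flagging: codimension-one triangle positivity for \emph{arbitrary} $\wp'$ is not literally a by-product of the inductive step of Lemma~\ref{lem:triple_plait_positive}. Inside that induction, the hypothesis constrains the third matching by $c(\wp', \wpb) \leq c(\wp, \wpb)$, so the conclusion is only reached for those $\wp'$. What is actually true---and what the paper's proof does---is that once Lemma~\ref{lem:triple_plait_positive} has been fully established, \emph{all} triples containing $\wpb$ are positive, and re-running the same quadruple argument with those inputs now handles arbitrary $\wp'$. Your paragraph effectively makes this move (you invoke the conclusion of the lemma, not an intermediate state of the induction), so the proof goes through, but the attribution to ``during the proof'' is slightly misleading.
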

\begin{proof}
The statement is equivalent to all triples being positive, and Lemma \ref{Lem:EverythingFromMinimalDegree} implies it suffices to know positivity for triples in which one pair is of codimension $1$. Given a codimension one pair $(\wp,\wp')$ and  another matching $\wp_{\dagger}$, consider:
\[
\xymatrix{
&L_{\wp} \ar@{.}[ldd] \ar@{-}[d] \ar@{-}[rdd] & \\
& L_{\wp'} & \\
L_{\wp_{\dagger}} \ar@{.}[ur] \ar@{-}[rr] & & L_{\wpb} \ar@{-}[ul]
}
\]
By Lemma \ref{lem:triple_plait_positive}, the triples involving $\wpb$ are positive. Lemma \ref{Lem:ConormalTripleProducts} then says that the remaining triple is positive.
\end{proof}

\begin{Lemma}  \label{Lem:CupsOK}
The bases constructed by the inductive procedure of Section \ref{Sec:Induction} are compatible with all cup functors $\cup_i: \scrH_{n-1}^{symp} \rightarrow \scrH_n^{symp}$.
\end{Lemma}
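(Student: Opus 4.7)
The approach will use the formality of $\cup_i$ (Lemmas \ref{lem:OddCupFormal}--\ref{lem:EvenCupFormal}): the cup functor descends to a genuine functor on cohomology that preserves Floer products, and under the K\"unneth isomorphism $HF^*(L_{\cup_i \wp}, L_{\cup_i \wp'}) \cong HF^*(L_\wp, L_{\wp'}) \otimes H^*(V_i)$ it acts by $x \mapsto x \otimes 1$. In particular it sends minimal-degree generators to minimal-degree generators, so $\cup_i \alpha_{\wp,\wp'} = \pm \alpha_{\cup_i \wp, \cup_i \wp'}$, and the Lemma reduces to fixing the sign to be $+1$. I will argue by induction on $n$ (with base case $n = 3$ already addressed in the text), treating $i$ odd and $i$ even symmetrically by routing through $\wpb^n$ and $\mix^n$ respectively.

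The ``seed'' compatibility comes directly from the construction. When $i$ is odd, $\wpb^n = \cup_i \wpb^{n-1}$; by the consistency lemma asserting that the choice of odd cup is irrelevant in step (2), applying step (2) at level $n$ through the cup at position $i$ tautologically identifies $\alpha_{\wpb^n, \cup_i \wp} = \cup_i \alpha_{\wpb^{n-1}, \wp}$. Symmetrically, when $i$ is even, step (3) gives $\alpha_{\mix^n, \cup_i \wp} = \cup_i \alpha_{\mix^{n-1}, \wp}$ via $\mix^n = \cup_i \mix^{n-1}$. This handles all pairs where one endpoint is $\wpb^n$ (for $i$ odd) or $\mix^n$ (for $i$ even).

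For a general pair $(L_{\cup_i \wp}, L_{\cup_i \wp'})$ I will use a positive reference triangle. For $i$ odd, the triangle $(L_{\cup_i \wp}, L_{\cup_i \wp'}, L_{\wpb^n})$ from step (6) characterises $\alpha_{\cup_i \wp, \cup_i \wp'}$ uniquely, and the two sides involving $\wpb^n$ are now seed-compatible. The inductive hypothesis at level $n-1$ gives a positive identity
\[
\alpha_{\wpb^{n-1},\wp'}\cdot\alpha_{\wp,\wpb^{n-1}} \;=\; c\cdot\alpha_{\wp,\wp'}
\]
with $c$ a positive element of $H^*(\scrY_{n-1})$. Applying $\cup_i$ and using formality transports this to the analogous upstairs identity with $\cup_i\alpha_{\wp,\wp'}$ in place of $\alpha_{\cup_i\wp,\cup_i\wp'}$, where the central coefficient remains a positive combination of basis elements of $H^*(\scrY_n)$ by Corollary \ref{Cor:Same} (the distinguished positive basis of $H^*(\scrY)$ is preserved under the relevant restriction/pullback). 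Step (6) upstairs then forces the sign to agree. For $i$ even I replace the reference triangle by $(L_{\cup_i \wp}, L_{\cup_i \wp'}, L_{\mix^n})$: by Lemma \ref{Lem:ProductsOK} this triangle is automatically positive, so it characterises the same basis $\alpha_{\cup_i \wp, \cup_i \wp'}$, and its reference sides now lie in the image of $\cup_i$ by the seed case. The remainder of the argument is identical.

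The main obstacle will be the asymmetry of step (6), which privileges $\wpb^n$ over $\mix^n$: because $\wpb^n$ is never in the image of $\cup_i$ when $i$ is even, a direct K\"unneth comparison through the $\wpb^n$-triangle is unavailable in that case. The escape is to reformulate the characterising positivity condition through $\mix^n$, which is legitimate by Lemma \ref{Lem:ProductsOK} and which matches the cup functor's action. The remaining sign bookkeeping reduces to verifying that $\cup_i$ preserves the distinguished positive basis of $H^*(\scrY)$ under the restriction map, which is built into Corollary \ref{Cor:Same}.
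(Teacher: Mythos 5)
Your proof is correct and takes essentially the same route as the paper: the even-cup consistency is resolved by comparing the positive triple $(L_{\mix}, L_{\cup_{2i}\wp}, L_{\cup_{2i}\wp'})$ in its inductively-constructed and K\"unneth bases, using Lemmas \ref{Lem:PlaitMixNonzeroProducts} and \ref{Lem:ProductsOK}, with $\mix$ playing the role that $\wpb$ plays for odd cups. The paper treats the odd case as immediate from Steps (2) and (6) rather than spelling out the seed-and-propagate structure, and it does not invoke formality of $\cup_i$ (the K\"unneth isomorphism of Floer cohomology groups already determines the products at the cohomological level), but these are presentational rather than substantive differences.
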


\begin{proof}
Compatibility with the odd cup functors is immediate from Step (2) of the construction, and the fact that at Step (6) arbitrary bases are defined by comparison with $\wpb$ which contains all the odd cups.  (Compatibility with the even cup functors requires a consistency check, since we have broken symmetry between $\wpb$ and $\mix$ in Step (6) of the induction.)  So suppose we have two matchings which are both in the image of an even cup functor, $\cup_{2i}\wp$ and $\cup_{2i}\wp'$. The argument then follows the same strategy as in Lemma \ref{lem:other_basis_also_works}. We consider the configuration
\[
\xymatrix{
& L_{\cup_{2i}\wp} \ar@{.}[d] \\
L_{\mix} \ar[ur] \ar[r] & L_{\cup_{2i}\wp'}
}
\]
which admits a non-trivial product involving all 3 Lagrangians, by Lemma \ref{Lem:PlaitMixNonzeroProducts}. 
The K\"unneth theorem in Floer cohomology, and the fact that all products are positive by Lemma \ref{Lem:ProductsOK}, implies that \emph{if we use K\"unneth bases} for the three groups in the triple, all of which lie in the image of $\cup_{2i}$, then the triple is positive. Lemma \ref{Lem:ProductsOK} implies that the triple is also positive with respect to the basis we constructed in the induction, hence the two bases agree.
\end{proof}

The combinatorial arc algebra underlying Khovanov homology is based on a 2d TQFT with underlying Frobenius algebra $V=\bZ\langle 1,x\rangle$ with the property that the co-product 
\[
V \rightarrow V\otimes V, \qquad 1 \mapsto 1\otimes x + x\otimes 1
\]
gives a positive linear combination of elements of the tensor product basis: the two terms in the final expression have the same sign (rather than $1\otimes x - x \otimes 1$). The corresponding positivity in our setting was contained in Lemma \ref{Lem:NonzeroProducts}.

\begin{Corollary} \label{Cor:WeWin} The positive basis for $\scrH_n^{symp}$ constructed previously defines an isomorphism $\scrH_n^{symp} \rightarrow H_n$ which entwines the $\cup_i$ and $\cup_i^{comb}$.
\end{Corollary}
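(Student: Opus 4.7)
The plan is to argue by induction on $n$, with the base case $n=2$ supplied by Proposition \ref{Prop:n=2}. Given the positive basis for $\scrH_n^{symp}$ constructed in Section \ref{Sec:Induction}, together with the standard tensor-product basis for $H_n$ arising from the Frobenius algebra $V = \bZ\langle 1, x\rangle$, I would define $\Phi_n \co \scrH_n^{symp} \to H_n$ summand by summand: for each pair $(\wp, \wp')$, send the chosen minimal degree generator $\alpha_{\wp, \wp'}$ to the element $1^{\otimes c(\wp,\wp')} \in V^{\otimes c(\wp, \wp')}$, and extend by equivariance with respect to the module action. On the symplectic side this action is that of $H^{*}(\scrY_n)$ via Corollary \ref{cor:HF_simple_over_center}, and on the combinatorial side it is the obvious action by tensor products of $V$; the two are identified through the basis of $H^{2}(\scrY_n)$ in Corollary \ref{Cor:Same} and its restriction to each compact core component $\hat L_\wp$.

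The key verification is that $\Phi_n$ is an algebra homomorphism. Cyclicity (Corollary \ref{cor:HF_simple_over_center}) reduces this to matching products of minimal degree generators and checking that multiplication by central classes on the symplectic side corresponds to the tensor-factor-wise action of $V$ on the combinatorial side. For the products, Lemma \ref{Lem:FormOfMinDegreeProduct} writes
\[
\alpha_{\wp',\wp''}\cdot \alpha_{\wp,\wp'} \;=\; \pm\,\prod_j (v_{j_1}+v_{j_2})\,\alpha_{\wp,\wp''},
\]
where each $v_{j_1}+v_{j_2}$ is Poincar\'e dual to an antidiagonal in a pair of $S^2$-factors corresponding to a circle merge or split in the unlink $\wp\cup\overline{\wp'}\cup\overline{\wp''}$. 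This is precisely the shape of the combinatorial product in $H_n$, and Lemma \ref{Lem:ProductsOK} guarantees that the sign is a $+$, so $\Phi_n$ preserves products. The module-action compatibility is essentially tautological from the choice of basis for $H^2(\scrY_n)$ in Corollary \ref{Cor:Same} together with Lemma \ref{Lem:IntersectionLocusIsCorrect}.

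Compatibility with cup functors, $\Phi_{n+1}\circ \cup_i = \cup_i^{comb}\circ \Phi_n$, is then Lemma \ref{Lem:CupsOK}: both functors insert a new $V$-factor by the K\"unneth rule, and the inductive construction of the symplectic basis was arranged precisely so that this K\"unneth insertion matches the combinatorial tensor-factor insertion. The main substantive obstacle — the sign analysis making all products positive in the chosen symplectic bases — has already been handled in Section \ref{Sec:Induction}; the one thing to check beyond book-keeping is that the two ways of identifying $H^{*}(L_{\cup_i \wp})$ with a tensor product (via Corollary \ref{Cor:Same} versus via K\"unneth for the cup functor) coincide, which is the content of Corollary \ref{Cor:Same} together with the orientation convention on the vanishing cycles $V_i$ fixed after Lemma \ref{Lem:IsotopicCoreVCycle}. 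With these identifications in place, $\Phi_n$ is a bijection between bases that preserves the algebra and cup-functor structures, giving the claimed isomorphism.
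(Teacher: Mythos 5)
Your overall architecture matches the paper's: fix minimal degree generators, extend by the $H^*(\scrY_n)$ module action using Corollary \ref{cor:HF_simple_over_center}, reduce to codimension-one products via Lemma \ref{Lem:EverythingFromMinimalDegree}, and invoke the positivity established in Lemma \ref{Lem:ProductsOK}, then carry cup functors along via Lemma \ref{Lem:CupsOK}. (Note your opening mention of induction on $n$ is a bit of a red herring --- once the positive bases of Section \ref{Sec:Induction} are in place, the comparison of products is a direct check, not an inductive one, which is also how the paper runs it.)

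Where you diverge from the paper, and where a genuine gap appears, is in the sentence ``This is precisely the shape of the combinatorial product in $H_n$.'' You are comparing the output of Lemma \ref{Lem:FormOfMinDegreeProduct} directly against Khovanov's TQFT model, with $V = \bZ\langle 1,x\rangle$ and its Frobenius structure. But the content still to be supplied is exactly the identification of the antidiagonal class $(v_{j_1}+v_{j_2}) \in H^2(\scrY_n)$ with the Frobenius coproduct $\Delta(1) = 1\otimes x + x\otimes 1$ inserted by a split, and of the action of $H^*(\scrY_n)$ on $V^{\otimes c(\wp,\wp')}$ with the tensor-factor-wise multiplication. Neither of these is tautological in the TQFT model: the saddle moves in Khovanov's cube are local, whereas your classes live in the ambient $H^2(\scrY_n)$, and you need to know which ambient generators restrict to which tensor factors and with which signs. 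This is precisely what the Stroppel--Webster description of $H_n$ as $\oplus_{\wp,\wp'} H^*(\hat L_\wp \cap \hat L_{\wp'})$ with a convolution product supplies, and their theorem (that this coincides with Khovanov's TQFT product) is the bridge the paper uses; the paper then compares Lemma \ref{Lem:NonzeroProducts} against the explicit ``push-forward'' formula $\pm(z_{j,i+1}+z_{\sigma(j),i+1})$ in \cite[Section 4.2]{SW}. Also, be careful with your phrasing ``corresponding to a circle merge or split'': in Lemma \ref{Lem:NonzeroProducts}, merges (case (1)) contribute no $H^2$-factor at all, and only splits (case (2)) introduce the class $(v_{j_1}+v_{j_2})$. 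So the argument as written is under-justified at its crux; it is not wrong, but it silently assumes a comparison of geometric and diagrammatic products that needs the Stroppel--Webster convolution model to make rigorous.
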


\begin{proof} A helpful model of the combinatorial arc algebra for our purposes is that given by Stroppel and Webster \cite{SW}: they set 
\[
H_n = \oplus_{\wp,\wp'} H^*(\hat{L}_{\wp} \cap \hat{L}_{\wp'})
\]
and define a modified convolution product on these groups, which (they prove) co-incides with Khovanov's original TQFT product.   There are natural maps from $H^*(\scrY_n)$ to the centre of both $H_n$ and $\scrH_n^{symp}$. The clean intersection models for pairs of Lagrangians give identifications of $HF^*(L_{\wp},L_{\wp'})$ and $H^*(\hat{L}_{\wp} \cap \hat{L}_{\wp'})$ as modules over $H^*(\scrY_n)$. To see that these identifications yield an algebra isomorphism, it is sufficient to know that one can choose minimal degree generators for all groups such that all products between minimal degree generators agree.  Furthermore, we know from Lemma \ref{Lem:EverythingFromMinimalDegree} and the Stroppel-Webster algorithm that it suffices to check products of a minimal degree generator with a degree one generator arising from a pair $(\wp,\wp')$ which meet in codimension one.  

The algebra isomorphism now follows from comparing Lemma \ref{Lem:NonzeroProducts} with \cite[Section 4.2]{SW}. In particular, one should compare the ``push-forward" case on p. 503 of that paper, in which a saddle cobordism leads to multiplication by an element $\pm (z_{j,i+1} + z_{\sigma(j),i+1})$ which is a sum of two basis elements in $H^2(\scrY_n)$, with the corresponding situation in Lemma \ref{Lem:NonzeroProducts}.  The essential point is that the two $H^2$-classes which appear have the \emph{same} sign, rather than opposite signs, and in both cases represent the Poincar\'e dual in cohomology to the appropriate triple intersection submanifold.  (\cite{SW} gives an explicit sign recipe for ensuring that all the terms $\pm (z_{j,i+1} + z_{\sigma(j),i+1})$ appear with a sign $+$; our construction of a positive basis gives an implicit proof that on the symplectic side one can also ensure that only $+$ signs appear.) Compatibility of the isomorphism with cup functors follows from Lemma \ref{Lem:CupsOK}.
\end{proof}

\begin{Remark}\label{rem:modules_agree}
The isomorphism of Corollary \ref{Cor:WeWin} identifies the projective module over $\scrH_n^{symp}$ defined by the Lagrangian submanifolds $L_{\wp}$ with the projective module over $H_n$ defined by the idempotent  associated to $\wp$. In particular, the Lagrangian $L_{\wp_{\circ}}$ which enters into the definition of symplectic Khovanov cohomology is identified with the corresponding module $P_{\wp_{\circ}}$ over $H_n$. 
\end{Remark}



\section{The isomorphism}
We now combine the previous results with the exact sequence formulated and proved in Appendix \ref{Sec:LES_of_twist} to establish the isomorphism between Khovanov and symplectic Khovanov cohomology. 

\subsection{Parallel transport} \label{sec:parallel-transport}

We begin with a technical discussion of the choice of symplectic structure on the Slodowy slice, to ensure that the results of Appendix \ref{Sec:LES_of_twist} can be applied to our setting. 

We start in the following more general situation.  Suppose we have a Stein manifold $\scrX$ with a holomorphic map $\chi: \scrX \to \scrB$ to a Stein manifold $\scrB$, which is non-singular over the complement of a discriminant divisor $\Delta \subset \scrB$.  Suppose moreover we have 
 a smooth function $\psi: \scrX \to \bR$ with the properties that 
\begin{itemize}
\item the form $\omega = -dd^c\psi$ defines a K\"ahler form on $\scrX$;
\item  the set of critical values $\mathrm{Crit}(\chi)$ is proper over $\scrB\backslash \Delta$; 
\item outside a compact set, $\| \nabla \psi \|^2 \leq C \psi$ for some constant $C>0$.
\end{itemize}
The final equality is with respect to the K\"ahler metric associated to $\omega$. 
Denote by $\scrX_t$ the fibre of $\chi$ over a point $t\in \scrB$. The form $\omega$ defines an exact K\"ahler form on  $\scrX_t$, provided $t \in \scrB\backslash \Delta$ is generic so the fibre is smooth. In that case, the Liouville vector field $Z_t$ of $\psi_t = \psi|_{\scrX_t}$ satisfies 
\[
Z_{t} \cdot \psi_{t} \leq C \psi_{t}
\]
and hence has globally defined flow, so $(\scrX_t, \omega)$ is convex and has an infinite contact conical end.  Fix a compact subset $B \subset \scrB\backslash \Delta$ containing the point $t$.   In this situation, \cite{SS} explained how to relate distinct fibres of $\chi$ by ``rescaled symplectic parallel transport maps" which were defined on arbitrarily large compact subsets of the fibres.  Here we expand on \cite[Remark 30]{SS}, defining symplectic parallel transport globally for paths which stay away from the discriminant locus;  analogous arguments appear in \cite[Section 6]{khovanov-seidel} and \cite[Section 9]{Keating:freetwists}, which we follow closely.

\begin{Proposition} \label{prop:good_Kaehler_general}
There is a K\"ahler form $\omega'$ on $\scrX_t$, and an extension of this to a vertically non-degenerate closed 2-form on $\scrX$, for which  parallel transport maps are globally defined along paths in $B$. 
\end{Proposition}

\begin{proof}
Because of the properness of the critical values of $\psi$, we can find $c>0$ for which the truncated fibres $\scrX_b \cap (\psi^{-1}[0,c])$ form a family of smooth Stein domains as in \cite[Lemma 47]{SS}, and Gray's theorem implies that the contact boundaries vary locally trivially. Following \cite[Section 6]{khovanov-seidel}, we trivialise a collar neighbourhood of the horizontal boundary $\partial_{hor} \scrX$ of the fibration $\scrX|_B \cap \psi^{-1}[0,c]$ by the flow of the fibrewise Liouville field $Z_b$, defining a diffeomorphism onto its image
\begin{equation} \label{Eqn:Collar}
\partial_{hor} \scrX \times [-\epsilon, 0] \longrightarrow W \subset \scrX|_B \cap \psi^{-1}[0,c], \qquad (v,t) \mapsto \phi_{Z_{\mathbf{\chi(v)}}}^{-t}(v).
\end{equation}
If $\alpha = -d^c\psi|_{\partial_{hor} \scrX}$ is the contact 1-form on $\partial_{hor} \scrX$, we obtain a 1-form $\alpha' = e^t(\phi_Z)^*\alpha \in \Omega^1(W)$ with the property that $d\alpha'$ is fibrewise symplectic on $W$, and its associated horizontal subspaces are $\phi_{Z_t}$-equivariant in a collar neighbourhood of $\partial_{hor} \scrX$. 

Fix a cut-off function $\eta: [-\epsilon,0] \rightarrow \bR$ which equals $0$ on $[-\epsilon, -3\epsilon/4]$ and equals $1$ on $[-\epsilon/4,0]$.  Let $\widehat{\eta}: W \rightarrow \bR$ denote the function obtained from $\eta$ via \eqref{Eqn:Collar}. The form $\omega' = \omega + d(\widehat{\eta}\cdot (\alpha'+d^c\psi))$ is a closed vertically non-degenerate 2-form with the following properties:
\begin{itemize}
\item in $\scrX|_B \cap \psi^{-1}[0,c]$ it agrees with $\omega$ on the complement of the neighbourhood $W$ of the horizontal boundary;
\item its restriction to any fibre $\scrX_b \cap \psi^{-1}[0,c]$  agrees with the restriction of $\omega$, for $b \in B$;
\item the $\omega'$-parallel transport maps over $B$ commute with the Liouville flow in $W$, so are cones on contactomorphisms. 
\end{itemize}
The final condition implies that the $\omega'$-parallel transport maps have globally defined flows on the symplectic completions of the fibres (which was not obvious for the $\omega$-parallel transport maps). 
\end{proof}

We now return to the particular situation of interest in this paper.
Recall the slice $\Slice$ from \eqref{eq:y-matrix} and the adjoint quotient map \eqref{Eqn:AdjointQuotient}, which defines a holomorphic fibration $\Slice \rightarrow \Sym_{2n}(\bC)$. Let $\mathbf{w} = \{1,2,\ldots,2n\}$ and $\scrY_n = \scrY_n^{\mathbf{w}}$ denote the corresponding fibre of $\chi|_{\Slice}$. Fix a compact set $B\subset \Sym_{2n}(\bC) \backslash \Delta = \Conf_{2n}(\bC)$ which lies away from the discriminant locus of non-simple configurations (corresponding to repeated eigenvalues).

 In \cite{SS} a K\"ahler form on $\Slice$, and hence by restriction on its general fibre $\scrY_n$, is defined as follows.  For each co-ordinate $z$ of the matrix $A_i$, we take the function $\xi_i(z) = |z|^{4n/i}$.  The sum of these define a proper $C^1$-smooth function $\xi: \Slice \rightarrow \bR$, which can be smoothed near the co-ordinate hyperplanes by replacing $\xi_i \mapsto \xi_i+\eta_i$ for suitable compactly supported functions $\eta_i$.   Let $\psi$ denote the function obtained by adding up all the $\xi_i+\eta_i$ over all entries of $A\in\Slice$. It was proven in \cite[Section 5.1]{SS} that $\psi$ satisfies the conditions listed before Proposition \ref{prop:good_Kaehler_general}.
 
 For each $1 \leq i \leq 2n-1$ we consider a holomorphic map $w_i : \bC \rightarrow \Conf_{2n}(\bC)$ with $w_i(0) = \{1,2,\ldots, i-1, i+1/2, i+1/2, i+2, \ldots, 2n\}$ and with image im$(w_i) \pitchfork \Delta$ transverse at $0$ to the discriminant in the symmetric product $\Sym_{2n}(\bC)$ and otherwise disjoint from the discriminant.  Let $\mathbf{w_i} = \{1,2,\ldots, i-1, i+1,\ldots, 2n\}$.  Choosing $B$ to be sufficiently large, we pull back $\chi|_{\Slice}$ by $w_i$ to obtain a holomorphic map 
\[
W_i: E^{B}_i \rightarrow D^{2}
\]
 which is Morse-Bott-Lefschetz at $0$, with zero-fibre having singular locus canonically isomorphic to the space $\scrY_{n-1}^{\mathbf{w_i}}$, and with generic fibre $\scrY_n$. One can apply Proposition 
 \ref{prop:good_Kaehler_general} in this setting, since the required hypotheses on the K\"ahler form are preserved under pullback by $w_i$.  After a further isotopy of K\"ahler forms, it follows that parallel transport is well-defined for $W_i$, in particular the horizontal boundary $W_i^{-1}(\partial D^2)$  is exhibited as the mapping torus of a symplectomorphism.  There is another K\"ahler form in a neighbourhood of $W_i^{-1}(\partial D^2)$  which is a flat symplectic fibration in a neighbourhood of the boundary of the disc, obtained by parallel transport in radial directions from the given K\"ahler form on the boundary. The two symplectic forms agree on the boundary, and can be interpolated through forms with well-defined parallel transport. We write
 \begin{equation}
 W_i: E_i \rightarrow \bC
 \end{equation}
for an extension of the previous map $W_i$ to a fibration over $\bC$, which, outside the unit disc, is modelled after the symplectic mapping torus of the monodromy. By construction $W_i$ is an \emph{exact LG model} in the sense of the Appendix, whose restriction to a neighbourhood of the critical point agrees with the map constructed  in \cite{SS}.

\begin{Proposition} \label{prop:good_Kaehler}
 There is a K\"ahler form $\omega'$ on $\scrY_n$, and an extension of this to a vertically non-degenerate closed 2-form on $\Slice$, for which  parallel transport maps are globally defined along paths in $B$.  Writing $\Symp_\infty(\scrY_n)$ for the group of symplectomorphisms of $(\scrY_n, \omega')$ which are modelled on contactomorphisms at infinity, there is a global monodromy representation
\[
Br_{2n} \longrightarrow \pi_0\Symp_{\infty}(\scrY_n).
\]
In particular, the monodromy
$\tau_i: \scrY_n \longrightarrow \scrY_n $ of $W_i$ is well-defined in $\pi_0\Symp_{\infty}(\scrY_n)$.
\end{Proposition}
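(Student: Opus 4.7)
The plan is to adapt the braid group construction of \cite{SS}, refining it so that the resulting monodromies are well-defined elements of $\pi_0\Symp_\infty(\scrY_n)$ rather than the group of compactly supported symplectic isotopy classes.

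First I would produce the form. Take the standard flat K\"ahler form on $\mathfrak{gl}_{2n}(\bC)$ associated to the Hermitian inner product $\langle A, B \rangle = \mathrm{tr}(AB^*)$, with K\"ahler potential $\rho(A) = \tfrac{1}{2}\|A\|^2$, and pull back to $\Slice$ along the inclusion. Since $\Slice$ is a complex affine submanifold and $\chi|_{\Slice}$ is a holomorphic submersion over $B$, the resulting closed $(1,1)$-form $\omega_\Slice$ is vertically non-degenerate on $\chi^{-1}(B)$ and restricts to a K\"ahler form $\omega'$ on each smooth fibre. The orthogonal complement of the vertical distribution defines the symplectic connection, whose horizontal lifts we then wish to integrate globally along paths in $B$.

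The second step is completeness of horizontal lifts. Because $\chi$ is polynomial and the ambient metric on $\mathfrak{gl}_{2n}(\bC)$ is flat, the horizontal lift $\xi^h$ of a bounded vector field $\xi$ on a compact subset of $B$ admits an estimate of the form $|\xi^h(\rho)| \le C(1+\rho)$ on the preimage in $\Slice$, with $C$ depending only on the compact subset traversed.  This linear-in-$\rho$ bound produces a Gronwall-type inequality along flow lines, guaranteeing completeness of the flow and hence global existence of parallel transport maps $\scrY_n^{\mathbf{w}_0} \to \scrY_n^{\mathbf{w}_1}$ along any path in $B$.  Each such map is a symplectomorphism because the symplectic connection preserves the fibrewise two-forms.

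The main obstacle is upgrading this to control at infinity, identifying the monodromies as elements of $\pi_0\Symp_\infty(\scrY_n)$.  To this end I would invoke the Cautis-Kamnitzer fibrewise compactification $\mathbf{Y}_n \to \bC^{2n}$ from Section \ref{Sec:Flags}.  The form $\omega_\Slice$ does not extend smoothly across the divisor $\mathbf{Y}_n \setminus \mathbf{Y}_n^{\open}$, but it admits a modification supported in a neighbourhood of infinity in each fibre which remains cohomologous to it (and is vertically non-degenerate), and is of contact type at infinity relative to a trivialisation of a collar neighbourhood of the divisor.  After this modification, the horizontal distribution near infinity is adapted to the cylindrical collar associated to the Liouville primitive of $\omega'$, and continuity ensures that parallel transport preserves the divisor at infinity and hence the contact structure on its link; on the open fibre $\scrY_n$ this precisely says that the monodromy is modelled on a contactomorphism at infinity.

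Finally, composition of parallel transports under concatenation of paths and invariance under homotopies rel.\ endpoints (via a standard Moser argument applied to the family of connections) shows that the monodromy of a loop in $B$ depends only on its homotopy class.  Taking $B$ to contain the half-twist paths $W_i$ together with loops realising the braid relations yields the homomorphism $Br_{2n} \to \pi_0 \Symp_\infty(\scrY_n)$, and the image of $W_i$ is by definition $\tau_i$.
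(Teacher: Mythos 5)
Your strategy is genuinely different from the paper's, but it has a serious gap at its first substantive step. You take the flat potential $\rho(A) = \tfrac12\|A\|^2$ and assert that the horizontal lift of a bounded vector field satisfies $|\xi^h(\rho)| \leq C(1+\rho)$ ``because $\chi$ is polynomial and the metric is flat.'' This is precisely the estimate that is \emph{not} known to hold for the flat form, and the absence of such a bound is the reason Seidel and Smith introduce the quasi-homogeneous potential $\psi$ with the carefully chosen weights $\xi_i(z) = |z|^{4n/i}$. The size of $\xi^h$ is governed by the smallest singular value of $D\chi$ restricted to the horizontal distribution, and since $\chi$ is quasi-homogeneous (not homogeneous) with degrees running from $1$ to $2n$, nothing prevents that singular value from decaying rapidly along the fibres without the weight-correcting choice of potential. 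A Gronwall argument needs an actual estimate, and for $\rho$ you have not produced one. The paper sidesteps this entirely by starting from the potential $\psi$ of \cite{SS}, which already satisfies $\|\nabla\psi\|^2 \leq C\psi$ outside a compact set, and then modifying the associated form near the boundary rather than controlling lifts all the way to infinity.

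The second half of your argument is also differently routed and likewise vague at the decisive point. The paper truncates each fibre at a level set $\psi^{-1}[0,c]$ to obtain a family of Stein domains, trivialises a collar of the horizontal boundary by the fibrewise Liouville flow, and then adds a correction $d(\widehat\eta\cdot(\alpha' + d^c\psi))$ supported in that collar so that parallel transport commutes with the Liouville flow near the boundary (hence is the cone on a contactomorphism). You instead want to modify $\omega_\Slice$ near the Cautis--Kamnitzer divisor $\mathbf{Y}_n\setminus\mathbf{Y}_n^{\open}$. Two problems: first, $\omega_\Slice$ is not known to extend to the compactification, so there is no a priori collar of the divisor on which one can compare it to a cylindrical model; second, that divisor is \emph{singular} (the paper notes this explicitly when discussing irreducibility), so even a topological collar requires care. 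Saying the form ``admits a modification \ldots of contact type at infinity relative to a trivialisation of a collar neighbourhood of the divisor'' is not a proof; it is a restatement of the desired conclusion. If you want to pursue the compactification route, you would at minimum need to produce the collar and the contact-type form near the singular divisor, which is nontrivial. The intrinsic Stein-domain approach of the paper avoids all of this.
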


\begin{proof} The first statement is the conclusion of Proposition \ref{prop:good_Kaehler_general}. For the final statement, the global monodromy representation
\[
Br_{2n} \longrightarrow \pi_0\Symp_{\infty}(\scrY_n)
\]
 is obtained from the previous construction by taking $B$ to be the complement of a relatively compact open neighbourhood of the discriminant, so $B$ is a deformation retract of  configuration space.   Similarly, parallel transport maps exist over compact subsets disjoint from the critical values of the Morse-Bott Lefschetz fibrations obtained by restricting $\chi|_{\Slice}$ to a disc transverse to the discriminant in $\Sym_{2n}(\bC)$.  It follows that parallel transport maps are globally defined for $W_i$ along paths which do not go into the origin, which completes the proof. \end{proof}

 \begin{Remark}
 There are at least three natural symplectic structures on $\scrY_n$: 
 \begin{enumerate}
 \item the one just discussed, from \cite{SS}, which has good parallel transport properties helpful for establishing the long exact triangle;  
 \item the one discussed in Section \ref{Sec:Flags}, which extends smoothly to the compactification $(S^2)^{2n}$ and is convenient for comparing the Lagrangians $L_{\wp}$ to the anti-diagonals $\hat{L}_{\wp}$ in the topological model and computing the ring structure in the symplectic arc algebra; 
 \item and the forms from Section \ref{Sec:Hilb}  induced from $\scrY_n \hookrightarrow \Hilb^{[n]}(A_{2n-1})$ which are product-like away from the diagonal, with respect to which the Lagrangians $L_{\wp}$ are products of fibred Lagrangians in Milnor fibres, and which we used in constructing the equivariant structure on the Fukaya category in \cite{AbSm} and on the cup bimodules in this paper.  
 \end{enumerate}
All three of these symplectic structures are in fact Stein structures for a fixed complex structure $J=J_n^t$ on $\scrY_n=\scrY_n^t$ (the complex isomorphism type depends on the eigenvalues, i.e. the point $t$ in configuration space).  They are therefore related by linear interpolation through exact K\"ahler forms. Non-compactness of $\scrY_n$  means that those deformations need not integrate to global symplectomorphisms, but they do yield symplectic embeddings of compact subdomains. In particular,   if $M \subset \scrY_n$ is a compact subdomain with $J$-convex boundary (a notion depending only on the complex structure $J$), the Liouville completion of $M$ is independent up to exact symplectomorphism of  which of the above Stein structures one chooses, cf. \cite{Cieliebak-Eliashberg}.  It follows that Floer theory for closed Lagrangians in $M$ is also independent of that choice.
 
To compare the Lagrangians themselves, note that Manolescu showed \cite{Manolescu} that one can embed a large compact subset of the Hilbert scheme $\Hilb^{[n]}(A_{2n-1})$ symplectically into $(\scrY_n, -dd^c\psi)$ in such a way that the Lagrangians obtained as products of matching spheres are Hamiltonian isotopic to the iterated vanishing cycles. One can therefore identify Floer theory for vanishing cycles in $\scrY_n$, equipped with the symplectic structure from \cite{SS}, with Floer theory for products of matching spheres in the Hilbert scheme, as used to study formality in \cite{AbSm}.

The K\"ahler form from section \ref{Sec:Flags} was only used to simplify the study of cohomological properties of the restriction $H^*(\scrY_n) \rightarrow H^*(L_{\wp})$, by reduction to the smooth models $\hat{L}_{\wp}$; those cohomological properties then carry over to any other model, even without identifying the corresponding Lagrangians up to Hamiltonian isotopy.   We may therefore appeal to the formality results for cup and cap functors, the long exact triangle, and the vanishing cycle description of Lagrangians (and the corresponding clean intersection models arising from Lemma \ref{Lem:CleanPairs}) simultaneously.  We do so without further comment.
 \end{Remark}

\subsection{The cup and cap functors revisited}
\label{sec:cup-functor}

Let $\Tw \scrH_{n-1}$ denote the category of twisted complexes over $\scrH_{n-1}$, and let $\Delta_{\tau^{-1}_i} $ denote the $\Tw \scrH_n$ bimodule associated to the inverse monodromy of  $W_i$ (i.e. the monodromy around a \emph{clockwise} oriented path encircling $0\in \bC$). Recall that we previously constructed a functor $\cup_i \co  \scrH_{n-1} \to \scrH_{n} $,  which we proved to be formal in Section \ref{sec:cup-bimodules-are}.

\begin{prop} \label{prop:inverse_twist_LES}
 There is a functor $ \cap_i \co \scrH_n \to \Tw \scrH_{n-1}$ which is left adjoint to $\cup_i$. Moreover, the cone of the unit
\begin{equation}
  \Delta^{\Tw \scrH_n} \to \Delta^{\Tw \scrH_n}_{\cup_i \circ \cap_i  }
\end{equation}
is quasi-isomorphic to the graph bimodule  $\Delta_{\tau^{-1}_i} $.
\end{prop}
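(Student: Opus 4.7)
I would realise $\cap_i$ and the desired triangle geometrically using the Morse--Bott Lefschetz fibration $W_i\co E_i \to \bC$ constructed in Section~\ref{sec:parallel-transport}: its singular fibre has critical locus canonically identified with $\scrY_{n-1}$, and its monodromy around $0 \in \bC$ is the Morse--Bott Dehn twist $\tau_i$. The triangle of the proposition would then be a direct consequence of the long exact triangle for such monodromies established in Appendix~\ref{Sec:LES_of_twist}.

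\emph{Construction of $\cap_i$.} Recall from Section~\ref{Sec:MeetCup} that $\cup_i$ is represented by the elementary $(\scrF(\scrY_n),\scrF(\scrY_{n-1}))$-bimodule $\scrK_i$ built from the Lagrangian sphere $V_i \subset U^i$ via the product embedding $U^i \times \scrY_{n-1}^i \hookrightarrow \scrY_n$. From $\scrK_i$ one extracts a candidate cap functor by defining $\cap_i(L)$ to be the right $\scrH_{n-1}$-module whose value at $L_\wp$ is $CF^*(\cup_i L_\wp, L)$, with its canonical $A_\infty$-module structure. The substance of the construction is the representability claim: this right $\scrH_{n-1}$-module is quasi-isomorphic to the Yoneda module of a twisted complex of objects $L_\wp$. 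Using Corollary~\ref{Cor:WeWin} to identify $\scrH_{n-1}$ with the combinatorial arc algebra, one imports the split-generation statement from the combinatorial side (where every module of the form $\hom_{H_n}(\cup_i P_\wp, -)$ is tautologically a sum of projectives), after which Lemma~\ref{lem:representable_bimod_functor} produces the functor $\cap_i \co \scrH_n \to \Tw \scrH_{n-1}$. The adjunction $\cap_i \dashv \cup_i$ holds by construction, so one obtains the unit $\Delta^{\Tw\scrH_n} \to \Delta^{\Tw\scrH_n}_{\cup_i \circ \cap_i}$.

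\emph{The exact triangle.} I would then apply the main result of Appendix~\ref{Sec:LES_of_twist} to the exact Morse--Bott Lefschetz model $W_i\co E_i \to \bC$. That result produces, for every object $L \in \scrF(\scrY_n)$, a distinguished triangle
\[
L \ \longrightarrow\ \cup_i \cap_i(L) \ \longrightarrow\ \tau_i^{-1}(L) \ \longrightarrow\ L[1]
\]
in $\scrF(\scrY_n)$, in which the first map is the evaluation/unit map determined by the vanishing thimble. Naturality of this construction in $L$, together with Lemma~\ref{lem:Yoneda-bimodules} and Lemma~\ref{lem:tensor_product_is_composition} (which translate compositions of functors into tensor products of bimodules), upgrades the objectwise triangle to the claimed quasi-isomorphism $\mathrm{Cone}(\Delta^{\Tw\scrH_n} \to \Delta^{\Tw\scrH_n}_{\cup_i \circ \cap_i}) \simeq \Delta_{\tau_i^{-1}}$ of $A_\infty$-bimodules.

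\emph{Main obstacle.} The principal technical point is to identify the geometric map $L \to \cup_i\cap_i(L)$ produced by the Appendix's thimble/cobordism construction with the algebraic unit of the adjunction $\cap_i \dashv \cup_i$ at the chain level. This is a fibered enhancement of the standard compatibility between Seidel's Dehn-twist triangle and the spherical-functor adjunction for a Lagrangian sphere, here with the role of the sphere played by the Morse--Bott vanishing cycle, which is an $S^2$-bundle over $\scrY_{n-1}$ rather than a single sphere. The comparison amounts to matching two counts of pseudo-holomorphic discs---one in the total space $E_i$ cutting out the thimble, the other in $\scrY_n$ defining the composition $\cup_i \circ \cap_i$---and is expected to proceed by degeneration to the singular fibre, but requires careful attention to the non-compactness of the Morse--Bott critical set and to the parallel transport conventions fixed in Proposition~\ref{prop:good_Kaehler}.
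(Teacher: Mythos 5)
Your proposal takes a genuinely different route from the paper, and there is a real gap in it.

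The paper's proof of this proposition is precisely the verification of the two hypotheses of Proposition~\ref{prop:what_we_need_of_the_twist}: it exhibits, for each crossingless matching $L' = L_\wp$, a sub-Lefschetz-fibration $E_{L'} \to \bC$ containing all critical points with $M \times E_{L'} \subset E'$ (via the $A_2$-degeneration of Lemma~\ref{lem:local_A2_degeneration} if $\wp$ does not contain the $i$\th cup), and shows that $L_\wp$ either is a product $L \times K$ or meets $\cup_i(L_{\wp_r})$ cleanly along a section over $K$. Those geometric hypotheses are exactly what make the appendix's Proposition~\ref{prop:what_we_need_of_the_twist} applicable, and they are the entire content of the proof. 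Your proposal asserts that the triangle ``would then be a direct consequence of the long exact triangle \ldots established in Appendix~\ref{Sec:LES_of_twist}'' but never checks these hypotheses, so you cannot actually invoke that result. In particular condition~\eqref{eq:enough_subfibrations} and the clean-intersection/product alternative require nontrivial input from the geometry of $A_2$-degenerations, and no combinatorial or algebraic argument can substitute for them.

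Secondly, you construct $\cap_i$ independently --- as the representing object of the module $L_\wp \mapsto CF^*(\cup_i L_\wp, L)$, with representability deduced by transporting the corresponding split-generation fact across the isomorphism of Corollary~\ref{Cor:WeWin} --- and then flag as the ``main obstacle'' the identification of your algebraically-defined unit with the geometric unit coming from the thimble/Orlov machinery. That obstacle is real and your sketch of resolving it by degeneration is not an argument. The paper avoids the problem entirely: in Proposition~\ref{prop:what_we_need_of_the_twist}, $\cap$ is \emph{produced} by factoring the Orlov functor $\scrD$ through the thimble embedding $\scrT$, so the functor, the adjunction, and the cone description of $\Delta_\phi$ all come out of a single construction, and there is nothing to match. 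In summary: the detour through the combinatorial arc algebra is unnecessary and creates a comparison problem you do not solve, while the geometric verifications that the paper carries out (and which the application actually requires) are absent from your proposal.
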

\begin{proof}
Keeping in mind that $\tau^{-1}$ corresponds to a clockwise path around the critical point,  this is a direct application of Proposition \ref{prop:what_we_need_of_the_twist}, whose assumptions it  therefore suffices to verify.  In the notation of \emph{op. cit.} we have $\scrA = \scrH_{n-1}$, with $M=\scrY_{n-1}$, and $\scrA' = \scrH_n$ with $W = \scrY_n$;  we set $E=\bC^3 \rightarrow \bC$ to be the standard Lefschetz fibration, with fibre $N = T^*S^2$; and we obtain the inclusion $M \times E \rightarrow E' = E_i$, compatible with the map $W_i$, from the existence of $A_1$ degenerations as pairs of eigenvalues come together, as in \cite[Lemma 27]{SS}.  (Strictly speaking, $E$ and $N$ should be defined as subdomains of $\bC^3$ respectively $T^*S^2$ with contact boundary, and similarly for the fibration $E_{L'}$ introduced below, but we will keep to the simpler notation and hope no confusion will arise.)  To fulfil the hypotheses of Proposition \ref{prop:what_we_need_of_the_twist} we require:
\begin{enumerate}
\item For each object $L'$ of $\scrA'$, there exists a Lefschetz fibration $E_{L'} \to \bC$, with fibre $N_{L'}$,  including $E$ as a subfibration containing all critical points, and an inclusion $M \times E_{L'} \subset E'$ compatible with the maps to $\bC$ such that $L'$ is contained in  $M \times N_{L'}$;
\item  For each object $L'$  of $\scrA'$, there is an object $L$ of $\scrA$ such that either  (i) $L'$ is quasi-equivalent to the product of $L \times K$ or (ii) $L'$ is quasi-equivalent to a Lagrangian which meets $L \times K$ cleanly along a section of the projection to $K$. 
\end{enumerate}
Recall from Lemma \ref{Lem:PlaitMixNonzeroProducts} that if one places the eigenvalues $\mathbf{w}_{cyc} \in \Conf_{2n}(\bC)$ defining the fibre $\scrY_n^{\mathbf{w}_{cyc}}$ at the roots of unity, there is a cyclic symmetry which acts transitively on consecutive pairs, and hence relates the different Morse-Bott fibrations $W_i$.  Without loss of generality we therefore suppose $i=1$.

If $L'$ is in the image of $\cup_i$, say $L' = \cup_i L$, we simply take $E_{L'} = E$.  This corresponds to case (i) of Condition (2) above.

If $L'$ is not in the image of $\cup_i$, say $L' = L_{\wp}$, then since $i=1$, we can suppose that $\wp$ contains the arcs $(2,p)$ and $(1,m)$ for some $p < m$.  The hypothesis that $i=1$ additionally implies that the arc $(1,m) \subset \wp$ is necessarily outermost.  Embed $W_i$ into the larger two-parameter degeneration in which the triple of eigenvalues $\{1,2,p\}$ come together along the arcs $[1,2]\subset \bR$ and $(2,p) \subset \wp \subset \bC$, as in \cite[Lemma 29]{SS}, which is reproduced as Lemma \ref{lem:local_A2_degeneration} above.  Let $E_{L'}$  be $ T^*S^2\times\bC$, equipped with the Lefschetz fibration $E_{L'} \rightarrow \bC$ which has generic fibre $N_{L'}$ the $A_2$-Milnor fibre, and a unique critical value. One can view this as a subset, containing only one of the two critical values, of the stabilisation of the fibration $E\rightarrow \bC$, which gives a fibre-preserving inclusion $E \subset E_{L'}$.  Lemma \ref{lem:local_A2_degeneration} implies that there is an open subset of $E'$ which is an $A_2$-fibration over $\scrY_{n-1} \cong \mathrm{Crit}(W_i)$, and this fulfils the first set of required conditions.  

 From the construction of the $A_2$-degeneration,  $L_{\wp}$ is fibred over the matching given by removing the arc $(2,p)\subset \wp$, replacing the critical points $\{1,2,p\}$ by a single point $\ast$ viewed as lying at position $\{1\}$, and considering the matching $\wp_r$ comprising $\wp \backslash (2,p)$, which contains the arc $(\ast,m)$ (which is uniquely defined to isotopy, and still an upper half-plane matching, since $(\ast,m)$ is still outermost).  We set $L = L_{\wp_r} \in \scrA$.  Then $\cup_i(L_{\wp_r})$ and $L_{\wp}$ meet in codimension one, and are given locally by fibrations over a common base with fibre the two arcs of the compact core of the $A_2$-fibre $N_{L'}$.  This fulfils the requirements of case (ii) of Condition (2),  completing the proof.
 \end{proof}




If we consider  the monodromy twist $\tau_i$ instead of $\tau_i^{-1}$, functoriality of the construction of the graph bimodule implies that $ \Delta_{\tau_i }$  is the bimodule inverse (under tensor product) to $  \Delta_{\tau_i^{-1} } $. Since inverse bimodules are uniquely determined up to quasi-isomorphism, we conclude that this bimodule agrees with the diagramatic inverse twist:
\begin{cor} \label{cor:triangle_2}
The bimodule $\Delta_{\tau_i} $  is quasi-isomorphic to the cone of the counit:
\begin{equation}
\Delta_{\tau_i}\,  \simeq\, \mathrm{Cone}\left(\Delta^{\Tw \scrH_{n}} _{\cup_i \circ \cap_i} \to \Delta^{\Tw \scrH_n}\right).
\end{equation} \qed
\end{cor}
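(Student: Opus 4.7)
The plan is to leverage the fact that $\tau_i$ and $\tau_i^{-1}$ induce mutually quasi-inverse autoequivalences of $\Tw\scrH_n$, which forces their graph bimodules to be inverse under tensor product:
\[
\Delta_{\tau_i}\otimes_{\Tw \scrH_n} \Delta_{\tau_i^{-1}}\ \simeq\ \Delta^{\Tw\scrH_n}.
\]
Inverses with respect to tensor product over an $A_\infty$-category are unique up to quasi-isomorphism when they exist, so it suffices to exhibit some $\scrA$-$\scrA$ bimodule quasi-isomorphic to $\mathrm{Cone}\!\left(\Delta^{\Tw\scrH_n}_{\cup_i\circ\cap_i}\to \Delta^{\Tw\scrH_n}\right)$ and tensor-inverse to the cone of the unit provided by Proposition \ref{prop:inverse_twist_LES}.

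To make sense of the map $\Delta_{\cup_i\circ\cap_i}\to\Delta^{\Tw\scrH_n}$, I would exploit the fact that in this geometric setting $\cup_i$ is a spherical-type functor: the Lagrangian $L^i$ underlying the elementary correspondence of Section \ref{Sec:MeetCup} is a Lagrangian sphere $S^2$, so $\cup_i$ is both left and right adjoint to the single functor $\cap_i$ (up to a shift absorbed by $L^i$ being its own Serre dual). The counit of the adjunction $\cup_i\dashv\cap_i$ then supplies a canonical natural transformation $\cup_i\circ\cap_i\Rightarrow \Id_{\Tw\scrH_n}$, whose associated bimodule map is the one featuring in the corollary. Concretely this counit is manufactured by exactly the type of construction used for the unit in Proposition \ref{prop:inverse_twist_LES}, but using holomorphic sections of the LG model $W_i$ decaying in the opposite direction; equivalently, one reverses the role of ``input'' and ``output'' strip-like ends in the moduli spaces of Appendix \ref{Sec:LES_of_twist}.

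With the counit in hand, the remaining task is to verify that the cone of the counit is tensor-inverse to the cone of the unit. I would carry this out by the octahedral axiom applied to the composite
\[
\Delta^{\Tw\scrH_n}_{\cup_i\circ\cap_i}\ \longrightarrow\ \Delta^{\Tw\scrH_n}\ \xrightarrow{\eta}\ \Delta^{\Tw\scrH_n}_{\cup_i\circ\cap_i},
\]
which is null-homotopic as a consequence of the triangle identities for the unit and counit (up to the usual $A_\infty$-homotopies). The associated octahedron relates the two cones and the tensor product of the cones, and after computing one is left with $\Delta^{\Tw\scrH_n}$ as the tensor product, as required. Combining with Proposition \ref{prop:inverse_twist_LES} and uniqueness of tensor inverses gives the claimed quasi-isomorphism.

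The main obstacle is the second paragraph: rigorously producing the counit natural transformation $\cup_i\circ\cap_i\Rightarrow\Id$, and checking that the composition with the unit is null-homotopic in a way compatible with the $A_\infty$-bimodule structures. In the presence of a genuine adjunction at both sides (which itself requires either a Calabi--Yau/Serre duality argument or direct geometric input from $W_i$ with the opposite orientation), this reduces to a standard statement about adjunctions; the work is really in the identification of the right adjoint to $\cup_i$ with the same functor $\cap_i$ that appears in Proposition \ref{prop:inverse_twist_LES}.
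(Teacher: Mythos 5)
The high-level strategy — express the answer as the unique tensor-inverse of $\Delta_{\tau_i^{-1}}$, whose identification with the cone of the unit is already known from Proposition~\ref{prop:inverse_twist_LES} — is exactly the one the paper uses. The difference lies in how one knows that the cone of the counit is indeed tensor-inverse to the cone of the unit. The paper does \emph{not} re-derive this; it imports it from Khovanov's combinatorial theory, where the mutual invertibility of the diagrammatic twist and inverse twist is part of the verification of the braid group relations, and this fact transports to the symplectic side because of the established isomorphism $\scrH_n^{symp}\cong H_n$ compatible with the cup functors (Corollary~\ref{Cor:WeWin}) and hence, via formality and adjunction, with the cap functors, units and counits. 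You instead try to prove this invertibility directly, and that is where your argument has two gaps.

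First, you invoke a biadjunction $\cup_i\dashv\cap_i\dashv\cup_i$ ``because $L^i$ is a sphere,'' but the paper only proves that $\cap_i$ is left adjoint to $\cup_i$ (Proposition~\ref{prop:inverse_twist_LES}), so the counit $\cup_i\cap_i\to\Id_{\Tw\scrH_n}$ you need belongs to the \emph{other} adjunction. Producing it geometrically — or via Serre/Calabi--Yau duality as you sketch — is a genuine additional step that isn't just a matter of re-running the Lefschetz-theory moduli spaces ``with the other orientation''; in the paper it is obtained for free from the combinatorial side once one knows the cup and cap functors match.

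Second, and more seriously, the claimed null-homotopy of $\Delta_{\cup_i\circ\cap_i}\xrightarrow{\epsilon}\Delta\xrightarrow{\eta}\Delta_{\cup_i\circ\cap_i}$ ``by the triangle identities'' is not correct: the triangle identities control $\epsilon F\circ F\eta$ and $G\epsilon\circ\eta G$, not the composite $\eta\circ\epsilon$ on $FG$, and for a spherical-type functor this composite is typically non-trivial. The standard proof that the twist and inverse twist are mutually inverse does use the octahedral axiom, but it also requires the sphericity condition (concretely, the structure of $\cap_i\cup_i$ as $\Id\oplus\Id[2]$), which is not invoked in your proposal. Without that input the octahedron does not collapse to $\Delta^{\Tw\scrH_n}$. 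So your route could in principle be made to work — and would then be a more self-contained geometric argument — but as written the key computation is missing, and the paper's shortcut of transferring the known diagrammatic relation is both simpler and already available at this point in the paper.
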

We therefore reach the main theorem:

\begin{thm} \label{thm:main}
 For any link $\kappa$, and characteristic zero field $\bk$, we have an isomorphism 
 \begin{equation}
 \Kh_{symp}(\kappa; \bk) \cong \Kh(\kappa; \bk).
 \end{equation}
\end{thm}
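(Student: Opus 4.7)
The plan is to combine the three main threads of the paper. First, Corollary \ref{Cor:WeWin} gives an isomorphism of algebras $\scrH_n^{symp} \cong H_n$ over $\bZ$, in a way compatible with the cup bimodules $\cup_i$ and their combinatorial analogues $\cup_i^{comb}$. By Lemmas \ref{lem:OddCupFormal} and \ref{lem:EvenCupFormal}, each $\cup_i$ is formal over a characteristic zero field, so under the arc algebra isomorphism the $A_{\infty}$ cup functor on $C\scrH_n^{symp}$ agrees (up to quasi-isomorphism) with the pullback of $\cup_i^{comb}$. By the adjunction result of Section \ref{sec:adjunctions}, the cap functor $\cap_i$ is then also formal, and the units and counits of the symplectic $(\cup_i, \cap_i)$ adjunction are identified with those of the combinatorial adjunction between cup and cap $H_n$-bimodules. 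Passing to twisted complexes, one obtains a cohomologically fully faithful, braid-equivariant equivalence $D\scrF(\scrY_n) \simeq D(\mathrm{mod}\text{-}H_n)$.

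Second, I would use Proposition \ref{prop:inverse_twist_LES} and Corollary \ref{cor:triangle_2}, which exhibit the symplectic half-twist bimodule $\Delta_{\tau_i^{\pm 1}}$ as the cone on the unit or counit of the $(\cup_i, \cap_i)$-adjunction. On the combinatorial side, the Khovanov bimodule assigned to a positive or negative crossing is, tautologically, the analogous cone on the unit or counit of $(\cup_i^{comb}, \cap_i^{comb})$. Since the previous step matches both adjunctions under our equivalence, the symplectic graph bimodule $\Delta_{\tau_i^{\pm 1}}$ corresponds to the Khovanov crossing bimodule. Iterating over the letters of a braid word for $\beta_{\kappa} \in Br_n$, the symplectomorphism $\beta_{\kappa} \times \id$ corresponds under the equivalence to the Khovanov complex of crossing bimodules associated to $\beta_\kappa$.

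Third, apply this to the horseshoe Lagrangian $L_{\wp_{\circ}}$. By Remark \ref{rem:modules_agree}, $L_{\wp_{\circ}}$ corresponds to the idempotent module $P_{\wp_{\circ}}$ over $H_n$. Therefore
\[
\Kh_{symp}(\kappa) \ = \  HF^*(L_{\wp_{\circ}}, (\beta_{\kappa} \times \id) L_{\wp_{\circ}})\  \cong\  \Ext^*_{H_n}\bigl(P_{\wp_{\circ}}, \mathrm{Kh}(\beta_{\kappa})(P_{\wp_{\circ}})\bigr),
\]
and the right-hand side computes Khovanov homology in the Khovanov--Stroppel presentation via the horseshoe module. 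The independence of this cohomology from the choice of braid presentation (i.e.\ Markov invariance on the symplectic side, already proved in \cite{SS}) means no further check is needed for well-definedness. What remains is bookkeeping of gradings to identify the single $\bZ$-grading on $\Kh_{symp}^k$ with the collapsed bigrading $\bigoplus_{i-j=k}\Kh^{i,j}$ of \eqref{eq:main}.

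The main obstacle I expect is precisely this last grading compatibility: the symplectic side's absolute grading comes from Maslov indices of graded Lagrangian branes, while the combinatorial side tracks writhe and the $i$, $j$ grading shifts built into the crossing bimodules. One must verify that the shift conventions at the cones in Proposition \ref{prop:inverse_twist_LES} and Corollary \ref{cor:triangle_2} match those used to define Khovanov's crossing bimodules, and that the choice of brane structure on $L_{\wp_{\circ}}$ (orienting $\kappa$) corresponds to Khovanov's normalization. Once that check is carried out, the theorem follows by assembling the equivariant equivalence, the identification of crossing bimodules, and the identification of the horseshoe object.
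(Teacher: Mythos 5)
Your proposal is correct and takes essentially the same route as the paper: identify the arc algebras via Corollary \ref{Cor:WeWin}, use formality of the cup (hence cap, via adjunction) functors to match the adjunctions, use Proposition \ref{prop:inverse_twist_LES} and Corollary \ref{cor:triangle_2} to match the twist bimodules with Khovanov's crossing bimodules, and transport the distinguished module via Remark \ref{rem:modules_agree}. The paper's proof of the theorem is terser (leaving the gradings to a separate discussion afterward, exactly as you flag), but it is the same argument.
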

\begin{proof}
Khovanov homology is completely determined by the arc algebra,  the cup functors, and the distinguished module $P_{\wp_{\circ}}$ associated to the $H_n$-idempotent defined by $\wp_{\circ}$.  Indeed, the cap functors are adjoints to cups, the twist functors are cones on adjunctions, and the link invariants are obtained from the resulting braid group action as $\Ext$-groups 
\[
\Kh(\kappa_{\beta}) = \Ext_{D(\mathrm{mod}-H_n)}(P_{\wp_{\circ}}, (\beta \times \id)(P_{\wp_{\circ}}))
\] 
for $\beta \in \Br_n$ and $\beta\times\id \in \Br_{2n}$ having closure $\kappa_{\beta}$. 
From our description of the long exact triangle for the braid group half-twists, the symplectic link invariant $\Kh_{symp}$ is obtained by exactly the same procedure, starting from the symplectic arc algebra and its cup functors, and the Lagrangian $L_{\wp_{\circ}}$ which corresponds to $P_{\wp_{\circ}}$, cf. Remark \ref{rem:modules_agree}. Therefore symplectic and combinatorial Khovanov cohomologies co-incide over $\bk$. 
\end{proof}

Up to this point, we have not discussed gradings.  One can use Theorem \ref{thm:main}, or the underlying fact that we have compatible braid group actions on the derived category of graded modules over the arc algebra on both the combinatorial and symplectic sides, to equip $\Kh_{symp}$ with a relative $\bZ\times\bZ$-grading. However, this is of limited interest ,in the sense that the resulting bigrading is not defined or Markov invariant for intrinsically symplectic reasons.  Instead, we show that the isomorphism of Theorem \ref{thm:main} is compatible with the symplectically defined absolute grading mentioned in the Introduction. 

Let $\beta \in \Br_{n}$ be a braid.  The Lagrangian submanifold $L_{\wp_{\circ}}$ admits a grading; by parallel transport, the image $(\beta \times \id)(L_{\wp_{\circ}})$ is also graded, which yields an absolute $\bZ$-grading on the group $HF^*(L_{\wp_{\circ}}, (\beta \times \id)(L_{\wp_{\circ}}))$, which is independent of choices.   Then
\begin{equation} \label{eq:grading_shift}
\Kh^{\ast}_{symp}(\kappa_{\beta}) = HF^{\ast + n + w}(L_{\wp_{\circ}}, (\beta\times \id)(L_{\wp_{\circ}}))
\end{equation}
defines a Markov-invariant absolute $\bZ$-grading on symplectic Khovanov cohomology, where $n$ is the number of strands in the braid $\beta$, and $w$ is the writhe, and the braid closure $\kappa_{\beta}$ is canonically oriented as a link by orienting each of the strands of $\beta$ in the same direction.  We also recall the unoriented skein relation\footnote{According to long-standing convention, a positive (fibred Dehn) twist in the braid group corresponds to the negative crossing in a link diagram, and vice-versa.} for Khovanov homology, which reads (for a positive respectively negative crossing)
\begin{multline}
\label{eq:plustriangle}
 \qquad \cdots \longrightarrow \Kh^{i,j}(\overcrossing) \longrightarrow
 \Kh^{i,j-1}(\asmoothing) \longrightarrow
 \Kh^{i-v,j-3v-2}(\hsmoothing) \qquad\qquad \\ \longrightarrow
 \Kh^{i+1,j}(\overcrossing) \longrightarrow \cdots \qquad \qquad \qquad
 \qquad \quad \qquad \qquad \qquad
\end{multline}
and
\begin{multline}
 \label{eq:minustriangle}
 \quad \cdots \longrightarrow  \Kh^{i,j}(\undercrossing) \longrightarrow
 \Kh^{i-v+1,j-3v+2}(\hsmoothing) \longrightarrow
 \Kh^{i+1,j+1}(\asmoothing) \quad\quad \\ \longrightarrow
  \Kh^{i+1,j}(\undercrossing) \longrightarrow \cdots \qquad \qquad \qquad
  \qquad \quad \qquad \qquad \qquad
\end{multline}
Here, in the complement of the crossing under consideration,
 one takes the arc which ends at the top left corner of the crossing, and sets $v$ to be the signed number of crossings between this arc and the other
connected components of the complement (this compensates for
the non-local change of orientation that occurs in the given 
way of resolving the crossing).  To extend the definition of \eqref{eq:grading_shift} to a diagram of a link which is not a braid closure, such as the unoriented crossing resolution, one should interpret the shift $n+w$ as the sum of the \emph{number of cups} $n$ in the diagram with the writhe $w$ as before, compare to \cite[Section 5]{CK} or \cite[Equation 40]{Reza2}.  Given this, considering for instance the first of the two skein triangles (for the positive crossing and hence negative twist monodromy), one finds that in the $k=i-j$ grading the terms which occur in the first line have degrees $k, k+1, k+2v+2$ respectively.  Suppose the given link is presented as a braid closure for $\beta \in \Br_n$ with writhe $w$; then the corresponding absolute Floer gradings are for a sequence (in an obvious schematic notation)
\begin{multline}
 \label{eq:plustriangle2}
 \qquad \cdots \longrightarrow HF^{k+n+w}(\overcrossing) \longrightarrow
 HF^{(k+1)+n+(w-1)}(\asmoothing) \longrightarrow
 HF^{(k+2v+2)+(n+1)+(w-2v-1)}(\hsmoothing) \qquad\qquad \\ \longrightarrow
 HF^{(k+1)+n+w}(\overcrossing) \longrightarrow \cdots \qquad \qquad \qquad
 \qquad \quad \qquad \qquad \qquad
\end{multline}
Here we note that the writhe decreases by one in $\overcrossing \to \asmoothing$ whilst the number of cups increases by one in  $\asmoothing \to \hsmoothing$.  Furthermore, if the original $\beta$ has $a$ positive crossings and $b$ negative crossings, then the unoriented resolution has $a-v-1$ positive and $b+v$ negative crossings, so the writhe of the third term differs from $w$ by $2v+1$. In short, the degrees in the exact triangle in $i-j$ grading are precisely those associated with the mapping cone construction of the twist in Corollary \ref{cor:triangle_2}, with the boundary map of the mapping cone having cohomological degree $+1$ and the other arrows degree $0$ respectively $2$ (the last being the dimension of the vanishing sphere, cf. the degrees in \cite{Seidel:LES}).  The same discussion applies to the skein triangle for the negative crossing, versus the mapping cone of Proposition \ref{prop:inverse_twist_LES}.   Note that if $\bigcirc$ is the unknot, the absolute grading on $\Kh_{symp}(\bigcirc)$ concentrates that group in symmetric degrees $\{\pm 1\}$. Given  that the cohomological grading in the symplectic arc algebra agrees with the $i-j$-grading collapse in the combinatorial arc algebra (which is concentrated in a single homological degree, and is given by tensor products of $H^*(S^2)[1]$ in the $j$-grading),  one infers that, in the absolute grading, one has an isomorphism
\[
\Kh_{symp}^k = \bigoplus_{i-j=k} \Kh^{i,j}
\]
which completes the proof of \cite[Conjecture 2]{SS}.

\appendix
  

\section{The long exact sequence of a twist} \label{Sec:LES_of_twist}

\subsection{The result}
Let $(E,\omega = d \theta,J)$ be an exact symplectic manifold equipped with a compatible almost complex structure $J$ such that $E$ is geometrically bounded in the sense of Sikorav \cite{Sikorav}. Recall that this means that for some auxiliary Riemannian metric $g$ on $E$, we have that
\begin{enumerate}
\item $J$ is uniformly tamed, so there are positive constants $\alpha$ and $\beta$ so that $\omega(X,JX) \geq \alpha \|X\|_g^2$, $\omega(X,Y) \leq \beta \|X\|_g \cdot \|Y\|_g$; 
\item the Riemannian manifold $(E,g)$ has sectional curvature uniformly bounded above and injectivity radius uniformly bounded below.
\end{enumerate} 
The conditions  imply completeness of $g$, as well as the monotonicity Lemma which bounds the diameter of holomorphic curves in terms of their area,  which yields the following a priori estimate:
\begin{prop} \label{prop:Sikorav}
Let $K \subset E$ be a compact subset. For each positive real number $A$, there is a compact subset $K(A)$ so that any map from a compact Riemann surface with boundary to $E$, which is $J$-holomorphic outside of $K$ and has geometric energy bounded by $A$, has image contained in $K(A)$. \qed
\end{prop}

\begin{defn}
An \emph{exact Landau-Ginzburg model} is a smooth map $W \co E \to \bC$ such that, outside a compact set $K \subset \bC$, $W$ is a $J$-holomorphic submersion, and the symplectic connection defined by $\omega$ over $\bC\backslash K$ is flat.
\end{defn}
For convenience, we specify that the compact set is the  disc of radius $1/4$ centered at $(-1/2,0)$, and assume that the triple $(E,\theta,J )$ is trivial over the right half-plane. For the purpose of defining $\bZ$-graded Fukaya categories, we will furthermore assume that $c_1(E)=0$, and fix a trivalisation of the top exterior power of $TE$ as a complex vector bundle.

We will find it useful to study  inclusions of Landau-Ginzburg models, in the following set-up.  Recall that a Lefschetz fibration $W \co E \to \bC$ consists of the data of an exact symplectic manifold $E$ with contact boundary, equipped with a map to $\bC$, whose singularities are disjoint from the boundary and are locally modelled after the quadratic map
\begin{equation}
  \sum z_i^2 \co \bC^{n} \to \bC.
\end{equation}
In addition, we require that $W$ be a symplectic fibration away from the critical points, i.e. that the fibres are symplectic submanifolds of $E$.

Consider the following specific geometric situation: (i) $M$ is a Liouville domain, (ii) $W \co E \to \bC$ is a Lefschetz fibration with a unique critical point, whose smooth fibre we denote $N$, and (iii)  $W' \co E' \to \bC $ is an exact Landau-Ginzburg model with fibre $M'$, equipped with an inclusion
\begin{equation}
 M \times E \subset E'
\end{equation}
which is compatible with the map to $\bC$, and so that the difference between the chosen Liouville forms is exact. In particular, we have a symplectic embedding $M \times N \subset M'$.

Consider subcategories  $\scrA \subset \scrF_{M} $ and $\scrA' \subset \scrF_{M'}$, and assume that the following technical condition holds:
\begin{equation}
  \label{eq:enough_subfibrations}
  \parbox{38em}{For each object $L'$ of $\scrA'$, there exists a Lefschetz fibration $E_{L'} \to \bC$, with fibre $N_{L'}$,  including $E$ as a subfibration containing all critical points, and an inclusion $M \times E_{L'}  \subset E'$ compatible with the maps to $\bC$ such that $L'$ is contained in  $M \times N_{L'} \subset M'$. }
\end{equation}

\begin{rem}
 Note that $M'$ is exact and geometrically bounded, but no Liouville structure is specified, so that it does not directly fall within the purview of Seidel's construction \cite{FCPLT}. Nonetheless, the construction of the category of \emph{compact} exact Lagrangians in $M'$ is standard, as the only issue is to ensure compactness of moduli spaces of (pseudo)-holomorphic curves, which follows under the assumption that all perturbations are compactly supported by Proposition \ref{prop:Sikorav}. 
\end{rem}

We denote by $\phi \co M' \to M'$ the inverse monodromy of the fibration, i.e. the symplectomorphism obtained by parallel transport along a loop going clockwise once around a circle lying in the region where $W'$ is locally flat. We write $\Delta^{\Tw \scrA'}_{\phi}$ for the graph bimodule over the category of twisted complexes on $\scrA'$ associated to the endo-functor of the Fukaya category induced by $\phi$.

Fix a closed exact Lagrangian $K \subset N$. As in Section \ref{sec:cup-bimodules-are}, let $\cup \co \scrA \to \scrF_{M'}$ denote the functor which assigns to a Lagrangian  $L \in \Ob \scrA $ its product with $K$ (we constructed this indirectly as a representing functor for the bimodule $\scrK$). The following result is proved in  Section \ref{sec:fibred-twist}, at the very end of this Appendix, as a consequence of Theorem \ref{thm:les}; the latter in turn is a special case, sufficient for our purposes, of a more general result which will appear in forthcoming work of the first author and Sheel Ganatra \cite{Abouzaid-Ganatra}.

\begin{prop} \label{prop:what_we_need_of_the_twist}
Assume that for every object $L'$  of $\scrA'$, there is an object $L$ of $\scrA$ such that either  (i) $L'$ is quasi-equivalent to the product of $L \times K$ or (ii) $L'$ is quasi-equivalent to a Lagrangian which meets $L \times K$ cleanly along a section of the projection to $K$. There is a functor $ \cap \co \Tw \scrA' \to \Tw \scrA$ which is left adjoint to $\cup$. Moreover, the graph bimodule $ \Delta^{\Tw \scrA'}_{\phi}$ is quasi-isomorphic to the cone of the unit:
\begin{equation}
 \Delta^{\Tw \scrA'}_{\phi} \ \simeq \ \mathrm{Cone}\left(
  \Delta^{\Tw \scrA'} \to \Delta^{\Tw \scrA'}_{\cup \circ \cap  } \right).
\end{equation}
\end{prop}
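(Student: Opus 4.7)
The strategy is to deduce the proposition from Theorem \ref{thm:les}, which (in the setting of our exact Landau--Ginzburg model $W'$ containing the sub-Lefschetz fibration $W \times \mathrm{Id}_M$) produces a long exact triangle of $\scrA'$-$\scrA'$ bimodules relating the graph bimodule $\Delta^{\Tw\scrA'}_{\phi}$ of the inverse monodromy to the diagonal and to a bimodule $\scrB$ built geometrically from $\cup$. My task splits into three steps: (i) identify $\scrB$ as a composition $\Delta^{\Tw\scrA'}_{\cup} \tens{\Tw\scrA} {}_{\cup}\Delta^{\Tw\scrA'}$ in disguise, which will give me a candidate functor $\cap \co \Tw\scrA' \to \Tw\scrA$; (ii) verify that this $\cap$ is left adjoint to $\cup$; and (iii) match the connecting morphism in the triangle with the unit $\Delta^{\Tw\scrA'} \to \Delta^{\Tw\scrA'}_{\cup \circ \cap}$.

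For (i), the bimodule $\scrB$ produced by Theorem \ref{thm:les} is assembled, at each object $L'$ of $\scrA'$, from Floer complexes of $L'$ with products $L \times K$ via the auxiliary fibration $E_{L'} \to \bC$ of hypothesis \eqref{eq:enough_subfibrations}. By Lemma \ref{lem:representable_bimod_functor}, to exhibit $\scrB$ as ${}_{\cup \circ \cap}\Delta^{\Tw\scrA'}$ for some functor $\cap$ it suffices to check that the $\scrA$-module $\scrB(-, L')$ is representable in $\Tw\scrA$ for every $L'$. This is exactly where the two alternatives of the proposition's hypothesis are used: in case (i), $L' \simeq L \times K$ and the module is obviously the Yoneda module of $L$; in case (ii), the clean intersection of $L'$ with $L \times K$ along a section of $\pi_K$ yields, via the cohomological plumbing model of Proposition \ref{Prop:plumb} together with the module identification of Lemma \ref{Lem:IntersectionLocusIsCorrect}, a presentation of $\scrB(-,L')$ as a twisted complex of Yoneda modules, hence an object of $\Tw\scrA$. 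Varying $L'$ and appealing to the Yoneda Lemma \ref{lem:Yoneda-bimodules} then promotes the assignment $L' \mapsto \cap L'$ to an $A_\infty$-functor.

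For (ii), once $\scrB \simeq \Delta^{\Tw\scrA'}_{\cup \circ \cap}$ is established via Lemma \ref{lem:tensor_product_is_composition}, left adjointness of $\cap$ to $\cup$ is read off from the duality pairing built into $\scrB$: the Floer complexes $CF^{*}(L \times K, L')$ that appear as the matrix coefficients of both $\Delta^{\Tw\scrA'}_{\cup}$ and ${}_{\cup}\Delta^{\Tw\scrA'}$ provide the bimodule equivalence $_{\cap}\Delta^{\Tw\scrA'} \simeq \Delta^{\Tw\scrA}_{\cup}$ required by the definition of adjunction in Section \ref{sec:adjunctions}. For (iii), the map in Theorem \ref{thm:les}'s triangle is, by construction, the one induced by the geometric evaluation of Floer cochains between $L'$ and $L \times K$, which is precisely the formula for the unit of the adjunction just constructed. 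Because the Yoneda embedding from functors to bimodules is cohomologically fully faithful (Lemma \ref{lem:Yoneda-bimodules}), checking this identification on bimodules suffices.

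The main obstacle will be the representability check in case (ii) of hypothesis (2): while Proposition \ref{Prop:plumb} gives the correct \emph{cohomological} model for the pair, upgrading this to an $A_\infty$-equivalence of $\scrA$-modules (so that $\scrB(-,L')$ really is a twisted complex, and not merely a module with the right cohomology) requires verifying that the Floer $A_\infty$-structure on the plumbing respects the filtration by action, so that Pozniak-type degeneration of holomorphic triangles allows the abstract model to be transported. A secondary, essentially book-keeping, difficulty is that the triangle of Theorem \ref{thm:les} is formulated a priori on bimodules over $\scrA'$ and $\scrA$, whereas the statement concerns bimodules over $\Tw\scrA'$ and $\Tw\scrA$; passing to twisted complexes is harmless because tensor products with the diagonal bimodule are preserved, but requires invoking the extension of all constructions to $\Tw$ by the universal property.
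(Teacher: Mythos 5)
Your plan correctly identifies Theorem \ref{thm:les} as the engine, but the route from there to the statement has a structural error, and the paper's actual mechanism for constructing $\cap$ is different from anything you propose.

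The bimodule you call $\scrB$ is ${}_{\scrD}\Delta^{\scrF_W}_{\scrD}$, which is an $\scrA'$-$\scrA'$ bimodule --- both pullbacks are along the same Orlov functor $\scrD$. So $\scrB(\,\cdot\,,L')$ is an $\scrA'$-module, not an ``$\scrA$-module'' as you write in step (i). Representability arguments of the Lemma~\ref{lem:representable_bimod_functor} type, applied to $\scrB$, can only produce an \emph{endofunctor} of $\Tw\scrA'$ (which one hopes is $\cup\circ\cap$); they cannot manufacture $\cap\colon\Tw\scrA'\to\Tw\scrA$ on their own. And since you never have $\cap$ independently, the tensor-product decomposition $\Delta^{\Tw\scrA'}_\cup\tens{\Tw\scrA}{}_\cup\Delta^{\Tw\scrA'}$ that you want to identify with $\scrB$ is presupposing the adjoint you set out to construct. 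The whole chain in your step (ii) --- reading off left adjointness ``from the duality pairing built into $\scrB$'' --- does not close this circle.

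The paper instead constructs $\cap$ \emph{inside} $\scrF_W$, by showing that $\scrD|_{\scrA'}$ factors up to equivalence through the fully faithful thimble embedding $\scrT\colon\scrA\to\scrF_W$ (Lemma~\ref{lem:thimble_embed}). That $\scrD L'$ lies in the category generated by the image of $\scrT$ is exactly what the two hypotheses are for: in case~(i) one uses the thimble splitting $\scrD(L\times K)\cong\scrT L\oplus\scrT L[n-1]$ (Lemma~\ref{lem:basic_quasi-isomorphisms-MB}(2)); in case~(ii) one uses the Morse--Bott $\lambda$-lemma (Lemma~\ref{lem:Morse-bott-lambda}), which produces an \emph{actual Hamiltonian isotopy} from $\scrD L'$ to a thimble $\scrT L$. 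This is a much stronger statement than a cohomological identification, and it is what the plumbing model (Proposition~\ref{Prop:plumb}), which you invoke, cannot deliver: as you yourself observe, the plumbing model is $A_1$-level, and the ``action filtration + Pozniak degeneration'' fix you suggest is not the mechanism; the paper bypasses the issue entirely by the surgery/$\lambda$-lemma argument, which works in the ambient Landau--Ginzburg Fukaya category rather than among $\scrA$- or $\scrA'$-modules. Once $\scrD=\scrT\circ\cap$ is fixed, full faithfulness of $\scrT$ gives ${}_\cap\Delta^{\Tw\scrA}_\cap\cong{}_\scrD\Delta^{\scrF_W}_\scrD$, so the triangle drops out of Theorem~\ref{thm:les}, and the adjunction follows from the chain ${}_\cap\Delta^{\scrA}\cong{}_\scrD\Delta^{\scrF_W}_\scrT\cong\scrR_\scrT\cong\scrK$ (Corollary~\ref{cor:adjunction_exists} and Lemma~\ref{lem:basic_quasi-isomorphisms-MB}(1)) together with the fact that $\scrK$ represents $\cup$. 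None of your referenced lemmas (\ref{lem:Yoneda-bimodules}, \ref{lem:tensor_product_is_composition}, \ref{Prop:plumb}, \ref{Lem:IntersectionLocusIsCorrect}) substitute for these.
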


The reader may compare with Proposition \ref{prop:inverse_twist_LES} to see how this result is applied. Its assumptions are neither optimal nor natural. Assumption \eqref{eq:enough_subfibrations} would more naturally be replaced by the property of being a fibration with Morse-Bott critical locus and \emph{globally integrable symplectic connection} (yielding global parallel transport maps), and the hypothesis on objects of $\scrA'$ should be dropped. The global properties of parallel transport are unfortunately not well understood in the desired application, because the K\"ahler form on the slice $\Slice_n$ is only known to have well-defined parallel transport maps out of bounded subsets of the critical locus of a generic fibre in the discriminant locus of the adjoint quotient $\chi$.  In a different direction, omitting the conditions (i) or  (ii) on objects of $\scrA'$ in the statement of the Proposition would require additional algebraic machinery, which whilst natural in a general development is not needed for our particular application (this however is undertaken in the forthcoming work of Abouzaid-Ganatra).


\subsection{The setting}
The proof of Proposition \ref{prop:what_we_need_of_the_twist} relies on a more abstract adjunction involving the Fukaya category $\scrF_W$ of a Landau Ginzburg potential $W: E \rightarrow \bC$, which we construct in section \ref{sec:fukaya-category-w}. The idea that there should be such a Fukaya category is due to Kontsevich. There are many implementations, some of which are not yet in the literature (see, e.g. \cite{FCPLT,Abouzaid:toric,Seidel:Lefschetz-I,Seidel:Lefschetz-II,BC:Lef,AS:Lef-Wrap}). We shall adopt a viewpoint which is a mixture of Seidel's approach in \cite{Seidel:Lefschetz-I} and Abouzaid-Seidel's approach in \cite{AS:Lef-Wrap}, and where the Lagrangians we consider are similar to those studied by Biran and Cornea in \cite{BC:Cob,BC:Lef}. In particular, the objects of this category satisfy the following geometric condition:

\begin{defn} \label{def:h-admissible}
An exact Lagrangian in $E$ is \emph{horizontally admissible} if it is proper over $\bC$, and its image under $W$ agrees, outside a compact set, with a finite union of half-lines parallel to the positive real axis.
\end{defn}
Note that \cite[Lemmas 16.2 \& 16.3]{FCPLT} show that, over any half-line $\delta: [0,\infty) \rightarrow \bC$ near infinity with $W(E) \supset \im(\delta)$, such an admissible Lagrangian $L$ is smoothly fibred over $\delta$, and the component $L \cap W^{-1}(\im(\delta))$ is obtained by parallel transport along $\delta$ of a smooth Lagrangian submanifold of $W^{-1}(\delta(0))$. 

The \emph{height} $h(L)$ of an admissible Lagrangian $L$ is the collection of real numbers which appear as $y$-coordinates of the corresponding half-lines near infinity. A brane structure on such a Lagrangian consists of a choice of $\Spin$ structure and a real lift of the $S^1$ valued phase.  If $W(L)$ comprises a single half-line near infinity, we say that $L$ has \emph{one end}.

Let $M$ denote a fibre of $W$ over the right half-plane. We denote by $\phi \co M \to M$ the monodromy symplectomorphism obtained by parallel transport along a loop going \emph{clockwise} once around a circle which is sufficiently large that it lies entirely in the region where $W$ is locally flat.  In Section \ref{sec:fukaya-category-m} we construct a particular model for the Fukaya category $\scrF_M$ of this fibre. We write $\Delta_{\phi}$ for the graph bimodule of the endo-functor of the Fukaya category induced by $\phi$.

In section \ref{sec:orlov-functor}, we  construct a functor $\scrD \co \scrF_M \to \scrF_W$ which the first author first heard described by Dima Orlov.  Let $ _{\scrD} \Delta^{ \scrF_W } _{\scrD}  $ denote the $2$-sided pullback of the diagonal bimodule $\Delta^{\scrF_W}$ of $\scrF_W $ by $\scrD  $. Applying $\scrD$ induces a natural map
\begin{equation}
\Delta^{ \scrF_M }  \rightarrow  {_{\scrD} \Delta^{ \scrF_W } _{\scrD}} 
\end{equation}
of bimodules which we call the unit.  
\begin{thm}[Abouzaid-Ganatra]\label{thm:les}
The cone over the unit  $ \Delta^{\scrF_M} \to  {_{\scrD} \Delta^{ \scrF_W } _{\scrD}}   $ is quasi-isomorphic to the graph bimodule $\Delta_{\phi}$ of the clockwise monodromy.
\end{thm}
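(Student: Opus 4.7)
The plan is to reduce the bimodule quasi-isomorphism to a chain-level decomposition of the Floer complex $\hom_{\scrF_W}(\scrD L_0, \scrD L_1)$, and to promote this decomposition to an $A_\infty$-bimodule equivalence by naturality. First I would pin down the Orlov functor $\scrD$ geometrically: on objects it should send an exact Lagrangian $L \subset M$ to the horizontally admissible Lagrangian $\widetilde L \subset E$ obtained by parallel transporting $L$ along a horizontal ray to $+\infty$, so $\scrD L$ has a single end.

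Next, I would analyse $\hom_{\scrF_W}(\scrD L_0, \scrD L_1)$ using the perturbation data built into $\scrF_W$. Since $\scrD L_0$ and $\scrD L_1$ have ends at the same height, the Hamiltonian used to make them transverse must rotate one end past the other at infinity. One can arrange this so that the generators of the Floer complex split naturally into two classes: \emph{short} generators in the compact region of $E$, in bijection with points of $L_0 \pitchfork L_1 \subset M$; and \emph{long} generators produced by the wrap at infinity, in bijection with points of $L_0 \pitchfork \phi L_1 \subset M$ (the appearance of $\phi$ reflects that the wrap traverses the monodromy loop once clockwise). An action/energy argument, using the product structure of $(E,W)$ outside a compact set, shows that the short generators span a subcomplex: no Floer differential can convert a long generator into a short one, because short generators have strictly lower action. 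This subcomplex computes $\hom_{\scrF_M}(L_0, L_1)$; the natural unit map $\Delta^{\scrF_M} \to {_\scrD \Delta^{\scrF_W}_\scrD}$ is precisely its inclusion; and the quotient computes $\hom_{\scrF_M}(L_0, \phi L_1) = \Delta_\phi(L_0, L_1)$, giving the desired cone presentation at the linear level.

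To upgrade this to a quasi-isomorphism of $A_\infty$-bimodules, I would choose the perturbation and almost complex structure data coherently across all bimodule moduli, so that the action filtration survives as a filtration of ${_\scrD \Delta^{\scrF_W}_\scrD}$ by sub-bimodules whose subquotients are $\Delta^{\scrF_M}$ and $\Delta_\phi$. A TQFT argument identifies the long part of the filtration with the graph bimodule of the monodromy: the pseudoholomorphic polygons contributing are concentrated near infinity, where $W$ is a trivial symplectic fibration, and hence reduce, after parallel transport back to the fibre, to polygons defining the structure maps of $\Delta_\phi$ on $M$.

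The main obstacle I anticipate is the rigorous identification of the long generators and their contributions with $\Delta_\phi$. This requires a careful analysis of moduli of pseudoholomorphic curves whose boundary components exit along the wrapped portions of $\scrD L_i$ at infinity, together with a compactness and gluing argument matching such curves with the polygons computing $\Delta_\phi$ on the fibre. These are standard ingredients in the (partially) wrapped Fukaya category literature, but they must be implemented coherently at the level of all $A_\infty$-operations and not merely in cohomology, which is where the technical burden lies.
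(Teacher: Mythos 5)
Your proposal contains a genuine error in the geometric model for the Orlov functor, and this error derails the rest of the argument. You state that $\scrD$ sends $L \subset M$ to a one-ended Lagrangian obtained by parallel transport along a horizontal ray to $+\infty$. That is the \emph{thimble} construction $\scrT$ of Section \ref{sec:lefschetz-fibrations}, not the Orlov functor $\scrD$ of Section \ref{sec:orlov-functor}. In the paper, $\scrD(L,i)$ is parallel transport of $L^i$ along a \emph{two-ended} arc $\gamma_i$ which passes through a basepoint $q$ in the left half-plane and then winds around the origin before going off to $+\infty$ at two distinct heights (Figure \ref{fig:curves_for_cap_functor}). This two-ended winding is the whole point: a one-ended $\scrD L$ would carry no monodromy information at all, and the cone of the unit $\Delta^{\scrF_M} \to {}_\scrD\Delta^{\scrF_W}_\scrD$ would not produce $\Delta_\phi$. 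The identification of ``short'' generators with the diagonal and ``long'' generators with $\Delta_\phi$ that you are hoping for cannot come from wrapping a single end, because in the localization framework used to define $\scrF_W$ the quasi-units only relate Lagrangians of nearby heights and never carry an end around the origin.

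The paper's actual mechanism is quite different in flavour and you should contrast it with your action-filtration plan. It first identifies ${}_\scrD\Delta^{\scrF_W}$ with a concrete restriction bimodule $\scrR$ (Corollary \ref{cor:adjunction_exists}), then forms the two-sided pullback $\scrW_q = \scrR_\scrD$ and filters it \emph{geometrically}: the sub-bimodule $\scrW_q^+$ is spanned by intersection points of $\scrD(K,j)$ and the vertical Lagrangian $\scrV(L,i)$ that project to the \emph{upper} half-plane (i.e.\ near the basepoint $q$), with quotient $\scrW_q^-$ spanned by those in the lower half-plane. An open-mapping-principle argument (Lemma \ref{lem:inner_subcomplex}) shows this is a filtration by $A_\infty$-sub-bimodules. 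When $q$ lies in the upper-right quadrant, all curves contributing to $\scrW_q^+$ sit over $q$, so $\scrW_q^+ \cong \Delta^{\scrF_M}$ and the inclusion is the unit (Lemma \ref{lem:inclusion_is_unit}). The identification of $\scrW_q^-$ with $\Delta_\phi$ is then achieved by a parallel-transport trick, not by a moduli-matching argument near infinity: one moves $q$ counterclockwise from $(1,1)$ to $(1,-1)$, after which the \emph{lower} half-plane plays the same role and $\scrW_{(1,-1)}^-$ is manifestly the diagonal; pulling the identification back to $(1,1)$ produces exactly the clockwise monodromy twist. This sidesteps entirely the ``careful analysis of moduli near infinity'' that you flag as the main technical burden. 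If you fix the definition of $\scrD$ to the winding two-ended arcs, you will see your intuition about a two-step decomposition is on the right track, but the filtration you want is by which half-plane the intersection lies in, not by Floer action, and the punchline is the moving-basepoint argument rather than a gluing analysis at infinity.
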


Very schematically, the functor $\scrD$ involves taking the parallel transport of a Lagrangian $L \subset M$ along an arc which has two ends and encircles the origin, cf. Figure \ref{fig:curves_for_cap_functor}.  There is a canonical two-step filtration of Floer complexes for objects in the image of $\scrD$ with objects in the fibre,  coming from the two-ended structure, and the exact triangle arises from that filtration.

\subsection{Floer cochains and operations}

If $h(L)$ is disjoint from $h(L')$, we define the Floer complex $ CF^*(L,L') $
using the methods of \cite{FCPLT}; i.e. we pick a compactly supported pair $(H,K)$ consisting of a Hamiltonian on $E$ and a perturbation of the complex structure $J$, so that regularity is achieved for all moduli spaces of finite energy solutions to Floer's equation with boundary on $L$ and $L'$. The Floer complex is generated by Hamiltonian chords starting at $L$ and ending on $L'$, and the Floer differential counts solutions to Floer's equation with such ends. The moduli spaces of Floer trajectories are precompact because (i) projection to the base is holomorphic outside a compact set, so that the maximum principle applies, and (ii) Proposition \ref{prop:Sikorav} implies that moduli spaces of pseudo-holomorphic curves of bounded energy whose boundaries are contained in a compact set cannot escape to infinity (exactness provides the necessary energy bound). 

By choosing a proper Morse function $f_L \co L \to [0,\infty)$, and defining
\begin{equation} \label{eq:Floer=Morse}
  CF^*(L,L) = CM^*(L; f_L),
\end{equation}
we extend this definition to the case $L = L'$.

There is a special case in which this complex may be readily computed: we say that $L_{-\epsilon}$ is a  \emph{small negative perturbation}  of $L$ if is obtained by a small Hamiltonian isotopy which  decreases all heights by $-\epsilon < 0$. For an appropriate almost complex structure, the Floer complex $CF^*(L,L_{-\epsilon})$ is isomorphic to the Morse complex of a function on $L$ whose gradient flow points outwards at infinity (see \cite{FukayaOh, Abouzaid:toric}). In particular:
\begin{lem} \label{lem:degree_0_gen_small}
There is a canonical degree $0$ generator of $ CF^0(L,L_{-\epsilon}) $ for $-\epsilon$ sufficiently small. \qed
\end{lem}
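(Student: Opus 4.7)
The plan is to unwrap the identification $CF^*(L, L_{-\epsilon}) \cong CM^*(L; f)$ referenced in the sentence preceding the lemma, and to locate the required generator as the Morse minimum of $f$. First I would pin down which $f$ is in play. The height-decreasing isotopy may be taken to be the time-$1$ flow of a Hamiltonian $H_\epsilon$ on $E$ which, outside a compact set, agrees with $\epsilon \cdot \mathrm{Re}(W)$, since this function generates translation by $-\epsilon$ in the $\mathrm{Im}(W)$-direction on the region where $W$ is a flat symplectic fibration. In a Weinstein neighborhood of $L$, the perturbed Lagrangian $L_{-\epsilon}$ is then, for $\epsilon$ sufficiently small, the graph of $df$ for a function $f \co L \to \bR$ which, outside a compact set, agrees with $\epsilon \cdot \mathrm{Re}(W)|_L$.

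Since the ends of the horizontally admissible Lagrangian $L$ project under $W$ to half-lines going to $+\infty$ in $\mathrm{Re}(W)$, this function $f$ is proper and bounded below, tending to $+\infty$ along each end and with gradient pointing outward at infinity (consistent with the statement quoted just before the lemma). In particular, all critical points of $f$ are contained in a compact region of $L$. Making a small, compactly supported Morse perturbation, I may assume $f$ is Morse--Smale with a unique global minimum on each connected component of $L$. Under the Floer-to-Morse isomorphism, an index-$k$ critical point of $f$ corresponds to a generator of Floer degree $k$, so the sum of the minima over the components of $L$ defines a distinguished element of $CF^0(L, L_{-\epsilon})$; its cohomology class is the unit $1 \in H^0(L) \cong HF^0(L, L_{-\epsilon})$, which is manifestly independent of auxiliary choices and establishes the canonicity claimed in the statement.

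The hard part is not the production of the generator but the careful matching of gradings: one has to verify that the Maslov index of the Hamiltonian chord from $L$ to $L_{-\epsilon}$ associated to a critical point of $f$ equals the Morse index there (and not the Morse coindex, which would produce a degree-$n$ generator instead). I would settle this by a direct local computation in a Darboux chart modelled on $T^*L$, in which the chord is a straight line whose Maslov index can be read off directly; compatibility of the graded structures of $L$ and $L_{-\epsilon}$ is standard in the setup of \cite{FCPLT}, and is where the small size of $\epsilon$ is genuinely used.
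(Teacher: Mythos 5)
Your proposal is correct and follows essentially the route the paper implicitly takes: it cites \cite{FukayaOh, Abouzaid:toric} for the identification $CF^*(L,L_{-\epsilon}) \cong CM^*(L;f)$ with $f$ proper and outward-pointing at infinity (equivalently $f\to+\infty$), so that the canonical generator is the unit $1 \in H^0(L)$. One small point: with the sign conventions of \cite{FCPLT} ($\iota_{X_H}\omega = -dH$), a height-decreasing isotopy is generated by $H_\epsilon \sim -\epsilon\,\mathrm{Re}(W)$ rather than $+\epsilon\,\mathrm{Re}(W)$ near infinity; this flips the intermediate formula for $H_\epsilon$ but leaves your conclusion $f \sim +\epsilon\,\mathrm{Re}(W)|_L \to +\infty$ and the resulting degree-$0$ generator unchanged.
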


We shall need a technical result,  a version of which forms the basis to all approaches to the Fukaya category of $W$.  Consider a half-plane  $H_{a} = [a,\infty) \times \bR $, and two collections of functions $\{ f_i \}_{i \in A}$ and $\{ f'_j \}_{j \in A'} $ on $[a,+\infty)$, which are locally constant on a neighbourhood of $a$ and $\infty$. Let $\Gamma$ and $\Gamma'$ denote the unions of the graphs of these functions; assume that the curves in each of these sets are pairwise disjoint.
\begin{defn} \label{def:monotone_pair}
A monotone pair $(\Gamma, \Gamma')$ is a pair which satisfies:
\begin{equation}
  \frac{df_i}{dx}(x) \geq \frac{df'_j}{dx}(x) \textrm{ for all } (i,j) \in A \times A', \textrm{ with strict inequality if } f_i(x) = f'_j(x).
\end{equation}
\end{defn}
\begin{figure}
\centering
\begin{tikzpicture}
\begin{scope}               
\node[label=right:{$f_1$}] at (2,0) {};
\draw[line width=\lw, blue] (0,0)--(2,0);
\node[label=right:{$f_2$}] at (2,-2) {};
\draw[line width=\lw, blue] (0,-2)--(2,-2);
\draw[line width=\lw] (0,1)  .. controls (1, 1) and  (1, -3) .. (2, -3);
\node[label=right:{$f'_2$}] at (2,-3) {};
\draw[line width=\lw] (0,2)  .. controls (1, 2) and  (1, -1) .. (2, -1);
\node[label=right:{$f'_1$}] at (2,-1) {};
\end{scope}
\end{tikzpicture}
\caption{}
\label{fig:monotone_pair}
\end{figure}

Consider Lagrangians $L$ and $L'$ which respectively project to the graphs of $\Gamma$ and $\Gamma'$, and assume that their intersections over $H_{a}$ are transverse.  Assuming that the heights of $L$ and $L'$ are disjoint, let $  CF^*(L,L') $  be defined with respect to a Hamiltonian supported away from $W^{-1}(H_{a})$, and a perturbation of almost complex  structures for which $W$ is holomorphic over $H_a$. Consider $CF^*_{in}(L,L')  \subset CF^*(L,L') $ to be the submodule generated by intersections projecting away from $H_{a}$.  We claim this is a subcomplex:
\begin{lem} \label{lem:inner_subcomplex}
The differential preserves $CF^*_{in}(L,L')$, and all holomorphic curves which contribute to the differential on this subcomplex have image which project to $\bC\backslash H_a$.
\end{lem}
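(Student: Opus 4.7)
The plan is to prove both assertions simultaneously by showing that for any Floer trajectory $u \colon \bR \times [0,1] \to E$ with both asymptotic ends $x_\pm$ lying in $CF^*_{in}(L,L')$, the image of $u$ is entirely disjoint from $W^{-1}(H_a)$. This establishes both claims of the lemma at once: preservation of the subcomplex, because a trajectory contributing to the differential cannot end at a generator projecting to $H_a$; and the image statement, because the trajectory's projection lies in the complement of $H_a$.

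I would begin by setting $v = W \circ u$ and examining the set $S = v^{-1}(H_a) \subset \bR \times [0,1]$. By hypothesis on the perturbation datum $(H,K)$, the map $v$ is holomorphic on $\mathrm{int}(S)$, with boundary values on $\Gamma \cup \Gamma'$ along the Lagrangian portion of $\partial S$ and on the vertical line $\{x=a\}$ along the rest; the asymptotic conditions $W(x_\pm) \in \{x < a\}$ keep $S$ bounded in the $s$-direction. I would then apply the maximum principle to the harmonic function $h = \mathrm{Re}\, v - a$ on $\bar S$. The strong maximum principle excludes a positive maximum in the interior, and $h \equiv 0$ on the strip-interior portion of $\partial S$, so any positive maximum must occur at a strip-boundary point $p$, say $p = (s_0, 0)$ with $v(p) \in \Gamma_i = \{y = f_i(x)\}$. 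Writing $v = X + iY$ locally and combining the boundary condition $Y = f_i(X)$ with the Cauchy--Riemann equations, a boundary maximum at $p$ forces $\partial_s X(p) = 0$, hence $\partial_s Y(p) = f_i'(X(p))\,\partial_s X(p) = 0$ and thus $\partial_t X(p) = -\partial_s Y(p) = 0$, contradicting the Hopf boundary lemma. The remaining case $h \equiv 0$ on $S$ would force $v$ locally constant on $S$ by the Cauchy--Riemann equations, and hence constant on the entire strip by unique continuation, contradicting $\lim_{s \to \pm\infty} v(s,t) = W(x_\pm(t)) \in \{x < a\}$. So $S$ must be empty.

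The main obstacle will be the treatment of corner points of $\partial S$, where the level line $\{x = a\}$ meets the strip boundary. Here the monotone pair hypothesis of Definition \ref{def:monotone_pair} is essential: the strict slope inequality at any crossing $f_i(x) = f'_j(x)$ ensures that $\Gamma$ and $\Gamma'$ meet transversally inside $H_a$, so that $S$ is a piecewise-smooth surface with isolated corners at which $h = 0$; this places the corners outside the positive-max analysis and lets the Hopf argument run on the smooth Lagrangian part of $\partial S$. A secondary technical point is to verify that the Hamiltonian $H$ can indeed be chosen to vanish on $W^{-1}(H_a)$ while still achieving Floer-theoretic transversality on the interior region where the generators of $CF^*_{in}$ live; this is standard in the Lefschetz-fibration framework of \cite{FCPLT} and costs us no essential difficulty.
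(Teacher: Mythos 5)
Your proposal has a genuine logical gap. You open by restricting to trajectories whose \emph{both} asymptotic ends lie in $CF^*_{in}(L,L')$ and claim that proving $S = v^{-1}(H_a) = \emptyset$ for such strips ``establishes both claims of the lemma at once.'' But preservation of $CF^*_{in}$ means exactly that one must rule out strips whose input is interior while the output projects into $H_a$, and your argument never addresses that case. If $W(x_-)$ lies in $H_a$, the set $S$ is unbounded in the outgoing direction; the compactness you invoke (``keeps $S$ bounded in the $s$-direction'') fails; and after compactifying the strip the supremum of $h = \mathrm{Re}\,v - a$ may be attained at the outgoing corner $z_-$, where the two boundary arcs in $\Gamma$ and $\Gamma'$ meet at the intersection point $W(x_-)$. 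At that corner the Hopf lemma does not by itself yield a contradiction: the transverse crossing of the two graphs (a consequence of the monotone hypothesis) cuts a neighbourhood of $W(x_-)$ into four sectors, two on the side $x > \mathrm{Re}\,W(x_-)$ and two on the side $x < \mathrm{Re}\,W(x_-)$, and the maximum principle only bites if the image of $v$ near $z_-$ is forced into a larger-$x$ sector. Which sector is realised is dictated by the orientation convention for the \emph{output} of a Floer strip, which your argument never consults. Note also that for the bounded components the transversality of $\Gamma$ with $\Gamma'$ is irrelevant: the corners of $S$ at $h=0$ occur where the strip boundary crosses the level set $\{x=a\}$, not where $\Gamma$ meets $\Gamma'$; the transversality matters precisely at the outgoing puncture you exclude.

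This missing case is the heart of the paper's proof, which it handles by a degree/intersection-number argument: the intersection number of $\partial(v(\Sigma))$ with a horizontal ray emanating from $\{x=a\}$ must vanish, which forces the degree of $v$ over the triangle cut out by $\gamma$, $\gamma'$ and $\{x=a\}$ to be negative once the ordering convention for the output is taken into account, contradicting nonnegativity of degree for a holomorphic map. Your maximum-principle and Hopf-lemma approach is a reasonable and arguably cleaner alternative to the paper's open-mapping argument for components that avoid the outgoing end, and your boundary-derivative computation there is correct; but it is silent on the outgoing corner, and that is exactly where the statement about preservation lives.
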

\begin{proof}
Let $u$ be a holomorphic strip whose input is a generator of $ CF^*_{in}(L,L') $. After possibly deforming $a$, each component $\Sigma$ of  $(W \circ u)^{-1}(H_a)$ is a surface with corners; we shall show by contradiction that all such components are constant and contained in the vertical line $x=a$. Letting $v$ denote the restriction of $W \circ u $ to such a component we first consider the situation in which this component does not include the outgoing end. In this case, every boundary segment of $\Sigma$ which maps under $v$ to a curve in $\Gamma$ or $\Gamma'$ must have both endpoints at the same point of the vertical line $x=a$. In particular, the intersection number of the boundary of $v (\Sigma)$ with a horizontal half-ray starting at $x=a$ must vanish. This implies invariance of the signed count of pre-images of any point in the plane, which then vanishes because of vanishing at infinity. By the open mapping theorem, we conclude that the image of this component is constant, and contained in the boundary.

In order to extend this argument to the case that $\Sigma$ includes the outgoing end, let $v$ denote the extension of $W \circ u $  to the  compactification $\overline{\Sigma}$, and let $z$ denote the image of the point at infinity. The two segments of $\partial \overline{\Sigma}$ meeting at the point at infinity map to curves $\gamma$ and $\gamma'$ in $\Gamma$ and $\Gamma'$ which meet at $z$.  The situation is summarised in Figure \ref{fig:inner_subcomplex}, and the ordering convention for the output of a Floer differential is such that the degree of $v$ over the unique embedded triangle in that figure must be negative in order for the degree to vanish for large values of the $x$-coordinate. By positivity of degree (and the open mapping theorem) we conclude that such a component cannot exist.
\end{proof}
\begin{figure}
\centering
\begin{tikzpicture}
\begin{scope}               
\node[label=right:{$\gamma$}] at (2,0) {};
\draw[line width=\lw, blue] (0,0)--(2,0);
\draw[line width=\lw] (0,1)  .. controls (1, 1) and  (1, -1) .. (2, -1);
\node[label=right:{$\gamma'$}] at (2,-1) {};
\draw[line width=2*\lw, dashed] (0,-1) -- (0, 1.5);
\end{scope}
\end{tikzpicture}
\caption{}
\label{fig:inner_subcomplex}
\end{figure}

The same methods yield a higher product
\begin{equation} \label{eq:A_oo_operations}
\mu_{d} \co  CF^*(L_d, L_{d-1}) \otimes \cdots \otimes CF^*(L_0, L_1) \to CF^{*}(L_0, L_d)[2-d]
\end{equation}
whenever all Floer complexes are defined. The case in which some of the Lagrangians are equal is handled by counting configurations of holomorphic discs and (perturbed) gradient flow lines, as in \cite{BC:Pearl,Seidel:genus,Sheridan}. Chosen inductively, these products satisfy the $A_{\infty}$ relation. The same argument as in Lemma \ref{lem:inner_subcomplex} shows:
\begin{lem} \label{lem:inner_subA_oo}
If $L_0, \cdots, L_d$ project to collections of curves $\Gamma_0, \cdots, \Gamma_d$ such that each pair $(\Gamma_i, \Gamma_j)$ is monotone if $i < j$, then $\mu_d$ preserves  interior Floer cochains. \qed
\end{lem}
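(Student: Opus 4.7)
The plan is to follow the template of Lemma~\ref{lem:inner_subcomplex} essentially verbatim, with the monotonicity hypothesis playing exactly the role that the pair-monotonicity played in the two-Lagrangian case. So I would set up a holomorphic disc $u$ representing an element of $\mu_d$ whose inputs all lie in the respective interior Floer subcomplexes, and argue by contradiction that its output must also be interior. As before, choose $a$ generically and analyse the components $\Sigma$ of $(W\circ u)^{-1}(H_a)$, letting $v = W\circ u|_\Sigma$.

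First I would dispose of the components of $\Sigma$ that do not contain the outgoing puncture. Here the argument is identical to the strip case: each boundary arc of such a component maps under $v$ into one of the curves $\Gamma_i$, and the endpoints of each arc lie on the vertical line $\{x=a\}$. The signed count of preimages of a point far to the right in $H_a$ is therefore zero (by deformation to infinity) and constant in the plane (by proper holomorphicity), so it vanishes identically, and then the open mapping theorem forces $v$ to be constant on $\Sigma$, with image in $\{x=a\}$.

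Second, and this is the step where monotonicity is genuinely needed, I would handle the component containing the outgoing puncture. The extension $v$ to the compactification $\bar\Sigma$ still sends the boundary of $\bar\Sigma$ into the union of graphs, but now two boundary segments $\gamma_i\subset \Gamma_i$ and $\gamma_j\subset \Gamma_j$ (with $i<j$) meet at the image $z$ of the puncture at infinity. The monotonicity condition $\frac{df_i}{dx}\geq \frac{df_j}{dx}$ at equality points is precisely what rules out a locally positive-degree configuration at $z$: near the outgoing end, the \emph{ordering} convention for the outputs of $\mu_d$ places $\gamma_i$ above $\gamma_j$, and monotonicity forces the unique embedded triangle cut out by $\gamma_i$, $\gamma_j$ and any horizontal ray into the region of locally negative degree, exactly as in Figure~\ref{fig:inner_subcomplex}. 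A positive-degree component would contribute positively to the far-right intersection count, which must vanish; positivity of local degree for holomorphic maps and the open mapping theorem then give a contradiction unless the component is again constant in $\{x=a\}$.

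The only genuinely new feature compared to Lemma~\ref{lem:inner_subcomplex} is bookkeeping: one has $d{+}1$ cyclically ordered Lagrangians instead of two, so one must check that the pairwise monotonicity assumption propagates correctly around the disc boundary. I do not expect an obstacle here, since the argument above is entirely local near each boundary segment and near the outgoing puncture, and Lemma~\ref{lem:inner_subcomplex} already handles the configurations at the $d$ incoming punctures (each of which involves only two consecutive Lagrangians $L_{k-1}, L_k$, which are monotone by hypothesis). The contribution from pearly trajectories (when some $L_i = L_{i+1}$) causes no additional trouble, as the Morse flow happens on $L_i$ and projects to a point in the base. I would therefore conclude that each component of $(W\circ u)^{-1}(H_a)$ is constant in $\{x=a\}$, so that the output of $\mu_d$ is an interior generator, and only curves projecting into $H_a$ contribute to the interior part of the operation.
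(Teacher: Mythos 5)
Your argument is correct and takes the same route the paper intends: the paper gives no proof here beyond the prefatory remark ``The same argument as in Lemma~\ref{lem:inner_subcomplex} shows,'' and your proposal correctly spells out that adaptation — treating the non-outgoing components by the zero-winding argument (which uses only that the inputs are interior, so these components contain no punctures and all boundary arcs have endpoints on $\{x=a\}$), and using monotonicity of the pair $(\Gamma_0,\Gamma_d)$ at the outgoing puncture. One small inaccuracy worth fixing: Morse flow lines on a horizontally admissible $L_i$ project under $W$ to arcs inside $W(L_i)$, not to a point, though this does not affect the degree argument since only the holomorphic components of a pearly trajectory carry positive local degree.
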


\subsection{The Fukaya category of $W$} \label{sec:fukaya-category-w}

The collection of horizontally admissible branes forms a partially ordered set, with $L > K$ if and only if $h(L) > h(K)$ as subsets of $\bR$. We define a category $\scrO_{W}$ with objects such branes and morphisms
\begin{equation}
\scrO_{W}(L,K) =   \begin{cases} 
  CF^*(L,K) &  \textrm{ if } L \geq K \\
0 & \textrm{otherwise.}     
  \end{cases}
\end{equation}
The $A_{\infty}$ operations are given by Equation \eqref{eq:A_oo_operations}.

We shall define the Fukaya category as a localisation of $\scrO_W$, following \cite{AS:Lef-Wrap}. To this end, consider a pair $(L,L')$ such that there is a real number $2 < a$, and a small negative perturbation $L_{-\epsilon}$ of $L$ which projects to straight lines on $H_{a}$ such that (i) $L'$ agrees with $L_{-\epsilon}$ away from $W^{-1}(H_a)$, and (ii) the pair $(L,L')$ projects to a monotone pair of arcs outside a compact set. In that case, Lemma  \ref{lem:inner_subcomplex} implies that we have a subcomplex
\begin{equation}
 CF^*(L, L_{-\epsilon})  = CF^*_{in}(L,L')  \subset CF^*(L,L'),
\end{equation}
hence Lemma \ref{lem:degree_0_gen_small} yields a class which we call a \emph{quasi-unit}
\begin{equation}
  \kappa \in H\scrO_{W}(L,L').
\end{equation}

\begin{Definition}[Abouzaid-Seidel]
  The Fukaya category $\scrF_W$ is the localisation of $\scrO_W $ with respect to quasi-units.
\end{Definition}

  The definition of localisation relies on the quotient construction of $A_{\infty}$ categories. Following Drinfeld \cite{Drinfeld} in the differential graded case, a convenient model for such quotients was provided by \cite{LO}. By the universal property of localisations, there is a functor $\scrO_W \to \scrF_W$. The main justification of our definition is the following result:
\begin{prop}
  If $L > K$, the localisation map induces an isomorphism on cohomology
  \begin{equation}
    H \scrO_{W}(L,K) \cong  H\scrF_{W}(L,K). 
  \end{equation}
\end{prop}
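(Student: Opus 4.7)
The plan is to reduce the statement to the assertion that each quasi-unit, when viewed as a Floer-theoretic continuation element, becomes an isomorphism on cohomology once we restrict to mapping into a Lagrangian $K$ whose heights lie strictly below. Concretely, I would use the bar-complex model of the Drinfeld quotient (equivalently the Lyubashenko-Ovsienko construction) in which the localised morphism complex $\scrF_W(L,K)$ is presented as a homotopy colimit
\[
\scrF_W(L,K) \simeq \mathrm{hocolim}_{(\kappa_1,\ldots,\kappa_i)} \scrO_W(L^{(i)},K),
\]
the colimit indexed by finite chains of quasi-units $\kappa_j \co L^{(j-1)}\to L^{(j)}$ starting at $L^{(0)}=L$, with transition maps induced by $A_\infty$-composition.

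First I would fix a cofinal subsystem built out of iterated small negative perturbations. Since $h(L)>h(K)$ as subsets of $\bR$, the gap $\epsilon_0 := \min h(L) - \max h(K)$ is positive, and we may pick a sequence $\epsilon_j > 0$ with $\sum_j \epsilon_j < \epsilon_0$ so that $L^{(j+1)}$ is the $(-\epsilon_{j+1})$-perturbation of $L^{(j)}$ in the sense that defines quasi-units (Lemma \ref{lem:degree_0_gen_small}); all the intermediate Lagrangians then still satisfy $L^{(j)} > K$. A standard cofinality argument shows that the homotopy colimit over this sequence of iterated quasi-units computes the same object as the homotopy colimit over all chains of quasi-units.

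The crux is then to show each transition map
\[
- \circ \kappa_{j+1} \co \scrO_W(L^{(j+1)},K) \longrightarrow \scrO_W(L^{(j)},K)
\]
is a quasi-isomorphism. The quasi-unit $\kappa_{j+1}$ is represented by the canonical degree-zero generator of $CF^*(L^{(j)},L^{(j)}_{-\epsilon_{j+1}})$ identified in Lemma \ref{lem:degree_0_gen_small}, which in turn is the Floer-theoretic avatar of a continuation element associated to the small compactly-supported Hamiltonian isotopy from $L^{(j)}$ to $L^{(j+1)}$. Composition with such a continuation element is a quasi-isomorphism on $\scrO_W(-,K)$ by the usual Floer continuation argument: the moduli spaces interpolating between the two Floer complexes are cut out by a parametrised Floer equation for which compactness is guaranteed because the isotopy is compactly supported and all the $L^{(j)}$ remain horizontally admissible above $K$, so no new chords appear and the Floer differential is deformed through quasi-isomorphisms.

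The main obstacle is the passage between the formal bar-construction description of the Drinfeld quotient and the geometric direct-limit model sketched above; this is essentially the same bookkeeping used by Abouzaid-Seidel \cite{AS:Lef-Wrap} to compare their definition of the wrapped Fukaya category with the one obtained by literally applying Hamiltonian perturbations of ever-increasing size. Once this identification is in place, the argument above shows the colimit is eventually constant, yielding the desired isomorphism $H\scrO_W(L,K) \cong H\scrF_W(L,K)$.
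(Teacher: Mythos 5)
Your geometric input --- that the continuation map for a small negative push-off of $L$ is a quasi-isomorphism on $CF^*(-,K)$ because no chords with $K$ appear or disappear at infinity --- agrees with what the paper calls the ``essential point.'' But the scaffolding around it has problems. First, a factual error: the isotopy from $L^{(j)}$ to $L^{(j+1)}$ decreases all heights by $\epsilon_{j+1}$, so it is emphatically \emph{not} compactly supported. The continuation map is still well-defined and still a quasi-isomorphism, but the correct reasons are the ones the paper gives: the set of chords with $K$ is unchanged throughout the isotopy because the heights stay separated, and compactness for the continuation moduli spaces follows from a maximum principle applied to $W \circ u$ (holomorphicity of $W$ at infinity), not from compact support.

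More seriously, the hocolim presentation you invoke is not obviously correct, and it is precisely here that your argument becomes circular. In the Lyubashenko--Ovsienko/Drinfeld model for the localised hom, one must show that insertions of cones on \emph{arbitrary} quasi-units between $L$ and $K$ contribute acyclically; doing so requires knowing that quasi-units act by quasi-isomorphism via both pre- and post-composition --- exactly the two conditions the paper verifies with its $\kappa_+$ and $\kappa_-$. Your colimit only perturbs the source $L$, so it silently uses the very formula it is supposed to prove. The cofinality claim is also suspect: your subsystem of chains with $\sum_j \epsilon_j < \epsilon_0$ is not cofinal among all chains of quasi-units (which can have arbitrarily large total displacement, eventually dropping $L^{(i)}$ below $K$, at which point $\scrO_W(L^{(i)},K)=0$), so ``a standard cofinality argument'' does not directly apply. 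You flag the passage from the bar model to the colimit model as the ``main obstacle''; that is indeed the gap, and the paper's route --- verify the two-sided quasi-isomorphism conditions directly and then invoke the formal localisation argument, as in \cite{Seidel:CatDynamics} --- is what circumvents it.
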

\begin{proof}[Sketch]
The proof  is completely formal once one shows that multiplication with respect to  quasi-units induces an isomorphism on homology among directed objects, i.e. if $L > K $, and $ \kappa_+ \co L_+ \to L $ and $\kappa_- \co K \to K_-$ are quasi-units, then the maps
\begin{align}
\mu_{2}(\_, \kappa_+) \co CF^*(L,K) & \to CF^*(L_+,K) \\
 \mu_{2}(\kappa_-, \_) \co CF^*(L,K) & \to CF^*(L,K_-) 
\end{align}
induce isomorphisms on homology. The essential point is that, in the isotopy between $L$ and $L_+$, no intersections with $K$ at infinity are created or destroyed (and similarly for swapping the roles of $K$ and $L$).  The proof then follows from  invariance of Floer cohomology under continuation maps \cite{Seidel:Lefschetz-I,BC:Cob};  compare to \cite[Lemma 10.7]{Seidel:CatDynamics} for a related localisation construction. 
\end{proof}

\begin{Remark}
The category $\scrF_W$ is a ``partially wrapped" category, in the terminology of e.g. \cite{Auroux}.  In constructing any version of the Fukaya category of a Liouville manifold which involves non-compact Lagrangian submanifolds $L$ and perturbations by a Reeb-type flow $\phi_H$  at infinity defined by a Hamiltonian function $H$, one must always contend with the fact that $A_{\infty}$-operations in Hamiltonian Floer cohomology are defined by maps 
\begin{equation} \label{eq:basicproblem}
CF^*(L,\phi_{H_1}(L)) \otimes \cdots \otimes CF^*(L, \phi_{H_k}(L)) \longrightarrow CF^*(L, \phi_{H_1+\cdots+H_k}(L))
\end{equation}
which involve multiples $H_i = \lambda_i H$ of a given Hamiltonian function, and the fact that the Floer complex on the right of \eqref{eq:basicproblem} is not isomorphic at chain level to the factors on the left.  The ``telescope construction" of \cite{Abouzaid-Seidel} circumvents this for ``fully wrapped" categories, but there are additional complications for compactness of spaces of holomorphic curves, and well-definition of the required continuation maps for direct systems of partially wrapped Floer groups,  when the Hamiltonian flow is degenerate on a subset of the contact boundary (roughly stemming from non-properness of $H$ on the completion).  The localisation construction of \cite{AS:Lef-Wrap}, borrowed above, is designed to circumvent these issues.
\end{Remark}

One-ended Lagrangians, which by definition project to a single arc outside a compact set, play a special role in the theory. In our intended applications, we shall need a more flexible notion: to this end, we say that a Lagrangian $L$ is \emph{weakly one-ended} if there exists a positive real number $x_0 \in \bR$ such that $W(L)$ agrees near the vertical line $x=x_0$ with a horizontal line (say $y=y_0$). Let $\Lambda$ denote the fibre of $L$ over $(x_0,y_0)$, and let $L_{in} \subset L$ be the submanifold of $L$ (with boundary $\Lambda$) defined by the inequality that the real part of $W$ is bounded by $x_0$.  We define $T_L$ be the Lagrangian which agrees with $L_{in}$ to the left of $x=x_0 $, and with the parallel transport of $\Lambda$ along $y=y_0$ to the right of this line. 
\begin{lem} \label{lem:weakly_one_ended}
The Lagrangians $L$ and $T_L$ are quasi-isomorphic in $\scrF_W$.  
\end{lem}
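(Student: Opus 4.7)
The strategy is to construct a morphism $c \in \scrO_W(T_L, L)$ (up to passing to appropriate small perturbations so that heights are disjoint and ordered correctly) and then verify that its image in $\scrF_W$ is a quasi-isomorphism by producing an inverse. The key geometric observation is that $L$ and $T_L$ agree on $L_{in}=L\cap W^{-1}(\{x\le x_0\})$ and differ only in how they fill the boundary $\Lambda$ over $\{x>x_0\}$: the Lagrangian $T_L$ does so by the simplest parallel transport along $y=y_0$, while $L$ admits a potentially complicated filling. The role of the localisation at quasi-units is precisely to render this choice of filling irrelevant.

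Concretely, after a compactly supported Hamiltonian isotopy I may assume the horizontal ends of $L$ have heights strictly below $y_0=h(T_L)$ and that their projections approach those heights monotonically from above as $x\to\infty$, forming a monotone pair with the horizontal line $y=y_0$ in the sense of Definition \ref{def:monotone_pair}. Lemma \ref{lem:inner_subcomplex} then implies that the Floer differential of $CF^*(T_L,L)$ preserves the interior subcomplex $CF^*_{in}(T_L,L)$, which is naturally identified with a Morse complex on $L_{in}$ relative to its boundary $\Lambda$, with Morse function directed outward at $\Lambda$. An application of Lemma \ref{lem:degree_0_gen_small} in this relative setting produces the canonical degree-zero class $c \in H^0\scrO_W(T_L,L)$.

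By the symmetric construction, using a small positive perturbation of $L$ placed above $T_L$, I obtain a class $c' \in H^0\scrO_W(L,T_L)$. To identify the compositions $c\circ c'$ and $c'\circ c$ with quasi-units, I would appeal to Lemma \ref{lem:inner_subA_oo}: the three projections involved form a monotone configuration over the right half-plane, confining all contributing holomorphic triangles to the region $\{x\le x_0\}$. Once restricted to that region, the composition reduces to a purely Morse-theoretic computation on $L_{in}$ which identifies it with the quasi-unit, up to higher-order corrections that vanish by degree counts (compare the constant-triangle computations in Lemma \ref{Lem:Constant_triangle} and Proposition \ref{Prop:plumb}). Since $\scrF_W$ is defined by inverting quasi-units, it follows that $c$ and $c'$ descend to mutually inverse isomorphisms in $H^0\scrF_W$.

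The main obstacle lies in the third step: rigorously ruling out contributions of holomorphic discs that wander into the region $\{x>x_0\}$, where the complicated structure of $L$ could in principle create additional intersections with its positive perturbation $L'$ and hence spurious triangle counts. The monotone-pair reduction confines everything to the interior, but one must check that the monotone configuration survives the introduction of $L'$ and, relatedly, that the canonical degree-zero class of Lemma \ref{lem:degree_0_gen_small} extends cleanly to the relative setting with boundary $\Lambda$. Both extensions are modest generalisations of standard techniques from \cite{FukayaOh,Abouzaid:toric}, but the matching of Morse-theoretic data across the two Floer complexes needs to be set up with enough care that the compositions can be evaluated explicitly.
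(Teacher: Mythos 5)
Your strategy shares the essential geometric input with the paper's proof: arranging heights so that intersection points and holomorphic polygons are confined over $\{x \le x_0\}$ (via the monotone-projection Lemmas \ref{lem:inner_subcomplex} and \ref{lem:inner_subA_oo}), whence the relevant Floer complexes reduce to Morse theory on $L_{in}$. But the algebraic packaging and the choice of auxiliary objects differ. You propose to build $c \in \scrO_W(T_L,L)$ and $c' \in \scrO_W(L',T_L)$, where $L$ is pushed so all ends lie below $y_0$ and $L'$ is a second copy pushed above $y_0$, and to show $c,c'$ descend to inverse morphisms. The paper instead builds one-ended Lagrangians $T^{\pm}_L$ — Hamiltonian pushoffs of $L_{in}$ by $\mp h$ for a carefully localized Morse function $h$, joined to parallel transports at heights strictly above/below every element of $h(L)$ — and shows $L$ and $T_L$ are mutual summands via a quasi-unit factorisation $\mu^2(g,f) \in HF^*(T^+_L,T^-_L)$.

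The gap in your version is that the composition $\mu^2(c,c')$ lands in $\scrO_W(L',L)$ where $L'$ is a \emph{large} perturbation of $L$ (each end must cross the line $y=y_0$, generically crossing the other ends of $L$), so this is not a small negative pushoff in the sense of Lemma \ref{lem:degree_0_gen_small}, and identifying the product with a quasi-unit would require an extra continuation/Morse-comparison argument that you gesture at in your final paragraph but do not supply. The point of the paper's $T^{\pm}_L$ is exactly to avoid this: since they are one-ended and their Morse data near $\Lambda$ is matched to $h$, all three complexes $HF^*(T^+_L,L)$, $HF^*(L,T^-_L)$, $HF^*(T^+_L,T^-_L)$ are \emph{canonically} $H^*(L_{in})$ with the Floer product becoming cup product, so the quasi-unit assertion is immediate rather than something to be verified. (One small slip besides: a compactly supported Hamiltonian isotopy cannot move the heights of $L$ at infinity; the shift of ends below $y_0$ requires a non-compactly-supported negative pushoff.)
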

\begin{proof}[Sketch of proof:]
To show that $L$ is a summand of $T_L$ in  $\scrF_W$,  it suffices to construct Lagrangians $T^{+}_L$ and $T^-_L$ which are quasi-equivalent to $T_L$ in $\scrF_W$, together with maps $f \in HF^*(T^{+}_L,L)$ and $g \in   HF^*(L, T^{-}_L)$ such that the product
\begin{equation} \label{eq:product_classes_quasi-unit}
  \mu^2(g,f) \in HF^*(T^{+}_L,  T^{-}_L) )
\end{equation}
is a quasi-unit. 

Pick a $C^2$-small Morse function $h$ on $L$ whose restriction to a neighbourhood of $\Lambda$ agrees with the sum of a Morse function on $\Lambda$ with a small multiple of $(x-x_0)^2 + (y-y_0)^2) $. We can then pick $T^{+}_L$  (respectively $T^-_L$) to agree with Hamiltonian pushoff of $L_{in}$ by $-h$ (respectively $h$) in the region $x\leq x_0$, and with the parallel transport of $\Lambda$ by a line above $y=y_0$ (respectively below $y=y_0$) which does not  intersect $W(L)$. Since all relevant intersection points project to the left of the line $x=x_0$, we have identities of sets
\[
L \cap T_L^- = T_L^+ \cap L = T_L^+ \cap T_L- = \mathrm{crit}(h|_{L_{in}});
\]
at the level of Floer cochains, these yield isomorphisms of Floer groups
\begin{equation}
HF^*(L, T^{-}_L) \cong HF^*(T^{+}_L,L)  \cong HF^*(T^{+}_L,  T^{-}_L) ) \cong H^*(L_{in}),
\end{equation}
since all of the complexes are identified with the Morse complex of $h|_{L_{in}}$. These isomorphisms are compatible with multiplication, hence the quasi-unit in $HF^*(T^{+}_L,  T^{-}_L)$ can be written as a product as in Equation \eqref{eq:product_classes_quasi-unit}.  This completes the proof that $L$ is a summand of $T_L$; the proof that they are quasi-isomorphic follows by reversing their r\^oles in the above argument.
\end{proof}

\subsection{The Fukaya category of $M$} \label{sec:fukaya-category-m}

Let $M$ be the fibre of $W$ at a point in the upper half-plane; by our assumptions on $W$, parallel transport in the right half-plane yields an identification of any pair of such fibres which preserves the primitive $\theta|M$ and the complex structure $J|M$.

To ease comparison with $\scrF_W$, we define a version of the Fukaya category of $M$ by localisation: fix a finite collection of exact Lagrangian branes $\scrL_M$ in $M$. The objects of $\scrO_M$ are pairs $(L,i)$, with $L \in \scrL_M$, and $i$ a negative integer. Choose a sequence of Hamiltonian perturbations $\{ L^i \}$ of $L$ which are uniformly $C^2$-small, so that
\begin{equation}
  \parbox{30em}{$L^i$ is transverse to $ K^j  $ whenever $i \neq j$.} 
\end{equation}
We obtain a directed category with morphisms
\begin{equation}
 \scrO_{M}((L,i),(K,j)) =  \begin{cases}   CF^*(L^i,K^j) &  \textrm{ if } (L,i) \geq (K,j) \\
0 & \textrm{otherwise,}     
  \end{cases} 
\end{equation}
where we again choose an auxiliary Morse function on each Lagrangian to define self-Floer cohomology, and the partial order is $(L,i) > (K,j)$ if and only if $i>j$.  The definition of the Floer cochains uses perturbed almost complex structures, but we require that the inhomogeneous terms vanish. Given that the Hamiltonian perturbations were assumed to be $C^2$-small, there is a canonical element
\begin{equation}
  \kappa_{i,j} \in HF^*(L^i, L^j) 
\end{equation}
under the identification of Floer cohomology with Morse cohomology. If $i \geq j$, this represents a morphism in $H\scrO_{M}((L,i),(L,j)) $.
\begin{defn}
The Fukaya category $\scrF_M$ of $M$ is the localisation of $\scrO_{M}$ at the continuation elements $\kappa_{i,j}$. 
\end{defn}
The objects $ (L,i)$ and $(L,j) $ are quasi-isomorphic in $\scrF_M $, but not identical.  If $\scrF_{M}$ denotes the ``usual" Fukaya category, as constructed in \cite{FCPLT} and used in the body of this paper, one can construct an $A_{\infty}$-functor $\scrO_M \rightarrow \scrF_{M}$ which sends $\kappa_{i,j}$ to a quasi-isomorphism for every $i,j$. From that point, a formal argument based on the properties of localisation again shows that $\scrF_M$ and $\scrF_{M}$ are quasi-equivalent; see \cite{AS:Lef-Wrap} and \cite[Remark 10.8]{Seidel:CatDynamics}.

\subsection{Vertical Lagrangians}

 Pick a monotonically increasing function $g$ on $\bR$ which agrees with $-1$ for $y \ll 0$, with $1$ for $0 \ll y$, and which vanishes with non-zero derivative at the origin. Let $g_i = (1+\frac{1}{i}) g(y)$, as shown on Figure \ref{fig:functions_rescaling}.  For each negative integer $i$, Lagrangian $L \in \scrL_{M}$, and point $p$ in the right half plane,  let $\scrV_p(L,i) $  denote the parallel transport of $L^i$ along the curve  $ p+ (g_n(y),y)$.
\begin{figure}
\centering
\begin{tikzpicture}
\begin{scope}               
  \node[label=right:{$g_{-1}$}] at (2,0) {};
\draw[line width=\lw] (-2,0)--(2,0);
\draw[line width=\lw] (-2,-.5)  .. controls (0, -.5) and  (0, .5) .. (2, .5);
\node[label=right:{$g_{-2}$}] at (2,.5) {};
\draw[line width=\lw] (-2,-.667)  .. controls (0, -.667) and  (0, .667) .. (2, .667);
\draw[line width=\lw] (-2,-.75)  .. controls (0, -.75) and  (0, .75) .. (2, .75);
\draw[line width=\lw] (-2,-1)  .. controls (0, -1) and  (0,1) .. (2,1);
\node[label=above:{$g$}] at (2,1) {};
\end{scope}
\end{tikzpicture}
\caption{}
\label{fig:functions_rescaling}
\end{figure}
The Lagrangians $\scrV_p(L,i)$ are not horizontally admissible in the sense of Definition \ref{def:h-admissible}, but they are vertically admissible. Extending the choices of almost complex structure on $M$ used to define $CF^*(L^{i_0}_0,L^{i_1}_1)$ to $E$, we obtain a canonical isomorphism
\begin{equation}
CF^*(L^{i_0}_0,L^{i_1}_1) \equiv  CF^*(\scrV_p(L_0,i_0), \scrV_p(L_1,i_1))
\end{equation}
of free abelian groups given by the inclusion of intersection points. Moreover, given a sequence $(L^{i_0}_0, \cdots , L^{i_d}_d)$, the maximum principle implies that all holomorphic discs in $E$ with boundary on  the sequence obtained by applying $\scrV_p$ are contained in the fibre over $0$, i.e. we have an identification between moduli spaces of holomorphic discs in $E$ and $M$. The following result was proved by Seidel in \cite{FCPLT}:
\begin{prop} \label{prop:regularity-good-order}
 If $i_d < \cdots < i_0$, regularity for holomorphic discs with boundary conditions $ (L^{i_0}_0, \cdots , L^{i_d}_d) $  is equivalent to regularity of the corresponding disc with boundary conditions $(\scrV_p(L_0,i_0), \cdots , \scrV_p(L_d,i_d))  $.  \qed
\end{prop}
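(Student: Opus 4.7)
The plan is to establish a canonical identification between the two moduli spaces that is compatible with their linearized $\bar\partial$-operators, so that regularity on one side transfers to the other.

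First, I would establish the key geometric claim: every holomorphic disc $u \colon D \to E$ with boundary on the ordered sequence $(\scrV_p(L_0,i_0), \ldots, \scrV_p(L_d,i_d))$ and corners at intersection points actually has image in the single fibre $W^{-1}(p) \cong M$. Set $v = W \circ u$; this is a holomorphic map $D \to \bC$ with boundary on the cyclically ordered arcs $\gamma_j(y) = p + (g_{i_j}(y), y)$ and all corners at the common point $p$. The strict ordering $i_d < \cdots < i_0$ implies the amplitudes $1 + 1/i_j$ are strictly monotone, so that any two arcs $\gamma_j$ and $\gamma_k$ meet only at $p$, and on the upper half-plane (resp.\ lower half-plane) their order from left to right is fixed and opposite. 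For any $q \in \bC \setminus \bigl(\{p\} \cup \bigcup_j \gamma_j\bigr)$, the algebraic count $\# v^{-1}(q)$ is non-negative and equals the degree of the boundary of $v$ onto $\bC \setminus \{q\}$; the cyclic monotonicity of the arcs together with the fact that all corners lie at $p$ forces this winding number to vanish for every such $q$, and the open mapping theorem then gives $v \equiv p$.

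Second, given that every such disc is contained in the fibre $M = W^{-1}(p)$, I would identify the moduli space of discs in $E$ with the corresponding moduli space in $M$ via the inclusion $M \hookrightarrow E$. The identification of chain-level generators extends to a bijection of moduli spaces because holomorphic discs in $E$ that factor through $M$ and their preimages in $M$ satisfy the same Floer equation once the almost complex structure on $E$ is chosen to be product-like near $M$ (which is possible since $W$ is holomorphic over the right half-plane, where $p$ lies). Under this identification, the linearized $\bar\partial$-operator on $E$ splits as a direct sum of the operator for the $M$-problem with a constant-coefficient operator on the normal bundle (the pullback of $T_p\bC$), with totally real boundary conditions given by the tangent directions of the arcs $\gamma_j$ at $p$. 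The strict monotonicity $i_d < \cdots < i_0$ precisely guarantees that this configuration of boundary lines in $\bR^2$ is cyclically ordered so that the relevant Riemann--Hilbert problem is surjective with zero kernel, hence contributes no correction to the index or cokernel.

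The main obstacle will be the winding-number argument in the first step. One must carefully handle the corners of $v$, where the boundary jumps between arcs $\gamma_{j-1}$ and $\gamma_j$ meeting transversely at $p$: the local picture determines an angle contribution to the boundary winding, and the sum of these contributions around the disc must be reconciled with the degree $\# v^{-1}(q)$ for $q$ placed in each of the sectors of $\bC \setminus \bigcup_j \gamma_j$. The strict ordering of the $i_j$ ensures these sectors are arranged so that every such algebraic count is non-positive, hence zero, which combined with holomorphicity forces $v$ to be constant. Once this is established, the remainder of the argument --- identification of moduli spaces and of linearized operators --- is essentially formal, following the analogous statements in \cite{FCPLT}.
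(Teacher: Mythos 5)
Your proposal is correct, and the two-step structure — fibre containment via the argument principle, then splitting the linearized operator into the $M$-problem plus a model Riemann–Hilbert problem on the normal bundle — is the argument given by Seidel in \cite{FCPLT}, which the paper cites rather than reproving.

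Two clarifications are worth making, mainly concerning where the hypothesis $i_d < \cdots < i_0$ is actually used. The containment of discs in the fibre $W^{-1}(p)$ is established in the paragraph \emph{preceding} the Proposition (attributed there to the maximum principle) and does not need the ordering hypothesis at all: each boundary arc of $v = W \circ u$ is a path in the contractible one-manifold $\gamma_{i_j}$ starting and ending at $p$, so the full boundary loop $v|_{\partial D}$ is null-homotopic \emph{within} $\bigcup_j \gamma_{i_j}$, hence has vanishing winding about any $q \notin \bigcup_j \gamma_{i_j}$. This is both simpler and sharper than your ``sectors are arranged so that the algebraic count is non-positive'' sketch, which as written is hard to justify and, more importantly, would spend the ordering hypothesis on a step that does not need it. The strict ordering is needed only in your second step: it is exactly the condition that the tangent lines $T_p\gamma_{i_0}, \ldots, T_p\gamma_{i_d} \subset T_p\bC$ occur in strictly decreasing angular order within an interval of length less than $\pi$, which is what makes the constant-coefficient Riemann–Hilbert problem on the trivial normal line bundle (with these boundary conditions and all corners at $p$) have index zero with trivial kernel and cokernel. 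That is precisely the Fredholm-theoretic input from \cite{FCPLT}, and with it the equivalence of regularities follows from the block-diagonal form of the linearization that you describe.
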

We can extend this discussion to Lagrangians which are equal by choosing the Morse function on $\scrV_{p}(L,i)$ to be the sum of the Morse function on $L^i$ with $(y-p)^2$. With these choices, the $A_{\infty}$-structure on $\scrO_{M}$ can be equivalently defined as a subcategory of a category of vertically admissible Lagrangians in $E$.

\subsection{The restriction bimodule} \label{sec:restr-bimod}
Given a horizontally admissible Lagrangian $T$ and a vertically admissible Lagrangian $V$, we define $
  CF^*(T,V) $ by choosing a compactly supported Hamiltonian on $E$ which maps $T$ to a Lagrangian transverse to $V$. More generally, given sequences $(T_0, \ldots, T_s)$ and $(V_r, \ldots, V_0)$ of horizontally and vertically admissible Lagrangians projecting to arcs which are disjoint outside a compact set, the count of discs with $r+s+2$ boundary marked points defines an operation
\begin{multline} \label{eq:Floer_bimodule_maps}
CF^*(V_{r-1}, V_r) \otimes \cdots \otimes CF^*(V_0, V_1) \otimes CF^*(T_s,V_0) \\
 \otimes  CF^*(T_{s-1},T_s) \otimes \cdots \otimes CF^*(T_{0}, T_{1}) \longrightarrow CF^*(T_{0},  V_r),
\end{multline}
which satisfies the equation for an $A_{\infty}$-bimodule.

We now define an $\scrO_{M}$-$\scrO_W$ bimodule $\scrR$, called the restriction bimodule, as follows:  given objects $(L,i)$ of $\scrO_M$ and $T$ of $\scrO_W$, we set
\begin{equation}
  \scrR(T,(L,i)) = \begin{cases} CF^*(T,\scrV(L,i))  & \textrm{ if }  2i+1  < h(T)  \\
0 & \textrm{ otherwise. } 
 \end{cases}
\end{equation}
The structure maps are obtained from Equation \eqref{eq:Floer_bimodule_maps} and the identifications of morphism spaces in $\scrO_M $ with Floer groups among vertical Lagrangians.  The key point is that, given sequences $(T_0, \ldots, T_s)$ and $((L_r,i_r) , \ldots,  (L_0,i_0) )$, the condition $2i_0+1  < h(T_s)  $ implies $2i_r+1  < h(T_0)  $ whenever $i_{k+1} \leq i_{k} $ for all $ 0 \leq k  \leq s-1$ and $h(T_k) \leq h(T_{k+1})  $   for all $0 \leq k \leq r-1$. In particular, given sequences such that $T_k \leq T_{k+1} $ and $(L_{k+1}, i_{k+1}) \leq (L_k, i_k)$, the  Floer complexes between horizontal and vertical Lagrangian appearing in Equation \eqref{eq:Floer_bimodule_maps} are respectively isomorphic to $\scrR(T_s,(L_0,i_0))$ and  $\scrR(T_0,(L_r,i_r))$, and all other complexes are morphism groups in $\scrO_M$ or $\scrO_W$. The $A_{\infty}$ equation for the bimodule $\scrR$ therefore follows from the $A_\infty$ equation satisfied by Equation \eqref{eq:Floer_bimodule_maps}.


\subsection{The Orlov functor} \label{sec:orlov-functor}
Fix a point $q \in  \bC$, lying in the region where $W$ is a locally flat symplectic fibration. For concreteness, we also assume that $q$ lies to the left of the line $x=2$, though the entire construction can done without this assumption by changing constants below.  In this section, we build a functor from $\scrO_{M_q}$ to $\scrO_W$; the key input is a careful construction of a sequence of arcs in the plane.
\begin{figure}
\centering
\begin{tikzpicture}[scale =1]
\begin{scope}
\node[label=left:{$q=(-2,0)$}] at (-2,0) {};
\draw[dashed] (3,-5)--(3,2);
\node[label=right:{$x=3$}] at (3,1) {};
\draw[dashed] (2,-5)--(2,2);
\node[label=left:{$x=2$}] at (2,-4) {};

\fill[black!40!white] (-.5,0) circle (.125);

\node[label=right:{$\gamma_{-1}$}] at (4,0) {};
\draw[line width=4*\lw] (3,0)--(4,0);
\draw[line width=4*\lw] (3,-1)--(4,-1);
\draw[line width=4*\lw] (2,2)  .. controls (2.5, 2) and  (2.5, 0) .. (3,0);
\draw[line width=4*\lw] (2,-1)  .. controls (2.5, -1) and  (2.5, -1) .. (3,-1);  
\draw[line width=4*\lw] (-2,0) .. controls (-2, 2) .. (2,2);
\draw[line width=4*\lw] (-2,0) .. controls (-2, -1) .. (2,-1);

\node[label=right:{$\gamma_{-2}$}] at (4,-2) {};
\draw[line width=2*\lw] (3,-2)--(4,-2);
\draw[line width=2*\lw] (3,-3)--(4,-3);
\draw[line width=2*\lw] (2,1.5)  .. controls (2.5, 1.5) and  (2.5, -2) .. (3,-2);
\draw[line width=2*\lw] (2,-1.5)  .. controls (2.5, -1.5) and  (2.5, -3) .. (3,-3);  
\draw[line width=2*\lw] (-2,0) .. controls (-2, 1.5) .. (2,1.5);
\draw[line width=2*\lw] (-2,0) .. controls (-2, -1.5) .. (2,-1.5);

\node[label=right:{$\gamma_{-3}$}] at (4,-4) {};
\draw[line width=\lw, densely dotted] (3,-4)--(4,-4);
\draw[line width=\lw, densely dotted] (3,-5)--(4,-5);
\draw[line width=\lw, densely dotted] (2,1.333)  .. controls (2.5, 1.333) and  (2.5, -4) .. (3,-4);
\draw[line width=\lw, densely dotted] (2,-1.667)  .. controls (2.5, -1.667) and  (2.5, -5) .. (3,-5);  
\draw[line width=\lw, densely dotted] (-2,0) .. controls (-2, 1.333) .. (2,1.333);
\draw[line width=\lw, densely dotted] (-2,0) .. controls (-2, -1.667) .. (2,-1.667);
\end{scope}
\end{tikzpicture}
\caption{\label{fig:curves_for_cap_functor}
}

\end{figure}
Let $\gamma_i$ be a sequence of arcs as in Figure \ref{fig:curves_for_cap_functor}, indexed by negative integers $i$. More precisely, we first list the properties which only involve one curve at a time:
\begin{enumerate}
\item $\gamma_i$ agrees in $[3,+\infty) \times \bR$ with horizontal lines at heights $2i+1$ and $2i+2$.  
\item $\gamma_i$ intersects $[2,3] \times \bR$ in two components, which are graphs of monotonically decreasing functions with values $1-1/i$ and $-2-1/i$ at $2$.
\item $\gamma_i$ is disjoint from $ (-1,2) \times ( -1, 1 ) $ .
\end{enumerate}
The next conditions are required for all pairs $i > j$:
\begin{enumerate}
\item $\gamma_i$ is transverse to $\gamma_j$.
\item The intersection of the pair $(\gamma_i, \gamma_j)$ with $[2,+\infty) \times \bR$ is monotone (c.f. Definition \ref{def:monotone_pair}.)
\item $\gamma_i \cap \gamma_j  \cap  (-\infty,2] \times \bR = \{ q \}$.
\end{enumerate}

To define a functor, we set $\scrD(L,i)$ to be the parallel transport of $L^i$ along the curve $\gamma_i$, parametrised monotonically so that $t=0$ maps to the point $q$. If $f_{L^i}$ is the Morse function used to define the self-Floer cochains of $L^i$, the function $  f_{L^{i}} + t^2$ is Morse on $  $ 
$ \scrD(L,i)$, so we obtain an identification
\begin{equation}
  CF^*(L^i, L^i) = CF^*(\scrD(L,i), \scrD(L,i)),
\end{equation}
of self-Floer cochains. For pairs, we note that $\scrD(L,i)$ and $\scrD(K,j)$ are transverse if $i \neq j$, so we can define all $A_{\infty}$ operations among such Lagrangians without using inhomogeneous terms. We have an inclusion
\begin{equation}
CF^*(L^i, K^j) \subset CF^*(\scrD(L,i), \scrD(K,j)),
\end{equation}
corresponding to the intersection points lying over $q \in \bC$. Lemma \ref{lem:inner_subcomplex} implies that this is an inclusion of subcomplexes. Since $j<i$ implies that $ h(\scrD(K,j)) < h(\scrD(L,i))  $, we obtain an inclusion of morphisms in the directed categories:
\begin{equation}
  \scrO_{M_q}((L,i), (K,j) ) \subset  \scrO_{W}((L,i), (K,j).
\end{equation}

This inclusion yields a functor
\begin{equation}
\scrD \co \scrO_{M_q} \to \scrO_{W} 
\end{equation}
with trivial higher order terms by the generalisation of  Lemma \ref{lem:inner_subcomplex} to multiple Lagrangian boundary conditions. As quasi-units are defined in the same way on $\scrO_{M_q} $ and $\scrO_{W}  $ we obtain, by the universal property of localisation, a functor
 \begin{equation}
\scrD \co \scrF_{M_q} \to \scrF_{W}.
\end{equation}

\subsection{An equivalence of bimodules}
\label{sec:adjunction}
The Orlov functor and the restriction bimodule can be compared whenever $p=q$ lies in the right half plane. For concreteness, we set $ q = (1,1)$. In this case, note that $ \scrD(L,i) $ and $\scrV(L,i)$ meet cleanly along a copy of $L^i$ over $q$, as shown in Figure \ref{fig:curves_for_adjunction}. Given any horizontally admissible Lagrangian $T$, the count of holomorphic polygons with corners mapping to this clean intersection defines a map
\begin{multline}
CF^*(\scrD(L_{r-1}, i_{r-1}), \scrD(L_r,i_r))  \otimes \cdots \otimes CF^*(\scrD(L_0, i_0), \scrD(L_1,i_1)) \\ \otimes CF^*(T_{s}, \scrD(L_0,i_0)) 
\otimes CF^*(T_{s-1},T_s) \otimes \cdots \otimes CF^*(T_{0}, T_{1})   \longrightarrow CF^*(T_{0}, \scrV(L,i)).
\end{multline}
Using the inclusion of morphism spaces in $\scrO_M$ as subcomplexes of morphism spaces among the images of these Lagrangians under $\scrD$, we obtain a map of $\scrO_M$-$\scrO_W$ bimodules
\begin{equation} \label{eq:adjunction_map_of_bimodules}
  _{\scrD} \Delta^{\scrO_W} \to \scrR.
\end{equation}

\begin{figure}
\centering
\begin{tikzpicture}[scale =1.5]
\begin{scope}
\clip (-3,-2.5) rectangle (6,3);
\draw[dashed] (3,-5)--(3,2);
\draw[dashed] (2,-5)--(2,2);
\node[label=left:{$x=2$}] at (2,-4) {};

\fill[black!40!white] (-.5,0) circle (.25);
\node[label=right:{$T$}] at (4,0.5) {};
\draw[line width=4*\lw,black!40!white] (-.5,0)   .. controls (.5, .25)  ..(1,0.25) -- (4,.25);
\draw[line width=4*\lw,black!40!white] (-.5,0)   .. controls (.5, .5)  ..(1,0.5) -- (4,.5);
\draw[line width=4*\lw,black!40!white] (-.5,0)   .. controls (.5, .7)  ..(1,0.7) -- (4,.7);

\node[label=right:{$\scrD(L,-1)$}] at (4,0) {};
\draw[line width=4*\lw,blue] (3,0)--(4,0);
\draw[line width=4*\lw,blue] (3,-1)--(4,-1);
\draw[line width=4*\lw,blue] (2,2)  .. controls (2.5, 2) and  (2.5, 0) .. (3,0);
\draw[line width=4*\lw,blue] (2,-1)  .. controls (2.5, -1) and  (2.5, -1) .. (3,-1);  
\draw[line width=4*\lw,blue] (2,2)  .. controls (1.75, 2) and  (1.75, 1) .. (1,1);
\draw[line width=4*\lw,blue] (1,1)  .. controls (-1, 1) and  (-2, 1) .. (-2,0);  
\draw[line width=4*\lw,blue] (-2,0) .. controls (-2, -1) .. (2,-1);

\node[label=right:{$\scrD(L,-2)$}] at (4,-2) {};
\draw[line width=2*\lw,blue] (3,-2)--(4,-2);
\draw[line width=2*\lw,blue] (3,-3)--(4,-3);
\draw[line width=2*\lw,blue] (2,1.5)  .. controls (2.5, 1.5) and  (2.5, -2) .. (3,-2);
\draw[line width=2*\lw,blue] (2,-1.5)  .. controls (2.5, -1.5) and  (2.5, -3) .. (3,-3);  

\draw[line width=2*\lw,blue] (2,1.5) .. controls (1.5, 1.5) and (1.5,.5) .. (1,1);
\draw[line width=2*\lw,blue] (-2.5,0) .. controls (-2.5, 1.5) and (.5,1.5) .. (1,1);
\draw[line width=2*\lw,blue] (-2.5,0) .. controls (-2.5, -1.5) .. (2,-1.5);

\node[label=right:{$\scrD(L,-3)$}] at (4,-4) {};
\draw[line width=\lw,blue] (3,-4)--(4,-4);
\draw[line width=\lw,blue] (3,-5)--(4,-5);
\draw[line width=\lw,blue] (2,1.333)  .. controls (2.5, 1.333) and  (2.5, -4) .. (3,-4);
\draw[line width=\lw,blue] (2,-1.667)  .. controls (2.5, -1.667) and  (2.5, -5) .. (3,-5);  
\draw[line width=\lw,blue] (2,1.333)  .. controls (1.5, 1.333) and  (1.5, .333) .. (1,1);
\draw[line width=\lw,blue] (-2.667,0) .. controls (-2.667, 1.667) and (.5,1.667) .. (1,1);
\draw[line width=\lw,blue] (-2.667,0) .. controls (-2.667, -1.667) .. (2,-1.667);

  \node[label=left:{$\scrV(L,-1)$}] at (1,2.5) {};
\draw[line width=4*\lw, red] (1,-2)--(1,2.5);
\draw[line width=2*\lw, red] (.5,-2)  .. controls (.5, .5) .. (1, 1) .. controls (1.5,1.5) .. (1.5,2.5);
\node[label=above:{$\scrV(L,-2)$}] at (1.5,2.5) {};
\draw[line width=\lw,red ] (.333,-2) .. controls ( .333,.5) .. (1,1) .. controls (1.667,1.5) .. (1.667,2.5);

\node[label=110:{$q$}] at (1,1) {};

\end{scope}
\end{tikzpicture}
\caption{}
\label{fig:curves_for_adjunction}
\end{figure}

\begin{lem} \label{lem:adjunction_map_equivalence}
  Whenever $2i+2< h(T) $, the induced map
\begin{equation} 
\scrO_{W}(T, \scrD(L,i) )   \to \scrR(T,(L,i))
\end{equation}
is a chain equivalence.
\end{lem}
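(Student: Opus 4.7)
The plan is to show the map is a quasi-isomorphism by localising the computation to a neighbourhood of the clean intersection $\scrD(L,i) \cap \scrV(L,i)$ over $q$, and then identifying it with a convolution product of the type studied in Proposition \ref{Prop:plumb} and Lemma \ref{lem:Conv3}.

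First, I would exploit the hypothesis $2i+2 < h(T)$ to control intersections at infinity.  The horizontal ends of $\scrD(L,i)$ lie at heights $2i+1$ and $2i+2$, both strictly below the heights of the horizontal ends of $T$.  Applying the monotone-pair argument of Lemma \ref{lem:inner_subcomplex} outside a sufficiently large disc, one sees that all intersections of $T$ with $\scrD(L,i)$ are confined to a compact region.  A parallel argument, applied to the pair $(T,\scrV(L,i))$ in the regions where $y \to \pm \infty$, confines intersections of $T$ with $\scrV(L,i)$ to a compact region as well.  By a small Hamiltonian isotopy of $T$ supported away from infinity, one can then arrange that all such intersections lie over a small neighbourhood $U$ of $q$, over which both $\scrD(L,i)$ and $\scrV(L,i)$ agree with parallel transports of $L^i$ along transversely-intersecting arcs through $q$.

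Second, near the clean intersection over $q$, Lemma \ref{lem:ConormalModelForPairs} provides a symplectic identification of a neighbourhood of $L^i$ in $E$ with a neighbourhood of the zero-section in $T^*\scrD(L,i)$, taking $\scrV(L,i)$ to the conormal bundle $\nu^*_{L^i}$ of the clean intersection.  In this local model, the map \eqref{eq:adjunction_map_of_bimodules} reduces, at cohomological level, to the convolution map
\[
HF^*(T,\scrD(L,i)) \otimes HF^*(\scrD(L,i),\scrV(L,i)) \to HF^*(T,\scrV(L,i))
\]
evaluated against the fundamental class of the clean intersection, which by Lemma \ref{lem:Conv3} is given by the composition of restriction to $L^i$ followed by pushforward to $\nu^*_{L^i}$.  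A Morse--Floer comparison, as in the proof of Proposition \ref{Prop:plumb}, then shows that this convolution is an isomorphism, and a direct check on the local model shows that both Floer complexes compute the same cohomology of an intersection of $T$ with $L^i$ in the fibre $M_q$.

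The main obstacle is the cochain-level matching between the original $A_\infty$-structure and the local convolution model: even granted the Weinstein neighbourhood of Step 2, one must rule out contributions from nonconstant holomorphic triangles whose image escapes the local model.  The standard way to handle this is via an action-filtration argument: the action of an intersection point of $T$ with either Lagrangian, lying over $U$, differs from the action of the canonical generator of the clean intersection by an amount controlled by the size of $U$.  Choosing $U$ sufficiently small, together with an appropriate choice of Hamiltonian perturbation data, forces any non-constant triangle to have area bounded below by a positive constant, which by energy-action considerations cannot connect pairs of generators matched by the constant-triangle count.  After this, the map is seen to be the identity on the associated graded of the action filtration, and the lemma follows.
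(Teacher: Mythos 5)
Your proposal contains a genuine gap in the localisation step, which is also where it diverges most from the paper's actual argument.

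You assert that ``by a small Hamiltonian isotopy of $T$ supported away from infinity, one can arrange that all such intersections lie over a small neighbourhood $U$ of $q$.'' This is not correct. The intersections of $T$ with $\scrV(L,i)$ are governed by where the horizontal ends of $T$ (at heights $h_j > 2i+2$) cross the essentially vertical projection of $\scrV(L,i)$. These crossings are pinned by the asymptotic data, which a compactly supported (or supported-away-from-infinity) Hamiltonian isotopy cannot alter. Similarly, the intersections of $T$ with $\scrD(L,i)$ occur along the ends of $T$ where they cross the arc of $\scrD(L,i)$ descending from $q$ to the line $y=2i+2$. Neither family of intersection points can be concentrated near $q$. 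As a consequence, the action-filtration argument in your final paragraph, which presumes all relevant generators and triangles live over $U$, does not get off the ground: the triangles one needs to count have one corner near $q$ and the other two spread along the ends of $T$, so their projected images are macroscopic, not small.

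The paper's proof does something different and simpler. Rather than trying to move $T$, it moves $q$ far into the upper right quadrant, using the invariance of the map under isotopies of $\scrD(L,i)$ (of fixed height) and $\scrV(L,i)$. After this, the hypothesis $2i+2 < h(T)$, together with the assumption that $h(T)$ is less than the $y$-coordinate of the new $q$, forces each horizontal end of $T$ to cross the arc of $\scrD(L,i)$ exactly once (on the descending segment between $q$ and $y=2i+2$) and the arc of $\scrV(L,i)$ exactly once. This yields a canonical bijection between generators of $\scrO_W(T,\scrD(L,i))$ and $\scrR(T,(L,i))$, one pair per end of $T$, realised by holomorphic triangles that are constant in the fibre and project to the obvious triangular regions of the base. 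No conormal model, no convolution, no action filtration is needed: the map is a bijection on generators essentially by inspection of the projected picture. You should also note that Lemma \ref{lem:ConormalModelForPairs} and Lemma \ref{lem:Conv3}, which you invoke, are about cleanly intersecting pairs and products in the compact setting; they are not set up to apply to the triple $(T,\scrD(L,i),\scrV(L,i))$ of non-compact, horizontally or vertically admissible Lagrangians.
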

\begin{proof}
Up to equivalence, the map is invariant under isotopies of $\scrD(L,i) $ among horizontally admissible Lagrangians of fixed height, and all isotopies of $\scrV(L,i)$ among vertically admissible Lagrangians. By moving the intersection point $q$ so that its $x$ and $y$ coordinate are both much larger than $0$, we may assume that all intersections points of $T$ with $\scrD(L,i)$ and $\scrV(L,i)  $ occur along the ends of $T$, and all these ends have height smaller than the $y$-coordinate of $q$ (see Figure \ref{fig:curves_for_adjunction}). Each end of $T$ projects to an arc which intersects the arcs defining $\scrV(L,i)$ and $\scrD(L,i)$ once, in the second case along the path going from $q$ to the horizontal line of height $2i+2 $ by assumption. The result follows from a straightforward count of holomorphic triangles which are constant in the fibre. 
\end{proof}
\begin{cor} \label{cor:adjunction_exists}
$\scrD$ represents the bimodule $\scrR$, i.e. there is a natural quasi-equivalence of $\scrF_M$-$\scrF_W$-bimodules
\begin{equation}
_{\scrD} \Delta^{\scrF_W} \cong \scrR.
\end{equation} \qed
\end{cor}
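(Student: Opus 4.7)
The plan is to show that the bimodule map from Equation \eqref{eq:adjunction_map_of_bimodules}, namely $_{\scrD}\Delta^{\scrO_W} \to \scrR$ constructed in Section \ref{sec:adjunction}, descends to a quasi-equivalence of bimodules over the localised categories $\scrF_M$ and $\scrF_W$. The strategy has three steps.

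First, I would verify that $\scrR$ descends to a well-defined $\scrF_M$-$\scrF_W$ bimodule. This amounts to checking that the quasi-units of $\scrO_M$ and $\scrO_W$ act via quasi-isomorphisms on the morphism complexes of $\scrR$. For the $\scrO_W$-side, this follows from the standard invariance of Floer cohomology between horizontally and vertically admissible Lagrangians under compactly supported Hamiltonian perturbations of the horizontal brane, together with a continuation argument showing that no intersections are created or destroyed at infinity (the horizontal and vertical ends remain disjoint throughout the isotopy). For the $\scrO_M$-side quasi-units $\kappa_{i,j}$, one observes that the Lagrangians $\scrV(L,i)$ and $\scrV(L,j)$ differ only in the vertical reparametrisation $g_i \rightsquigarrow g_j$ of a shared arc; the resulting continuation map is a quasi-isomorphism because the intersection points with $T$ are in bijection and the usual Morse-Bott type comparison applies.

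Second, I would extend the map \eqref{eq:adjunction_map_of_bimodules} through the localisation functor to obtain a map of $\scrF_M$-$\scrF_W$ bimodules $_{\scrD}\Delta^{\scrF_W} \to \scrR$. This uses the universal property of the localisation construction of \cite{LO}, once compatibility with quasi-units on both sides is established as in Step 1.

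Third, I would upgrade Lemma \ref{lem:adjunction_map_equivalence} from a partial equivalence to a full quasi-equivalence on the localised categories. Given an arbitrary pair $(T, (L,i))$ with $T$ an object of $\scrF_W$ and $(L,i)$ an object of $\scrF_M$, we use that $(L,i) \simeq (L, i')$ in $\scrF_M$ for any integer $i' < i$ (via the quasi-units $\kappa_{i,i'}$, which are inverted by the localisation). Choose $i'$ so that $2i'+2 < h(T)$; the naturality square
\[
\xymatrix{
_{\scrD}\Delta^{\scrO_W}(T, (L,i')) \ar[r] \ar[d] & \scrR(T, (L,i')) \ar[d] \\
_{\scrD}\Delta^{\scrO_W}(T, (L,i)) \ar[r] & \scrR(T, (L,i))
}
\]
has the top arrow a quasi-isomorphism by Lemma \ref{lem:adjunction_map_equivalence}, while the vertical arrows become quasi-isomorphisms after localisation by the argument in Step 1. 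Hence the bottom arrow is also a quasi-isomorphism after localisation, which is the desired quasi-equivalence.

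The main obstacle I expect is the first step: verifying that $\scrR$ descends to the localisation. In particular, controlling the action of $\kappa_{i,j}$ on morphism complexes in $\scrR$ requires checking that the continuation maps interact correctly with the inhomogeneous perturbation data defining Equation \eqref{eq:Floer_bimodule_maps}, including higher multiplication maps. This is routine but delicate, and amounts to choosing the Floer data in a manner compatible with the vertical reparametrisation family interpolating between $\scrV(L,i)$ and $\scrV(L,j)$.
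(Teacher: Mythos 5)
Your proposal is correct and takes essentially the same approach the paper intends: the corollary is stated as an immediate consequence of Lemma \ref{lem:adjunction_map_equivalence}, and your three steps (descend to the localisation, transport the map of \eqref{eq:adjunction_map_of_bimodules}, and use cofinality in the index $i$ via the inverted quasi-units to reduce to the range $2i'+2 < h(T)$) are precisely what is implicit in the ``\qed''. One cosmetic point: since $\kappa_{i,i'}\colon (L,i) \to (L,i')$ for $i > i'$ and $\scrR$ is covariant in the $\scrO_M$-slot, the vertical arrows in your naturality square should point from the $(L,i)$-row to the $(L,i')$-row, the reverse of what you have drawn; the argument is unaffected.
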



\subsection{A two-step filtration}
\label{sec:two-step-filtration}

Consider the right-pullback $\scrW_{q}$ of $\scrR$ by the functor $\scrD$: this is a $\scrO_{M_p}$-$\scrO_{M_q}$-bimodule which assigns to $(L,i) \in \Ob\, \scrO_{M_p}$ and $(K,j) \in \Ob \,\scrO_{M_q}$ the complex
\begin{equation}
    \scrW_{q}( (K,j) ,  (L,i)  )  = \begin{cases}  CF^*( \scrD(K,j), \scrV(L,i ) ) & \textrm{ if }  i< j \\
0 & \textrm{ otherwise.} 
\end{cases}
\end{equation}
The bimodule structure maps arise from the identification of morphisms in $\scrO_{M_p}$ with the Floer complexes among the Lagrangians $ \scrV(L,i )  $, and the inclusion of morphisms in $\scrO_{M_q} $ as the subcomplex of  the Floer complexes among the Lagrangians $ \scrD(K,j) $ corresponding to the intersection points lying over $q$. 
\begin{figure}
\centering
\begin{tikzpicture}[scale =1.5]
\begin{scope}
\node[label=left:{$q$}] at (-2,0) {};
\node[label=right:{$0$}] at (0,0) {};

\fill[black!40!white] (-.5,0) circle (.125);

\node[label=right:{$\scrD(K,-1)$}] at (2,2) {};
\draw[line width=4*\lw, blue] (-2,0) .. controls (-2, 2) .. (2,2);
\draw[line width=4*\lw, blue] (-2,0) .. controls (-2, -1) .. (2,-1);

\node[label=right:{$\scrD(K,-2)$}] at (2,1.5) {};
\draw[line width=2*\lw, blue] (-2,0) .. controls (-2, 1.5) .. (2,1.5);
\draw[line width=2*\lw, blue] (-2,0) .. controls (-2, -1.5) .. (2,-1.5);

\draw[line width=\lw, blue] (-2,0) .. controls (-2, 1.333) .. (2,1.333);
\draw[line width=\lw, blue] (-2,0) .. controls (-2, -1.667) .. (2,-1.667);

  \node[label=left:{$\scrV(L,-1)$}] at (0,2.5) {};
\draw[line width=4*\lw] (0,-2)--(0,2.5);
\draw[line width=2*\lw] (-.5,-2)  .. controls (-.5, 0) and  (.5, 0) .. (.5, 2.5);
\node[label=above:{$\scrV(L,-2)$}] at (.5,2.5) {};
\draw[line width=\lw] (-.667,-2)  .. controls ( -.667,0) and  ( .667,0) .. ( .667,2.5);

\draw[line width = \lw, dashed] (2,0) .. controls (0,-.25) .. (-2,-2);
\end{scope}
\end{tikzpicture}
\caption{}
\label{fig:pullback_bimodule}
\end{figure}

The definition of $\scrW_q$ is summarised in Figure \ref{fig:pullback_bimodule}: since the generators of all these complexes, as well as the intersection points among the Lagrangians corresponding to $\scrD$ and $\scrV$, take place in the region $x < 2$, we have only illustrated this subset of the base.

The intersection points between  $\scrD(K,j)$  and $ \scrV(L,i )$ which occur over points in the lower half-plane generate a submodule which we denote $\scrW^{+}_q ( (K,j) ,  (L,i)  ) $.
\begin{lem}
  The differential preserves $\scrW_q^{+} ( (K,j) ,  (L,i)  )   $.
\end{lem}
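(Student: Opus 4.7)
The plan is to use a barrier argument in the base $\bC$, in the spirit of Lemma \ref{lem:inner_subcomplex}. The key geometric input, indicated by the dashed curve in Figure \ref{fig:pullback_bimodule}, is that the intersections of the arc $\gamma_j$ (the base projection of $\scrD(K,j)$) with the vertical curve defining $\scrV(L,i)$ which lie in the lower half-plane are separated from those in the upper half-plane by a properly embedded arc $\beta \subset \bC$ that avoids both base projections. First I would make $\beta$ precise: after a small isotopy of the auxiliary data, I choose $\beta$ to enter from the left below $q$, pass beneath the lower strand of $\gamma_j$ and to the right of the descent of the vertical curve into the lower half-plane, and escape to the right below all intersection points; by choosing $\beta$ to lie outside the compact region where $W$ is not a flat fibration, I may also assume $\beta$ misses the critical values of $W$.

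Next, given a holomorphic strip $u$ contributing to $\mu^1$ with incoming asymptote $x_-$ a generator of $\scrW_q^+$, I set $v = W \circ u$. Because the Hamiltonian and almost complex perturbations are compactly supported and we arrange $W$ to be holomorphic outside this compact set, $v$ is holomorphic near any neighbourhood of $\beta$ (which lies outside the support of the perturbations). After a further generic perturbation of $\beta$, the preimage $v^{-1}(\beta)$ is a smooth $1$-submanifold of $\bR \times [0,1]$ whose boundary is disjoint from $\partial(\bR\times[0,1])$, since $\beta$ is disjoint from $\gamma_j$ and the vertical curve on which the boundary of $u$ is sent.

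Then I would repeat the counting argument of Lemma \ref{lem:inner_subcomplex}. Each closed component of $v^{-1}(\beta)$ bounds a disc in the strip whose image lies entirely on one side of $\beta$; by the open mapping theorem and positivity of intersection between $v$ and $\beta$, this disc must be constant, ruling out such components. Each non-closed component is a properly embedded arc with both endpoints at $\pm \infty$ in $\bR \times [0,1]$, so it contributes to the signed asymptotic count. Since $v$ converges to $W(x_\pm)$ at $\pm\infty$ and $\beta$ is a smooth arc, the parity of this count is determined by whether $W(x_-)$ and $W(x_+)$ lie on the same side of $\beta$. Positivity of intersection forces the count to be nonnegative, and asymptotic considerations force it to vanish unless $W(x_\pm)$ lie on the same side, so $W(x_+)$ must also lie in the lower half-plane; hence $\mu^1(x_-) \in \scrW_q^+$.

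The main obstacle is bookkeeping around the compact region where $W$ has critical values and the perturbation data are concentrated: I must ensure that the curve $\beta$ can genuinely be routed to avoid this compact set while still separating lower from upper intersection points. Since the projections $\gamma_j$ and the vertical arc are explicitly controlled near the compact set (they meet it along a small collection of arcs passing through $q$), this is a matter of direct topological verification on the base $\bC$, and the resulting argument identifies $\scrW_q^+$ as a subcomplex.
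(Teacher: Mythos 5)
Your proposed barrier cannot be chosen to avoid the base projections of the Lagrangians, and this invalidates the classification of $v^{-1}(\beta)$ on which the rest of your argument rests. The projection $\gamma_j$ of $\scrD(K,j)$ has intersections with the (roughly vertical) projection of $\scrV(L,i)$ both in the lower half-plane and in the upper half-plane: both are connected curves, so any arc $\beta$ separating the lower intersections from the upper ones necessarily crosses each of them. In particular the vertical curve runs from $y \ll 0$ to $y \gg 0$, so it must meet $\beta$; and $\gamma_j$ passes through $q$ on one side of $\beta$ and meets the vertical curve on the other side, so it meets $\beta$ as well. Your sketch of $\beta$ ``passing beneath the lower strand of $\gamma_j$ and escaping to the right below all intersection points'' describes a curve that lies entirely to one side of the intersection points rather than one separating them. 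Once $\beta$ is forced to cross the projections, $v^{-1}(\beta)$ is a $1$-manifold with boundary on $\partial(\bR \times [0,1])$, and your dichotomy (``closed loops'' vs.\ ``arcs running to $\pm\infty$'') is incomplete: the missing case — properly embedded arcs with endpoints on the sides of the strip, where the base projections cross $\beta$ — is precisely the case that Lemma \ref{lem:inner_subcomplex} handles via surfaces with corners. The winding/parity argument you then run (intersections of $v^{-1}(\beta)$ with rays, positivity of intersection) does not go through without treating these corners, as in the original lemma, where boundary segments of a component $\Sigma$ mapping into $\Gamma$ or $\Gamma'$ are shown to be null-homotopic as $1$-chains so that they contribute zero to the degree count.

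A secondary point you do not engage with: one side of $\beta$ contains the compact set over which $W$ is not a flat holomorphic fibration. The degree argument of Lemma \ref{lem:inner_subcomplex} uses holomorphicity (open mapping theorem, positivity of degree) over the excluded region $H_a$, so in adapting the argument to the present situation one must be careful about which side of $\beta$ plays the role of $H_a$, since over the side containing the critical value the map $W \circ u$ is not automatically holomorphic. Your proposal does not address this asymmetry at all. The paper's proof is an appeal to ``the same argument'' applied to the dashed line, which at minimum must replicate the corner analysis; your argument as written deviates from it in a way that introduces a genuine gap.
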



\begin{proof}
We apply the same argument as in Lemma \ref{lem:inner_subcomplex} to the dashed line in Figure \ref{fig:pullback_bimodule}, which separates the intersection points in the lower half plane from the region where the fibration is not flat and which contains the intersection points in the upper half plane. 
\end{proof}

Generalising the above argument to the case of holomorphic polygons, we find a sub-bimodule $ \scrW_q^{+}  \subset \scrW_q$, given on pairs by the subcomplex $ \scrW_q^{+} ( (K,j) ,  (L,i)  ) $.    We introduce the quotient complex 
\begin{equation}
\scrW^{+}_q \to \scrW_q \to \scrW_q^{-},
\end{equation}
whose value on a pair is the Floer complex $\scrW_q^{-} ((K,j) ,  (L,i))$ generated by all intersection points occurring in the lower half plane.

\subsection{Parallel transport and the exact triangle}\label{Sec:Orlov}
Let us return to the situation considered in the proof of Lemma \ref{lem:adjunction_map_equivalence}, where $p=q$ is a point in the upper right quadrant of the plane. In this case, all intersection points  defining the bimodule $\scrW_q^{+}$  lie over this point and hence so do all holomorphic discs contributing to the bimodule structure maps. Since the Lagrangians $\scrD(K,j)$  and $ \scrV(L,i )$ intersect this fibre along $K^{j} $ and $L^{i}$, the bimodule $\scrW_q^{+}$ therefore represents the diagonal bimodule of $\scrO_{M_q}$. 
\begin{lem}\label{lem:inclusion_is_unit}
The inclusion $ \scrW_q^{+} \to \scrW_q $ agrees with the unit $ \scrO_{M_q} \to {}_{\scrD}\Delta^{\scrO_{W}}_{\scrD}  $, i.e. the following diagram commutes:
\begin{equation}
  \xymatrix{  \scrO_{M_q} \ar[d] \ar[r] &  \scrW_q^{+} \ar[d] \\ 
_{\scrD}\Delta^{\scrO_{W}}_{\scrD} \ar[r] & \scrW_q \\
}
\end{equation}
\end{lem}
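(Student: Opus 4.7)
The plan is to unwind both maps at the cochain level and verify they agree. Recall from Section \ref{sec:orlov-functor} that the Orlov functor $\scrD$ has trivial higher-order terms: on morphism spaces it is the inclusion $CF^*(K^j, L^i) \hookrightarrow CF^*(\scrD(K,j), \scrD(L,i))$ of the subcomplex generated by intersection points lying over $q$. The unit $\Delta^{\scrO_{M_q}} \to {}_\scrD \Delta^{\scrO_W}_\scrD$ is induced by this action of $\scrD$ on morphisms. On the other side, the equivalence ${}_\scrD \Delta^{\scrO_W}_\scrD \simeq \scrW_q$ obtained by right-pullback of \eqref{eq:adjunction_map_of_bimodules} is given at cochain level by counting holomorphic polygons with a distinguished corner mapping to the clean intersection $L^i = \scrD(L,i) \cap \scrV(L,i)$ in the fibre over $p$.

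I would first analyse the linear part of the composition $\Delta^{\scrO_{M_q}} \to {}_\scrD \Delta^{\scrO_W}_\scrD \to \scrW_q$. A generator $x \in CF^*(K^j, L^i)$ is an intersection of $K^j$ and $L^i$ in the fibre over $q = p$; since $K^j \cap L^i \subset \scrD(K,j) \cap \scrV(L,i)$, the same point can equivalently be regarded as a generator of $\scrW_q^{+}((K,j),(L,i))$. The composition first includes $x$ as an intersection of $\scrD(K,j)$ and $\scrD(L,i)$ over $q$, then applies the triangle-counting map to produce an element of $CF^*(\scrD(K,j), \scrV(L,i))$. The constant triangle whose three corners all coincide with $x$ (which lies in $\scrD(K,j) \cap \scrD(L,i) \cap \scrV(L,i)$) is regular and rigid, and contributes $\pm 1$ with the sign matching the canonical identification of orientation lines. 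A projection / open-mapping argument as in Lemma \ref{lem:inner_subcomplex} shows that non-constant rigid triangles contribute zero, localising the computation to the fibre over $q$. This identifies the linear part of the composition with the tautological inclusion $\scrW_q^{+} \hookrightarrow \scrW_q$.

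I would then extend the identification to the higher bimodule structure maps. The same localisation argument, applied to a polygon whose bimodule input is a generator of $\scrW_q^{+}$ and whose other inputs are morphisms in $\scrO_{M_q}$ and $\scrO_{M_p}$ (realised via vertical perturbations and the $\scrD$-arcs respectively), shows that all rigid contributing configurations stay close to the fibre over $q$; in the degenerate limit each such polygon splits as the product of a constant polygon at the clean intersection with a polygon in the fibre $M_q$, and the latter count reproduces precisely the bimodule operations of $\Delta^{\scrO_{M_q}}$. The main obstacle will be controlling the auxiliary curves with corners on perturbations in $\scrO_{M_p}$ and $\scrO_{M_q}$: this requires exploiting the monotonicity of the pairs $(\gamma_i, \gamma_j)$ from Section \ref{sec:orlov-functor} together with the small-vertical-perturbation conventions of Section \ref{sec:fukaya-category-m}, ensuring that no output generator of $\scrW_q^{+}$ can escape the fibre over $q$. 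Given this, commutativity of the diagram at the level of $A_\infty$-bimodule maps follows, and the lemma is proved.
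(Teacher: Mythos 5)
Your proposal is correct and takes essentially the same route as the paper, which compresses everything into the single observation that ``all holomorphic curve counts take place over $q$'' (invoking the analysis of Lemma~\ref{lem:adjunction_map_equivalence}); your write-up unwinds what that localisation buys at the level of constant triangles and then of higher structure maps. One small slip: the $\scrO_{M_q}$-inputs are realised via the $\scrD$-arcs and the $\scrO_{M_p}$-inputs via the vertical perturbations, not the other way around (cf.\ the description of $\scrW_q$'s structure maps in Section~\ref{sec:two-step-filtration}); since $p=q$ here this is immaterial to the argument, but the roles should be stated correctly.
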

\begin{proof}
The bottom arrow is induced (by pullback under $\scrD$) by the equivalence between $  {}_{\scrD} \Delta^{\scrO_W} \cong \scrR $. Commutativity is implied by the fact that all holomorphic curve counts take place over $q$, as in Lemma \ref{lem:adjunction_map_equivalence}.
\end{proof}

In order to compute $\scrW_q^{-}$, we move $q$ along a path from $(1,1)$ to $(1,-1)$ that moves counterclockwise (e.g. going through $(-2,0)$). Having fixed the isomorphism between the categories $\scrO_{M_{(1,1)}}  $ and  $\scrO_{M_{(1,-1)}}  $ arising from parallel transport in the right half-plane, the above path induces the monodromy symplectomorphism between these fibres; and hence acts correspondingly on Fukaya categories. On the other hand, setting $p=q=(1,-1)$ we have that $  \scrW_{(1,-1)}^{-}$ is isomorphic to diagonal bimodule. Pulling back again to a point in the upper right quadrant, we conclude 
\begin{lem}
The bimodule $\scrW_q^{-}$ is quasi-isomorphic to the graph bimodule of the clockwise monodromy. \qed
\end{lem}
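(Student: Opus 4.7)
The strategy is to prove this by a deformation argument in which the basepoint $q$ is moved along a counterclockwise path from the upper right quadrant to the lower right quadrant, tracking how $\scrW_q^{-}$ varies.  The essential observation is that $\scrW_q^{-}$ is geometrically meaningful for every $q$ in the region where $W$ is locally flat, and that its value at the two endpoints of the path can be identified explicitly: in the lower half-plane it is the diagonal bimodule, while in the upper half-plane it differs from the diagonal by precisely one circuit of the monodromy.

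First I would establish the ``base case'' that when $q=(1,-1)$ lies in the lower right quadrant, the bimodule $\scrW_q^{-}$ represents the diagonal bimodule of $\scrO_{M_q}$.  This is the exact mirror of the statement for $\scrW_{(1,1)}^{+}$ used in Lemma~\ref{lem:inclusion_is_unit}.  With $q$ in the lower half-plane, one checks that all intersection points of $\scrD(K,j) \cap \scrV(L,i)$ that survive in $\scrW_q^{-}$ occur over $q$ itself; the companion estimate (analogous to Lemma \ref{lem:inner_subcomplex}) confines the relevant moduli spaces of holomorphic polygons contributing to the bimodule structure maps to the fibre $M_q$.  The resulting complex is then visibly the Floer complex computing morphisms in $\scrO_{M_q}$.

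Next I would set up the one-parameter family $\{ \scrW_{q(t)}^{-} \}$, where $q(t)$ traces a counterclockwise path from $(1,1)$ to $(1,-1)$ passing to the left of the origin through $(-2,0)$, chosen to remain in the region of flat symplectic fibration.  For each $t$, the Lagrangians $\scrD_{q(t)}(K,j)$ and $\scrV_{q(t)}(L,i)$ deform smoothly with $t$; in particular the arcs $\gamma_i(t)$ and the vertical arcs through $p=q(t)$ are rearranged continuously, and the generators and differentials of $\scrW_{q(t)}^{-}$ depend continuously on $t$.  By a standard continuation argument for Floer-theoretic bimodules (using the maximum principle in the right half-plane to guarantee compactness), the family yields a quasi-isomorphism of $\scrO_M$-$\scrO_M$ bimodules $\scrW_{q(0)}^{-} \simeq \scrW_{q(1)}^{-}$.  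Combined with the previous step, this identifies $\scrW_{q(0)}^{-}$ (the bimodule of interest) with the diagonal bimodule of $\scrO_{M_{q(1)}}$.

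Finally I would identify the twisting. The diagonal bimodule of $\scrO_{M_{q(1)}}$ transports to a bimodule over $\scrO_{M_{q(0)}}$ via the identification of the two fibres. We have fixed this identification via the right half-plane parallel transport, whereas the continuation argument naturally uses the identification induced by the counterclockwise path $q(t)$, which goes once around the origin.  The discrepancy between these two identifications is exactly the clockwise monodromy $\phi$ (with the orientation convention that going counterclockwise along $q(t)$ corresponds to $\phi^{-1}$, and transferring back corresponds to $\phi$).  Thus under the fixed identification, $\scrW_q^{-}$ for $q=(1,1)$ is quasi-isomorphic to the graph bimodule $\Delta_{\phi}$.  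The main obstacle is to check that the continuation argument really does produce a quasi-isomorphism of bimodules and not merely of chain complexes—this requires that Floer data and interpolating inhomogeneous terms be chosen compatibly across all pairs and higher tuples of objects, and that one propagates the argument of Lemma~\ref{lem:inner_subcomplex} through the isotopy so as to preserve the filtration that isolates $\scrW_q^{-}$ inside $\scrW_q$.
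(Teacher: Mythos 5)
Your proposal is correct and follows essentially the same route as the paper: move $q$ counterclockwise from the upper right to the lower right quadrant, identify $\scrW_{(1,-1)}^{-}$ (with $p=q=(1,-1)$) with the diagonal bimodule, and recover the monodromy $\phi$ as the discrepancy between the continuation-induced identification and the fixed right-half-plane identification of $\scrO_{M_{(1,1)}}$ with $\scrO_{M_{(1,-1)}}$. The paper states this in three sentences; your write-up fills in the continuation argument and the sign bookkeeping in more detail, but the decomposition and key steps are the same.
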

Combining the triangle $\scrW_q^{+} \to \scrW_q \to \scrW_q^{-}$ with Lemma \ref{lem:inclusion_is_unit}, we obtain Theorem \ref{thm:les}, namely:
\begin{cor} \label{cor:twist_cone_adjunction-bimod}
The graph bimodule $\Delta_{\phi}$ is quasi-isomorphic to the cone  $  \Delta^{\scrF_{M}} \to {}_{\scrD}\Delta^{\scrF_{W}}_{\scrD}  $ of the unit. \qed
\end{cor}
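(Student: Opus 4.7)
The plan is to assemble the ingredients already set up in Sections \ref{sec:restr-bimod}--\ref{Sec:Orlov} into the desired exact triangle of bimodules. The starting point is the two-step filtration
\[
0 \longrightarrow \scrW_q^{+} \longrightarrow \scrW_q \longrightarrow \scrW_q^{-} \longrightarrow 0
\]
of $\scrO_{M_p}$-$\scrO_{M_q}$-bimodules constructed in Section \ref{sec:two-step-filtration}. Being a short exact sequence of $A_\infty$-bimodules (a sub-bimodule with quotient), it induces an exact triangle in the derived category, exhibiting $\scrW_q^{-}$ as quasi-isomorphic to the mapping cone of the inclusion $\scrW_q^{+} \to \scrW_q$.

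Next I identify the three terms. For the middle term, the bimodule $\scrW_q$ is by definition the right-$\scrD$-pullback of the restriction bimodule $\scrR$; combined with Corollary \ref{cor:adjunction_exists}, which gives $\scrR \simeq{} _{\scrD}\Delta^{\scrF_W}$, this yields $\scrW_q \simeq {}_{\scrD}\Delta^{\scrF_W}_{\scrD}$. For the submodule $\scrW_q^{+}$, specialising to $p=q=(1,1)$ (with the fibres identified via parallel transport in the right half-plane) forces all intersection points and all holomorphic polygons contributing to the structure maps to live over the single fibre $M_q$, reducing the bimodule to $\Delta^{\scrF_M}$. For the quotient $\scrW_q^{-}$, the preceding lemma moves $q$ along a counter-clockwise path from $(1,1)$ to $(1,-1)$; after this move the same argument identifies $\scrW_{(1,-1)}^{-}$ with the diagonal, and the monodromy accumulated along the path implements the clockwise twist, yielding $\scrW_q^{-} \simeq \Delta_\phi$.

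Finally, the inclusion $\scrW_q^{+} \to \scrW_q$ needs to be identified with the unit $\Delta^{\scrF_M} \to {}_{\scrD}\Delta^{\scrF_W}_{\scrD}$; this is precisely Lemma \ref{lem:inclusion_is_unit}. Putting everything together, the mapping cone of the unit is quasi-isomorphic to $\scrW_q^{-} \simeq \Delta_\phi$, which is the statement of the Corollary. Passage from $\scrO$-bimodules to $\scrF$-bimodules is then formal via the universal property of localisation at quasi-units, which preserves exact triangles.

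The main subtlety I expect to work out is the compatibility of the parallel transport identification for $\scrW_q^{-}$: one has to check that moving the base point $q$ along the chosen arc acts on the bimodule $\scrW_q^{-}$ exactly by pulling back on the $\scrO_{M_q}$-side by the clockwise monodromy functor, as opposed to some other functor differing by an isotopy. This amounts to verifying that the continuation/parallel-transport maps on Floer complexes fit into a morphism of bimodules rather than merely a family of chain equivalences, and requires a careful parametrised moduli space argument tracking the fibre over a moving point. Once this and the analogous naturality statement implicit in Lemma \ref{lem:inclusion_is_unit} are in place, the rest of the argument is the formal extraction of an exact triangle from a short exact sequence.
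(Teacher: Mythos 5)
Your proposal reproduces the paper's argument essentially verbatim: the two-step filtration $\scrW_q^{+} \to \scrW_q \to \scrW_q^{-}$, the identification of the outer terms with $\Delta^{\scrF_M}$ (at $p=q$) and with $\Delta_\phi$ (by sliding $q$ counterclockwise to $(1,-1)$), and Lemma~\ref{lem:inclusion_is_unit} identifying the inclusion with the unit. The subtlety you flag — that moving $q$ acts on the bimodule by the monodromy functor rather than merely inducing a family of chain equivalences — is exactly what the paper invokes (without much elaboration) via parallel transport and invariance, so your note is well-placed but not a deviation in approach.
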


\subsection{Lefschetz fibrations}
\label{sec:lefschetz-fibrations}

We now implement some of the above ideas in the setting of a Lefschetz fibration $W \co E \to \bC$ on a total space of dimension $2n$, with a unique critical point (by convention, we can set the value to be $-1/2$ to remain consistent with the previous section). Fix the thimble $T$ which projects to the subset $[-1/2,+\infty)$ of the real axis.  As an object of $\scrF_W$, a thimble $T$ has self-Floer cochains generated by  the critical points of any proper Morse function on $T \cong \bR^{n}$. We fix such a Morse function $f_{\bR^{n}}$ with a unique minimum, which near the unit sphere is the sum of a Morse function on the sphere with $(\|W \|-1)^2 $. Denoting by $\bk$ the category with one object whose endomorphism group is $\bk$, we obtain a functor
\begin{equation}
 \scrT \co  \bk  \to   \scrF_W,
\end{equation}
which is a fully faithful embedding since the class of the minimum maps to the identity, which generates the self-Floer cohomology of a thimble. We abuse notation and write $\scrT$ either for the functor, or for the corresponding object of $\scrF_W$.

Let $K \subset M = \pi^{-1}(1)$ denote the vanishing cycle. Consider the Yoneda module $\scrK$ of this object of $\scrF_M$; note this $\scrF_M$ module can be equivalently thought of as a $\bk-\scrF_M$ bimodule.
\begin{Lemma} \label{lem:pullback_restriction}
The pullback of $\scrR$ by $\scrT$ is quasi-isomorphic to $\scrK$. 
\end{Lemma}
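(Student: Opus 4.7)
The pullback $\scrT^{*}\scrR$ is a $\bk$-$\scrF_{M}$ bimodule (equivalently, a right $\scrF_{M}$-module) whose value on an object $(L,i) \in \mathrm{Ob}\,\scrO_{M}$ is the Floer complex $CF^{*}(T, \scrV_{p}(L,i))$, with $p$ a chosen point in the right half-plane. My plan is to identify this module with the Yoneda module of the vanishing cycle $K \in \mathrm{Ob}\,\scrF_{M}$, whose value on $(L,i)$ is $CF^{*}(K, L^{i})$ computed in $M$.

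The geometric picture is as follows. Choose $p=(p_{x},p_{y})$ with $p_{x}>2$. Then $W(\scrV_{p}(L,i))$ is a nearly vertical arc crossing the positive real axis in the single transverse intersection $(p_{x},0)$ with $W(T) = [-1/2,+\infty)$. Since the fibration is flat and canonically trivialised over the right half-plane, parallel transport identifies the fibre of $T$ over $(p_{x},0)$ with $K$ and the fibre of $\scrV_{p}(L,i)$ over $(p_{x},0)$ with $L^{i}$; hence $T \cap \scrV_{p}(L,i)$ is naturally in bijection with $K \cap L^{i}$.

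First I would establish the identification at the level of differentials. Pick Floer data on $E$ supported in a tubular neighbourhood $\nu$ of the fibre over $(p_{x},0)$, whose restriction to $\nu \cong D \times M$ is the pullback from $M$ of the Floer data defining the morphism complex $CF^{*}(K,L^{i})$, and pick almost complex structures which split as products on $\nu$ with the standard complex structure on the base factor. Any finite-energy holomorphic strip for this data projects to $\bC$ via $W$ as a holomorphic strip with boundary on the arcs $W(T)$ and $W(\scrV_{p}(L,i))$, both of whose asymptotes are the single point $(p_{x},0)$. The open mapping theorem, applied exactly as in Lemma \ref{lem:inner_subcomplex}, forces the projection to be constant; every strip therefore lies in the fibre over $(p_{x},0)$ and is a genuine Floer strip for the pair $(K, L^{i})$ in $M$. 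This gives a chain-level identification $CF^{*}(T, \scrV_{p}(L,i)) = CF^{*}(K, L^{i})$.

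Next I would extend this to the $\scrF_{M}$-module structure. The bimodule operations of $\scrR$ count holomorphic polygons whose boundary alternates between $T$ and a sequence $\scrV_{p}(L_{j},i_{j})$ with strictly decreasing indices. Under the identification $CF^{*}(\scrV_{p}(L_{j},i_{j}),\scrV_{p}(L_{j+1},i_{j+1})) \cong CF^{*}(L_{j}^{i_{j}},L_{j+1}^{i_{j+1}})$ supplied by Proposition \ref{prop:regularity-good-order}, and after choosing Floer data along the relevant moduli space of domains which are still of the product form above near the fibre over $(p_{x},0)$, the same maximum principle argument applied to the projection of each polygon to $\bC$ (whose boundary maps to one horizontal half-line and a monotonically ordered family of vertical arcs, all meeting at $(p_{x},0)$) confines every contributing polygon to that fibre. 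The resulting operations match the Yoneda $A_{\infty}$-module operations on $K$ in $\scrF_{M}$, yielding the desired quasi-isomorphism. The main obstacle is the consistency of the perturbation scheme: the auxiliary data must be chosen coherently, compatibly with gluing, so that the reduction-to-fibre argument works simultaneously for all $A_{\infty}$-operations. This is an $A_{\infty}$ K\"unneth argument, standard in spirit but requiring some care in the partially compact geometric setting.
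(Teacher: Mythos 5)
Your argument captures the same essential mechanism as the paper's proof: the maximum principle (via the open mapping theorem, as in Lemma \ref{lem:inner_subcomplex}) confines holomorphic strips and polygons to the fibre over the single crossing point of $W(T)$ and $W(\scrV(L,i))$, where the thimble restricts to $K$ and the vertical Lagrangian restricts to $L^i$, reducing everything to Floer theory in $M$. This is also the paper's argument, though it is stated much more tersely.

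One case you have not addressed, which the paper explicitly flags, is $L = K$. In the model $\scrO_M$ the self-endomorphism complex of $K$ is not a transverse Floer complex but the Morse complex $CM^*(K; f_K)$ for a chosen Morse function, and on the other side the thimble $\scrT$ carries its own proper Morse function $f_{\bR^n}$. Your bijection of intersection points does not literally apply in this degenerate situation, and some comparison is needed. The paper resolves this by fixing $f_{\bR^n}$ so that its restriction to the vanishing sphere is exactly $f_K$, which makes the two models agree at the chain level; you should either invoke this compatibility or argue that the Floer complex $CF^*(K, K^{j_0})$ for the small Hamiltonian perturbation matches the chosen Morse model. A second, smaller point: your open mapping theorem step requires that all the projected boundary arcs $W(\scrV(L_j,i_j))$ and the half-line $W(T)$ meet the real axis at a common point. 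Since $g_{i}(0)=0$ for every index, this holds precisely when the reference point $p$ lies on the positive real axis; for general $p=(p_x,p_y)$ with $p_y\neq 0$ the arcs cross the real axis at $i$-dependent points. You should therefore either take $p$ real, or more carefully isolate the relevant preimages as in the proof of Lemma \ref{lem:inner_subcomplex}.
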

\begin{proof}
The Yoneda module assigns to any Lagrangian $L$ the Floer complex with $K$, computed in the fibre $M$, whereas the module ${}_{\scrT} \scrR$ is obtained by taking the Floer complex of the vertically admissible Lagrangian associated to $L$ with the thimble $\scrT$. For $L \neq K$, these agree by the maximum principle, whereas for $L = K$ they agree because our choice of Morse function on $\scrT$ restricts to a Morse function on $K$, which we use to define the self-Floer complex of $K$.
\end{proof}

Lemma \ref{lem:pullback_restriction} explains the nomenclature \emph{restriction bimodule}: there is a basic link between $\scrR$ and the geometric process of ``restricting" a Lefschetz thimble to a generic fibre. We next consider the functor $\scrD$.
\begin{Lemma} \label{lem:splitting_transport_vanishing}
There is a quasi-isomorphism $\scrD K \cong \scrT \oplus \scrT[n-1]$.
\end{Lemma}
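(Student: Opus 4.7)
The strategy is geometric: the arc $\gamma$ defining $\scrD K$ encircles the unique critical value of $W$, and by pinching $\gamma$ at the critical value one exhibits $\scrD K$ as a union of two Lefschetz thimbles, which will account for the two summands $\scrT$ and $\scrT[n-1]$.

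First I would deform $\gamma$ by a compactly supported Hamiltonian isotopy in the base of $W$ (preserving both heights at infinity) so that $\gamma$ is brought arbitrarily close to the critical value from both sides, and lift this to a Hamiltonian isotopy of $\scrD K$ in $E$ away from the critical locus. Because $K$ is the vanishing cycle, the fibre of $\scrD K$ over a point approaching the critical value collapses to the critical point. Thus, in the pinching limit, $\scrD K$ degenerates into a union of two Lagrangian half-pieces $T_+$ and $T_-$, each fibering over one of the two sub-arcs obtained by cutting $\gamma$ at the critical value; each such sub-arc joins the critical value to an end at infinity, so each of $T_\pm$ is (as an unbranded Lagrangian) a Lefschetz thimble of the unique critical point. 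Since the only intersection point of $T_+$ and $T_-$ is the critical point itself, a small further perturbation separates them, producing a horizontally admissible Lagrangian that splits as a disjoint union. This yields a quasi-isomorphism $\scrD K \simeq T_+ \oplus T_-$ in $\scrF_W$.

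Next, since $W$ has a unique critical point, any two Lefschetz thimbles of that critical point are related by a Hamiltonian isotopy (within the class of horizontally admissible Lagrangians, up to shifting the height by the quasi-unit construction of $\scrF_W$). Hence $T_+$ and $T_-$ are each isomorphic to $\scrT$ after forgetting gradings. The grading shift between them is dictated by the fact that the two arcs meet the critical value from opposite sides of $\gamma$, and so the two ``boundary vanishing cycles" differ by the monodromy Dehn twist $\tau_K$; by the standard computation in a Lefschetz fibration of fibre complex dimension $n-1$, one has $\tau_K(K) \simeq K[n-1]$ as graded Lagrangians. Transporting this shift to the thimbles themselves, one summand is $\scrT$ and the other is $\scrT[n-1]$, giving $\scrD K \simeq \scrT \oplus \scrT[n-1]$.

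The main obstacle is justifying the pinching degeneration rigorously within the framework of horizontally admissible Lagrangians and the localisation defining $\scrF_W$. To sidestep this, I would carry out the argument at the level of modules: Lemma \ref{lem:pullback_restriction} together with Corollary \ref{cor:adjunction_exists} identifies $HF^*(\scrT, \scrD(\_))$ with the Yoneda module of $K$ in $\scrF_M$, so $HF^*(\scrT,\scrD K) \cong HF^*(K,K) \cong H^*(S^{n-1}) \cong \bk \oplus \bk[1-n]$, matching $HF^*(\scrT, \scrT \oplus \scrT[n-1])$. One then constructs two explicit morphisms $\scrT \to \scrD K$ and $\scrT[n-1] \to \scrD K$ from the two generators of this group (geometrically realised by holomorphic half-discs implementing the two thimble pieces), and verifies that the induced map $\scrT \oplus \scrT[n-1] \to \scrD K$ induces an isomorphism on $HF^*(\scrT, \_)$. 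Since $\scrT$ generates the subcategory of $\scrF_W$ containing $\scrD K$ (by the exact triangle of Theorem \ref{thm:les}, which presents $\scrD K$ in terms of objects built from $\scrT$ and fibre data), the Yoneda lemma promotes this to the desired quasi-isomorphism.
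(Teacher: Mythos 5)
Your geometric picture—pinching the arc $\gamma$ at the critical value to split $\scrD K$ into two thimbles—is the correct intuition, and the module-level computation $HF^*(\scrT,\scrD K)\cong HF^*(K,K)\cong H^*(S^{n-1})$ via Lemma~\ref{lem:pullback_restriction} and Corollary~\ref{cor:adjunction_exists} is sound. However, the last step of your fallback argument is circular. You appeal to the claim that $\scrT$ generates the subcategory of $\scrF_W$ containing $\scrD K$, attributing it to the exact triangle of Theorem~\ref{thm:les}. But that theorem is a statement about $\scrF_M$-$\scrF_M$ bimodules (the unit map from $\Delta^{\scrF_M}$ to the $2$-sided $\scrD$-pullback), not about which objects of $\scrF_W$ are generated by the thimble. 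The generation statement you want is precisely what is established \emph{downstream} of this lemma: in the proof of Proposition~\ref{prop:what_we_need_of_the_twist}, the assertion that the Orlov functor factors (up to equivalence) through the image of $\scrT$ is deduced from Lemma~\ref{lem:basic_quasi-isomorphisms-MB}, which is the fibred analogue of the present lemma together with Lemma~\ref{lem:lag_transverse_vanishing_one}. So this lemma is an input to that factorisation, not a consequence of it. Even the weaker form of your conclusion—that a map $\scrT\oplus\scrT[n-1]\to\scrD K$ inducing an isomorphism on $HF^*(\scrT,\_)$ is a quasi-isomorphism—cannot be promoted via Yoneda without some independently established generation or split-generation of $\scrD K$ by $\scrT$.

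The paper avoids this circularity by working entirely in a local model. After deforming the fibration to the standard $\pi\colon\bC^n\to\bC$, $\pi(z)=\sum z_i^2$, one observes that two thimbles $\scrT_\pm$ with ends placed above and below those of a representative of $\scrD K$ meet $i\bR^n$ as small perturbations near the origin, so the relevant Floer computations take place in $T^*\bR^n$. One then \emph{explicitly constructs} classes $\iota_0,\iota_1\in HF^*(\scrT_+,\scrD K)$ and $p_0,p_1\in HF^*(\scrD K,\scrT_-)$ satisfying $\mu_2(p_j,\iota_i)=\delta_{i,j}$, using $HF^*(\scrD S^{n-1},\scrT_-)\cong H^*(S^{n-1})$ and the identification of $HF^*(\scrT_+,\scrT_-)$ with $H^*_c(\bR^n)$, via a convolution diagram for the clean intersection. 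These relations produce mutually orthogonal inclusion/projection pairs exhibiting two shifted copies of $\scrT$ as summands, directly giving the splitting. This is essentially a rigorous implementation of your pinching picture, but carried out by constructing the splitting data algebraically in a model where the Floer theory can be computed, rather than by appealing to a generation statement that is not yet available; the grading shift $n-1$ then falls out of the cohomology of $S^{n-1}$, rather than being read off from the monodromy Dehn twist as you propose.
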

\begin{proof}
Let $ \scrD ( K)$ be a representative of the image of the Orlov functor with ends at $\pm 1$, and  $\scrT_{\pm} $ be representatives of $\scrT $ in $\scrO_{W}$ with ends at $\pm 2$ (see Figure \ref{fig:direct_sum_orlov}). Note that the product
\begin{equation} \label{eq:product_to_K}
  HF^*(\scrD ( K ), \scrT_{-} )  \otimes HF^*(\scrT_{+},\scrD ( K) ) \longrightarrow HF^*(\scrT_{+}  , \scrT_{-}  ) \cong H^*(K)
\end{equation}
is in the correct order for computing morphisms in $\scrF_{W}$. The existence of an embedding  of two shifted copies of $\scrT $ as summands in  $  \scrD ( K)$ is equivalent to the existence of classes $p_0, p_1  \in   HF^*(\scrD (K) , \scrT_{-} )  $  and $ \iota_0, \iota_1  \in HF^*(\scrT_{+},\scrD ( K))   $, whose products satisfy
\begin{equation}
  \mu_{2} ( p_j, \iota_i) = \delta_{i,j} ,
\end{equation}
since, up to shifts, this yields (mutually orthogonal) projections and inclusions of $ \scrT_{\pm}  \to \scrD (K) $.
\begin{figure}
\centering
\begin{tikzpicture}[scale =1]
\begin{scope}

\node[label=left:{$\scrT_+ $}] at (-3,1) {};
\draw[line width=\lw, red] (0,0) .. controls (-1,0) and (-3,0) .. (-3,1) ..  controls (-3,2) and (0,2) .. (4,2);
\node[label=left:{$\scrT_- $}] at (-3,-1) {};
\draw[line width=\lw, blue] (0,0) .. controls (-1,0) and (-3,0) .. (-3,-1) ..  controls (-3,-2) and (0,-2) .. (4,-2);
\draw[radius = 8*\lw, fill] (0,0) circle ;

\node[label=below:{$ \scrD \left( K \right)$}] at (2,1) {};
\draw[line width=4*\lw] (-1,0) .. controls (-1, 1) .. (4,1);
\draw[line width=4*\lw] (-1,0) .. controls (-1, -1) .. (4,-1);

\end{scope}
\end{tikzpicture}
\caption{}
\label{fig:direct_sum_orlov}
\end{figure}
By deforming the Lefschetz fibration if necessary, it suffices to prove the result for the standard Lefschetz fibration $\pi: \bC^n \rightarrow \bC$, $\pi: (z_1, \ldots , z_n) \mapsto \sum z_j^2$.  The Lagrangians $ \scrT_\pm$ project to the arcs shown in Figure \ref{fig:direct_sum_orlov}, hence in particular agree with small perturbations of $i \bR^{n}$ in a neighbourhood of the origin in $\bC^{n}$.  We can therefore perform the Floer-theoretic computation in $T^* \bR^{n}$, in which case we have a natural isomorphism $  HF^*(\scrT_+, \scrT_-) \cong H^*_{c}(\bR^{n}) $. Using the fact that $\scrD S^{n-1}$ meets $i \bR^{n}$ cleanly along $S^{n-1}$, we have a commutative diagram
\begin{equation}
 \xymatrix{ HF^*(\scrD S^{n-1}, \scrT_-)  \otimes HF^*(\scrT_+, \scrD S^{n-1} ) \ar[r]  \ar[d] &  HF^*(\scrT_+, \scrT_-) \ar[d] \\
H^*(S^{n-1}) \otimes H^*(\bR^n, \bR^n \setminus S^{n-1}) \ar[r] &  H^*_{c}(\bR^{n}).},
\end{equation}
(If $U$ is an open neighbourhood of $S^{n-1} \subset i\bR^n$, the plumbing model for clean intersections is usually stated with morphism groups $C^*(U)$ and $C^*(U,\partial U)$, cf. Proposition  \ref{Prop:plumb}; we have used excision to identify $H^*(U,\partial U) \cong H^*(\bR^n, \bR^n\backslash S^{n-1})$.)  The classes $p_{0}$ and $p_1$ can now be chosen to be generators of the two non-zero graded components of $HF^*(\scrD S^{n-1},\scrT_- ) \cong H^*(S^{n-1})  $, with $\iota_0 $ and $\iota_1$ their Alexander-Lefschetz duals.  
\end{proof}

Consider now a closed exact Lagrangian  $L \subset M$:
\begin{Lemma}  \label{lem:lag_transverse_vanishing_one}
If $L$ meets $K$ at a single point, there is a quasi-isomorphic $\scrD L \cong \scrT$.  
\end{Lemma}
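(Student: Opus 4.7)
My plan is to adapt the argument of Lemma \ref{lem:splitting_transport_vanishing}, with the key simplification that the clean intersection is now a single point $\{p\} = L \cap K$ rather than an $(n-1)$-sphere. As in that proof, I would first choose representatives $\scrT_{\pm}$ of the thimble with ends at heights $\pm 2$ and a representative of $\scrD L$ with ends at $\pm 1$, so that the triangle product
\begin{equation*}
HF^*(\scrD L, \scrT_{-}) \otimes HF^*(\scrT_{+}, \scrD L) \longrightarrow HF^*(\scrT_{+}, \scrT_{-})
\end{equation*}
is in the correct order for morphisms in $\scrF_W$. After deforming to the standard local model $\bC^n \to \bC$, $(z_1, \ldots, z_n) \mapsto \sum z_j^2$, a Hamiltonian isotopy supported in a Weinstein neighbourhood of $K$ inside $M$ allows me to assume that near $K$, the Lagrangian $L$ coincides with a cotangent fibre over $p$ in the local model $T^*K$. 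With appropriate choices, $\scrD L$ then meets a small perturbation of $i\bR^n$ cleanly at the single point lying over the base point $q$.

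The clean-intersection (plumbing) model from Proposition \ref{Prop:plumb}, together with the Thom isomorphism, then yields
\begin{equation*}
HF^*(\scrT_{+}, \scrD L) \cong H^*(\{p\}) \cong \bk, \qquad HF^*(\scrD L, \scrT_{-}) \cong H^*(\bR^n, \bR^n \setminus \{p\}) \cong \bk[{-n}],
\end{equation*}
each one-dimensional and concentrated in a single degree. Letting $\iota$ and $\pi$ denote generators, the analogue of the commutative square in the proof of Lemma \ref{lem:splitting_transport_vanishing} identifies $\mu_{2}(\pi, \iota)$ with the generator of $H^n_c(\bR^n) \cong \bk$, which represents the quasi-unit of $HF^*(\scrT_{+}, \scrT_{-})$. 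This exhibits $\scrT$ as a direct summand of $\scrD L$ via the splitting relation $\pi \circ \iota = \id_{\scrT}$.

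To conclude that the complementary summand $C$ is acyclic, I would invoke Corollary \ref{cor:adjunction_exists} together with Lemma \ref{lem:pullback_restriction} to compute
\begin{equation*}
HF^*(\scrD L, \scrT) \cong \scrR(\scrT, L) \cong HF^*(L, K) \cong \bk,
\end{equation*}
the last isomorphism holding since $L$ meets $K$ transversely at a single point. Combined with $HF^*(\scrT, \scrT) \cong \bk$ and the splitting $\scrD L \cong \scrT \oplus C$, this forces $HF^*(C, \scrT) = 0$; a parallel local computation yields $HF^*(\scrT, C) = 0$. Since $\scrT$ is the unique thimble for the Lefschetz fibration with a single critical point and therefore split-generates the relevant subcategory of $\scrF_W$, we obtain $C \simeq 0$.

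I expect the main obstacle to be ensuring that the clean-intersection computation is \emph{global}, i.e.\ that the dimensions of $HF^*(\scrT_{\pm}, \scrD L)$ arising from intersections near $q$ are not augmented by further generators coming from the global behaviour of $L$ away from $K$. This should be controlled by a refinement of the Hamiltonian perturbation of $L$ in a neighbourhood of $K$, combined with a monotonicity argument in the style of Lemma \ref{lem:inner_subcomplex} confining all relevant holomorphic discs to the region where the plumbing model applies.
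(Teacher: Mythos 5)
Your proposal takes a genuinely different route from the paper, and unfortunately it has a gap at the final step. The paper's proof is a direct geometric argument: it applies Polterovich's Lagrangian surgery cobordism between $K \amalg L$ and $K\#L$ (a Lagrangian in $M\times\bC$ fibred over a tripod), extends the $K$-end of the cobordism into the critical point of the Lefschetz fibration to obtain a two-ended Lagrangian $T_L\subset E$ with ends fibred by $L$ and $\tau_K(L)$, and then invokes the $\lambda$-Lemma to exhibit a Hamiltonian isotopy between $\scrD L$ and $T_L$. The conclusion $\scrD L\cong\scrT$ then follows from Lemma \ref{lem:weakly_one_ended}. Your argument instead tries to show $\scrT$ is a direct summand of $\scrD L$ by a Floer product computation, and then to kill the complementary summand $C$ by orthogonality plus split-generation.

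The split-generation step is the gap. It is not true that the single thimble $\scrT$ split-generates the partially wrapped category $\scrF_W$: by Lemma \ref{lem:pullback_restriction} and Corollary \ref{cor:adjunction_exists}, $HF^*(\scrT, \scrD L')\cong HF^*(K,L')$, so if $L'\subset M$ is any exact Lagrangian disjoint from the vanishing cycle $K$ then $\scrD L'$ is a nonzero object (its endomorphisms recover $H^*(L')$ via the restriction bimodule) which is orthogonal to $\scrT$. Seidel's generation theorem for Lefschetz fibrations applies to the ``compact'' Fukaya category of $W$, not to the larger category $\scrF_W$ which contains the two-ended objects $\scrD L$ for arbitrary $L$. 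So the vanishing $HF^*(\scrT,C)=HF^*(C,\scrT)=0$ that you establish does not force $C\simeq 0$; it is exactly consistent with $C$ being something like $\scrD L'$ for $L'$ away from $K$. You would need either an \emph{a priori} bound on the total rank of $HF^*(\scrD L,\scrD L)$, or a different characterisation of the orthogonal complement to $\scrT$ in $\scrF_W$, and neither is available cheaply. This is precisely why the paper replaces the categorical argument with the surgery/$\lambda$-Lemma isotopy, which identifies $\scrD L$ with a weakly one-ended Lagrangian on the nose and then applies Lemma \ref{lem:weakly_one_ended} directly, with no reference to generation.

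A secondary, more minor point: $\scrD L$ meets $\scrT_{\pm}$ transversely in a single point (not in a positive-dimensional clean intersection as in Lemma \ref{lem:splitting_transport_vanishing}), so the plumbing model of Proposition \ref{Prop:plumb} is not really the right tool; you would instead need to directly verify that the unique triangle contributing to $\mu_2(\pi,\iota)$ has the correct sign and lands on the quasi-unit in $HF^*(\scrT_+,\scrT_-)$. That computation is plausible in the local Lefschetz model but is not immediate from the Thom isomorphism alone, and your own final paragraph acknowledges the difficulty of confining the relevant holomorphic curves. These are smaller concerns, but they too are sidestepped by the paper's geometric approach.
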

\begin{proof}
Polterovich proved that, if Lagrangian submanifolds $ K, L \subset M$ meet transversely at a single point, there is a Lagrangian cobordism $\Gamma \subset M\times \bC$ between $K \amalg L$ and the Lagrange surgery $K\# L$, fibred over a tripod (figure $Y$, thickened at the vertex) with the surgery lying over a small neighbourhood of the trivalent vertex and the Lagrangians $K,L, K\# L$ lying over the three ends (see \cite{pol,BC:Cob}).   Supposing further that $K \cong S^n \subset M$ is a Lagrangian sphere, this cobordism can be extended inside the Lefschetz fibration $E$ with fibre $M$ and vanishing cycle $K$ by continuing the edge labelled by $K$ into the critical point (this was also used by Biran and Cornea \cite{BC:Lef}). This yields a Lagrangian $T_L \subset E$, which after Hamiltonian isotopy is horizontally admissible with ends fibred by copies of $L$ and the monodromy image $L\# K = \tau_K(L)$. The Lemma will follow from the claim that $L_\gamma$ is Hamiltonian isotopic to $T_L$, together with Lemma \ref{lem:weakly_one_ended}, which asserts the equivalence of $T_L$ with the thimble $\scrT$.

\begin{figure}
\centering
\begin{tikzpicture}[scale =1]
\begin{scope}
\clip (2,-.25) rectangle (2.5,.25);
\draw[line width=4*\lw, red, fill]  (2,0) .. controls (3,0) and (3,1) .. (4,1) --  (4,-1) ..  controls (3,-1) and (3,0) .. (2,0) ;
\end{scope}
\begin{scope}
\draw[line width=4*\lw, red] (0,0) -- (2,0) .. controls (3,0) and (3,1) .. (4,1);
\draw[line width=4*\lw, red] (2,0) .. controls (3,0) and (3,-1) .. (4,-1);
\node[label=left:{$T_L$}] at (0,0) {};
\draw[radius = 8*\lw, fill] (0,0) circle ;

\node[label=left:{$L_{\gamma}$}] at (-2,0) {};
\draw[line width=4*\lw, blue] (-2,0) .. controls (-2, 1) .. (4,1);
\draw[line width=4*\lw, blue] (-2,0) .. controls (-2, -1) .. (4,-1);

\end{scope}
\end{tikzpicture}
\caption{
\label{fig:lambda-lemma}}
\end{figure}

The Hamiltonian isotopy is a consequence of the $\lambda$-Lemma.  Given a hyperbolic critical point $(x,y)=(0,0) \in \bR^k\times \bR^m$ of a flow $(\phi_t)$, with local unstable manifold $W^u = \{0\}\times \bR^m$ and stable manifold $W^s = \bR^k \times \{0\}$, the $\lambda$-Lemma (see e.g. \cite{Deng})  
asserts that if $\Delta$ is an $m$-disc transverse to $W^s$, then its image under $\phi_R$, for sufficiently large $R\gg 0$, is the graph of a function $\phi: W^u \rightarrow \bR$ with values and derivatives bound by $e^{\lambda T}$.  (Schematically, although the given flow takes exponential time into the origin, one can consider the flow associated to a system whose critical point is shifted slightly, and then reparametrise the flow-lines to have uniformly bounded time.)  In the Lefschetz fibration, apply the Hamiltonian flow of $\mathrm{Im}(\pi)$, which is the gradient flow of $\mathrm{Re}(\pi)$. This flows the complement of an open  neighbourhood of $-1 \in \gamma$ into the right half-plane, and the $\lambda$-lemma implies that the resulting fibred Lagrangian with boundary can be completed to a piecewise-smooth Lagrangian submanifold which contains a compact subset of the Lefschetz thimble, and which after smoothing is Hamiltonian isotopic to $L_{\gamma}$. That smoothing  yields the surgery $T_L $ presented in fibred position, cf. Figure \ref{fig:lambda-lemma}.
\end{proof}

\subsection{The fibred twist}
\label{sec:fibred-twist}

It would be natural, next, to consider the generalisation of the discussion of the previous section to the Morse-Bott case; for exact Morse-Bott fibrations with  globally defined parallel transport that is relatively straightforward. However, to obtain a theory  applicable to the geometric setting relevant to symplectic Khovanov cohomology with minimal further technical development, we consider the following setting introduced at the beginning of Appendix \ref{Sec:LES_of_twist}.  Namely, let $M$ be a Liouville domain, $W \co E \to \bC $ an exact Lefschetz fibration with a unique critical point and fibre $N$, and $W' \co E' \to \bC $ an exact Landau-Ginzburg model with fibre $M'$. Assume there is an exact inclusion
\begin{equation}
 M \times E \subset E'
\end{equation}
compatible with the maps to $\bC$. We now generalise the construction of the previous section to this setting. First, associated to each Lagrangian $L \subset M$, we obtain a thimble $\scrT L$ which is an object of $\scrF_{W'}$, by taking the product with the thimble $\scrT$ of $E$ which projects to the real axis.
\begin{lem} \label{lem:thimble_embed}
The assignment $L \to \scrT L$ extends to a fully faithful embedding
\begin{equation}
\scrT \co  \scrF_M  \to   \scrF_{W'}.
\end{equation}
\end{lem}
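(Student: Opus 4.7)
The plan is to construct $\scrT$ explicitly as a product-with-thimble functor and to derive full faithfulness from a K\"unneth argument in Floer cohomology, modeled on the cohomological computation already done in the proof of Lemma \ref{lem:bimodule-pure} but adapted to the Landau--Ginzburg framework developed in this appendix.

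First I would set up the object map. Given a pair $(L,i) \in \Ob\,\scrO_M$, the product $\scrT \times L$ sits inside $M \times E \subset E'$ via the given exact inclusion. Because $W'$ restricts to $W \circ \pi_E$ on $M \times E$ near infinity, and $\scrT$ is horizontally admissible with one end at height $0$, the Lagrangian $\scrT L := \scrT \times L$ is horizontally admissible in $E'$ with one end. To respect the directed structure of $\scrO_M$, I would take $\scrT (L,i)$ to be the horizontally admissible representative whose unique end has height equal to $i$ (or any strictly increasing function of $i$), so that $(L,i) \geq (K,j)$ if and only if $h(\scrT(L,i)) \geq h(\scrT(K,j))$, with ties handled by a small further perturbation as in Section \ref{sec:fukaya-category-m}. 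Self-Floer cochains of $\scrT L$ are defined using the sum $f_L + f_{\bR^n}$ of the Morse function on $L$ with the proper Morse function on $\scrT$ fixed in Section \ref{sec:lefschetz-fibrations}.

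Next I would define the $A_\infty$ operations of $\scrT$ by a K\"unneth splitting. Given a sequence $(L_0, i_0), \ldots, (L_d, i_d)$ with $i_0 > \cdots > i_d$, I would choose Hamiltonian and almost complex perturbations on $E'$ that are compactly supported in $M \times E$ and split as sums of data pulled back separately from $M$ and $E$. Horizontal admissibility together with holomorphicity of $W'$ outside a compact subset forces (via the maximum principle, exactly as in Lemma \ref{lem:inner_subcomplex}) any holomorphic disc with boundary on $\scrT L_0, \ldots, \scrT L_d$ to stay inside $M \times E$; the split structure then identifies its moduli space with the product of a moduli space of discs in $E$ with boundary $\scrT, \ldots, \scrT$ and a moduli space of discs in $M$ with boundary $L_0, \ldots, L_d$. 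This yields a canonical isomorphism
\begin{equation}
CF^*_{\scrF_{W'}}(\scrT L_0, \ldots, \scrT L_d) \;\cong\; CF^*_{\scrF_W}(\scrT, \ldots, \scrT) \otimes CF^*_{\scrF_M}(L_0, \ldots, L_d),
\end{equation}
compatible with $A_\infty$-structure maps. The functor $\scrT^d$ is obtained by tensoring the input with the distinguished degree $0$ generator $\kappa \in CF^0(\scrT,\scrT)$ provided by Lemma \ref{lem:degree_0_gen_small}, yielding in particular $\scrT^1(x) = \kappa \otimes x$ on morphisms.

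Full faithfulness then reduces to the computation $HF^*(\scrT, \scrT) \cong H^*(\bR^n) = \bk$ concentrated in degree zero, so that the K\"unneth isomorphism above descends on cohomology to an isomorphism $HF^*_{\scrF_{W'}}(\scrT L_0, \scrT L_1) \cong HF^*_{\scrF_M}(L_0, L_1)$. It remains to check that the functor descends to localisations: the image under $\scrT$ of each continuation element $\kappa_{i,j}$ defining $\scrF_M$ is (up to the fixed factor $\kappa$) a continuation element in $\scrO_{W'}$, hence is inverted in $\scrF_{W'}$ after passing to the localisation at quasi-units. By the universal property of localisation, $\scrT$ descends to a fully faithful $A_\infty$-embedding $\scrF_M \to \scrF_{W'}$.

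The main obstacle will be a clean implementation of the splitting of moduli spaces: one must verify that for product boundary conditions in $M \times E \subset E'$ with split Hamiltonian and almost complex data, no pseudoholomorphic disc can escape the product $M \times E$ into the rest of $E'$. This reduces to a maximum-principle argument using horizontal admissibility of the $\scrT L_j$ and holomorphicity of $W'$ at infinity, precisely parallel to Lemma \ref{lem:inner_subcomplex} but in the product setting; once this containment is established the factorisation of curves into an $E$-factor and an $M$-factor follows automatically from the product structure of the data, and all remaining steps are formal consequences of K\"unneth.
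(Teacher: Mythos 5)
Your approach is the same as the paper's: one includes $CF^*(L_0,L_1)$ into $CF^*(\scrT L_0, \scrT L_1)$ by tensoring with the distinguished generator given by the unique minimum of the proper Morse function $f_{\bR^n}$ on the thimble, and full faithfulness follows because $HF^*(\scrT,\scrT)\cong\bk$ in degree zero; the paper's proof is a terse version of exactly this, together with the remark (via Proposition \ref{prop:regularity-good-order}) that regularity of discs transfers from $M$ to $M\times E$ and then to $E'$.

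One point in your sketch that would not survive careful scrutiny is the justification of the confinement step. You claim that ``no pseudoholomorphic disc can escape the product $M\times E$'' by a maximum-principle argument ``precisely parallel to Lemma \ref{lem:inner_subcomplex}.'' That lemma, and the maximum principle applied to $W'\circ u$, only control the \emph{horizontal} behaviour, i.e.\ the image of the disc in the base $\bC$; they say nothing about whether a curve can escape $M\times N$ \emph{inside a fixed fibre} $M'$, since $M\times N\subsetneq M'$. What is actually needed here is the exactness of the inclusion $M\times E\subset E'$ (equivalently $M\times N\subset M'$): this is a Liouville-type embedding, and the standard convexity/monotonicity argument for a Liouville subdomain, not the open mapping theorem in $\bC$, is what prevents curves with boundary in $M\times E$ from leaving it. Once that is supplied the rest of your K\"unneth factorisation, the identification of the $\scrT$-factor with the constant disc at the minimum, and the descent through localisation are all correct and match the paper's intent.
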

\begin{proof}
We fix the Morse $ f_{\bR^n} $ on $\bR^{n}$ used in the previous section, with a unique minimum on the unit sphere. This induces an inclusion
\begin{equation}
  CF^*(L_0, L_1) \subset  CF^*(L_0 \times K, L_1 \times K),
\end{equation}
which defines the $A_{\infty}$-homomorphism $\scrT$. As in Proposition \ref{prop:regularity-good-order}, holomorphic curves in $M$ which are regular are regular as curves in $M \times E$, hence in $E'$.
\end{proof}
Let $K \subset N = W^{-1}(1)$ denote the vanishing cycle as before. As in Section \ref{Sec:BimodulesWithoutQuilts}, we assign to $K$ an $\scrF_{M'}$-$\scrF_{M}$-bimodule $\scrK$ by considering Floer theory in $M'$. The proof of the next result is a straightforward generalisation of the proofs of Lemmas  \ref{lem:pullback_restriction} and \ref{lem:splitting_transport_vanishing}, using the fact that products of regular holomorphic curves are regular. 

\begin{Lemma} \label{lem:basic_quasi-isomorphisms-MB} In the setting and notation of Lemma \ref{lem:thimble_embed}:
  \begin{enumerate}
  \item The pullback of the bimodule $\scrR$ by $\scrT$ is quasi-isomorphic to $\scrK$.
\item For each Lagrangian $L \subset M$, there is a quasi-isomorphism
\[
  \scrD \left( L \times K \right)  \cong \scrT  L \oplus \scrT L [n-1].
\]
  \end{enumerate} \qed
\end{Lemma}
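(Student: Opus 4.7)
For part (1), the plan is to unwind the definition of the pullback bimodule and reduce to a fibrewise computation via the maximum principle, paralleling Lemma \ref{lem:pullback_restriction}. By definition, $({}_\scrT \scrR)(L', L) = \scrR(L', L \times \scrT)$, which is the Floer complex in $E'$ between $L'$ and the vertical Lagrangian associated to $L \times K$. The key observation is that, inside the subdomain $M \times E \subset E'$, the vertical construction is compatible with the product structure: the vertical Lagrangian associated to $L \times K$ can be taken to be $L \times \scrV(K)$, and $W'$ restricted to this subdomain coincides with projection to $E$ followed by $W$.  The maximum principle in the $W'$-coordinate then forces all strips contributing to the differential (and to the higher bimodule operations) to lie in a single fibre $M'$, where they become holomorphic strips between $L'$ and $L \times K$. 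When $L' = L \times K$ itself, one selects the auxiliary Morse function as in Lemma \ref{lem:pullback_restriction}---a sum of a chosen Morse function on $L \times K$ with $(\|W\|-1)^2$ on the thimble factor---so that the self-Floer complex matches $\scrK(L \times K, L)$ on the nose. The bimodule structure maps are compared using the same product/maximum-principle argument, and compatibility at chain level follows from regularity of products of regular discs.

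For part (2), the plan is to combine Lemma \ref{lem:splitting_transport_vanishing} with a K\"unneth-type construction inside the product region $M \times E \subset E'$. The Orlov functor is defined by parallel transport along a prescribed arc $\gamma_i \subset \bC$, and since $W'$ restricts to a product fibration on $M \times E$, parallel transport preserves the product structure there. In particular, one can choose a representative of $\scrD(L \times K)$ which, inside $M \times E$, coincides with $L \times \scrD(K)$. To produce the splitting, I would take the projection/inclusion classes $p_0, p_1 \in HF^*(\scrD(K), \scrT_-)$ and $\iota_0, \iota_1 \in HF^*(\scrT_+, \scrD(K))$ constructed in Lemma \ref{lem:splitting_transport_vanishing}, and cross them with the cohomological unit $1_L \in HF^*(L,L)$ (represented by the minimum of the chosen Morse function on $L$). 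The $A_\infty$-product $\mu^2(p_j \times 1_L,\, \iota_i \times 1_L)$ splits as a product of Floer computations by regularity of products of regular discs, giving $\delta_{ij}$. This exhibits two orthogonal idempotents in $HF^*(\scrD(L \times K), \scrD(L \times K))$ realising a splitting $\scrD(L\times K) \cong \scrT L \oplus \scrT L[n-1]$.

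The main obstacle in both parts is the same technical point isolated by the hint: that pseudoholomorphic curves arising from product boundary conditions in $M \times E \subset E'$ split as products of regular curves, and that regularity for the factors implies regularity in $E'$. This is not automatic because $M \times E$ is only a partial subfibration of $E'$, so one needs a priori confinement (via the maximum principle applied in the $W'$-direction on $M \times E$, together with the product structure of $J$ there) before the splitting argument applies. Once that confinement is established, the K\"unneth-type identifications used in both parts go through essentially formally, as in the proofs of Lemmas \ref{lem:pullback_restriction} and \ref{lem:splitting_transport_vanishing}.
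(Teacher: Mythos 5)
Your proposal is correct and follows essentially the same approach as the paper, which simply asserts that the lemma is ``a straightforward generalisation of the proofs of Lemmas \ref{lem:pullback_restriction} and \ref{lem:splitting_transport_vanishing}, using the fact that products of regular holomorphic curves are regular.'' You have simply unpacked that one-line justification -- the fibrewise reduction via the maximum principle for part (1), and the K\"unneth splitting via the $p_j$, $\iota_i$ classes from Lemma \ref{lem:splitting_transport_vanishing} crossed with the unit for part (2) -- in rather more detail than the paper chooses to record.
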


Consider now a closed exact Lagrangian  $ L' \subset M'$, and a Liouville domain $N_{L'}$ equipped with a Liouville inclusion $N_{L'} \times M \subset M'$, containing the image of $L'$. Moreover, we assume that $N \subset N_{L'}$, and that the inclusion $E \times M \subset E'$ extends to an inclusion
\begin{equation}
  E_{L'} \times M \subset E'
\end{equation}
where $E_{L'} \to \bC$ is a Lefschetz fibration with fibre $N_{L'}$ all of whose critical points are contained in $E$.

\begin{lem} \label{lem:Morse-bott-lambda}
If $L'$ meets $K \times L$ cleanly along a section of the projection to $K$, there is a quasi-isomorphic $\scrD L' \cong \scrT L$. 
\end{lem}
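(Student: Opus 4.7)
The plan is to generalize the strategy of Lemma \ref{lem:lag_transverse_vanishing_one} to the Morse--Bott setting, replacing the transverse intersection at a point by the clean intersection along a section of the projection to $K$. As in that proof, the idea is to construct a horizontally admissible representative $T_{L'}$ of $\scrD L'$ by gluing a surgery cobordism to the product thimble, and then identify it with $\scrT L$ via Lemma \ref{lem:weakly_one_ended}.

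First I would construct a Lagrangian cobordism inside $M' \times \bC$ between the pair $(L', K \times L)$ and their Lagrangian surgery, using a parametric version of Polterovich's construction for cleanly intersecting Lagrangians (equivalently, a clean-intersection Lagrangian surgery as developed by Mak--Wu). The hypothesis that $L'$ meets $K \times L$ along a section of the projection to $K$ is precisely what is required to perform this surgery fibrewise over $K$: working in the Weinstein neighbourhood supplied by Lemma \ref{lem:ConormalModelForPairs}, $L'$ appears locally as a conormal-type Lagrangian over the section, and Polterovich's local model extends parametrically over $K$. This yields a Lagrangian $\Gamma \subset M' \times \bC$ fibred over a tripod, with $L'$ and $K \times L$ on two ends and the surgery $L' \# (K \times L)$ on the third.

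Next I would extend $\Gamma$ into the Lefschetz fibration $W'$ by continuing the edge labelled by $K \times L$ along the positive real axis into the critical locus. The Lefschetz fibration $E \to \bC$ with vanishing cycle $K$ produces a thimble $\scrT$ with boundary $K$, and the inclusion $M \times E \subset E'$ (extended to $E_{L'} \times M \subset E'$ by the hypotheses of the section) carries $\scrT \times L$ to a Lagrangian in $E'$ whose boundary fibres as $K \times L$ over the corresponding end of $\Gamma$. Gluing $\scrT \times L$ to that end produces, after Hamiltonian isotopy and smoothing, a horizontally admissible Lagrangian $T_{L'} \subset E'$ which is weakly one-ended with fibre $L \subset M$ at infinity. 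Then I would invoke the $\lambda$-lemma for the Hamiltonian flow of $\mathrm{Im}(W')$ exactly as in Lemma \ref{lem:lag_transverse_vanishing_one}, to show that the parallel transport $L_\gamma$ of $L'$ along the arc $\gamma$ defining $\scrD L'$ is Hamiltonian isotopic to $T_{L'}$. The argument is local around the critical set of $W'$, where the product structure $E \times M \subset E'$ is available, so the transverse $\lambda$-lemma analysis applies parametrically over $L$. Finally, Lemma \ref{lem:weakly_one_ended} identifies $T_{L'}$ in $\scrF_{W'}$ with its horizontal truncation, which by construction is the product thimble $\scrT \times L$, i.e.\ $\scrT L$.

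The main obstacle will be verifying that the clean-intersection surgery cobordism $\Gamma$ exists in the required fibred form and glues correctly to $\scrT \times L$ across the vanishing cycle; this is a parametric refinement of Polterovich's surgery whose proof rests on the fact that the section hypothesis is exactly the conormal triple condition of Lemma \ref{lem:model3}, rendering the local normal form compatible with the product structure inherited from the inclusion $E_{L'} \times M \subset E'$.
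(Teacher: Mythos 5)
Your proposal is correct and follows exactly the route of the paper's (much terser) proof: view the lemma as the Morse--Bott version of Lemma \ref{lem:lag_transverse_vanishing_one}, build the clean-intersection surgery cobordism inside the product $E_{L'}\times M$ where the gradient flow of $\mathrm{Re}(W)$ is integrable, run a parametric (Morse--Bott) $\lambda$-lemma to match it with the Orlov image, and conclude with Lemma \ref{lem:weakly_one_ended}. The one small caveat is that your appeal to Lemma \ref{lem:model3} in the closing paragraph is unnecessary and not quite the right condition (that lemma concerns triples, not the pair $(L', K\times L)$); Lemma \ref{lem:ConormalModelForPairs} together with the section hypothesis already gives the fibred normal form you need.
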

\begin{proof}
This is the Morse-Bott version of Lemma \ref{lem:lag_transverse_vanishing_one}; the construction of the sugery and the cobordism takes place in the product of the Lefschetz fibration $E_{L'}$ with $M$, in which the gradient flow of the real part of $W$ is integrable. The result follows by the  Morse-Bott case of the $\lambda$-Lemma (see e.g. the proof of \cite[Theorem 25]{Banyaga}).
\end{proof}

To conclude, we prove the version of the exact sequence of a twist that we use in the main part of the paper:
\begin{proof}[Proof of Proposition \ref{prop:what_we_need_of_the_twist}]
By assumption, every object of $\scrA'$ either satisfies the hypothesis of Lemma \ref{lem:lag_transverse_vanishing_one} or is a product of a Lagrangian in $M$ with $K$. We conclude that the functor $\scrD \co \scrA' \to \scrF_W$  (composed with the inclusion $ \scrF_W \to  \Tw \scrF_W$) is equivalent to a functor which factors through the image of $\scrT$. Since $\scrT$ is a fully faithful embedding, we may therefore fix a functor $\cap \co \Tw \scrA' \to \Tw \scrA$ so that we have a diagram which commutes up to equivalence:
\begin{equation}
  \xymatrix{\Tw  \scrA' \ar[r]^{\cap} \ar[dr]^{\scrD} & \Tw \scrA   \ar[d]^{\scrT} \\
& \Tw \scrF_{W}.}
\end{equation}
Theorem \ref{thm:les} therefore implies that the graph bimodule $\Delta^{\Tw A}_{\phi}  $ is quasi-isomorphic to the cone of the unit $ \Delta^{\Tw \scrA} \to  {_{\cap} \Delta^{\Tw  \scrA } _{\cap}} $. It remains to show that we have an equivalence
\begin{equation} \label{eq:statement_adjunction_cup_cap}
  {_{\cap} \Delta^{\Tw  \scrA } _{\cap}} \cong  \Delta^{\Tw \scrA'}_{\cup \circ \cap  },
\end{equation}
i.e. that $\cap$ is adjoint to $\cup$, where $\cup$ is the  K\"unneth-type functor representing $\scrK$. Observe that the two-sided pullback of $ \Delta^{\scrF_W} $ by $\scrT$ is equivalent to $\Delta^{\scrA}$ because $\scrT$ is a fully faithful embedding, so   $ _{\cap} \Delta^{\scrA} $ is equivalent to $ _{\scrD} \Delta^{\scrF_W}_{\scrT}$. Corollary \ref{cor:adjunction_exists} implies that $\scrR_{\scrT} $ is represented by $ _{\scrD} \Delta^{\scrF_W}_{\scrT}$, but Lemma \ref{lem:basic_quasi-isomorphisms-MB} implies that $\cup$ is represented by $\scrR_{\scrT}$, which proves the existence of the adjunction, hence establishes the equivalence in Equation \eqref{eq:statement_adjunction_cup_cap}.
\end{proof}



\end{document}